\newcommand\version{September 2, 2024}
\newtheorem{theorem}{Theorem}[section]
\newtheorem{proposition}[theorem]{Proposition}
\newtheorem{lemma}[theorem]{Lemma}
\newtheorem{corollary}[theorem]{Corollary}
\theoremstyle{definition}
\newtheorem{definition}[theorem]{Definition}
\newtheorem{example}[theorem]{Example}
\theoremstyle{remark}
\newtheorem{remark}[theorem]{Remark}
\newtheorem{remarks}[theorem]{Remarks}
\numberwithin{equation}{section}
\newcommand{\C}{\mathbb{C}}
\renewcommand{\epsilon}{\varepsilon}
\newcommand{\I}{\mathbb{I}}
\newcommand{\N}{\mathbb{N}}
\renewcommand{\phi}{\varphi}
\newcommand{\R}{\mathbb{R}}
\newcommand{\Z}{\mathbb{Z}}
\def\bs{\mathbb{S}}
\def\cl{\mathcal{L}}
\def\Rd{{\mathbb{R}^d}}
\newcommand{\me}[1]{\mathrm{e}^{#1}}
\newcommand{\one}{\mathbf{1}}
\begin{document}

\title[Heat kernel bounds --- \version]{Hardy perturbations of \\ subordinated Bessel heat kernels}

\author[K. Bogdan]{Krzysztof Bogdan}
\address[Krzysztof Bogdan]{Department of Pure and Applied Mathematics, Wroc\l aw University of Science and Technology, Hoene-Wro\'nskiego 13C, 50-376 Wroc\l aw, Poland}
\email{krzysztof.bogdan@pwr.edu.pl}

\author[T. Jakubowski]{Tomasz Jakubowski}
\address[Tomasz Jakubowski]{Department of Pure and Applied Mathematics, Wroc\l aw University of Science and Technology, Hoene-Wro\'nskiego 13C, 50-376 Wroc\l aw, Poland}
\email{tomasz.jakubowski@pwr.edu.pl}

\author[K. Merz]{Konstantin Merz}
\address[Konstantin Merz]{Institut f\"ur Analysis und Algebra, Technische Universit\"at Braunschweig, Universit\"atsplatz 2, 38106 Braun\-schweig, Germany} 
\email{k.merz@tu-bs.de}

\subjclass[2020]{Primary 47D08, 60J35}
\keywords{Hardy inequality, heat kernel, fractional Laplacian, angular momentum channel}

\date{\version}

\begin{abstract}
  Motivated by the spectral theory of relativistic atoms,  we prove matching upper and lower bounds for the transition density of Hardy perturbations of subordinated Bessel heat kernels. The analysis is based on suitable supermedian functions, in particular invariant functions.
\end{abstract}

\thanks{This research was supported in part by grant Opus 2023/51/B/ST1/02209 of National Science Center, Poland (K.B.), and the PRIME programme of the German Academic Exchange Service DAAD with funds from the German Federal Ministry of Education and Research BMBF (K.M.).}

\maketitle
\vspace*{-2em}
\tableofcontents

\section{Introduction and main result}
\label{s:intro}

Bessel operators are fundamental for mathematical analysis, theoretical physics, and applied sciences. We are interested in their fractional powers with added critically singular Hardy potential,
\begin{equation}
  \label{e.fBo}
  (-\tfrac{d^2}{dr^2}-\tfrac{2\zeta}{r}\,\tfrac{d}{dr})^{\alpha/2}-\kappa r^{-\alpha}
\end{equation}
in $L^2(\R_+,r^{2\zeta}dr)$ with $\R_+=(0,\infty)$.
Here and in what follows, 
\begin{align}
  \label{eq:globalassump}
  \zeta\in(-1/2,\infty),
  \quad
  \alpha\in(0,2]\cap(0,2\zeta+1),    
\end{align}
and $\kappa\in \R$ is called the \textit{coupling constant} of the Hardy potential.
Below, we call (the potential) $\kappa r^{-\alpha}$ attractive (mass-creating) if $\kappa>0$ and repulsive (mass-killing) if $\kappa<0$.
Our interest in the operators \eqref{e.fBo} comes from the spectral theory of atoms. 
We pursue a program initiated in \cite{BogdanMerz2024} to describe the different rates of spectral dissipation of mass in various \textit{angular momentum channels} in $\Rd$, as parameterized by different values of $\zeta$, $\alpha$, and $\kappa$. From the point of view of theoretical physics, the most important is the three-dimensional pseudo-relativistic case  $\alpha=1$, with $\zeta=1+\ell$,  $\ell=0,1,\ldots$, and Coulomb potential $\kappa r^{-1}$. Here,  the operator \eqref{e.fBo} arises as one of the direct summands of the direct sum decomposition of $(-\Delta)^{1/2}-\kappa/|x|$ in $L^2(\R^3)$ into different angular momentum channels, indexed by $\ell\in\N_0$. Here,
$\kappa$ defines the strength of coupling  between the nucleus and electrons in an atom. The passage from the state space $\R^3$, or, more generally, $\Rd$ with $d\in\N$, to the half-line $\R_+$ in \eqref{e.fBo} is inspired by the intertwining in \cite[Proposition 4.3]{BogdanMerz2024}. In fact, in the present paper, we abstract from the setting of \cite{BogdanMerz2024} and study, in maximal generality, a semigroup on $\R_+$ associated with \eqref{e.fBo}.
The spectral analysis of \cite{BogdanMerz2024} in $\Rd$ will be picked up again in the forthcoming paper \cite{BogdanMerz2024H}, where we also clarify the definition of \eqref{e.fBo} as a self-adjoint operator.
The main result of the present paper, Theorem~\ref{mainresultgen} below, provides a crucial technical ingredient to be used in \cite{BogdanMerz2024H}, namely non-explosion and sharp heat kernel bounds for a semigroup associated with \eqref{e.fBo}.

To state the result, we briefly introduce necessary notation, but defer precise definitions to the ensuing subsections. In particular, here and below, we use a specific parameterization of the coupling constant $\kappa$ in \eqref{e.fBo}. Thus, for $\zeta\in(-1/2,\infty)$, we let 
\begin{align}\index{$\Psi_\zeta(\eta)$}
\label{eq:defpsietazetaOLD}
  \begin{split}
    \kappa=\Psi_\zeta(\eta) := 
    \begin{cases}
      \displaystyle \frac{2^\alpha\Gamma\left(\frac{2\zeta+1-\eta}{2}\right)\Gamma\left(\frac{\alpha+\eta}{2}\right)}{\Gamma\left(\frac{\eta}{2}\right)\Gamma\left(\frac{2\zeta+1-\eta-\alpha}{2}\right)} & \ \text{if $\alpha\in(0,2\wedge (2\zeta+1))$ and $\eta\in(-\alpha,2\zeta+1)$},
      \vspace{3pt} \\ 
      (2\zeta-1-\eta)\eta & \ \text{if $\alpha=2<2\zeta+1$ and $\eta\in\R$}.
    \end{cases}
  \end{split}
\end{align}
Here, as usual, $A\wedge B:=\min\{A,B\}$ and $A\vee B:=\max\{A,B\}$\index{$\wedge$}\index{$\vee$}.
We introduce further notation as we proceed; it is summarized in the \hyperref[index]{\indexname} at the end of the paper.
Clearly, $\Psi_\zeta(\eta)$ is symmetric about $\eta=(2\zeta+1-\alpha)/2$.
In view of \cite[Remark~3.3]{BogdanMerz2024},  its maximum is 
$$
\kappa_{\rm c}(\zeta,\alpha):=\Psi_\zeta((2\zeta+1-\alpha)/2).
$$
For instance, in the three-dimensional, pseudorelativistic situation with $\zeta=1+\ell$, 
\[
  \kappa_{\rm c}(1+\ell,1) = \frac{2\Gamma(1+\ell/2)^2}{\Gamma((\ell+1)/2)^2}.
\]
In particular, for \(\ell=0\), we get \(\kappa_{\rm c}(1,1)=2/\pi\) while for \(\ell=1\), we get \(\kappa_{\rm c}(2,1)=\pi/2\).
In the general situation, $\Psi_\zeta(\eta)$ is strictly positive for $\eta\in(0,2\zeta+1-\alpha)$, zero for $\eta\in\{0,2\zeta+1-\alpha\}$, and tends to $-\infty$ as $\eta\to -M$ or $\eta\to2\zeta+1-\alpha+M$. Here $M:=\alpha$ if $\alpha<2$ and $M:=\infty$ for $\alpha=2$. Thus, the parameterization $\kappa=\Psi_\zeta(\eta)$ only produces
$\kappa\in (-\infty,\kappa_{\rm c}(\zeta,\alpha)]$.
For this range,
we may and often do restrict $\eta$ to the interval $(-M,(2\zeta+1-\alpha)/2]$.
Moreover, for $\zeta>\zeta'>-1/2$, we have $\Psi_{\zeta}(\eta)>\Psi_{\zeta'}(\eta)$ for all 
$\eta$ in the intersection of the domains of $\Psi_\zeta$ and $\Psi_{\zeta'}$.
By the Hardy inequality, in fact, by the ground state representation \cite[Theorem~3.2]{BogdanMerz2024}, the quadratic form corresponding to \eqref{e.fBo} on $L^2(\R_+,r^{2\zeta}\,dr)$ is bounded from below if $\kappa=\Psi_\zeta(\eta)$ for some $\eta$
and it is unbounded from below if $\kappa>\kappa_{\rm c}(\zeta,\alpha)$. 
In the former case, the form is bounded from below by zero. This observation motivates to call $\kappa_{\rm c}(\zeta,\alpha)$ the critical coupling constant.

Summarizing, in most of the discussion below, \eqref{e.fBo} is substituted by
\begin{equation}
  \label{e.fBoeta}
  (-\tfrac{d^2}{dr^2}-\tfrac{2\zeta}{r}\,\tfrac{d}{dr})^{\alpha/2}-\Psi_\zeta(\eta) r^{-\alpha},
\end{equation}
for $\eta$ \textit{admissible}, that is, in the domain of $\Psi_\zeta$, see \eqref{eq:defpsietazetaOLD}.
We call $2\zeta+1$ the \textit{effective dimension}.
This is because the reference measure for the Hilbert space in which \eqref{e.fBo} is defined is $r^{2\zeta}dr=r^{2\zeta+1}\frac{dr}{r}$, where $\frac{dr}{r}$ is the Haar measure on the multiplicative group $(0,\infty)$.
As we shall see, $\eta$ conveniently parametrizes Hardy potentials, and, of course, $\alpha/2$ is the order of subordination. All the operators \eqref{e.fBoeta} are homogeneous of order $\alpha$ (with respect to dilations), but
the interplay of the three parameters critically influences their potential-theoretic properties.
We let
$$
p_\zeta^{(\alpha)}(t,r,s), \quad t, r,s>0,
$$
be the transition density associated to \eqref{e.fBo} with $\kappa=0$; see \eqref{eq:defpheatalpha2} and \eqref{eq:defpheatalpha} for actual definitions in the cases $\alpha=2$ and $\alpha<2$, respectively. Here, $t>0$ is \textit{time}, but $r,s\in (0,\infty)$ are positions in \textit{space}.
Moreover, we let
$$
p_{\zeta,\eta}^{(\alpha)}(t,r,s), \quad t, r,s>0,
$$
be the Schr\"odinger perturbation of $p_\zeta^{(\alpha)}$ by the Hardy potential $\Psi_\zeta(\eta)/r^\alpha$, 
as in Definition~\ref{def:pzetaeta}.
In \cite{BogdanMerz2024H}, we will show that $p_{\zeta,\eta}^{(\alpha)}$ equals the heat kernel of \eqref{e.fBoeta} in the sense of quadratic forms.
The main contribution of the present paper are the following \textit{matching} upper and lower, or \textit{sharp}, bounds for $p_{\zeta,\eta}^{(\alpha)}$, which are proved in Section~\ref{s:proofmainresultgen}.
\begin{theorem}
  \label{mainresultgen}
  If $\zeta\in(-1/2,\infty)$, $\alpha\in(0,2)\cap(0,2\zeta+1)$, and $\eta\in(-\alpha,\frac{2\zeta+1-\alpha}{2}]$, then
  \begin{align}
    \label{eq:mainresultgen}
    \begin{split}
      p_{\zeta,\eta}^{(\alpha)}(t,r,s)
      \sim_{\zeta,\alpha,\eta} \left(1\wedge\frac{r}{t^{1/\alpha}}\right)^{-\eta}\, \left(1\wedge\frac{s}{t^{1/\alpha}}\right)^{-\eta}\, p_\zeta^{(\alpha)}(t,r,s),
    \end{split}
  \end{align}
  and $p_{\zeta,\eta}^{(\alpha)}(t,r,s)$ 
is a jointly continuous function of $t,r,s>0$.
\end{theorem}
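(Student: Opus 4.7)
My strategy is to exploit the candidate invariant function $h(r) := r^{-\eta}$, which is natural because the definition \eqref{eq:defpsietazetaOLD} of $\Psi_\zeta(\eta)$ is exactly the Mellin-transform constant making $h$ formally satisfy $(-d^2/dr^2-(2\zeta/r)\,d/dr)^{\alpha/2}h = \Psi_\zeta(\eta)\,r^{-\alpha}h$, i.e.\ $h$ is harmonic for the full operator \eqref{e.fBoeta}. All four quantities in \eqref{eq:mainresultgen} are homogeneous under the scaling $r\mapsto \lambda r$, $s\mapsto \lambda s$, $t\mapsto \lambda^\alpha t$, so it suffices to work at $t=1$; set $H(t,r):=(1\wedge r/t^{1/\alpha})^{-\eta}$, so $H(1,r)=(1\wedge r)^{-\eta}$.

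\textbf{Upper bound via Duhamel iteration.} Write the perturbation series $p_{\zeta,\eta}^{(\alpha)}=\sum_{n\ge 0}p_n$, with $p_0=p_\zeta^{(\alpha)}$ and
\[
p_{n+1}(t,r,s)=\Psi_\zeta(\eta)\int_0^t\!\!\int_0^\infty p_\zeta^{(\alpha)}(u,r,w)\,w^{-\alpha}\,p_n(t-u,w,s)\,w^{2\zeta}\,dw\,du.
\]
The plan is to verify inductively $p_n(t,r,s)\le C_n\,H(t,r)H(t,s)\,p_\zeta^{(\alpha)}(t,r,s)$ with $\sum_n C_n<\infty$. The inductive step uses a three-kernel (3G-type) estimate for the unperturbed $p_\zeta^{(\alpha)}$ together with the \emph{key closing identity} that the integral $\int_0^\infty p_\zeta^{(\alpha)}(u,r,w)w^{-\alpha-\eta}w^{2\zeta}dw$ is proportional, via $\Psi_\zeta(\eta)$, to $u^{-1}r^{-\eta}$ (plus a term with the correct $H$-dependence). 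This is precisely the algebraic reason the constant $\Psi_\zeta(\eta)$ with its explicit Gamma factors is the right critical threshold.

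\textbf{Invariance and lower bound.} Once finiteness is in hand, I would prove the invariance
\[
\int_0^\infty p_{\zeta,\eta}^{(\alpha)}(t,r,s)\,s^{-\eta}\,s^{2\zeta}\,ds=r^{-\eta}
\]
for all $t,r>0$, by summing the Duhamel series termwise (using the closing identity above) or by a monotone truncation argument, since $h$ is not globally integrable. In the attractive range $\eta\in(0,(2\zeta+1-\alpha)/2]$, the trivial monotonicity $p_{\zeta,\eta}^{(\alpha)}\ge p_\zeta^{(\alpha)}$ inserted once into the Duhamel formula produces the factor $H(t,r)H(t,s)$ in the regime where $r$ or $s$ is small compared to $t^{1/\alpha}$. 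In the repulsive range $\eta\in(-\alpha,0)$, the matching lower bound is extracted from the upper bound and the invariance: a strict inequality $p_{\zeta,\eta}^{(\alpha)}<c\,H H\,p_\zeta^{(\alpha)}$ on a set of positive measure would make the left side of the invariance identity strictly smaller than $r^{-\eta}$, a contradiction. Joint continuity on $(0,\infty)^3$ follows because each $p_n$ is a convolution of jointly continuous kernels and the series converges locally uniformly thanks to the upper bound.

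\textbf{Principal obstacle.} The hardest step is the upper bound near the critical coupling $\eta=(2\zeta+1-\alpha)/2$, where the Duhamel series only barely converges and the constant $\Psi_\zeta(\eta)$ saturates the Hardy inequality; a naive 3G estimate then fails. I would handle this by using $h=r^{-\eta}$ not only as an algebraic identity but as a genuine supermedian function, splitting the spatial integral according to whether $w\lesssim t^{1/\alpha}$ or $w\gtrsim t^{1/\alpha}$, using the closing identity (invariance) on the first regime and the fast off-diagonal decay of $p_\zeta^{(\alpha)}$ on the second. A related technical point is to justify the invariance $P_t^{\zeta,\eta}h=h$ all the way up to the critical $\eta$, rather than the a priori weaker supermedian property $P_t^{\zeta,\eta}h\le h$.
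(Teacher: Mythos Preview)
Your high-level strategy---exploit the invariance of $h(r)=r^{-\eta}$ and combine with Duhamel---matches the paper's, but several key steps do not work as you describe them.

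\textbf{The closing identity is misstated.} The spatial integral $\int_0^\infty p_\zeta^{(\alpha)}(u,r,w)\,w^{-\alpha-\eta}\,w^{2\zeta}\,dw$ is \emph{not} proportional to $u^{-1}r^{-\eta}$; by Lemma~\ref{integralana1} it is comparable to $r^{-\alpha-\eta}\wedge u^{-(\alpha+\eta)/\alpha}$. What is true is the \emph{time-integrated} identity \eqref{eq:kappainv}, namely $\int_0^\infty\!\int_0^\infty p_\zeta^{(\alpha)}(u,r,w)w^{-\alpha-\beta}w^{2\zeta}\,dw\,du=\Psi_\zeta(\beta)^{-1}r^{-\beta}$. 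This distinction matters: the inductive scheme $p_n\le C_n\,HH\,p_\zeta^{(\alpha)}$ with $\sum C_n<\infty$ that you propose collapses precisely because the fixed-$u$ integral does not close, and at the critical $\eta=(2\zeta+1-\alpha)/2$ the ratio $\Psi_\zeta(\eta)/\Psi_\zeta(\beta)$ in Lemma~\ref{boglem34} equals $1$, so the geometric series diverges. The paper bypasses this by an \emph{absorption} argument (Lemma~\ref{boglem42}): one piece of the Duhamel integral is bounded by $\tfrac12\,p_{\zeta,\eta}^{(\alpha)}(1,r,s)$ and moved to the left-hand side, after which the remaining pieces are handled region by region (Lemmas~\ref{boglem43}--\ref{boglem48}) using the auxiliary supermedian identity \eqref{eq:heatkernelharmonic2} with $\beta\neq\eta$.

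\textbf{The lower-bound contradiction does not close.} Invariance gives $\int p_{\zeta,\eta}^{(\alpha)}(t,r,s)\,s^{-\eta}\,s^{2\zeta}\,ds=r^{-\eta}$, but the weight here is $s^{-\eta}$, not $H(s)^2$, so a pointwise strict inequality $p_{\zeta,\eta}^{(\alpha)}<c\,HH\,p_\zeta^{(\alpha)}$ does not immediately contradict anything. The paper instead Doob-conditions by $H$, uses invariance to get a uniform lower bound on the conditioned mass (cf.\ \eqref{eq:boglowbd0}), uses the \emph{upper} bound to show this mass does not escape to $0$ or $\infty$, and then chains via Chapman--Kolmogorov (\eqref{eq:boglowbd1}--\eqref{eq:boglowbd3}).

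\textbf{The repulsive case $\eta<0$ needs a separate mechanism.} Here the Duhamel terms alternate in sign and $p_{\zeta,\eta}^{(\alpha)}\le p_\zeta^{(\alpha)}$, so your iteration does not apply. The paper proves the upper bound by a self-improving mass estimate (Proposition~\ref{jakprop31}): one shows $M(1,r)\le \delta\,M(1,3^{1/\alpha}r)+Ar^{-\eta}$ with $\delta<3^{\eta/\alpha}$ and iterates. The lower bound for $r,s\gtrsim t^{1/\alpha}$ comes from the complete-monotonicity inequality $p^{-q}\ge p\exp(-p_1/p)$ (Lemma~\ref{jaklem38}, proved in the Appendix), which is a genuinely different idea from anything in your outline.
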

Here, we write $A \sim B$\index{$\sim$}, or $B \lesssim A \lesssim B$, if $A, B \geq 0$ and $c^{-1}B \leq A \leq cB$ for a constant \(c \in (0,\infty)\). We write $A \sim_\tau B$ if $c = c_\tau$ depends on \(\tau\), etc. The dependence on fixed parameters, like $\zeta,\alpha,\eta$ is usually omitted.

In view of the sharp Hardy inequality in \cite{BogdanMerz2024}, one may duly expect a \textit{blowup} for \textit{supercritical} coupling constants $\kappa$, i.e., $\kappa>\kappa_{\rm c}(\zeta,\alpha)$. 
Indeed, the interpretation and proof of the following corollary to Theorem~\ref{mainresultgen} are given in Subsection~\ref{ss:blowup} below.
\begin{corollary}
  \label{blowupcor}
  Let $\zeta\in(-1/2,\infty)$ and $\alpha\in(0,2)$. Then  the heat kernel associated with \eqref{e.fBo} blows up if $\kappa >\kappa_{\rm c}(\zeta,\alpha)$.
\end{corollary}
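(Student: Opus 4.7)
The plan is to approximate the supercritical Hardy potential from below by bounded potentials, and then exhibit a divergence already at the level of the first Duhamel iterate around the critical kernel. Write $\kappa = \kappa_{\rm c}(\zeta,\alpha) + \delta$ with $\delta>0$, and let $\eta_{\rm c} := (2\zeta+1-\alpha)/2$, so that $\Psi_\zeta(\eta_{\rm c}) = \kappa_{\rm c}(\zeta,\alpha)$. Approximate $V(z):=\kappa z^{-\alpha}$ from below by bounded, non-negative, compactly supported potentials $V_n\uparrow V$ (for instance $V_n(z) := (\kappa z^{-\alpha}) \wedge n$ restricted to $z\in(1/n,n)$). For each $n$, the Schrödinger perturbation $p^{V_n}$ of $p^{(\alpha)}_\zeta$ by $V_n$ is well defined and finite by the standard Neumann expansion, and $p^{V_n}$ is increasing in $n$ by positivity of all terms in the series. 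Define $p^V:=\lim_n p^{V_n}\in[0,\infty]$; the content of the corollary is that $p^V\equiv\infty$.

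Next, decompose $V_n = A_n + B_n$ with $A_n\uparrow \Psi_\zeta(\eta_{\rm c})\,z^{-\alpha}$ and $B_n\uparrow \delta\,z^{-\alpha}$, both bounded and non-negative. The two-step Schrödinger perturbation identity for bounded potentials gives, after dropping higher iterates in the $B_n$-expansion by positivity, the Duhamel lower bound
\begin{equation*}
  p^{V_n}(t,r,s) \;\ge\; p^{A_n}(t,r,s) \;+\; \int_0^t\!\!\int_0^\infty p^{A_n}(t-u,r,z)\, B_n(z)\, p^{A_n}(u,z,s)\, z^{2\zeta}\, \td z\, \td u.
\end{equation*}
As $n\to\infty$, monotone convergence combined with the sharp bounds of Theorem~\ref{mainresultgen} at $\eta=\eta_{\rm c}$ (which identify $\lim_n p^{A_n}$ with $p^{(\alpha)}_{\zeta,\eta_{\rm c}}$) yields
\begin{equation*}
  p^V(t,r,s) \;\ge\; p^{(\alpha)}_{\zeta,\eta_{\rm c}}(t,r,s) \;+\; \delta\int_0^t\!\!\int_0^\infty p^{(\alpha)}_{\zeta,\eta_{\rm c}}(t-u,r,z)\, z^{-\alpha}\, p^{(\alpha)}_{\zeta,\eta_{\rm c}}(u,z,s)\, z^{2\zeta}\, \td z\, \td u.
\end{equation*}

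Now fix $t,r,s>0$. For $z$ in any small neighborhood of $0$ and $u$ in any compact subinterval of $(0,t)$, the sharp lower bound of Theorem~\ref{mainresultgen} gives
\begin{equation*}
  p^{(\alpha)}_{\zeta,\eta_{\rm c}}(t-u,r,z)\,p^{(\alpha)}_{\zeta,\eta_{\rm c}}(u,z,s) \;\gtrsim\; z^{-2\eta_{\rm c}}\cdot p^{(\alpha)}_\zeta(t-u,r,z)\,p^{(\alpha)}_\zeta(u,z,s),
\end{equation*}
with the last product bounded away from zero (since the unperturbed kernel is continuous and positive up to $z=0$). Hence the integrand behaves, as $z\to 0^+$, like a constant multiple of $z^{2\zeta-\alpha-2\eta_{\rm c}} = z^{-1}$, by the very definition of $\eta_{\rm c}$. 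This is exactly the threshold of non-integrability at the origin, so the $z$-integral diverges and $p^V(t,r,s)=\infty$. Since the Schrödinger perturbations obey the Chapman--Kolmogorov inequality, explosion at one spacetime point propagates to every $(t,r,s)$.

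The principal obstacle is the bookkeeping for the monotone approximation: one must verify that the two-step Duhamel identity survives passage to the limit in both $A_n$ and $B_n$ simultaneously, using nothing beyond positivity and the uniform sharp estimates of Theorem~\ref{mainresultgen} at the subcritical and critical levels. Once this limiting procedure is in place, the divergence is purely dimensional: the critical parameter $\eta_{\rm c}$ is defined so that the exponent $2\zeta-\alpha-2\eta_{\rm c}$ equals $-1$, i.e., Hardy perturbations of $p^{(\alpha)}_{\zeta,\eta_{\rm c}}$ sit precisely on the boundary of integrability, and any strictly stronger Hardy coupling tips the first iterate over the edge.
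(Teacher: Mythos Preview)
Your argument is correct and essentially matches the paper's: both derive the Duhamel lower bound $p^V \ge p_{\zeta,\eta_{\rm c}}^{(\alpha)} + \delta \int_0^t\!\int_0^\infty p_{\zeta,\eta_{\rm c}}^{(\alpha)}(t-u,r,z)\,z^{-\alpha}\,p_{\zeta,\eta_{\rm c}}^{(\alpha)}(u,z,s)\,z^{2\zeta}\,dz\,du$ and then use the lower estimate of Theorem~\ref{mainresultgen} to see that the integrand near $z=0$ scales like $z^{2\zeta-\alpha-2\eta_{\rm c}}=z^{-1}$, forcing divergence. The only difference is packaging: the paper bypasses your bounded-approximation and monotone-limit step by citing \cite[Lemma~8]{Bogdanetal2008}, which directly identifies the supercritical perturbation series as the Schr\"odinger perturbation of $p_{\zeta,\eta_{\rm c}}^{(\alpha)}$ by the excess $(\kappa-\kappa_{\rm c})z^{-\alpha}$, so the Duhamel inequality is read off immediately from the second term of that series.
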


\begin{remark} 
  \label{remarksmainresult1}
  For $\alpha=2$, we have the identity
  \begin{align}
    \label{eq:mainresultalpha2}
    p_{\zeta,\eta}^{(2)}(t,r,s) = \left(rs\right)^{-\eta}p_{\zeta-\eta}^{(2)}(t,r,s).    
  \end{align}
  It is proved, e.g., by Metafune, Negro, and Spina \cite[Theorem~4.12, Proposition~4.14]{Metafuneetal2018} by a change variables; see also the beginning of Section~\ref{s:proofmainresultgen} for a sketch of proof. For $\alpha=2$, \eqref{eq:mainresultalpha2} and  \eqref{eq:easybounds2} yield that $p_{\zeta,\eta}^{(2)}(t,r,s)$ is jointly continuous on $(0,\infty)^3$ and obeys \eqref{eq:mainresultgen}, provided $\sim$ is replaced by the notation $\asymp$ explained in the paragraph below \eqref{eq:pzetafact}. Moreover, the blow up result in Corollary~\ref{blowupcor} extends to $\alpha=2$.
\end{remark}

Below in this paper we develop  potential-theoretic techniques, which we refer to as the \textit{integral method}, to study kernels appearing in \cite{BogdanMerz2024} and \cite{BogdanMerz2024H}. This may be considered as a complementary study largely independent of \cite{BogdanMerz2024} and \cite{BogdanMerz2024H}. Namely, we abstract from the setting of $\Rd$ and the angular momentum channels parameterized by $\ell$ in \cite{BogdanMerz2024} by projecting, transferring, or intertwining \cite{PatieSavov2021} onto the half-line $\R_+$. This allows us to use sub-Markovian semigroups to capture delicate oscillations that influence the different rates of dissipation of mass for different spherical harmonics. This method is not only relevant for this context but is also of independent interest. The addition of the Hardy potential has an analogous effect on the heat kernel, changing the dissipation rate but preserving \textit{scaling}.

In fact, additional motivation for our work 
comes from the fact that models with scaling are of fundamental interest for PDEs, functional analysis, probability, and physics; for further connections, see, e.g., 
\cite{Bileretal2001}, \cite{Chaumontetal2013}, \cite{Vazquez2018}, \cite{PatieSavov2021}, \cite{DerezinskiGeorgescu2021}, \cite{Kyprianouetal2021}, \cite{Choetal2020}, \cite{Bogdanetal2023}, \cite{Metafuneetal2023}, \cite{Armstrongetal2023}.

The main methodological inspiration for this work comes from Bogdan, Grzywny, Jakubowski, and Pilarczyk \cite{Bogdanetal2019} and Jakubowski and Wang \cite{JakubowskiWang2020}. However, the present development differs in several important ways and is more complex. Namely, the projection onto \(\mathbb{R}_+\) unfortunately eliminates both the origin and rotational invariance from the setting. Furthermore, we address \(\eta > 0\) and \(\eta < 0\) simultaneously. Moreover, the integral method for negative perturbations involves rather delicate compensation. 
Finally, our resolution is expected to have significant consequences for the multi-channeled spectral resolution of the Laplacian and related operators.

Let us note that Cho, Kim, Song, and Vondra\v{c}ek \cite{Choetal2020} (see also Song, Wu, and Wu \cite{Songetal2022}) and Jakubowski and Maciocha \cite{JakubowskiMaciocha2023} recently proved heat kernel bounds for the fractional Laplacian with Hardy potential on $\R_+$ and Dirichlet boundary condition on $(\R_+)^c$. As in our analysis, \cite{JakubowskiMaciocha2023} use the integral method introduced in  \cite{Bogdanetal2019}, but, in contrast to our approach, they cannot rely on subordination since the Dirichlet fractional Laplacian is 
different from the fractional Dirichlet Laplacian, see, e.g., \cite[Lemma~6.4]{FrankGeisinger2016}. 
Consequently, the results of \cite{JakubowskiMaciocha2023G} and \cite{JakubowskiMaciocha2023} are different than ours, as can be readily seen by comparing the respective parameterizations of the coupling constants, cf. \cite[(1.8)]{JakubowskiMaciocha2023G} and \eqref{eq:defpsietazetaOLD}.

In the following subsections, we make our setting more precise. To this end, we first define the subordinated Bessel heat kernels $p_\zeta^{(\alpha)}$ and recall some of their properties. 
Then  we give a precise definition of $p_{\zeta,\eta}^{(\alpha)}$.
At the end of this section, we outline implications of Theorem~\ref{mainresultgen} and the structure of the rest of the paper.
Recall that in \cite{BogdanMerz2024H}, we will prove that $p_{\zeta,\eta}^{(\alpha)}$ equals the heat kernel of \eqref{e.fBo} in the sense of quadratic forms for $\kappa=\Psi_\zeta(\eta)$, but we mention this fact only for motivation and of course do not use it in the present paper.

\subsection{Subordinated Bessel heat kernels}
\label{s:semigroupproperties}
We define the Bessel heat kernel
\begin{align}\index{$p_\zeta^{(2)}(t,r,s)$}
  \label{eq:defpheatalpha2}
  \begin{split}
    p_\zeta^{(2)}(t,r,s) & : = \frac{(rs)^{1/2-\zeta}}{2t}\exp\left(-\frac{r^2+s^2}{4t}\right)I_{\zeta-1/2}\left(\frac{rs}{2t}\right), \quad r,s,t>0.
  \end{split}
\end{align}
Here, for $z\in\C\setminus(-\infty,0]$, $I_\nu(z)$ denotes the modified Bessel functions of the first kind of order $\nu\in\C$ \cite[(10.25.2)]{NIST:DLMF}.\index{$I_\nu$}
Note that
\begin{align}
  \label{eq:pzetatzero}
  p_\zeta^{(2)}(t,r,0)
  = \frac{2^{-2\zeta}}{\Gamma(\frac{2\zeta+1}{2})} t^{-\frac{2\zeta+1}{2}}\exp\left(-\frac{r^2}{4t}\right)
  \end{align}
by the series expansion \cite[(10.25.2))]{NIST:DLMF} of the modified Bessel function.

The kernel $p_\zeta^{(2)}(t,r,s)$ with the reference (speed) measure $r^{2\zeta}dr$ on $\R_+$ is the transition density of the Bessel process with index $\zeta-1/2$ \emph{reflected at the origin}. We remark that the Bessel process with index $\zeta-1/2$ \emph{killed at the origin} has the transition density~\eqref{eq:defpheatalpha2}, but with $I_{\zeta-1/2}(\cdot)$ replaced with $I_{|\zeta-1/2|}(\cdot)$. When $\zeta\geq1/2$, the Bessel process never hits the origin, i.e., no condition (reflecting or killing) takes effect or needs to be imposed at the origin. See, e.g., \cite{Maleckietal2016}. We also remark that $p_\zeta^{(2)}(t,r,s)$ is the heat kernel of the nonnegative Bessel operator of index $\zeta-1/2$ in the Liouville form,
\begin{align}
  \label{eq:deflzeta}
  \cl_\zeta = -\frac{d^2}{dr^2} - \frac{2\zeta}{r}\frac{d}{dr}
  \quad \text{in} \ L^2(\R_+,r^{2\zeta}dr).
\end{align}
More precisely, we understand $\cl_\zeta$ as the Friedrichs extension of the corresponding quadratic form on $C_c^\infty(\R_+)$ if $\zeta\in[1/2,\infty)$, but we understand $\cl_\zeta$ as the Krein extension of the corresponding quadratic form on $C_c^\infty(\R_+)$ if $\zeta\in(-1/2,1/2]$. In both cases, $\cl_\zeta$ is nonnegative and in particular self-adjoint. We refer, e.g., to \cite[Theorem~4.22]{Bruneauetal2011}, \cite[Theorem~8.4]{DerezinskiGeorgescu2021}, or \cite[Section~4]{Metafuneetal2018} for further details and references regarding the spectral theory of $\cl_\zeta$, in particular for operator and quadratic form domains and (form) cores. In that setting,
\begin{align}
  \label{eq:heatkernellzeta}
  p_\zeta^{(2)}(t,r,s) = \me{-t\cl_\zeta}(r,s), \quad t,r,s>0.
\end{align}
This information is important to prove Theorem~\ref{mainresultgen} when $\alpha=2$.

Next, we define, for $\alpha\in(0,2)$, the $\tfrac\alpha2$-subordinated Bessel heat kernels. To that end, recall that for $\alpha\in(0,2)$ and $t>0$, by Bernstein's theorem, the completely monotone function $[0,\infty)\ni\lambda\mapsto\me{-t\lambda^{\alpha/2}}$  is the Laplace transform of a probability density function $\R_+\ni\tau\mapsto\sigma_t^{(\alpha/2)}(\tau)$. That is,
\begin{align}
  \label{eq:subordination}
  \me{-t\lambda^{\alpha/2}} = \int_0^\infty d\tau\, \sigma_t^{(\alpha/2)}(\tau)\, \me{-\tau\lambda}, \quad t>0,\,\lambda\geq0,
\end{align}
see, e.g., \cite[Chapter~5]{Schillingetal2012}.
We give some useful properties of and sharp estimates for $\sigma_t^{(\alpha/2)}(\tau)$, and references in \cite[Appendix~B]{BogdanMerz2024}. Using \eqref{eq:subordination}, we define the $\tfrac\alpha2$-subordinated Bessel heat kernel with reference measure $r^{2\zeta}dr$ on $\R_+$ as
\begin{align}\index{$p_\zeta^{(\alpha)}(t,r,s)$}
  \label{eq:defpheatalpha}
  \begin{split}
    p_\zeta^{(\alpha)}(t,r,s) & : = \int_0^\infty d\tau\, \sigma_t^{(\alpha/2)}(\tau)\, p_\zeta^{(2)}(\tau,r,s), \quad r,s,t>0.
  \end{split}
\end{align}

Crucially, $p_\zeta^{(\alpha)}(t,r,s)$ is a probability transition density. In fact, it also defines a strongly continuous contraction semigroup on $L^2(\R_+,r^{2\zeta}dr)$. We record this in the following proposition, which is proved, e.g., in \cite{BogdanMerz2024,BogdanMerz2023S}.

\begin{lemma}
  \label{summarypropertiespzeta}
  Let $\zeta\!\in\!(-1/2,\infty)$, $\alpha\!\in\!(0,2]$, and $t,t',r,s\!>\!0$. Then  $p_\zeta^{(\alpha)}(t,r,s)\!>\!0$,
  \begin{align}
    \label{eq:normalizedalpha}
    & \int_0^\infty ds\, s^{2\zeta} p_\zeta^{(\alpha)}(t,r,s) = 1, \\
    \label{eq:chapman}
    & \int_0^\infty dz\, z^{2\zeta} \, p_\zeta^{(\alpha)}(t,r,z) p_\zeta^{(\alpha)}(t',z,s) = p_\zeta^{(\alpha)}(t+t',r,s), \\
    \label{eq:scalingalpha}
    & p_\zeta^{(\alpha)}(t,r,s) = t^{-\frac{2\zeta+1}{\alpha}} p_\zeta^{(\alpha)}\left(1,\frac{r}{t^{1/\alpha}},\frac{s}{t^{1/\alpha}}\right),
  \end{align}
  and $\{p_\zeta(t,\cdot,\cdot)\}_{t>0}$ is a strongly continuous contraction semigroup on $L^2(\R_+,r^{2\zeta}dr)$.
\end{lemma}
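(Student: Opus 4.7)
My plan is to establish the five assertions first for $\alpha=2$, then transfer them to $\alpha\in(0,2)$ via the subordination integral~\eqref{eq:defpheatalpha}.

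For $\alpha=2$, positivity follows from the series representation of $I_{\zeta-1/2}$, which is positive on $(0,\infty)$. The normalization and Chapman-Kolmogorov identities~\eqref{eq:normalizedalpha}-\eqref{eq:chapman} are classical Bessel integral identities that can be verified either directly from~\eqref{eq:defpheatalpha2} using standard formulas for $\int_0^\infty s^{2\zeta} e^{-s^2/(4t)} I_{\zeta-1/2}(\cdot)\,ds$ and the product formula for two modified Bessel functions of the first kind, or probabilistically from the interpretation of $p_\zeta^{(2)}$ as the transition density of the Bessel process of index $\zeta-1/2$ (reflected at the origin when $\zeta<1/2$). Scaling~\eqref{eq:scalingalpha} for $\alpha=2$ is a direct change of variables $r\mapsto t^{-1/2}r$, $s\mapsto t^{-1/2}s$ in~\eqref{eq:defpheatalpha2}. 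Finally, the semigroup and strong continuity assertions follow from~\eqref{eq:heatkernellzeta} together with the self-adjointness and nonnegativity of $\cl_\zeta$ (either the Friedrichs or the Krein extension, depending on $\zeta$), by the spectral theorem.

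For $\alpha\in(0,2)$, every property is inherited from the $\alpha=2$ case through~\eqref{eq:defpheatalpha}. Positivity is clear. Normalization is an application of Fubini:
\[
  \int_0^\infty s^{2\zeta}p_\zeta^{(\alpha)}(t,r,s)\,ds
  =\int_0^\infty \sigma_t^{(\alpha/2)}(\tau)\left(\int_0^\infty s^{2\zeta}p_\zeta^{(2)}(\tau,r,s)\,ds\right)d\tau
  =\int_0^\infty\sigma_t^{(\alpha/2)}(\tau)\,d\tau=1,
\]
since $\sigma_t^{(\alpha/2)}$ is a probability density. The Chapman-Kolmogorov identity~\eqref{eq:chapman} follows by combining~\eqref{eq:chapman} for $\alpha=2$ with the convolution-semigroup property of the stable subordinator, $\sigma_t^{(\alpha/2)}*\sigma_{t'}^{(\alpha/2)}=\sigma_{t+t'}^{(\alpha/2)}$, after one interchange of integration order. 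Scaling is obtained by inserting the scaling of $p_\zeta^{(2)}$ and the stable-density scaling $\sigma_t^{(\alpha/2)}(\tau)=t^{-2/\alpha}\sigma_1^{(\alpha/2)}(t^{-2/\alpha}\tau)$ into~\eqref{eq:defpheatalpha} and substituting $\tau\mapsto t^{2/\alpha}\tau$.

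For the $L^2$-semigroup statement, the symmetry $p_\zeta^{(\alpha)}(t,r,s)=p_\zeta^{(\alpha)}(t,s,r)$ is inherited from~\eqref{eq:defpheatalpha2}; together with normalization and Cauchy-Schwarz this yields contractivity on $L^2(\R_+,r^{2\zeta}dr)$, and the Chapman-Kolmogorov identity gives the semigroup property. Strong continuity as $t\to 0^+$ can be obtained in two equivalent ways: by invoking Phillips' subordination theorem to identify $T_t=\me{-t\cl_\zeta^{\alpha/2}}$ via functional calculus, or directly from strong continuity of $\{\me{-\tau\cl_\zeta}\}_{\tau\geq0}$ combined with the fact that $\sigma_t^{(\alpha/2)}$ concentrates at $\tau=0$ as $t\to 0^+$, via dominated convergence in~\eqref{eq:defpheatalpha}. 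The main delicate point I expect is the $\alpha=2$ analysis in the low-$\zeta$ regime $\zeta\in(-1/2,1/2]$, where $\cl_\zeta$ is the Krein extension and the implicit reflecting boundary condition at the origin must be shown to be consistent with the explicit formula~\eqref{eq:defpheatalpha2}; once this is in place for $\alpha=2$, the subordination argument proceeds uniformly in $\zeta$.
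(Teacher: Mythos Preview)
Your proposal is correct and follows the standard route one would expect. Note, however, that the paper does not actually prove this lemma: it is stated as a fact ``proved, e.g., in \cite{BogdanMerz2024,BogdanMerz2023S}'' and no argument is given in the present paper. Your sketch---establishing everything first for $\alpha=2$ from the explicit Bessel formula and the operator-theoretic identification $p_\zeta^{(2)}=\me{-t\cl_\zeta}$, then transferring to $\alpha\in(0,2)$ via the subordination integral and the convolution-semigroup property of $\sigma_t^{(\alpha/2)}$---is precisely the argument one finds in those references, so there is nothing substantive to compare.
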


\subsection{Hardy-type Schr\"odinger perturbations}
\label{ss:mainresult}

Let $\zeta\in(-1/2,\infty)$, $\alpha\in(0,2]\cap(0,2\zeta+1)$, and recall that $M=\alpha$ when $\alpha\in(0,2)$ and $M=\infty$ when $\alpha=2$. For $\eta\in(-M,2\zeta+1)$, we let
\begin{align}\index{$q(z)$}
  \label{eq:defq}
  q(z) := \frac{\Psi_\zeta(\eta)}{z^\alpha}, \quad z>0.
\end{align}
Because of symmetry of $\Psi_\zeta(\eta)$ about $\eta=(2\zeta+1-\alpha)/2$, as a rule we assume $\eta\in(-M,(2\zeta+1-\alpha)/2]$ when considering $q$.
In  the Appendix~\ref{s:constructionnegschrodperturbation}, for the convenience of the reader, we discuss rather general Schr\"odinger perturbations of transition densities. The focus of this paper is, however, on the following cases.

\begin{definition}
  \label{def:pzetaeta}
  For $\zeta\in(-1/2,\infty)$, $\alpha\in(0,2)\cap(0,2\zeta+1)$, and $\eta\in (-\alpha,\frac{2\zeta+1-\alpha}{2}]$, let
  $p_{\zeta,\eta}^{(\alpha)}$ be
  the Schr\"odinger perturbation of $p_\zeta^{(\alpha)}$ by $q$ in \eqref{eq:defq}, defined in the Appendix~\ref{s:constructionnegschrodperturbation}.
  For $\alpha=2$ with $\alpha<2\zeta+1$, and $\eta\in(-\infty,\frac{2\zeta-1}{2}]$, let $p_{\zeta,\eta}^{(2)}(t,r,s)$ be the heat kernel of the self-adjoint operator $\cl_\zeta-\Psi_\zeta(\eta)/r^2$, with the domain described in \cite[Section~4]{Metafuneetal2018}.
\end{definition}
Thus, in the case $\alpha=2$, we typically refer to the literature. For $\alpha\in (0,2)$, we rely on the discussion
 in the Appendix~\ref{s:constructionnegschrodperturbation}: a Schr\"odinger perturbation adds mass to the semigroup if $q>0$ and decreases mass if $q<0$. Thus, for $\eta=0$, we have $q=0$ and $p_{\zeta,0}^{(\alpha)}=p_{\zeta}^{(\alpha)}$. For $\eta\in(0,\frac{2\zeta+1-\alpha}{2}]$, we have $q> 0$ and
\begin{align}\index{$p_{\zeta,\eta}^{(\alpha)}$}
  \label{eq:feynmankactransformed}
  \begin{split}
    p_{\zeta,\eta}^{(\alpha)}(t,r,s)
    & := \sum_{n\geq0} p_t^{(n,D)}(r,s), \quad \text{where} \quad
      p_t^{(0,D)}(r,s) := p_\zeta^{(\alpha)}(t,r,s), \quad \text{and} \\
    p_t^{(n,D)}(r,s) & := \int_0^t d\tau \int_0^\infty dz\, z^{2\zeta} p_\zeta^{(\alpha)}(t,r,z) q(z) p_{t-\tau}^{(n-1,D)}(z,s), \quad n\in\N.
  \end{split}
\end{align}
Each term $p_t^{(n,D)}(r,s)$ may be understood as an iteration of \emph{Duhamel's} formula below,
hence the superscript $D$.
Of course, we have the following bound
\begin{align}
  \label{eq:trivialupperbound}
  p_{\zeta,\eta}^{(\alpha)}(t,r,s) \geq p_\zeta^{(\alpha)}(t,r,s) \quad \text{for} \ \eta\in\left(0,\frac{2\zeta+1-\alpha}{2}\right].
\end{align}
For $\eta\in(-\alpha,0)$, we have $q<0$ and $p_{\zeta,\eta}^{(\alpha)}\leq p_{\zeta}^{(\alpha)}$; see the Appendix~\ref{s:constructionnegschrodperturbation}, in particular Theorem~\ref{thm:1}, for the  construction and properties of negative Schr\"odinger perturbations of transition densities using the more comfortable and general time-inhomogeneous setting. See the remarks before Theorem~\ref{thm:1}, too.
We also note that to deal with $\eta<0$ in Subsection~\ref{ss:integralanalysisnegativeeta}, 
we use the results on $\eta>0$ from Subsection~\ref{s.pc}. The structure of Section~\ref{s:integralana} reflects this connection.

We also note that $p_{\zeta,\eta}^{(\alpha)}$ may also be defined by the Feynman--Kac formula (for bridges), but the singularity of $z^{-\alpha}$ in \eqref{eq:defq} makes this problematic for large negative $\Psi_\zeta(\eta)$, i.e., $\eta$ close to $-\alpha$. 
Actually, a viable (probabilistic) approach to negative $\eta$, based on the Feynman-Kac formula and approximation by the killed semigroup, is suggested by \cite[p.~6]{ChoSong2024}, but we prefer to keep the paper more analytic, and propose the potential-theoretic approach of  the Appendix~\ref{s:constructionnegschrodperturbation}. 

In the following, we write $h_\beta(r):=r^{-\beta}$\index{$h_\beta(r)$} for $r>0$ and $\beta\in\R$, and abbreviate $$h:=h_\eta.$$\index{$h(r)$}
We record some fundamental properties of $p_{\zeta,\eta}^{(\alpha)}$.

\begin{proposition}
  \label{propertiesschrodheatkernel}
  Let $\zeta\in(-1/2,\infty)$, $\alpha\in(0,2]\cap(0,2\zeta+1)$, $\eta\in(-M,\frac{2\zeta+1-\alpha}{2}]$, and $r,s,t,t'>0$. Then the following statements hold. \\
  \textup{(1)}
  We have the Chapman--Kolmogorov equation,
  \begin{align}
    \int_0^\infty dz\, z^{2\zeta} p_{\zeta,\eta}^{(\alpha)}(t,r,z) \, p_{\zeta,\eta}^{(\alpha)}(t',z,s)
    & = p_{\zeta,\eta}^{(\alpha)}(t+t',r,s).
  \end{align}
  \textup{(2)}
  We have the Duhamel formula
  \begin{align}
    \label{eq:duhamelclassictransformed}
    \begin{split}
      p_{\zeta,\eta}^{(\alpha)}(t,r,s)
      & = p_\zeta^{(\alpha)}(t,r,s) + \int_0^t d\tau \int_0^\infty dz\, z^{2\zeta} p_\zeta^{(\alpha)}(\tau,r,z) \, q(z) \, p_{\zeta,\eta}^{(\alpha)}(t-\tau,z,s) \\
      & = p_\zeta^{(\alpha)}(t,r,s) + \int_0^t d\tau \int_0^\infty dz\, z^{2\zeta} p_{\zeta,\eta}^{(\alpha)}(\tau,r,z) \, q(z) \, p_\zeta^{(\alpha)}(t-\tau,z,s).
    \end{split}
  \end{align}
  \textup{(3)}
  We have the scaling relations $p_t^{(n,D)}(r,s)=t^{-\frac{2\zeta+1}{\alpha}}p_1^{(n,D)}(r/t^{1/\alpha},s/t^{1/\alpha})$ and
  \begin{align}
    \label{eq:scalingalphahardy}
    p_{\zeta,\eta}^{(\alpha)}(t,r,s)
    = t^{-\frac{2\zeta+1}{\alpha}} p_{\zeta,\eta}^{(\alpha)}\left(1,\frac{r}{t^{1/\alpha}},\frac{s}{t^{1/\alpha}}\right).
  \end{align}
  \textup{(4)}
  Let $\eta\in[0,\frac{2\zeta+1-\alpha}{2}]$. Then the function $h(r)=r^{-\eta}$ is supermedian with respect to $p_{\zeta,\eta}^{(\alpha)}$, i.e.,
  \begin{align}    \label{eq:supermediantransformedalphaintro}
    \begin{split}
      \int_0^\infty ds\, s^{2\zeta} p_{\zeta,\eta}^{(\alpha)}(t,r,s) h(s)
      & \leq h(r).
    \end{split}
  \end{align}
  In particular, $p_{\zeta,\eta}^{(\alpha)}(t,r,s)<\infty$ for all $t,r>0$ and almost all $s>0$.
\end{proposition}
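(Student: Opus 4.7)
My plan is to derive parts (1)--(3) from direct manipulations of the iterated series \eqref{eq:feynmankactransformed}, reserving the substantive work for part (4), which rests on an eigenfunction-type identity for $h=h_\eta$. The case $\alpha=2$ is handled throughout by appeal to the self-adjoint semigroup of $\cl_\zeta-\Psi_\zeta(\eta)r^{-2}$ from \cite[Section~4]{Metafuneetal2018}: Chapman--Kolmogorov is the semigroup law, Duhamel is a Dyson expansion, scaling follows from the $-2$-homogeneity of the operator under $r\mapsto\lambda r$, and the supermedian identity in (4) reduces to the invariance $\me{-t(\cl_\zeta-\Psi_\zeta(\eta)r^{-2})}h=h$, which follows from the direct computation $\cl_\zeta r^{-\eta}=\eta(2\zeta-1-\eta)r^{-\eta-2}=\Psi_\zeta(\eta)r^{-2}h$. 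I therefore focus below on $\alpha\in(0,2)$, and for part~(4) on $\eta\in[0,\tfrac{2\zeta+1-\alpha}{2}]$ as assumed; the case $\eta<0$ of (1)--(3) is outsourced to Appendix~\ref{s:constructionnegschrodperturbation} as flagged in the text.

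For (1), I would expand both sides as double series in the iterates $p_t^{(n,D)}$ and match coefficients: induction on \eqref{eq:feynmankactransformed} yields $p_{t+t'}^{(n,D)}(r,s)=\sum_{k=0}^{n}\int_0^\infty z^{2\zeta}p_t^{(k,D)}(r,z)\,p_{t'}^{(n-k,D)}(z,s)\,dz$, and summing over $n$ together with Fubini (justified by nonnegativity) gives the claim. For (2), peeling off the $n=0$ term, the first Duhamel identity follows from $\sum_{n\geq 1}p_t^{(n,D)}(r,s)=\int_0^t d\tau\int z^{2\zeta}p_\zeta^{(\alpha)}(\tau,r,z)q(z)\sum_{m\geq 0}p_{t-\tau}^{(m,D)}(z,s)\,dz$, and the second by iterating the recursion from the right. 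For (3), I induct on $n$: the substitution $\tau=t\tau'$ and $z=t^{1/\alpha}z'$ in the iteration integral, combined with \eqref{eq:scalingalpha} and the $-\alpha$-homogeneity of $q$, produces the factor $t^{-(2\zeta+1)/\alpha}$ at each step; summing over $n$ yields \eqref{eq:scalingalphahardy}.

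The crux is (4), which rests on the eigenfunction-type identity $\cl_\zeta^{\alpha/2}h=q\,h$ encoded in the very definition \eqref{eq:defpsietazetaOLD} of $\Psi_\zeta(\eta)$. Writing $P_t f(r):=\int_0^\infty s^{2\zeta}p_\zeta^{(\alpha)}(t,r,s)f(s)\,ds$, this identity integrates (formally) to the \emph{unperturbed Duhamel identity}
\begin{equation*}
  P_t h(r)+\int_0^t P_\tau(q\,h)(r)\,d\tau = h(r),\qquad t,r>0.
\end{equation*}
Granting this, I substitute into the recursion \eqref{eq:feynmankactransformed} and prove by induction on $N$ that $S_N(t,r):=\sum_{n=0}^{N}\int_0^\infty s^{2\zeta}p_t^{(n,D)}(r,s)h(s)\,ds$ satisfies $S_N(t,r)\leq h(r)$: the base case reads $S_0(t,r)=P_t h(r)\leq h(r)$, and the inductive step uses $\sum_{n=1}^{N+1}\int s^{2\zeta}p_t^{(n,D)}(r,s)h(s)\,ds = \int_0^t P_\tau(q\,S_N(t-\tau,\cdot))(r)\,d\tau$ together with the hypothesis $S_N\leq h$ and nonnegativity of $q$ on the assumed range of $\eta$. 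Monotone convergence as $N\to\infty$ then yields \eqref{eq:supermediantransformedalphaintro}, and a.e.\ finiteness of $p_{\zeta,\eta}^{(\alpha)}(t,r,s)$ in $s$ follows by Fubini.

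The principal obstacle is the rigorous verification of the unperturbed Duhamel identity, because $h\notin L^2(\R_+,r^{2\zeta}dr)$ so abstract semigroup/spectral machinery does not apply directly, and both sides can blow up as $r\to 0$ or $r\to\infty$. A clean route is explicit computation: by scaling, $P_t h(r)=r^{-\eta}G(t/r^\alpha)$ for a one-variable function $G$ with $G(0)=1$, which is determined via the subordination formula \eqref{eq:subordination} together with the classical Kummer/confluent-hypergeometric evaluation of $\int_0^\infty s^{2\zeta-\eta}p_\zeta^{(2)}(\tau,1,s)\,ds$ (a Weber-type Bessel integral). The normalization of $\Psi_\zeta(\eta)$ in \eqref{eq:defpsietazetaOLD} is designed precisely so that $G$ satisfies the corresponding integral identity, and the range $\eta\in[0,\tfrac{2\zeta+1-\alpha}{2}]$ is exactly where the required convergence holds---this is the Hardy/ground-state constraint underlying the subcritical regime.
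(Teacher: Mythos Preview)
Your proposal is correct and follows essentially the same route the paper takes by citing \cite{Bogdanetal2008} for (1)--(2), induction on \eqref{eq:scalingalpha} for (3), and \cite[Theorem~1]{Bogdanetal2016} together with \eqref{eq:defhbetagammaalphatransformed} for (4); in particular your inductive scheme $S_N\le h$ based on the unperturbed Duhamel identity is precisely the mechanism of \cite[Theorem~1]{Bogdanetal2016}.

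The one place where your argument diverges in execution is the justification of that unperturbed Duhamel identity $P_t h+\int_0^t P_\tau(qh)\,d\tau=h$. You propose to evaluate $P_t h(r)=r^{-\eta}G(t/r^\alpha)$ by subordination and explicit Kummer-type integrals, then verify the resulting ODE/integral identity for $G$. This would work but is heavier than necessary. The paper (see Lemma~\ref{boglem33} later in Section~\ref{s:integralana}) instead first computes the \emph{full time integral}
\[
\int_0^\infty dt\int_0^\infty ds\,s^{2\zeta}p_\zeta^{(\alpha)}(t,r,s)\,s^{-\eta-\alpha}=\Psi_\zeta(\eta)^{-1}r^{-\eta},
\]
which is exactly \eqref{eq:defhbetagammaalphatransformed} (equivalently \eqref{eq:kappainv}) and falls out of a single Gamma-function computation, and then recovers the finite-$t$ identity by splitting $\int_0^\infty=\int_0^t+\int_t^\infty$ and applying Chapman--Kolmogorov to the tail. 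This bypasses any need to identify $G$ explicitly and makes transparent why $\Psi_\zeta(\eta)$ is the right constant. Both routes establish the same lemma; the potential-kernel trick is simply shorter.
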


\begin{proof}
  For $\alpha=2$, these claims follow by using the explicit expression in \eqref{eq:mainresultalpha2}.
  Thus, suppose $\alpha\in(0,2)$ from now on.
  The scaling \eqref{eq:scalingalphahardy} follows from \eqref{eq:scalingalpha}, \eqref{eq:feynmankactransformed} when $\eta\geq0$ and Theorem~\ref{thm:1} when $\eta<0$, and induction.
  For $\eta\geq0$, the other three properties follow, e.g., from \cite{Bogdanetal2008} and \cite[Theorem~1]{Bogdanetal2016} together with the computation \eqref{eq:defhbetagammaalphatransformed}.
  For $\eta<0$, the Chapman--Kolmogorov equation and the Duhamel formula follow by the construction in Theorem~\ref{thm:1}.
\end{proof}

\subsection{Implications of Theorem~\ref{mainresultgen}}
\label{ss:implications}

In the pioneering paper \cite{Bogdanetal2019}, Bogdan, Grzywny, Jakubowski, and Pilarczyk proved estimates for the heat kernel of the homogeneous Schr\"odinger operator $(-\Delta)^{\alpha/2}-\kappa/|x|^\alpha$ on $\R^d$, sometimes called Hardy operator, with $\alpha\in(0,2\wedge d)$ and $0\le \kappa\le \kappa^*$,
where
\begin{equation}\label{e.ks}
   \kappa^*=\frac{2^{\alpha} \Gamma((d+\alpha)/4)^2 }{\Gamma((d-\alpha)/4)^{2}} 
   = \Psi_{\frac{d-1}{2}}\left(\frac{d-\alpha}{2}\right).
\end{equation}
Among others, their bounds were crucial for the systematic study of the $L^p$-Sobolev spaces generated by powers of $(-\Delta)^{\alpha/2}-\kappa/|x|^\alpha$ in \cite{Franketal2021,Merz2021,BuiDAncona2023,BuiNader2022}---see the founding paper \cite{Killipetal2018} dealing with the case $\alpha=2$---and to prove the strong Scott conjecture concerning the electron distribution of large relativistic atoms \cite{Franketal2020P,Franketal2023,MerzSiedentop2022,Franketal2023T}.

In \cite{BogdanMerz2024}, we made a first step into a more detailed analysis of the Hardy operator by taking its spherical symmetry into account. More precisely, for $d\in\N$, $\ell\in L_d\subseteq\N_0$, and $m\in M_{d,\ell}\subseteq\Z^{d-2}$ (for $d\geq3$)\footnote{Explicit descriptions of $L_d$ and $M_{d,\ell}$ are not important here, but can be found in \cite{BogdanMerz2024}.}, let $Y_{\ell,m}$ denote the orthonormal basis of (hyper)spherical harmonics in $L^2(\bs^{d-1})$. In \cite{BogdanMerz2024}, we considered the function space 
$$
V_{\ell,m}:=\{u(|x|)|x|^\ell Y_{\ell,m}(x/|x|):\, u\in L^2(\R_+,r^{d+2\ell-1}dr)\}
$$
of functions with given angular momentum $\ell$.
For $[u]_{\ell,m}(x):=u(|x|)|x|^\ell Y_{\ell,m}(x/|x|)$ with $u\in L^2(\R_+,r^{d-1+2\ell}dr)$, we showed
\begin{align}
  \label{eq:hardyopform}
  \begin{split}
    & \left\langle[u]_{\ell,m},\left((-\Delta)^{\alpha/2}-\frac{\kappa}{|x|^\alpha}\right)[u]_{\ell,m}\right\rangle_{L^2(\R^d)} \\
    & \quad = \left\langle u,\left(\left(-\frac{d^2}{dr^2}-\frac{2\zeta}{r}\frac{d}{dr}\right)^{\alpha/2}-\frac{\kappa}{r^\alpha}\right)u\right\rangle_{L^2(\R_+,r^{d+2\ell-1}\,dr)},
  \end{split}
\end{align}
whenever $\alpha<d+2\ell$, $\kappa=\Psi_{\frac{d-1+2\ell}{2}}(\eta)$ and all $\eta\in(-\alpha,d+2\ell)$. In particular, the two sides of \eqref{eq:hardyopform} are nonnegative if and only if $\kappa\leq\Psi_{\frac{d-1+2\ell}{2}}((d+2\ell-\alpha)/2)$.

Moreover, we constructed the \textit{generalized ground states} and proved a \textit{ground state representation} or Hardy identity for the quadratic forms \eqref{eq:hardyopform}, see \cite[Theorem~1.5]{BogdanMerz2024}. This refined the ground state representation of $(-\Delta)^{\alpha/2}-\kappa/|x|^\alpha$ in $L^2(\R^d)$, first proved in \cite[Proposition~4.1]{Franketal2008H} by Fourier transform\footnote{For $\alpha=2$, the ground state representation of $-\Delta-\kappa/|x|^2$ was known earlier; see, e.g., \cite[p.~169]{ReedSimon1975}.}; see also \cite[Proposition~5]{Bogdanetal2016} for a more abstract approach.
These generalized ground states are just $h(r)=r^{-\eta}$ and are crucial for our estimates of 
$p_{\zeta,\eta}^{(\alpha)}(t,r,s)$.
The main result of our forthcoming paper \cite{BogdanMerz2024H} states that $p_{\zeta,\eta}^{(\alpha)}(t,r,s)$ is indeed the heat kernel of the restriction of $(-\Delta)^{\alpha/2}-\Psi_{\frac{d+2\ell-1}{2}}(\eta)/|x|^\alpha$ to $V_{\ell,m}$.

Let us now comment on open problems and connections.
The above discussion indicates that the estimates in Theorem~\ref{mainresultgen}
will be important to advance the study of $L^p$-Sobolev spaces generated by powers of Hardy operators.
Our bounds should also lead to limits at the origin and self-similar solutions for the considered kernels, see \cite{Bogdanetal2023}.
Extensions to other homogeneous operators, including those with gradient perturbations, are of interest, too, see Metafune, Negro, and Spina \cite{Metafuneetal2018}
and Kinzebulatov, Sem\"enov, and Szczypkowski \cite{Kinzebulatovetal2021}.

Moreover, Theorem~\ref{mainresultgen} and the results in our forthcoming paper \cite{BogdanMerz2024H} are paramount for a more precise description of the location of electrons in large, relativistically described atoms. Let us explain the connection and future plans in more detail. Frank, Merz, Siedentop, and Simon \cite{Franketal2020P} consider large atoms by accounting for relativistic effects close to the nucleus using the so-called Chandrasekhar operator. The main result of \cite{Franketal2020P} is that, in the limit where the number of electrons and the nuclear charge goes to infinity, the ground state density, i.e., the probability density of finding one electron, converges to the so-called hydrogenic density, i.e., the sum of absolute squares of the eigenfunctions of $\sqrt{-\Delta+1}-1-\kappa/|x|$, where $\kappa$ is the (rescaled) interaction strength between the nucleus and the electrons. In fact, \cite{Franketal2020P} shows a finer result stating that the ground state density of electrons with prescribed angular momentum $\ell\in\N_0$ converges to the sum of absolute squares of the eigenfunctions of the operator
\begin{align}
  \label{e.fBoperturbed}
  (-\tfrac{d^2}{dr^2}-\tfrac{2\zeta}{r}\,\tfrac{d}{dr}+1)^{\alpha/2}-1-\kappa r^{-\alpha} \quad \text{in}\ L^2(\R_+,r^{2\zeta}dr),
\end{align}
with $\alpha=1$ and $\zeta=\ell+1/2$. The operator in \eqref{e.fBoperturbed} is a bounded perturbation of~\eqref{e.fBo}, but has, unlike \eqref{e.fBo}, infinitely many negative eigenvalues\footnote{This is because of the summand $+1$ in $(-\tfrac{d^2}{dr^2}-\tfrac{2\zeta}{r}\,\tfrac{d}{dr}+1)^{\alpha/2}$, which breaks the homogeneity and necessitates a distinction between large and small frequencies, the latter being responsible for the emergence of eigenvalues.}.
Moreover, \cite{Franketal2020P} provides pointwise estimates for the hydrogenic density using the heat kernel estimates for the homogeneous operator $(-\Delta)^{\alpha/2}-\kappa/|x|^\alpha$ established  in \cite{Bogdanetal2019}.
 
In this connection, we note that the estimates in the present paper and in \cite{BogdanMerz2024H}, together with the arguments developed in \cite{Franketal2020P}, will lead to bounds for the sum of absolute squares of eigenfunctions \eqref{e.fBoperturbed}. In particular, we can show that for all $t>0$, there is $c_t>0$ such that
\begin{align}
  \label{eq:boundsumsofsquares}
  \sum_{n\geq1}|\phi_n(r)|^2
  \leq c_t r^{-2\eta}, \quad 0<r\leq t,
\end{align}
where $\eta$ is in the one-to-one correspondence to $\kappa$ given by \eqref{eq:defpsietazetaOLD} and 
$\{\phi_n\}_{n\in\N}$ are the eigenfunctions corresponding to the negative eigenvalues of the operator in \eqref{e.fBoperturbed}. We are optimistic that, for $\zeta$ and $\alpha$ fixed, the upper bound in \eqref{eq:boundsumsofsquares} is optimal. 
In fact, the short-distance upper bound for the hydrogenic density of $\sqrt{1-\Delta}-1-\kappa/|x|$, obtained in \cite{Franketal2020P}, is optimal due to the lower bound on the ground state of $\sqrt{1-\Delta}-1-\kappa/|x|$, which was proved by Jakubowski, Kaleta, and Szczypkowski in \cite[Theorem~1.3]{Jakubowskietal2023} by using lower bounds for the corresponding heat kernel in \cite[Theorem~5.1]{Jakubowskietal2024}. While upper heat kernel bounds for \eqref{e.fBoperturbed} follow from those in the present paper, \cite{BogdanMerz2024H}, and subordination, proving corresponding lower bounds will require different techniques.

\subsection{Structure of the paper}
\label{s:organization}

In Section~\ref{s:comparison3g}, we state further pointwise bounds and a 3G inequality for the subordinated Bessel heat kernel $p_\zeta^{(\alpha)}$.
In Section~\ref{s:integralana}, we prove that the function $h(r)=r^{-\eta}$ is invariant under $p_{\zeta,\eta}^{(\alpha)}$. We also state further integral equalities and inequalities involving $p_{\zeta,\eta}^{(\alpha)}$ and $h(r)$.
In Section~\ref{s:proofmainresultgen}, we prove Theorem~\ref{mainresultgen} and Corollary~\ref{blowupcor}.
Finally, in  the Appendix~\ref{s:constructionnegschrodperturbation}, we provide an abstract discussion of Schr\"odinger perturbations of transition densities, which may be of independent interest.

\subsubsection*{Acknowledgments}
K.M.~thanks Haruya Mizutani for his hospitality at Osaka University, where parts of this research were carried out.
We also thank Volker Bach, Rupert Frank, Wolfhard Hansen, Jacek Małecki, Haruya Mizutani, and Grzegorz Serafin for helpful discussions.

\section{Bounds for subordinated Bessel heat kernels}
\label{s:comparison3g}

\subsection{Pointwise bounds and explicit expressions}

For $s,r \in \mathbb{R}$, $t>0$, and $\alpha\in(0,2]$, let $p^{(\alpha)}(t,r,s)$ be the stable density on the real line, i.e., 
\begin{align}
  \label{eq:stabledensity}
  p^{(\alpha)}(t,r,s) 
  := \frac{1}{2\pi}\int_{-\infty}^\infty \me{-i(s-r)z} \me{-t|z|^\alpha}\,dz.
\end{align}
Note that
\begin{align*}
  p^{(2)}(t,r,s) = \frac{1}{\sqrt{4\pi t}}\exp\left(-\frac{(r-s)^2}{4t}\right)
\end{align*}
is the Gau\ss--Weierstra\ss\, kernel and, by \cite{BlumenthalGetoor1960}, 
\begin{align*}
  p^{(\alpha)}(t,r,s) \sim \frac{t}{(t^{1/\alpha} + |r-s|)^{1+\alpha}} \quad \text{for} \ \alpha\in(0,2).
\end{align*}

From \cite[Corollary~3.8]{GrzywnyTrojan2021} and \cite[Theorem~2.1]{BogdanMerz2023S}, we record the following sharp upper and lower bounds for $p_\zeta^{(\alpha)}(t,r,s)$. 

\begin{proposition}[{\cite[Theorem~2.1]{BogdanMerz2023S}}]
  \label{heatkernelalpha1subordinatedboundsfinal}
  Let $\zeta\in(-1/2,\infty)$. Then, there are $c,c'>0$ such that for all $r,s,t>0$,
  \begin{subequations}
    \label{eq:easybounds2}
    \begin{align}
      p_\zeta^{(2)}(t,r,s)
      \label{eq:easybounds2a}
      & \asymp_\zeta t^{-\frac12}\frac{\exp\left(-\frac{(r-s)^2}{c t}\right)}{(rs+t)^{\zeta}} \\
      \label{eq:easybounds2b}
      & \asymp_\zeta \left(1\wedge\frac{r}{t^{1/2}}\right)^{\zeta}\, \left(1\wedge\frac{s}{t^{1/2}}\right)^{\zeta}\,\left(\frac{1}{rs}\right)^{\zeta} \cdot t^{-\frac12} \cdot \exp\left(-\frac{(r-s)^2}{c' t}\right).
    \end{align}
  \end{subequations}\index{$\asymp$}Moreover, for all $\alpha\in(0,2)$ and all $r,s,t>0$,
  \begin{align}
    \label{eq:heatkernelalpha1weightedsubordinatedboundsfinal}
    p_\zeta^{(\alpha)}(t,r,s)
    & \sim_{\zeta,\alpha} \frac{t}{|r-s|^{1+\alpha}(r+s)^{2\zeta} + t^{\frac{1+\alpha}{\alpha}}(t^{\frac1\alpha}+r+s)^{2\zeta}} \\
    \label{eq:pzetafact}
    & \sim_{\zeta,\alpha} p^{(\alpha)}(t,r,s) (t^{1/\alpha} + r+s)^{-2\zeta}.
  \end{align}
\end{proposition}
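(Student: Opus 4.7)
The plan is to handle $\alpha=2$ and $\alpha\in(0,2)$ separately: the Bessel bounds \eqref{eq:easybounds2a}--\eqref{eq:easybounds2b} come from the explicit formula \eqref{eq:defpheatalpha2}, while the subordinated bounds \eqref{eq:heatkernelalpha1weightedsubordinatedboundsfinal}--\eqref{eq:pzetafact} follow by integrating the $\alpha=2$ bound against the $\alpha/2$-stable subordinator density via \eqref{eq:defpheatalpha}.

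For \eqref{eq:easybounds2a}, the key input is the uniform two-sided estimate
\[
I_\nu(z) \;\sim_\nu\; \frac{z^\nu \me{z}}{(1+z)^{\nu+1/2}}, \qquad z>0,\ \nu>-1/2,
\]
which glues the series expansion of $I_\nu$ at $0$ with the classical asymptotic $I_\nu(z)\sim \me{z}/\sqrt{2\pi z}$ as $z\to\infty$. Substituting $\nu=\zeta-1/2$ and $z=rs/(2t)$ into \eqref{eq:defpheatalpha2} and using $r^2+s^2-2rs=(r-s)^2$, the exponentials combine to $\exp(-(r-s)^2/(4t))$ and the algebraic prefactors collapse to $t^{-1/2}(rs+t)^{-\zeta}$, which gives \eqref{eq:easybounds2a}. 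The passage to \eqref{eq:easybounds2b} is a short case analysis: in the diagonal regions $r,s\geq t^{1/2}$ and $r,s\leq t^{1/2}$ the two prefactors agree up to constants, while in the off-diagonal region one of $r/t^{1/2}$, $s/t^{1/2}$ is small and the other large, and the resulting polynomial discrepancy is forced to occur where $(r-s)^2/t$ is also large; the polynomial factor is therefore absorbed by enlarging the constant in the Gaussian exponent—this is exactly the weakening of $\sim$ to $\asymp$.

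For $\alpha\in(0,2)$, I would insert \eqref{eq:easybounds2a} into the subordination formula \eqref{eq:defpheatalpha} and apply the sharp two-sided bounds on the $\alpha/2$-stable subordinator density $\sigma_t^{(\alpha/2)}$ (algebraic tail $\sigma_t^{(\alpha/2)}(\tau)\sim t\tau^{-1-\alpha/2}$ for $\tau\gtrsim t^{2/\alpha}$ and sub-Gaussian decay for $\tau\ll t^{2/\alpha}$; cf.\ \cite[Appendix~B]{BogdanMerz2024}). Splitting the $\tau$-integral at the threshold $\tau\sim(r-s)^2\vee t^{2/\alpha}$, the dominant contribution comes from the power-law regime, and integrating this tail against the prefactor $\tau^{-1/2}(rs+\tau)^{-\zeta}\exp(-(r-s)^2/(c\tau))$ yields the right-hand side of \eqref{eq:heatkernelalpha1weightedsubordinatedboundsfinal}. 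Identity \eqref{eq:pzetafact} then follows from the stable asymptotic $p^{(\alpha)}(t,r,s)\sim t/(t^{1/\alpha}+|r-s|)^{1+\alpha}$ by splitting into the cases $|r-s|\gtrsim t^{1/\alpha}$ and $|r-s|\lesssim t^{1/\alpha}$ and comparing polynomial denominators.

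The main obstacle is the region-by-region matching of upper and lower bounds in the subordination step, which requires careful tracking of powers of $|r-s|$, $r+s$, and $t^{1/\alpha}$ to verify comparability in every regime of $(t,r,s)$ space. This detailed computation has been carried out in \cite[Theorem~2.1]{BogdanMerz2023S} and in closely related form in \cite[Corollary~3.8]{GrzywnyTrojan2021}, from which the proposition is directly quoted; the proof here therefore amounts to citing those references after the reductions sketched above.
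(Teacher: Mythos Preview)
Your proposal is correct and, in fact, goes further than the paper: the paper does not prove this proposition at all but simply records it as a quotation from \cite[Theorem~2.1]{BogdanMerz2023S} and \cite[Corollary~3.8]{GrzywnyTrojan2021}. Your sketch of the Bessel-function asymptotics for $\alpha=2$ and the subordination argument for $\alpha\in(0,2)$ is a faithful outline of how those cited references establish the result, so your final sentence citing them is exactly what the paper does.
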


Here and below the notation $\asymp_\zeta$ combines an upper bound and a lower bound similarly as $\sim_\zeta$, but the constants in exponential factors (e.g., the constants $c$ and $c'$ in \eqref{eq:easybounds2a}--\eqref{eq:easybounds2b}) may be different in the upper and the lower bounds. Note that we allow the constants in the exponential factors to depend on $\zeta$. Then, for instance, \eqref{eq:easybounds2a} is equivalent to the statement that there are positive $c_{j,\zeta}$, $j\in\{1,2,3,4\}$, with
\begin{align}
  \begin{split}
    c_{1,\zeta} t^{-\frac12}\frac{\exp\left(-\frac{(r-s)^2}{c_{2,\zeta} t}\right)}{(rs+t)^{\zeta}}
    \leq p_\zeta^{(2)}(t,r,s)
    \leq c_{3,\zeta} t^{-\frac12}\frac{\exp\left(-\frac{(r-s)^2}{c_{4,\zeta} t}\right)}{(rs+t)^{\zeta}}.
  \end{split}
\end{align}

For an explicit expressions for $p_\zeta^{(\alpha)}(t,r,s)$ in the physically important case $\alpha=1$, we refer, e.g., to Betancor, Harboure, Nowak, and Viviani \cite[p.~136]{Betancoretal2010}.

Below, we will also use the following convenient estimate.

\begin{lemma}
  \label{lem:pzetaest}
  Let $\zeta\in(-1/2,\infty)$ and $\alpha\in(0,2)$. Then, for every $\sigma,\lambda \in[0,2\zeta+1]$ such that $\sigma+\lambda \in[0,2\zeta+1]$, we have
  \begin{align}
    \label{eq:pzetaest}
    p_\zeta^{(\alpha)}(\tau,z,s) \lesssim  \frac{\tau^{1/\alpha}+z+s}{\tau^{1/\alpha}+|z-s|}   \tau^{-\lambda/\alpha} s^{-\sigma} z^{\sigma+\lambda-2\zeta-1}.
  \end{align}
  In particular if $z<s/2$ or $s  \lesssim \tau^{1/\alpha} \land z $, then
  \begin{align}
    \label{eq:pzetaest2}
    p_\zeta^{(\alpha)}(\tau,z,s) \lesssim  \tau^{-\lambda/\alpha} s^{-\sigma} z^{\sigma+\lambda-2\zeta-1}.
  \end{align}
\end{lemma}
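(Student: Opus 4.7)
The plan is to reduce both estimates to the sharp asymptotics for $p_\zeta^{(\alpha)}$ already recorded in Proposition~\ref{heatkernelalpha1subordinatedboundsfinal}. Starting from the factored form
\[
p_\zeta^{(\alpha)}(\tau,z,s) \sim_{\zeta,\alpha} \frac{\tau}{(\tau^{1/\alpha}+|z-s|)^{1+\alpha}}\cdot (\tau^{1/\alpha}+z+s)^{-2\zeta},
\]
which follows from \eqref{eq:pzetafact} together with the standard Blumenthal--Getoor bound on $p^{(\alpha)}$, the target inequality \eqref{eq:pzetaest} is equivalent, after multiplying out, to
\[
\tau^{(\alpha+\lambda)/\alpha}\, s^{\sigma}\, z^{2\zeta+1-\sigma-\lambda}
\lesssim (\tau^{1/\alpha}+|z-s|)^{\alpha}\, (\tau^{1/\alpha}+z+s)^{2\zeta+1}.
\]

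To verify this algebraic inequality I would first absorb the factor $\tau$ on the left into $(\tau^{1/\alpha}+|z-s|)^{\alpha}$, which works because $\tau^{1/\alpha}\le \tau^{1/\alpha}+|z-s|$. It then suffices to show
\[
\tau^{\lambda/\alpha}\, s^{\sigma}\, z^{2\zeta+1-\sigma-\lambda} \lesssim (\tau^{1/\alpha}+z+s)^{2\zeta+1}.
\]
Here the hypotheses $\lambda,\sigma,\sigma+\lambda\in[0,2\zeta+1]$ enter decisively: they guarantee that the three exponents $\lambda$, $\sigma$, and $2\zeta+1-\sigma-\lambda$ are all nonnegative and sum to $2\zeta+1$, so I can bound each of $\tau^{1/\alpha}$, $s$, and $z$ by the common upper bound $\tau^{1/\alpha}+z+s$ with the appropriate exponent and multiply. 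This is the only step where the sign conditions on $\sigma,\lambda$ are really used, and it is essentially the only obstacle — without the nonnegativity, raising $\tau^{1/\alpha}+z+s$ to an arbitrary real power would not dominate each of the three monomial factors separately.

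For \eqref{eq:pzetaest2} I would just show that the prefactor $(\tau^{1/\alpha}+z+s)/(\tau^{1/\alpha}+|z-s|)$ appearing in \eqref{eq:pzetaest} is bounded by a constant in each of the two assumed regimes. If $z<s/2$, then $|z-s|=s-z>s/2>z/2$, so $\tau^{1/\alpha}+z+s\le \tau^{1/\alpha}+3(s-z)\le 3(\tau^{1/\alpha}+|z-s|)$. If instead $s\le c(\tau^{1/\alpha}\wedge z)$ for some small $c<1$, then $|z-s|=z-s\ge(1-c)z$ and $\tau^{1/\alpha}+z+s\le(1+c)\tau^{1/\alpha}+z\le\frac{1+c}{1-c}(\tau^{1/\alpha}+|z-s|)$. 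In either case the prefactor is $O(1)$, and \eqref{eq:pzetaest2} follows at once from \eqref{eq:pzetaest}.
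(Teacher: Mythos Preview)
Your proof is correct and follows essentially the same approach as the paper: both start from the factored estimate \eqref{eq:pzetafact}, split the exponent $2\zeta+1$ as $\lambda+\sigma+(2\zeta+1-\sigma-\lambda)$, and use the nonnegativity of these three pieces to bound each of $\tau^{1/\alpha}$, $s$, $z$ by $\tau^{1/\alpha}+z+s$. The paper writes this as three upper bounds on negative powers of $\tau^{1/\alpha}+z+s$, whereas you rearrange the inequality first and bound monomials from above---the underlying estimates are identical. One small remark: your treatment of the second regime in \eqref{eq:pzetaest2} assumes $c<1$ so that $z>s$, while the statement allows any fixed constant in $s\lesssim \tau^{1/\alpha}\wedge z$; the paper handles this by a direct case split, and your argument extends to arbitrary constants with only a slightly different bound on the ratio.
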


\begin{proof}
  Since $0<\sigma\leq 2\zeta+1$, by \eqref{eq:pzetafact} and \eqref{eq:stabledensity}, we have
  \begin{align*}
    p_\zeta^{(\alpha)}(\tau,z,s) 
    & \sim \frac{\tau}{(\tau^{1/\alpha}+|z-s|)^{1+\alpha}} (\tau^{1/\alpha}+z+s)^{-2\zeta}  \\
    & = \frac{\tau(\tau^{1/\alpha}+z+s)}{(\tau^{1/\alpha}+|z-s|)^{1+\alpha}}  (\tau^{1/\alpha}+z+s)^{-\lambda}(\tau^{1/\alpha}+z+s)^{-\sigma}(\tau^{1/\alpha}+z+s)^{\sigma+\lambda-2\zeta-1} \\
    & \le \frac{\tau^{1/\alpha}+z+s}{\tau^{1/\alpha}+|z-s|}  \tau^{-\lambda/\alpha} s^{-\sigma} z^{\sigma+\lambda-2\zeta-1}.
  \end{align*}
  For $z<s/2$ or $s/2 \le \tau^{1/\alpha} \land z$, we have $\frac{\tau^{1/\alpha}+z+s}{\tau^{1/\alpha}+|z-s|} \lesssim 1$ and consequently $p_\zeta^{(\alpha)}(\tau,z,s) \lesssim \tau^{-\lambda/\alpha} s^{-\sigma} z^{\sigma+\lambda-2\zeta-1}$.
\end{proof}

\subsection{Comparability results for $p_\zeta^{(\alpha)}$}

The following lemma is proved in \cite{BogdanMerz2023S} and allows to compare two kernels $p_\zeta^{(\alpha)}$ at different times and positions with each other.

\begin{lemma}[{\cite[Theorem~4.1]{BogdanMerz2023S}}]
  \label{comparablealpha}
  Let $\zeta\in(-1/2,\infty)$ and $\alpha\in(0,2]$.
  \begin{enumerate}
  \item Let $z,s>0$, $0<C\leq 1$, and $\tau\in[C,C^{-1}]$. Then,  there exist $c_j=c_j(\zeta,C)\in(0,\infty)$, $j\in\{1,2\}$, such that
    \begin{align}
      \label{eq:comparablealpha1}
      \begin{split}
        p_\zeta^{(2)}(1,c_1 z,c_1 s) & \lesssim_{C,\zeta} p_\zeta^{(2)}(\tau,z,s) \lesssim_{C,\zeta} p_\zeta^{(2)}(1,c_2 z,c_2 s), \\
        p_\zeta^{(\alpha)}(\tau,z,s) & \sim_{C,\zeta,\alpha} p_\zeta^{(\alpha)}(1,z,s), \quad \alpha<2.
      \end{split}
    \end{align}

  \item Let $C,\tau>0$ and $0<z\leq s/2<\infty$. Then,  there is $c=c(\zeta,C)\in(0,\infty)$ with
    \begin{subequations}
      \label{eq:comparablealpha2}
      \begin{align}
        \label{eq:comparablealpha2a}
        \begin{split}
          p_\zeta^{(\alpha)}(\tau,z,s)
          & \lesssim_{C,\zeta,\alpha} p_\zeta^{(\alpha)}(\tau,c,c s)\one_{\{\tau>C\}} \\
          & \quad + \left(\tau^{-\frac12}\frac{\me{-cs^2/\tau}}{(\tau+s^2)^{\zeta}}\one_{\alpha=2} + \frac{\tau}{s^{2\zeta+1+\alpha}+\tau^{\frac{2\zeta+1+\alpha}{\alpha}}} \one_{\alpha\in(0,2)}\right)\one_{\{\tau<C\}},
        \end{split}
        \\
        \label{eq:comparablealpha2c}
        p_\zeta^{(\alpha)}(\tau,z,s)
        & \lesssim_{\zeta,\alpha} s^{-(2\zeta+1)}.
      \end{align}
    \end{subequations}

  \item Let $0<\tau\leq1$, $0<z\leq s/2$, and $s\geq C>0$. Then, there is $c=c(\zeta,C)\in(0,\infty)$ with
    \begin{align}
      \label{eq:comparablealpha3}
      p_\zeta^{(\alpha)}(\tau,z,s) \lesssim_{C,\zeta,\alpha} p_\zeta^{(\alpha)}(1,c,c s).
    \end{align}

  \item There is $c=c(\zeta)\in(0,\infty)$ such that for all $r,s>0$,
    \begin{align}
      \label{eq:comparablealpha4}
      \min\{p_\zeta^{(\alpha)}(1,1,r),p_\zeta^{(\alpha)}(1,1,s)\}
      \lesssim_{\zeta,\alpha} p_\zeta^{(\alpha)}(1,c r,c s).
    \end{align}

  \item Let $r,s,z,t>0$ with $|z-s|>|r-s|/2$. Then,  there is $c=c(\zeta)\in(0,\infty)$ with
    \begin{align}
      \label{eq:comparablealpha6}
      p_\zeta^{(\alpha)}(t/2,z,s) \lesssim_{\zeta,\alpha} p_\zeta^{(\alpha)}(t,cr,cs).
    \end{align}
  \end{enumerate}
\end{lemma}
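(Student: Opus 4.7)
The entire lemma is an elementary but delicate consequence of the sharp two-sided bounds in Proposition~\ref{heatkernelalpha1subordinatedboundsfinal}, combined with the scaling identity~\eqref{eq:scalingalpha}. The overall strategy, applied part by part, is to substitute the explicit expressions, exploit simple observations such as $|z-s|\asymp s$ and $z+s\asymp s$ when $z\le s/2$, and rescale suitably in space or time.

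For part (1), when $\alpha<2$ the claim is immediate from \eqref{eq:heatkernelalpha1weightedsubordinatedboundsfinal}, since restricting $\tau$ to $[C,C^{-1}]$ changes the denominator only by a factor depending on $C$ and $\zeta$. The case $\alpha=2$ is more delicate, because \eqref{eq:easybounds2a} is of type $\asymp$, with different exponential constants in the upper and lower bounds. Here one uses \eqref{eq:scalingalpha} to normalize to $\tau=1$ at the price of rescaling the spatial variables by $1/\sqrt{\tau}$, and then chooses $c_1,c_2=c_1,c_2(\zeta,C)$ so that the Gaussian exponential constant is dominated as required in both directions uniformly in $\tau\in[C,C^{-1}]$.

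For part (2), the hypothesis $z\le s/2$ makes the first summand in the denominator of \eqref{eq:heatkernelalpha1weightedsubordinatedboundsfinal} comparable to $s^{1+\alpha+2\zeta}$ and the second to $\tau^{(1+\alpha)/\alpha}(\tau^{1/\alpha}+s)^{2\zeta}$, both essentially independent of $z$; dropping the first yields \eqref{eq:comparablealpha2c}. For \eqref{eq:comparablealpha2a} one treats $\tau>C$ and $\tau<C$ separately: for $\tau>C$, one compares with $p_\zeta^{(\alpha)}(\tau,c,cs)$ with $c=c(\zeta,C)$ chosen small enough that $c\le cs/2$ on the relevant range, so the denominators match up to constants; for $\tau<C$, the stated short-time bound is read off directly from \eqref{eq:easybounds2a} (for $\alpha=2$) or \eqref{eq:heatkernelalpha1weightedsubordinatedboundsfinal} (for $\alpha<2$), using again $|z-s|\asymp s$. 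Part (3) is then obtained from part (2) by first rescaling to $\tau=1$ via \eqref{eq:scalingalpha}; this is harmless since $\tau\le 1$ and $s\ge C$ ensure $s/\tau^{1/\alpha}$ remains bounded below by a positive constant.

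Part (4) follows by estimating each of $p_\zeta^{(\alpha)}(1,1,r)$ and $p_\zeta^{(\alpha)}(1,1,s)$ via \eqref{eq:heatkernelalpha1weightedsubordinatedboundsfinal} (or \eqref{eq:easybounds2a} for $\alpha=2$), and then invoking the triangle inequality $|r-s|\le|1-r|+|1-s|\le 2\max\{|1-r|,|1-s|\}$ to reconstruct the denominator of $p_\zeta^{(\alpha)}(1,cr,cs)$ for a suitable small $c$. Part (5) is immediate: replacing $|z-s|$ in the denominator of \eqref{eq:heatkernelalpha1weightedsubordinatedboundsfinal} for $p_\zeta^{(\alpha)}(t/2,z,s)$ by the lower bound $|r-s|/2$, and choosing $c\le 1/2$ so that $|cr-cs|\le|r-s|/2$, completes the comparison. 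The main obstacle throughout is the $\alpha=2$ case, where the $\asymp$-type sharp bound has unequal exponential constants above and below; reconciling them requires admitting distinct spatial rescalings (notably $c_1\ne c_2$ in \eqref{eq:comparablealpha1}), and the dependence on $C$ must be tracked carefully. The cases $\alpha<2$ are comparatively straightforward, as \eqref{eq:heatkernelalpha1weightedsubordinatedboundsfinal} is purely algebraic.
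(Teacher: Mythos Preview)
The paper does not prove this lemma; it merely cites it as \cite[Theorem~4.1]{BogdanMerz2023S}, so there is no in-paper proof to compare against. Your sketch correctly identifies that everything reduces to the sharp two-sided bounds of Proposition~\ref{heatkernelalpha1subordinatedboundsfinal} together with scaling~\eqref{eq:scalingalpha}, and this is indeed the approach taken in the cited reference.

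Your outline is sound, and the case distinctions you list are the right ones. One place where your sketch is too terse is part~(5) for $\zeta\in(-1/2,0)$. You only discuss replacing $|z-s|$ by $|r-s|/2$ in the stable factor $p^{(\alpha)}$, but the weight $(t^{1/\alpha}+z+s)^{-2\zeta}$ in~\eqref{eq:pzetafact} is \emph{increasing} in $z$ when $-2\zeta>0$, so an upper bound on the left requires controlling $z+s$ from above as well. The missing observation is that $z+s\le |z-s|+2s$, which lets you absorb the growth of $(t^{1/\alpha}+z+s)^{-2\zeta}$ into the already-controlled factor $(t^{1/\alpha}+|z-s|)$ plus a harmless $s$-term; combined with $|r-s|\le r+s$, this closes the estimate. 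For $\zeta\ge0$ you instead need $t^{1/\alpha}+z+s\gtrsim t^{1/\alpha}+c(r+s)$, which follows from $z+s\ge s$ and $z+s\ge|z-s|>|r-s|/2$, since $|r-s|+2s\ge r+s$. Similar care with the sign of $\zeta$ is needed in parts~(2)--(4), though the arguments are routine once this point is noted.
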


The constants in exponential factors in Lemma~\ref{comparablealpha} may change from place to place when $\alpha=2$.

The following lemma describes the concentration of mass of $p_\zeta^{(\alpha)}$ at the origin. It is similar to \cite[p.~10]{Bogdanetal2019}.
\begin{lemma}
  \label{posmassheatkernel}
  Let $\zeta\in(-1/2,\infty)$ and $\alpha\in(0,2]$. Then, there is $c=c_{\zeta,\alpha}\in(0,1)$ such that for any $R>1$,
  \begin{align}
    \label{eq:posmassheatkernel}
    c < \int_0^{2R} dw\, w^{2\zeta} \, p_\zeta^{(\alpha)}(\tau,z,w) \leq 1, \quad \tau\in(0,1],\ z<R.
  \end{align}
\end{lemma}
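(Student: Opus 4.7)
The upper bound $\int_0^{2R} w^{2\zeta}\,p_\zeta^{(\alpha)}(\tau,z,w)\,dw \leq 1$ is immediate from the normalization \eqref{eq:normalizedalpha}. For the lower bound I would first reduce to a single time by the scaling relation \eqref{eq:scalingalpha}: substituting $u = w/\tau^{1/\alpha}$, and setting $v := z/\tau^{1/\alpha}$ and $T := R/\tau^{1/\alpha}$, one obtains
\[
\int_0^{2R} w^{2\zeta}\,p_\zeta^{(\alpha)}(\tau,z,w)\,dw \;=\; \int_0^{2T} u^{2\zeta}\,p_\zeta^{(\alpha)}(1,v,u)\,du \;=:\; f(v,T).
\]
The hypotheses $\tau \in (0,1]$, $z < R$, $R > 1$ translate to $T \geq R > 1$ and $v \in (0,T)$, so it suffices to prove $\inf_{T > 1,\ v \in (0,T)} f(v,T) > 0$.

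The argument splits into two regimes by a threshold $T_0 = T_0(\zeta,\alpha) > 1$ chosen below. \emph{Large $T$ regime.} For $u \geq 2T$ and $v < T$ one has $|u-v| \geq u/2$ and $u+v \sim u$; inserting these into the sharp bound \eqref{eq:heatkernelalpha1weightedsubordinatedboundsfinal} when $\alpha \in (0,2)$, or into \eqref{eq:easybounds2a} when $\alpha = 2$, gives respectively $u^{2\zeta} p_\zeta^{(\alpha)}(1,v,u) \lesssim u^{-1-\alpha}$ (so the tail is $\lesssim T^{-\alpha}$) and an exponentially decaying tail. Choosing $T_0$ so that this tail is at most $1/2$ for $T \geq T_0$, and invoking \eqref{eq:normalizedalpha}, I conclude $f(v,T) \geq 1/2$ for all such $T$. \emph{Bounded $T$ regime.} For $T \in (1, T_0]$, the parameter $v \in (0, T_0)$ lies in a bounded set and the window $[1,2]$ is contained in $[0,2T]$. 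On the set $(0,T_0) \times [1,2]$, the denominators in \eqref{eq:heatkernelalpha1weightedsubordinatedboundsfinal} and \eqref{eq:easybounds2a} are bounded above by a constant depending only on $\zeta, \alpha, T_0$, so Proposition~\ref{heatkernelalpha1subordinatedboundsfinal} yields a uniform positive lower bound $p_\zeta^{(\alpha)}(1,v,u) \geq c'_{\zeta,\alpha,T_0}$. Hence
\[
f(v,T) \;\geq\; \int_1^2 u^{2\zeta}\,p_\zeta^{(\alpha)}(1,v,u)\,du \;\geq\; c'_{\zeta,\alpha,T_0} \int_1^2 u^{2\zeta}\,du \;=:\; c'' > 0.
\]
Taking $c := \min(1/2, c'')$ completes the proof.

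\textbf{Main obstacle.} The slightly delicate point is securing a strictly positive lower bound for $p_\zeta^{(\alpha)}(1,v,u)$ that is uniform as $v \downarrow 0$. When $\zeta < 0$, the factor $(v+u)^{2\zeta}$ in the denominator of \eqref{eq:heatkernelalpha1weightedsubordinatedboundsfinal} (and analogously $(vu+\tau)^\zeta$ in \eqref{eq:easybounds2a}) blows up if $v+u$ approaches zero; the reference window $u \in [1,2]$ is therefore chosen precisely to force $v+u \geq 1$, which keeps all factors in the sharp bound controlled. Once this uniform lower bound is in hand, the two regimes combine cleanly, and the constant $c$ depends only on $\zeta, \alpha$, as required.
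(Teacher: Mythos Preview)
Your proof is correct. The approach differs from the paper's, so a brief comparison is in order.

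You scale to $\tau=1$ and then split by the size of $T=R/\tau^{1/\alpha}$: for large $T$ you bound the tail $\int_{2T}^\infty$ by $O(T^{-\alpha})$ (or exponentially small for $\alpha=2$) and invoke normalization; for bounded $T$ you extract a uniform pointwise lower bound for $p_\zeta^{(\alpha)}(1,v,u)$ on the compact set $(0,T_0)\times[1,2]$ from Proposition~\ref{heatkernelalpha1subordinatedboundsfinal}. The paper instead works directly at arbitrary $\tau$: it sets $x:=z\vee\tau^{1/\alpha}$ and observes that on the short window $w\in(x,x+\tau^{1/\alpha})\subset(0,2R)$ one has $\tau^{1/\alpha}+|z-w|\sim\tau^{1/\alpha}$ and $\tau^{1/\alpha}+z+w\sim w$, so the sharp bound \eqref{eq:pzetafact} gives $w^{2\zeta}p_\zeta^{(\alpha)}(\tau,z,w)\gtrsim\tau^{-1/\alpha}$ there, and integrating over an interval of length $\tau^{1/\alpha}$ yields a constant. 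The paper's argument is a one-step computation that avoids any case distinction; yours is more systematic and makes the role of normalization explicit, at the cost of a regime split and an auxiliary threshold $T_0$. Both routes give a constant depending only on $\zeta,\alpha$.
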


\begin{proof}
  The upper bound follows from the normalization of $p_\zeta^{(\alpha)}(\tau,z,w)$.
  We now prove the lower bound in \eqref{eq:posmassheatkernel}.
  First, consider $\alpha \in(0,2)$. Let $x=z \vee \tau^{1/\alpha}$. Then, for $w \in (x,x+\tau^{1/\alpha}) \subset (0,2R)$, we have $\tau^{1/\alpha} + |z-w| \sim \tau^{1/\alpha}$  and $\tau^{1/\alpha}+z+w \sim w \sim x$. Hence, by \eqref{eq:stabledensity}  and \eqref{eq:pzetafact}, we get
   \begin{align*}
      & \int_0^{2R} dw\, w^{2\zeta} \, p_{\zeta}^{(\alpha)}(\tau,z,w) \gtrsim \int_x^{x+\tau^{1/\alpha}} dw\, \frac{\tau}{(\tau^{1/\alpha} + |z-w|)^{1+\alpha}} (\tau^{1/\alpha}+z+w)^{-2\zeta} w^{2\zeta}\\
      & \quad \gtrsim \int_{x}^{x+\tau^{1/\alpha}} dw\, \tau^{-1/\alpha} = 1.
  \end{align*}
  Applying \eqref{eq:easybounds2a}, by the same proof, we get the result for $\alpha=2$.
  \end{proof}

\subsection{A 3G inequality for $p_\zeta^{(\alpha)}$ with $\alpha\in(0,2)$}

For the proof of the continuity of the heat kernel for all $\alpha\in(0,2)$ and $\eta\in(-\alpha,(2\zeta+1-\alpha)/2]$ in Theorem~\ref{mainresultgen}, we will use the following 3G inequality for $p_\zeta^{(\alpha)}(t,r,s)$. 
\begin{lemma}[3G inequality]
  \label{lem:3Gineq}
  Let $\zeta\in(-1/2,\infty)$ and $\alpha\in(0,2)$. For $t,\tau,t,s,r,z>0$, 
\begin{align}
\frac{p_\zeta^{(\alpha)}(t,r,z) p_\zeta^{(\alpha)}(\tau,z,s)}{p^{(\alpha)}(t+\tau,r,s)} \lesssim  \frac{p_\zeta^{(\alpha)}(t,r,z)}{(\tau^{1/\alpha}+z+s)^{2\zeta}} + \frac{p_\zeta^{(\alpha)}(\tau,z,s)}{(t^{1/\alpha}+z+r)^{2\zeta}}.
\end{align}
\begin{proof}
It suffices to use \eqref{eq:pzetafact} and the $3G$ inequality in \cite[(7)--(9)]{BogdanJakubowski2007}
\begin{align*}
p^{(\alpha)}(t,r,z) p^{(\alpha)}(\tau,z,s) \lesssim p^{(\alpha)}(t+\tau,r,s) \left[p^{(\alpha)}(t,r,z) + p^{(\alpha)}(\tau,z,s) \right].
\end{align*}
\end{proof}
\end{lemma}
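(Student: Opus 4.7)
The statement is essentially a transfer of a known 3G inequality for the unweighted stable density to its Bessel analogue, and the entire plan is already signposted by the comparison \eqref{eq:pzetafact} and the 3G inequality recalled in the proof sketch. I would organize it as follows.

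First, I would apply \eqref{eq:pzetafact} to each factor in the numerator, writing
\begin{align*}
p_\zeta^{(\alpha)}(t,r,z)\,p_\zeta^{(\alpha)}(\tau,z,s)
&\sim p^{(\alpha)}(t,r,z)\,p^{(\alpha)}(\tau,z,s)\,(t^{1/\alpha}+r+z)^{-2\zeta}\,(\tau^{1/\alpha}+z+s)^{-2\zeta}.
\end{align*}
The implicit constant depends only on $\zeta,\alpha$ and is harmless.

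Next, I would insert the stable-density 3G inequality of Bogdan--Jakubowski \cite[(7)--(9)]{BogdanJakubowski2007},
\[
p^{(\alpha)}(t,r,z)\,p^{(\alpha)}(\tau,z,s)\lesssim p^{(\alpha)}(t+\tau,r,s)\,\bigl[p^{(\alpha)}(t,r,z)+p^{(\alpha)}(\tau,z,s)\bigr],
\]
divide both sides by $p^{(\alpha)}(t+\tau,r,s)$, and distribute the two negative powers of $(t^{1/\alpha}+r+z)$ and $(\tau^{1/\alpha}+z+s)$ over the sum. This yields two terms of the shape
\[
p^{(\alpha)}(t,r,z)\,(t^{1/\alpha}+r+z)^{-2\zeta}\,(\tau^{1/\alpha}+z+s)^{-2\zeta}
\quad\text{and}\quad
p^{(\alpha)}(\tau,z,s)\,(t^{1/\alpha}+r+z)^{-2\zeta}\,(\tau^{1/\alpha}+z+s)^{-2\zeta}.
\]

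Finally, in each of these two terms I would reabsorb one of the factors $(t^{1/\alpha}+r+z)^{-2\zeta}$ or $(\tau^{1/\alpha}+z+s)^{-2\zeta}$ into a $p_\zeta^{(\alpha)}$ by the reverse direction of \eqref{eq:pzetafact}, leaving exactly the stated right-hand side. There is no genuine obstacle: the only mild subtlety is that $\zeta$ may be negative, so $(t^{1/\alpha}+r+z)^{-2\zeta}$ grows in its arguments and one cannot naively drop it; however, since each of the two terms of the dissection keeps a matching factor, the reabsorption via \eqref{eq:pzetafact} is exact up to constants, and no case distinction on the sign of $\zeta$ is required. The argument is symmetric in $(t,r)\leftrightarrow(\tau,s)$, which matches the symmetric form of the bound.
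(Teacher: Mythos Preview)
Your proposal is correct and follows exactly the same approach as the paper's proof: apply \eqref{eq:pzetafact} to each factor, invoke the stable 3G inequality from \cite{BogdanJakubowski2007}, and reabsorb one weight into $p_\zeta^{(\alpha)}$ via \eqref{eq:pzetafact} in each resulting term. Your remark that the reabsorption is exact up to constants, so no sign-of-$\zeta$ case split is needed, is a useful clarification that the paper leaves implicit.
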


There is a similar 3G inequality in \cite[Theorem~3.1]{BogdanMerz2023S}. However, the present 3G inequality in Lemma~\ref{lem:3Gineq} and that in \cite[Theorem~3.1]{BogdanMerz2023S} do not imply each other.

\section{Integral analysis of $p_{\zeta,\eta}^{(\alpha)}$}
\label{s:integralana}

The \textit{integral analysis} in this section is a potential-theoretic study based on suitable supermedian functions. In particular, we show that the function $s^{-\eta}$ is invariant for the transition density $p^{(\alpha)}_{\zeta,\eta}$,
see \eqref{eq:heatkernelharmonic1}.

The main results of this section are Theorems \ref{heatkernelharmonic} and \ref{jakthm24}, which are key elements in the proof of Theorem \ref{mainresultgen}. Let us now sketch the main ideas behind the integral analysis presented in this section. The method, developed in \cite{Bogdanetal2019} and further improved in \cite{JakubowskiMaciocha2023, Jakubowskietal2024}, builds on the results of \cite[Theorem 1]{Bogdanetal2016}. 

For flexibility in what follows, we introduce an extra parameter $\beta$ acting as a proxy for $\eta$. To
prove Theorem \ref{heatkernelharmonic}, we essentially show that $\Psi_\zeta(\beta)(\cl_\zeta)^{-\alpha/2} s^{-\alpha-\beta} = s^{-\beta}$, see \eqref{eq:kappainv}. 
As a consequence, we obtain \eqref{eq:heatkernelharmonic2} for \(\eta = 0\); see Lemma \ref{boglem33}. Next, by applying Duhamel's formula and Lemma \ref{boglem33}, we derive \eqref{eq:tpIntpos}, which is almost \eqref{eq:heatkernelharmonic2}. To fully establish \eqref{eq:heatkernelharmonic2}, we prove the convergence of the integrals in \eqref{eq:tpIntpos}; see Corollary \ref{bogcor35} and Lemma \ref{lem:iiptconv}.

We point out Proposition \ref{bogprop32} as a crucial step in our integral analysis, in which we estimate the mass of \(p_{\zeta,\eta}^{(\alpha)}\). It is the key to proving the estimates stated in Corollary~\ref{bogcor38}; the case \(\eta<0\) being simpler since the convergence of the integrals follows from the inequality \(p_{\zeta,\eta}^{(\alpha)} \le p_{\zeta}^{(\alpha)}\).

The result stated in Theorems \ref{heatkernelharmonic} and \ref{jakthm24} may be viewed as follows. Denote $\cl_{\zeta,\eta}^{(\alpha)} := \cl_\zeta^{\alpha/2} - \Psi_\zeta(\eta)s^{-\alpha}$. If we let $t\to\infty$ in \eqref{eq:heatkernelharmonic2}, then we formally get 
\begin{align*}
  (\Psi_\zeta(\beta) -\Psi_\zeta(\eta)) \left(\cl_{\zeta,\eta}^{(\alpha)}\right)^{-1} s^{-\beta-\alpha}=  s^{-\beta},
\end{align*} 
which is another expression of 
$\cl_{\zeta,\eta}^{(\alpha)} s^{-\beta}= (\Psi_\zeta(\beta) -\Psi_\zeta(\eta))  s^{-\beta-\alpha}$. The latter right-hand side changes sign at $\beta=\eta$, so we may informally conclude that for the operator $\cl_{\zeta,\eta}^{(\alpha)}$, the function $s^{-\beta}$ is subharmonic, harmonic and superharmonic if $\beta <\eta$, $\beta=\eta$ and $\beta>\eta$, respectively. In fact, in Theorems~\ref{heatkernelharmonic} and~\ref{jakthm24}, we prove that $s^{-\beta}$ is supermedian for $p_{\zeta,\eta}^{(\alpha)}$ if $\beta \ge\eta$ and $h(s) = s^{-\eta}$ is even invariant for $p_{\zeta,\eta}^{(\alpha)}$, which is crucial to prove the lower bounds. 
As we shall see, the estimates of Bessel and subordinated Bessel heat kernels also play an important role in our development.

When proving ground state representations for \eqref{e.fBo} in \cite{BogdanMerz2024}, we arrived at the invariant function $h(r)$ by integrating $p_{\zeta}^{(\alpha)}(t,r,s)$ against suitable functions in space and time, as suggested by the road map in \cite{Bogdanetal2016}.
In the following, we call two parameters $\beta,\gamma\in\R$ admissible, whenever they satisfy $\gamma\in(-1,\infty)$ and $\beta\in(1,(2\zeta-\gamma)/\alpha)$.
For admissible $\beta,\gamma$, we recall from \cite[Section~3]{BogdanMerz2024} that the function\footnote{The function $h_{\beta,\gamma}$ is also well-defined for $\beta\in(0,(2\zeta-\gamma)/\alpha]$.}

\begin{align}
  \label{eq:defhbetagammaalphatransformed}
  \begin{split}
    h_{\beta,\gamma}(r)
    & := \int_0^\infty \frac{dt}{t}\, t^\beta \int_0^\infty ds\, s^\gamma p_\zeta^{(\alpha)}(t,r,s)
      = C^{(\alpha)}\left(\beta,\gamma,\zeta\right) r^{\alpha\beta+\gamma-2\zeta}, \quad r>0,
  \end{split}
\end{align}
is finite where
\begin{align}
  \label{eq:timeshiftconstant1resultalpha}
  C^{(\alpha)}(\beta,\gamma,\zeta)
  & := \frac{\Gamma(\beta)}{\Gamma(\frac{\alpha\beta}{2})} C\left(\frac{\alpha\beta}{2},\gamma,\zeta\right), \quad \text{with} \\
  \label{eq:timeshiftconstant1result}
  C(\beta,\gamma,\zeta)
  & := \frac{2^{-2\beta} \Gamma (\beta) \Gamma \left(\frac{1}{2} (\gamma+1)\right) \Gamma \left(\frac{1}{2} (2\zeta-2\beta-\gamma)\right)}{\Gamma \left(\frac{1}{2}(2\zeta-\gamma)\right) \Gamma\left(\frac{1}{2}(2\beta+\gamma+1)\right)}.
\end{align}
Moreover, the Fitzsimmons ratio (see \cite{Bogdanetal2016}, \cite{Fitzsimmons2000}) is
\begin{align}
  q_{\beta,\gamma}(r) := \frac{(\beta-1) h_{\beta-1,\gamma}(r)}{h_{\beta,\gamma}(r)}, \quad r>0.
\end{align}
In particular, we recover
\begin{align}
  \label{eq:hardypottransformedalpha}
  q(z) = \frac{(\overline\beta-1)h_{\overline\beta-1,\overline\gamma}(z)}{h_{\overline\beta,\overline\gamma}(z)}, \quad z>0
\end{align}
from \eqref{eq:defq}, whenever $\overline\beta,\overline\gamma$ are admissible, to wit, $\overline\gamma\in(-1,\infty)$ and $\overline\beta\in(1,(2\zeta-\overline\gamma)/\alpha)$, and are chosen so that $\alpha\overline\beta+\overline\gamma-2\zeta=-\eta$. This is, e.g., the case when $\overline\beta=(2\zeta-\overline\gamma-\eta)/\alpha$ with $\zeta\in(-1/2,\infty)$, $\alpha\in(0,2]\cap(0,2\zeta+1)$, $\eta\in[0,\frac{2\zeta+1-\alpha}{2}]$, and $\overline\gamma\in(-1,\zeta-(\alpha+1)/2)$. 
We will use the function $h_{\beta,\gamma}$ and its variant $h_{\beta,\gamma}^{(+)}$ in Section~\ref{ss:integralanalysisnegativeeta} to obtain invariant functions for $p_{\zeta,\eta}^{(\alpha)}$. When doing so, the additional flexibility provided by $\overline\beta$ and $\overline\gamma$ will be helpful.

\subsection{Continuity}

For $\alpha\in(0,2)$, we will use the following lemma to prove that $p_{\zeta,\eta}^{(\alpha)}$ is a strongly continuous contraction semigroup and that $p_{\zeta,\eta}^{(\alpha)}(t,r,s)$ is continuous in $t,r,s>0$.

\begin{lemma}
 \label{integralana1}
 Let $\zeta\in(-1/2,\infty)$, $\alpha\in(0,2]$ and $\delta \in (0,2\zeta+1)$. Then,
  \begin{align}
    \label{eq:hbetagammataurtransformed}
    \begin{split}
      \int_0^\infty ds\, s^{2\zeta} p_\zeta^{(\alpha)}(\tau,r,s) s^{-\delta} \sim r^{-\delta}\one_{\{\tau<r^\alpha\}} + \tau^{-\delta/\alpha}\one_{\{\tau>r^\alpha\}} =r^{-\delta} \land \tau^{-\delta/\alpha}
    \end{split}
  \end{align}
  and
  \begin{align}
    \label{eq:hbetagammataurintegratedtransformed}
    \begin{split}
       \int_0^t d\tau\, \int_0^\infty ds\, s^{2\zeta} p_\zeta^{(\alpha)}(\tau,r,s) s^{-\delta} \; \sim \;
        \begin{cases}
          t (r^\alpha \vee t)^{-\delta/\alpha} & \quad \text{for}\ \delta<\alpha \\
          \log(1+\frac{t}{r^\alpha}) & \quad \text{for}\ \delta=\alpha \\
          r^{-\delta}  (t\wedge r^\alpha)  & \quad \text{for}\ \delta>\alpha
        \end{cases}.
    \end{split}
  \end{align}
\end{lemma}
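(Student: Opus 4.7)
The plan is to prove \eqref{eq:hbetagammataurtransformed} first and then obtain \eqref{eq:hbetagammataurintegratedtransformed} by an elementary one-variable time integration.

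For \eqref{eq:hbetagammataurtransformed}, the scaling relation \eqref{eq:scalingalpha} together with the substitution $s=\tau^{1/\alpha}u$ gives
\[
  \int_0^\infty s^{2\zeta-\delta}\,p_\zeta^{(\alpha)}(\tau,r,s)\,ds \;=\; \tau^{-\delta/\alpha}\,F(r\tau^{-1/\alpha}),\qquad F(x):=\int_0^\infty u^{2\zeta-\delta}\,p_\zeta^{(\alpha)}(1,x,u)\,du,
\]
so the first claim reduces to $F(x)\sim 1\wedge x^{-\delta}$ on $(0,\infty)$. The assumption $\delta\in(0,2\zeta+1)$ ensures $2\zeta-\delta>-1$, which controls the integrand near $u=0$, and the decay from \eqref{eq:pzetafact} (for $\alpha<2$) or \eqref{eq:easybounds2a} (for $\alpha=2$) controls it near $u=\infty$. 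Consequently $F$ is finite, positive, and continuous on $(0,\infty)$, and the same dominated bounds evaluated at $x=0$ show $F(0^+)$ is a positive finite constant, so $F(x)\sim 1$ on $(0,1]$.

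For $x\geq 1$, I would split $F(x)=\bigl(\int_0^{x/2}+\int_{x/2}^{2x}+\int_{2x}^\infty\bigr)u^{2\zeta-\delta}\,p_\zeta^{(\alpha)}(1,x,u)\,du$. In the middle piece $u\sim x$, so $u^{2\zeta-\delta}\sim x^{2\zeta-\delta}$; for $\alpha<2$, substituting $v=u-x$ and using \eqref{eq:pzetafact} together with $\int_\R (1+|v|)^{-1-\alpha}\,dv<\infty$ gives contribution $\sim x^{2\zeta-\delta}\cdot x^{-2\zeta}=x^{-\delta}$, and the same holds for $\alpha=2$ by the Gaussian concentration at $u=x$ in \eqref{eq:easybounds2a}. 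In the outer pieces $|x-u|\gtrsim x$, so the pointwise bound yields integrand $\lesssim u^{2\zeta-\delta}\,x^{-(2\zeta+1+\alpha)}$ for $\alpha<2$ and exponential decay for $\alpha=2$, contributing only $\lesssim x^{-\delta-\alpha}$, which is negligible next to the middle. A matching lower bound is obtained by restricting the middle integral to a fixed-width window about $u=x$. This proves $F(x)\sim x^{-\delta}$ for $x\geq 1$ and completes \eqref{eq:hbetagammataurtransformed}, since $\tau^{-\delta/\alpha}\bigl(1\wedge(r\tau^{-1/\alpha})^{-\delta}\bigr)=\tau^{-\delta/\alpha}\wedge r^{-\delta}$.

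To finish, I would integrate over $\tau\in(0,t)$: by \eqref{eq:hbetagammataurtransformed} the quantity to estimate is $\int_0^t(\tau^{-\delta/\alpha}\wedge r^{-\delta})\,d\tau$. Splitting at $\tau=r^\alpha$, the region $\tau\leq r^\alpha\wedge t$ contributes $r^{-\delta}(t\wedge r^\alpha)$; when $t>r^\alpha$, the region $\tau\in(r^\alpha,t)$ contributes $\int_{r^\alpha}^t \tau^{-\delta/\alpha}\,d\tau$, which evaluates to $\sim t^{1-\delta/\alpha}$ for $\delta<\alpha$, to $\log(t/r^\alpha)$ for $\delta=\alpha$, and to $\sim r^{\alpha-\delta}$ for $\delta>\alpha$. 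Comparing the two contributions in each of the three cases, using that $t^{1-\delta/\alpha}\gtrsim r^{\alpha-\delta}$ exactly when $\delta<\alpha$ and $t>r^\alpha$, one recovers the piecewise right-hand side of \eqref{eq:hbetagammataurintegratedtransformed}. The main obstacle is the two-sided estimate of $F(x)$ for large $x$: the sharp bounds on $p_\zeta^{(\alpha)}(1,x,u)$ blend polynomial spatial decay with a sharp peak across the diagonal $u\sim x$, and tracking both the prefactor $(1+x+u)^{-2\zeta}$ and the peak profile requires separate treatments for $\alpha<2$ and $\alpha=2$, although both collapse to the same $x^{-\delta}$ after integration.
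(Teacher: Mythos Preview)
Your approach is essentially the same as the paper's: reduce \eqref{eq:hbetagammataurtransformed} to a one-variable estimate via scaling, then use the sharp pointwise bounds \eqref{eq:pzetafact} (resp.\ \eqref{eq:easybounds2a}) to split the $u$-integral around the diagonal, and finally derive \eqref{eq:hbetagammataurintegratedtransformed} by elementary time integration. The paper sets $r=1$ and argues in $\tau$, while you set $\tau=1$ and argue in $x=r\tau^{-1/\alpha}$ through $F$; these are dual parametrizations of the same computation, and your continuity argument for $F$ on $(0,1]$ replaces the paper's direct evaluation for $\tau>1$.

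There is one slip you should fix. In the outer piece $u>2x$ you claim the integrand is $\lesssim u^{2\zeta-\delta}\,x^{-(2\zeta+1+\alpha)}$, but this bound is not valid there and would produce a divergent integral since $2\zeta-\delta>-1$. For $u>2x\ge 2$ one has $|x-u|\sim u$ and $1+x+u\sim u$, so \eqref{eq:pzetafact} gives $p_\zeta^{(\alpha)}(1,x,u)\sim u^{-(2\zeta+1+\alpha)}$, and the correct integrand bound is $u^{-1-\alpha-\delta}$; integrating over $(2x,\infty)$ then yields $\sim x^{-\delta-\alpha}$, which is the contribution you asserted. Your stated bound is in fact correct only on the lower outer piece $u<x/2$, where indeed $|x-u|\sim x$ and $1+x+u\sim x$. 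With this correction the argument goes through as written.
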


\begin{proof}
  We only prove this lemma for $\alpha\in(0,2)$; the proof for $\alpha=2$ is analogous and, in fact, easier using the explicit expression for $p_\zeta^{(2)}(t,r,s)$.
  Formula \eqref{eq:hbetagammataurintegratedtransformed} follows from integrating \eqref{eq:hbetagammataurtransformed}. Hence, we only need to prove \eqref{eq:hbetagammataurtransformed}.  By scaling we may and do assume $r=1$. We first show that for $\tau>1$,
  \begin{align}
    \label{eq1:integralana1}
    \int_0^\infty ds\, s^{2\zeta}\, p_\zeta^{(\alpha)}(\tau,1,s)s^{-\delta} \sim \tau^{-\delta/\alpha}.
  \end{align}
  Indeed, since $\tau>1$, we have $|s-1|< s+1 <3\tau^{1/\alpha}$ for $s<2\tau^{1/\alpha}$. Hence, by \eqref{eq:pzetafact},
  \begin{align*}
    \int_0^{2\tau^{1/\alpha}} ds\, s^{2\zeta}\, p_\zeta^{(\alpha)}(\tau,1,s) s^{-\delta} \sim \tau^{-(2\zeta+1)/\alpha}\int_0^{2\tau^{1/\alpha}} ds \,  s^{2\zeta-\delta} \sim \tau^{-\delta/\alpha}.
  \end{align*}
  For $s>2\tau^{1/\alpha}$, we have $s+1 \sim |s-1| \sim s$. Thus,
  \begin{align*}
    \int_{2\tau^{1/\alpha}}^\infty ds\, s^{2\zeta} p_\zeta^{(\alpha)}(\tau,1,s) s^{-\delta} \sim \tau \int_{2\tau^{1/\alpha}}^\infty ds\,  s^{-\delta-1-\alpha} \sim \tau^{-\delta/\alpha},
  \end{align*}
  which yields \eqref{eq1:integralana1}.
  Now, suppose $\tau\le 1$. Note that $s+1+\tau^{1/\alpha} \sim s \vee 1$. Additionally, for $s<1/2$, $p^{(\alpha)}(\tau,1,s) \sim \tau$. Hence, by \eqref{eq:pzetafact},
  \begin{align*}
    \int_{0}^{\infty} ds\, s^{2\zeta}\, p_\zeta^{(\alpha)}(\tau,1,s) s^{-\delta} \lesssim \int_{1/2}^{\infty} ds\, p^{(\alpha)}(\tau,1,s) +\tau \int_0^{1/2} ds\, s^{2\zeta-\delta} \lesssim 1,
  \end{align*}
  where we used that $p^{(\alpha)}(\tau,1,s)$ is the density of a probability distribution.
  Finally, for $1<s<1+\tau^{1/\alpha}$, $p^{(\alpha)}(\tau,1,s) \sim \tau^{-1/\alpha}$. Thus,
  \begin{align*}
    \int_{0}^{\infty} ds\, s^{2\zeta}\, p_\zeta^{(\alpha)}(\tau,1,s) s^{-\delta} \gtrsim \int_{1}^{1+\tau^{1/\alpha}} ds\, \tau^{-1/\alpha} = 1,
  \end{align*}
  which, together with \eqref{eq1:integralana1}, yields \eqref{eq:hbetagammataurtransformed}.
\end{proof}

We now prove that $t\mapsto p_{\zeta,\eta}^{(\alpha)}(t,\cdot,\cdot)$ is a strongly continuous contraction semigroup. This property will also be important to prove the pointwise continuity of $p_{\zeta,\eta}^{(\alpha)}(t,r,s)$ with respect to all $r,s,t>0$.

\begin{proposition}
  \label{bogprop24}
  Let $\zeta\in(-1/2,\infty)$, $\alpha\in(0,2]$, and $\eta\in(-\alpha,\frac{2\zeta+1-\alpha}{2}]$.
  Then $\{p_{\zeta,\eta}^{(\alpha)}(t,\cdot,\cdot)\}_{t\geq0}$ is a strongly continuous contraction semigroup on $L^2(\R_+,r^{2\zeta}dr)$.
\end{proposition}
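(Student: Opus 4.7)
The semigroup property is immediate from Chapman--Kolmogorov in Proposition~\ref{propertiesschrodheatkernel}(1), so only contractivity on $L^2(\R_+,r^{2\zeta}dr)$ and strong continuity at $t=0^+$ need to be addressed. Both hinge on the symmetry $p_{\zeta,\eta}^{(\alpha)}(t,r,s) = p_{\zeta,\eta}^{(\alpha)}(t,s,r)$, which follows by induction in $n$ in the Duhamel series \eqref{eq:feynmankactransformed} for $\eta\ge 0$ and from the construction in Appendix~\ref{s:constructionnegschrodperturbation} for $\eta<0$, together with the symmetry of $p_\zeta^{(\alpha)}$.

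For contraction, I would split on the sign of $\eta$. When $\eta\in(-\alpha,0]$, the dominance $p_{\zeta,\eta}^{(\alpha)}\le p_\zeta^{(\alpha)}$ from Appendix~\ref{s:constructionnegschrodperturbation}, combined with the normalization \eqref{eq:normalizedalpha}, gives $\int p_{\zeta,\eta}^{(\alpha)}(t,r,s)\,s^{2\zeta}ds\le 1$, and by symmetry the same holds in the other variable; Schur's test then yields $\|P_t\|_{L^2\to L^2}\le 1$. When $\eta\in[0,(2\zeta+1-\alpha)/2]$, I would use the Doob transform induced by the supermedian function $h(r)=r^{-\eta}$ from Proposition~\ref{propertiesschrodheatkernel}(4). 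Concretely, $U f := hf$ is a unitary isomorphism from $L^2(\R_+,s^{2\zeta-2\eta}ds)$ onto $L^2(\R_+,s^{2\zeta}ds)$, and the conjugated operator $\hat P_t := U^{-1}P_t U$ has the symmetric kernel $\hat p(t,r,s):=r^\eta p_{\zeta,\eta}^{(\alpha)}(t,r,s)s^\eta$ with respect to the measure $\hat\mu(ds):=s^{2\zeta-2\eta}ds$. The supermedian inequality \eqref{eq:supermediantransformedalphaintro} reads precisely $\int \hat p(t,r,s)\hat\mu(ds)\le 1$, and Schur's test applied to $\hat P_t$ transfers back to $P_t$ by unitarity.

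For strong continuity, it suffices to check $\|P_tf-f\|_2\to 0$ as $t\downarrow 0$ on the dense subset $C_c^\infty((0,\infty))$. In Hilbert space, this follows from the combination of (i) weak convergence $P_t f\rightharpoonup f$ and (ii) norm convergence $\|P_tf\|_2\to\|f\|_2$. For (i), I would test against $g\in C_c^\infty((0,\infty))$ and use the Duhamel formula \eqref{eq:duhamelclassictransformed} and the symmetry of $p_\zeta^{(\alpha)}$ to write
\[
\langle P_tf,g\rangle-\langle P_t^{(0)}f,g\rangle=\Psi_\zeta(\eta)\int_0^t d\tau\int_0^\infty dz\,z^{2\zeta-\alpha}\,(P_{t-\tau}f)(z)\,(P_\tau^{(0)}g)(z),
\]
where $P_t^{(0)}$ is the semigroup of $p_\zeta^{(\alpha)}$. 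The free term converges to $\langle f,g\rangle$ by Lemma~\ref{summarypropertiespzeta}, and the correction vanishes as $t\to 0$ because the Hardy-potential integral $\int p_\zeta^{(\alpha)}(\tau,r,z)z^{-\alpha}z^{2\zeta}dz$ is, by Lemma~\ref{integralana1} with $\delta=\alpha$, of size $\log(1+\tau/r^\alpha)$, which vanishes as $\tau\to 0$ uniformly on compacta in $r$, and this absorbs against the compact supports of $f,g$. For (ii), the contraction in the previous step gives $\limsup\|P_tf\|_2\le\|f\|_2$; the matching lower bound follows from $P_t|f|\ge P_t^{(0)}|f|$ and $\|P_t^{(0)}|f|\|_2\to\||f|\|_2$ when $\eta\ge 0$, and from Fatou applied to the pointwise limit $P_tf(r)\to f(r)$ when $\eta<0$ (the latter coming from $P_t^{(0)}f\to f$ a.e.~and a Duhamel sandwich bound).

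The main obstacle is rigorously controlling the Hardy-singular correction in (i) uniformly in $r$: the logarithmic $r$-dependence in Lemma~\ref{integralana1} must be tamed, either by keeping the test functions $f,g$ supported away from $0$ and $\infty$ (which is fine for density) or, more structurally, by carrying out the strong-continuity argument in the Doob-transformed frame, where $\hat P_t\bm{1}\le 1$ collapses the Hardy singularity into the reference measure and reduces the question to the strong continuity of a sub-Markovian symmetric semigroup—a situation that lends itself to a standard dominated-convergence argument using the pointwise concentration of $\hat p(t,r,\cdot)\hat\mu$ near $r$ as $t\to 0$.
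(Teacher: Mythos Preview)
Your contraction argument is correct and essentially matches the paper's: symmetry plus the supermedian weight $h(r)=r^{-\eta}$ (for $\eta\ge 0$) or the domination $p_{\zeta,\eta}^{(\alpha)}\le p_\zeta^{(\alpha)}$ (for $\eta<0$) feeds a Schur test. The Doob-transform reformulation is a clean way to say this.

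For strong continuity, your route differs from the paper's and---as you yourself flag---the Duhamel correction is not fully under control. Two concrete issues. First, part~(ii) is redundant: once weak convergence $P_tf\rightharpoonup f$ is established, $\liminf\|P_tf\|_2\ge\|f\|_2$ is automatic, so only (i) matters. Your separate argument for the lower bound in~(ii) is also shaky for $\eta<0$, since you invoke ``$P_tf(r)\to f(r)$ a.e.'' which has not been established for the perturbed semigroup. Second, and more seriously, in~(i) the supermedian bound $(P_{t-\tau}f)(z)\lesssim z^{-\eta}$ only handles $z\to 0$; at $z\to\infty$ the integrand $z^{2\zeta-\alpha-\eta}$ still grows when $2\zeta>\alpha+\eta$, so a matching decay bound on $(P_\tau^{(0)}g)(z)$ is needed and must be made uniform in $\tau\in(0,t)$. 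This can be patched via Lemma~\ref{integralana1} with a second exponent $\delta$, but the bookkeeping is delicate.

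The paper avoids all of this by bounding $\|T_t\phi\|_2$ directly rather than pairing against a test function. The key moves are: (a)~reduce to the critical parameter $\eta^*=(2\zeta+1-\alpha)/2$, since the Duhamel correction kernel is monotone in $\eta$; (b)~use $\phi\lesssim_\phi h^*(r)=r^{-\eta^*}$ and the supermedian property to replace the inner factor $(P_{t-\tau}\phi)(z)$ by $h^*(z)$; (c)~observe that $q(z)h^*(z)\sim z^{-\delta}$ with $\delta=\alpha+\eta^*=\zeta+(1+\alpha)/2\in(\alpha,2\zeta+1)$, so Lemma~\ref{integralana1} (third line of \eqref{eq:hbetagammataurintegratedtransformed}) gives the pointwise bound
\[
|T_t\phi(r)|\lesssim\int_0^t d\tau\int_0^\infty p_\zeta^{(\alpha)}(\tau,r,z)z^{2\zeta-\delta}\,dz\lesssim r^{-\delta}(t\wedge r^\alpha).
\]
Squaring and integrating against $r^{2\zeta}\,dr$ then yields $\|T_t\phi\|_2^2\lesssim\int_0^\infty r^{-\alpha-1}(t\wedge r^\alpha)^2\,dr\sim t\to 0$. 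This single estimate delivers strong (not just weak) convergence in one stroke, and simultaneously disposes of both the singularity at $z=0$ and the tail at $z=\infty$, because the problematic factor $(P_{t-\tau}\phi)(z)$ has been replaced by an explicit power before any integration in $r$ is carried out.
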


\begin{proof}
  For $\alpha=2$, Proposition~\ref{bogprop24} follows, e.g., from the explicit expression for $p_{\zeta,\eta}^{(2)}$ in~\eqref{eq:mainresultalpha2}.
  Thus, we consider $\alpha\in(0,2)$ in the following.
  Since we verified the semigroup property in Proposition~\ref{propertiesschrodheatkernel}, it suffices to verify the strong continuity and the contraction property.
  By the symmetry of $p_{\zeta,\eta}^{(\alpha)}$, the supermedian property \eqref{eq:supermediantransformedalphaintro}, and a Schur test, $p_{\zeta,\eta}^{(\alpha)}$ is a contraction on $L^2(\R_+,r^{2\zeta}dr)$ for every $t>0$.
  It remains to prove the strong continuity of $p_{\zeta,\eta}^{(\alpha)}(t,\cdot,\cdot)$. Since $C_c^\infty(\R_+)$ is dense in $L^2(\R_+,r^{2\zeta}dr)$ and $p_\zeta^{(\alpha)}(t,\cdot,\cdot)$ is strongly continuous (see Lemma~\ref{summarypropertiespzeta}), it follows from Duhamel's formula, that $p_{\zeta,\eta}^{(\alpha)}(t,\cdot,\cdot)$ is strongly continuous, if $\lim_{t\searrow0}\|T_t\phi\|_{L^2(\R_+,r^{2\zeta}dr)}=0$ holds for every nonnegative function $\phi\in C_c^\infty(\R_+)$, where $T_t$ is the integral operator acting as
  \begin{align}
    (T_t\phi)(r)
    = \int_0^t d\tau \int_0^\infty dz\, z^{2\zeta} p_\zeta^{(\alpha)}(\tau,r,z)q(z) \int_0^\infty ds\, s^{2\zeta} p_{\zeta,\eta}^{(\alpha)}(t-\tau,z,s)\phi(s).
  \end{align}
  By the definition of $p_{\zeta,\eta}^{(\alpha)}$, the integral kernel of $T_t$ is maximal for $\eta=(2\zeta+1-\alpha)/2$ (this is clear for $\eta\ge 0$ by \eqref{eq:feynmankactransformed}; for $\eta<0$, see Lemma~\ref{lem:1}). Thus, it suffices to consider this case. Since $\phi\in C_c^\infty(\R_+)$, we have $\phi\lesssim_\phi h^*$, with $h^*(r)=r^{-(2\zeta+1-\alpha)/2}$. By the supermedian property
  \eqref{eq:supermediantransformedalphaintro} and Lemma~\ref{integralana1} with $\delta =  \zeta+(1+\alpha)/2 \in (0,2\zeta+1)$,
  \begin{align}
    \begin{split}
      & \|T_t\phi\|_{L^2(\R_+,r^{2\zeta}dr)}^2
      \lesssim_\phi \|T_t h^*\|_{L^2(\R_+,r^{2\zeta}dr)}^2 \\
      & \quad \lesssim \int_0^\infty dr\, r^{2\zeta}\left(\int_0^t d\tau \int_0^\infty dz\, z^{2\zeta} p_\zeta^{(\alpha)}(\tau,r,z)q(z)h^*(z)\right)^2 \\
      & \quad \lesssim \int_0^\infty dr\, r^{-(\alpha+1)}(t\wedge r^\alpha)^2.
    \end{split}
  \end{align}
  Here we used $\alpha\beta+\gamma-2\zeta=-\zeta-(1+\alpha)/2<-\alpha$, which allows us to use the third line in \eqref{eq:hbetagammataurintegratedtransformed}.
  The conclusion follows from an application of the dominated convergence theorem.
\end{proof}

\subsection{The case $\eta\in(0,(2\zeta+1-\alpha)/2]$}\label{s.pc}

One of the main ingredients in the proof of heat kernel estimates is the invariance of the ground state expressed in terms of the heat kernel. 
This is the content of the following important theorem.

\begin{theorem}
  \label{heatkernelharmonic}
  Let $\zeta\in(-1/2,\infty)$, $\alpha\in(0,2]\cap(0,2\zeta+1)$, $\eta\in(0,(2\zeta+1-\alpha)/2]$, and $r,t>0$. Then,
  \begin{align}
    \label{eq:heatkernelharmonic1}
    \int_0^\infty ds\, s^{2\zeta} p_{\zeta,\eta}^{(\alpha)}(t,r,s)s^{-\eta}
    = r^{-\eta}.
  \end{align}
  Moreover, for $\beta\in[0,2\zeta+1-\alpha-\eta)$, we have
  \begin{align}
    \label{eq:heatkernelharmonic2}
    \begin{split}
      & \int_0^\infty ds\, s^{2\zeta} p_{\zeta,\eta}^{(\alpha)}(t,r,s)s^{-\beta}\,ds \\
      & \quad = r^{-\beta} + \left(1-\frac{\Psi_\zeta(\beta)}{\Psi_\zeta(\eta)}\right)\int_0^t d\tau\int_0^\infty ds\, s^{2\zeta} p_{\zeta,\eta}^{(\alpha)}(\tau,r,s)\, q(s) \, s^{-\beta}.
    \end{split}
  \end{align}
\end{theorem}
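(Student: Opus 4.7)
My plan is to derive \eqref{eq:heatkernelharmonic2} by first establishing an auxiliary free-kernel identity for $p_\zeta^{(\alpha)}$ tested against a power function, and then propagating it through Duhamel's formula \eqref{eq:duhamelclassictransformed} to the perturbed kernel $p_{\zeta,\eta}^{(\alpha)}$. The invariance \eqref{eq:heatkernelharmonic1} will then follow as the special case $\beta = \eta$ of \eqref{eq:heatkernelharmonic2}, in which the coefficient $1 - \Psi_\zeta(\beta)/\Psi_\zeta(\eta)$ vanishes.

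The first task is to prove, for $\beta \in [0, 2\zeta+1-\alpha)$, the free-kernel identity
\begin{equation*}
r^{-\beta} = \int_0^\infty s^{2\zeta} p_\zeta^{(\alpha)}(t,r,s) s^{-\beta}\,ds + \Psi_\zeta(\beta) \int_0^t d\tau \int_0^\infty s^{2\zeta} p_\zeta^{(\alpha)}(\tau,r,s) s^{-\alpha-\beta}\,ds.
\end{equation*}
Conceptually this is the time-integrated form of the eigen-relation $\cl_\zeta^{\alpha/2} s^{-\beta} = \Psi_\zeta(\beta) s^{-\alpha-\beta}$, which reflects the scaling invariance of the operator. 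Rigorously, I will derive it by representing $s^{-\beta}$ as a constant multiple of $h_{\bar\beta,\bar\gamma}$ from \eqref{eq:defhbetagammaalphatransformed} with admissible $(\bar\beta, \bar\gamma)$ such that $\alpha\bar\beta+\bar\gamma-2\zeta = -\beta$, using Chapman--Kolmogorov \eqref{eq:chapman}, and recognizing the ratio $\Psi_\zeta(\beta) = (\bar\beta - 1) h_{\bar\beta-1,\bar\gamma}/h_{\bar\beta,\bar\gamma}$ as a consequence of \eqref{eq:timeshiftconstant1resultalpha}, which is the Fitzsimmons-ratio framework behind \eqref{eq:hardypottransformedalpha}. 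All integrals are finite by Lemma~\ref{integralana1}.

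Using $q(s) = \Psi_\zeta(\eta) s^{-\alpha}$, the free identity rewrites as $r^{-\beta} - J(t,r) = \frac{\Psi_\zeta(\beta)}{\Psi_\zeta(\eta)} A(t,r)$, where $J(t,r) := \int s^{2\zeta} p_\zeta^{(\alpha)}(t,r,s) s^{-\beta}\,ds$ and $A(t,r) := \int_0^t\! d\tau \int s^{2\zeta} p_\zeta^{(\alpha)}(\tau,r,s) q(s) s^{-\beta}\,ds$. The natural ansatz is $\tilde I(t,r) := r^{-\beta} + (1 - \frac{\Psi_\zeta(\beta)}{\Psi_\zeta(\eta)}) G(t,r)$ with $G(t,r) := \int_0^t\! d\tau \int s^{2\zeta} p_{\zeta,\eta}^{(\alpha)}(\tau,r,s) q(s) s^{-\beta}\,ds$. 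I will check directly that both $I(t,r) := \int s^{2\zeta} p_{\zeta,\eta}^{(\alpha)}(t,r,s) s^{-\beta}\,ds$ and $\tilde I(t,r)$ satisfy the Duhamel fixed-point equation $F(t,r) = J(t,r) + \int_0^t\! d\tau \int z^{2\zeta} p_\zeta^{(\alpha)}(\tau,r,z) q(z) F(t-\tau,z)\,dz$; for $\tilde I$ this reduces, via a Fubini swap applied to the Duhamel expansion of $p_{\zeta,\eta}^{(\alpha)}$ inside $G$, to the relation $G(t,r) = A(t,r) + \int_0^t\! d\sigma \int z^{2\zeta} p_\zeta^{(\alpha)}(\sigma,r,z) q(z) G(t-\sigma,z)\,dz$ combined with the free identity of the previous step. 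Uniqueness of the fixed point, from the convergent Neumann series \eqref{eq:feynmankactransformed} when $\eta \ge 0$, and from the analogous construction in the Appendix when $\eta < 0$, then forces $I = \tilde I$, which is \eqref{eq:heatkernelharmonic2}.

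The main obstacle will be controlling the absolute convergence of every integral involved in these manipulations. The bound $\beta < 2\zeta+1-\alpha-\eta$ is precisely the threshold that ensures $G(t,r) < \infty$ and that each term in the iterated Neumann expansion of $I(t,r)$ is integrable. To this end I will combine the supermedian estimate of Proposition~\ref{propertiesschrodheatkernel}(4) (which controls $p_{\zeta,\eta}^{(\alpha)}$ against the weight $s^{-\eta}$) with the scale-invariant bound on $\int s^{2\zeta} p_\zeta^{(\alpha)}(\tau,r,s) s^{-\delta}\,ds$ from Lemma~\ref{integralana1} and the pointwise estimates of Proposition~\ref{heatkernelalpha1subordinatedboundsfinal}. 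The delicate compensation between the Hardy singularity $s^{-\alpha}$ hidden in $q$ and the ground-state weight $s^{-\eta}$ is the technical heart of the argument, and is what necessitates the separate treatments of $\eta \ge 0$ in Subsection~\ref{s.pc} and of $\eta < 0$ in Subsection~\ref{ss:integralanalysisnegativeeta} foreshadowed in the introductory paragraph of Section~\ref{s:integralana}.
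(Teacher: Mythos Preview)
Your free-kernel identity is exactly the paper's Lemma~\ref{boglem33}, and propagating it through Duhamel is the right idea. But two steps fail as written. First, at the critical value $\eta=(2\zeta+1-\alpha)/2$ the admissible range in \eqref{eq:heatkernelharmonic2} becomes $[0,\eta)$, so $\beta=\eta$ is \emph{not} a special case and \eqref{eq:heatkernelharmonic1} does not drop out. The paper treats criticality separately: it writes $p_{\zeta,\eta}^{(\alpha)}$ as a positive Schr\"odinger perturbation of a subcritical $p_{\zeta,\beta}^{(\alpha)}$, uses the already-proved invariance \eqref{eq:heatkernelharmonic1} for the latter inside Duhamel to obtain \eqref{eq:heatkernelharmonic2}, and then combines the resulting one-sided inequality $\int p_{\zeta,\eta}^{(\alpha)}(t,r,s)s^{2\zeta-\beta}\,ds\ge r^{-\beta}$ (let $\beta\nearrow\eta$ via dominated convergence) with the supermedian upper bound \eqref{eq:supermediantransformedalphaintro} to squeeze out equality.

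Second, your fixed-point uniqueness claim is not free: the Neumann series furnishes \emph{one} solution, but forcing $\tilde I=I$ requires the iterated remainder to vanish, hence an a priori bound on $G(t,r)$. For $\beta\in(0,\eta]$ the tools you list (the supermedian bound for the weight $s^{-\eta}$ and Lemma~\ref{integralana1} for the \emph{free} kernel) do not control $\int_0^t\!\int p_{\zeta,\eta}^{(\alpha)}(\tau,r,s)\,s^{2\zeta-\alpha-\beta}\,ds\,d\tau$; the paper's finiteness Lemma~\ref{lem:iiptconv} needs, in addition, the mass estimate Proposition~\ref{bogprop32} and the geometric-series bound Corollary~\ref{bogcor35}. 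The paper sidesteps uniqueness altogether: it adds the Duhamel identity for $\int p_{\zeta,\eta}^{(\alpha)}\,s^{2\zeta-\beta}\,ds$ to $\Psi_\zeta(\beta)$ times the time-integrated Duhamel identity for $\int p_{\zeta,\eta}^{(\alpha)}\,s^{2\zeta-\alpha-\beta}\,ds$, applies the free-kernel identity inside the brackets, and arrives at \eqref{eq:tpIntpos} directly; \eqref{eq:heatkernelharmonic2} then follows by subtracting a quantity already known to be finite.
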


For $\alpha=2$, Theorem~\ref{heatkernelharmonic} follows, e.g., from the explicit expression for $p_{\zeta,\eta}^{(2)}$ in \eqref{eq:mainresultalpha2}. Thus, we may freely assume $\alpha\in(0,2)$ in the following, although all lemmas, propositions, and theorems in this subsection also hold true for $\alpha=2$.

To prove Theorem~\ref{heatkernelharmonic}, we need several auxiliary statements, which follow almost exactly as in \cite{Bogdanetal2019}.
In Proposition~\ref{bogprop32}, we estimate the mass of $p_{\zeta,\eta}^{(\alpha)}$ by $1+(r/t^{1/\alpha})^{-\eta}$.
Auxiliary bounds to that end are contained in Lemmas~\ref{bogprop32aux} and \ref{chapmankolmogorovinequality}. Lemmas~\ref{boglem34} and~\ref{boglem33} contain the heart of the proof of Theorem~\ref{heatkernelharmonic}. Here, we integrate the zeroth and $n$-th summands of the perturbation series \eqref{eq:feynmankactransformed} against monomials. We note the role of the auxiliary parameter $\beta$, in particular \eqref{eq:heatkernelharmonic2}, in proving \eqref{eq:heatkernelharmonic1}. 
We also note that \eqref{eq:heatkernelharmonic2} plays an important role to prove the upper bound in \eqref{eq:mainresultgen} in Theorem~\ref{mainresultgen} for small distances to the origin; see Lemmas~\ref{boglem44} and \ref{boglem46}.
We apply Lemma~\ref{boglem34} in Corollary~\ref{bogcor35} to obtain a weaker version of Theorem~\ref{heatkernelharmonic}, which will be useful to prove the upper bounds for $p_{\zeta,\eta}^{(\alpha)}$; see, in particular, Lemma~\ref{boglem46}.
These results together with the finiteness of certain space and space-time integrals over $p_{\zeta,\eta}^{(\alpha)}$ in Lemma~\ref{lem:iiptconv} eventually enable us to conclude the proof of Theorem~\ref{heatkernelharmonic}.
In Corollary~\ref{bogcor38}, we collect some convenient consequences of Theorem~\ref{heatkernelharmonic} that will be used to prove the bounds for $p_{\zeta,\eta}^{(\alpha)}$ when $\eta>0$.
We finish this section with Lemma~\ref{pzetaetafinite}, where we show the finiteness of $p_{\zeta,\eta}^{(\alpha)}(t,r,s)>0$ for all $r,s,t>0$.

\smallskip
Let us now give the details and start with the following auxiliary bound.
\begin{lemma}
  \label{bogprop32aux}
  For $\zeta\in(-1/2,\infty)$, $\alpha\in(0,2]$, $\eta\in(0,(2\zeta+1-\alpha)/2]$, and $r,t,R>0$,
  \begin{align}
    \label{eq:bogprop32aux}
    \int_0^1 d\tau\int_0^R dz\, z^{2\zeta} p_{\zeta,\eta}^{(\alpha)}(\tau,r,z)q(z)
    \leq \frac{(2R)^{\eta}}{c} r^{-\eta},
  \end{align}
  where $c=c_R$ is the constant appearing in Lemma~\ref{posmassheatkernel}.
\end{lemma}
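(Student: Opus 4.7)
The natural idea is to start from the Duhamel identity in Proposition~\ref{propertiesschrodheatkernel}(2), truncate the $z$-integration to $(0,R)$ (which is legitimate because, with $\eta>0$, all Duhamel terms are nonnegative), and then integrate the resulting inequality against the measure $w^{2\zeta}\,dw$ on $(0,2R)$. This produces a \emph{double-sided sandwich}: the inner $w$-integral is bounded below by the mass-concentration result of Lemma~\ref{posmassheatkernel}, while the outer integral is controlled from above by the supermedian (and in fact invariance-type) property \eqref{eq:supermediantransformedalphaintro} of $h(r)=r^{-\eta}$.

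\textbf{Main steps.} First, by Proposition~\ref{propertiesschrodheatkernel}(2) and nonnegativity of $q=\Psi_\zeta(\eta)/z^\alpha$ on $\R_+$ (recall $\eta\in(0,(2\zeta+1-\alpha)/2]$),
\begin{equation*}
  p_{\zeta,\eta}^{(\alpha)}(1,r,w)\;\ge\;\int_0^1 d\tau\int_0^R dz\, z^{2\zeta}\,p_{\zeta,\eta}^{(\alpha)}(\tau,r,z)\,q(z)\,p_\zeta^{(\alpha)}(1-\tau,z,w).
\end{equation*}
Integrate both sides against $w^{2\zeta}\,dw$ on $(0,2R)$ and apply Fubini on the right. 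For the resulting innermost integral $\int_0^{2R}dw\,w^{2\zeta}\,p_\zeta^{(\alpha)}(1-\tau,z,w)$, use Lemma~\ref{posmassheatkernel} with $z<R$ and time $1-\tau\in(0,1]$ to bound it below by the constant $c=c_R$ (when $R\le 1$ one applies the lemma with a larger $R$ and absorbs the resulting numerical factor into $c_R$, so the final constant still depends only on $R$).

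For the upper bound on the outer integral, simply note that $w^{-\eta}\ge(2R)^{-\eta}$ on $(0,2R)$ since $\eta>0$, whence
\begin{equation*}
  \int_0^{2R} dw\,w^{2\zeta}\,p_{\zeta,\eta}^{(\alpha)}(1,r,w)\;\le\;(2R)^{\eta}\int_0^\infty dw\,w^{2\zeta}\,p_{\zeta,\eta}^{(\alpha)}(1,r,w)\,w^{-\eta}\;\le\;(2R)^{\eta}\,r^{-\eta},
\end{equation*}
where the last inequality is the supermedian property \eqref{eq:supermediantransformedalphaintro}. Combining these two sandwich estimates yields the asserted inequality.

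\textbf{Expected obstacle.} The proof is essentially a one-line application of Duhamel plus the supermedian property once the correct sandwich is identified; the only genuine subtlety is verifying that Lemma~\ref{posmassheatkernel} is invoked in the admissible range (i.e.\ handling the small-$R$ case so that $c_R$ is still a positive constant depending solely on $R$). Everything else is forced by positivity of $q$ and by the already-established properties of $p_{\zeta,\eta}^{(\alpha)}$, so no finer heat-kernel estimates are needed at this stage.
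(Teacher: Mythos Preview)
Your proposal is correct and follows essentially the same route as the paper: truncate the second Duhamel formula, integrate against $w^{2\zeta}\,dw$ on $(0,2R)$, apply Lemma~\ref{posmassheatkernel} to bound the inner $w$-integral from below by $c$, and then use the supermedian property \eqref{eq:supermediantransformedalphaintro} after inserting $w^{-\eta}\ge(2R)^{-\eta}$. Your remark about the small-$R$ case is a valid observation (Lemma~\ref{posmassheatkernel} is stated for $R>1$), though in the paper the lemma is only ever invoked with $R\ge(2\Psi_\zeta(\eta))^{1/\alpha}\vee 2>1$, so this subtlety does not arise in practice.
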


\begin{proof}
  By Lemma~\ref{posmassheatkernel}, there is $c=c_R\in(0,1)$ such that
  \begin{align}
    c \leq \int_0^{2R} p_\zeta^{(\alpha)}(1-\tau,z,s)s^{2\zeta}\,ds \leq 1,
    \quad 0<z<R,\,\tau\in(0,1].
  \end{align}
  Thus, by Duhamel \eqref{eq:duhamelclassictransformed} and the supermedian property \eqref{eq:supermediantransformedalphaintro}, we have
  \begin{align}
    \begin{split}
      & \int_0^1 d\tau\int_0^R dz\, z^{2\zeta} p_{\zeta,\eta}^{(\alpha)}(\tau,r,z)q(z) \\
      & \quad \leq \frac1c \int_0^{2R} ds\, s^{2\zeta}\int_0^1 d\tau\int_0^R dz\, z^{2\zeta} p_{\zeta,\eta}^{(\alpha)}(\tau,r,z)q(z)p_\zeta^{(\alpha)}(1-\tau,z,s) \\
      & \quad \leq \frac1c \int_0^{2R}ds\, s^{2\zeta} p_{\zeta,\eta}^{(\alpha)}(1,r,s)
      \leq \frac{(2R)^{\eta}}{c}\int_0^{2R} ds\, s^{2\zeta} p_{\zeta,\eta}^{(\alpha)}(1,r,s) s^{-\eta}
      \leq \frac{(2R)^{\eta}}{c} r^{-\eta}.
    \end{split}
  \end{align}
  This concludes the proof.
\end{proof}

We record the following Chapman--Kolmogorov-type inequality for partial sums of the perturbation series \eqref{eq:feynmankactransformed}.

\begin{lemma}
  \label{chapmankolmogorovinequality}
  Let $\zeta\in(-1/2,\infty)$, $\alpha\in(0,2]$, $\eta\in(0,(2\zeta+1-\alpha)/2]$, and $r,s,t>0$.
  For $n\in\N_0=\{0,1,2,...\}$, let
  \begin{align}
    P_t^{(n,D)}(r,s) := \sum_{k=0}^n p_t^{(k,D)}(r,s),
  \end{align}
  with $p_t^{(n,D)}(r,s)$ defined in \eqref{eq:feynmankactransformed}. Then, for all $0<\tau<t$ and $n\in\N_0$,
  \begin{align}
    \label{eq:chapmankolmogorovinequality}
    \int_0^\infty dz\, z^{2\zeta} P_{\tau}^{(n,D)}(r,z)p_\zeta^{(\alpha)}(t-\tau,z,s)
    \leq P_t^{(n,D)}(r,s).
  \end{align}
\end{lemma}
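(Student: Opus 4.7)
The natural approach is induction on $n \in \N_0$. The base case $n = 0$ is immediate: $P_t^{(0,D)} = p_\zeta^{(\alpha)}(t,\cdot,\cdot)$, and the left-hand side of \eqref{eq:chapmankolmogorovinequality} collapses to $p_\zeta^{(\alpha)}(t,r,s)$ by Chapman--Kolmogorov \eqref{eq:chapman}, in fact with equality.

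For the inductive step, the key preparatory observation---obtained by summing the Duhamel recursion \eqref{eq:feynmankactransformed} over $k = 1, \ldots, n$ and interchanging summation with integration---is the telescoped identity
\begin{align*}
P_\tau^{(n,D)}(r,z) = p_\zeta^{(\alpha)}(\tau,r,z) + \int_0^\tau du \int_0^\infty dy\, y^{2\zeta}\, p_\zeta^{(\alpha)}(u,r,y)\, q(y)\, P_{\tau - u}^{(n-1,D)}(y,z),
\end{align*}
and the analogous identity for $P_t^{(n,D)}(r,s)$ with $\tau$ replaced by $t$. I would multiply this by $z^{2\zeta} p_\zeta^{(\alpha)}(t-\tau,z,s)$, integrate over $z > 0$, and apply Fubini---justified by the nonnegativity of all integrands, which in turn rests on $\eta > 0$ so that $q > 0$---to move the $z$-integration inside. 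The leading term then reduces to $p_\zeta^{(\alpha)}(t,r,s)$ by \eqref{eq:chapman}, while the inner $z$-integral inside the second term fits exactly the inductive hypothesis at order $n-1$ with time pair $(\tau - u, t - \tau)$, whose sum is $t - u$; this gives the upper bound $P_{t-u}^{(n-1,D)}(y,s)$.

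Putting the two steps together produces
\begin{align*}
\int_0^\infty dz\, z^{2\zeta}\, P_\tau^{(n,D)}(r,z)\, p_\zeta^{(\alpha)}(t-\tau,z,s) \leq p_\zeta^{(\alpha)}(t,r,s) + \int_0^\tau du \int_0^\infty dy\, y^{2\zeta}\, p_\zeta^{(\alpha)}(u,r,y)\, q(y)\, P_{t-u}^{(n-1,D)}(y,s),
\end{align*}
which differs from the telescoped expansion of $P_t^{(n,D)}(r,s)$ only in that the outer $u$-integration runs over $[0,\tau]$ rather than $[0,t]$. Since $\tau \leq t$ and the integrand is nonnegative, enlarging the domain of integration only increases the value, closing the induction.

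The only mildly subtle step is the time bookkeeping when invoking the inductive hypothesis, namely that the shifted times $\tau - u$ and $t - \tau$ sum to $t - u$, so that the hypothesis applies with consistent arguments. The decisive structural input is the nonnegativity of $q$ (guaranteed by $\eta > 0$): it makes the telescoped integral well-defined, legitimizes Fubini, and is exactly what turns the equality for $n=0$ into the inequality for $n \geq 1$ once the $u$-integration is enlarged from $[0,\tau]$ to $[0,t]$.
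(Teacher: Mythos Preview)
Your argument is correct. The paper's proof reaches the same conclusion by a closely related but slightly different route: instead of invoking the inductive hypothesis, it expands each term $p_\tau^{(k,D)}(r,z)$ using the \emph{other} form of the Duhamel recursion (with the bare kernel $p_\zeta^{(\alpha)}$ on the right and $p_\rho^{(k-1,D)}$ on the left), so that after integrating against $z^{2\zeta}p_\zeta^{(\alpha)}(t-\tau,z,s)$ the two adjacent bare kernels combine by Chapman--Kolmogorov into $p_\zeta^{(\alpha)}(t-\rho,w,s)$, giving an exact identity term by term; the inequality then enters only in the last step by enlarging the $\rho$-integration from $[0,\tau]$ to $[0,t]$. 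Your approach instead places the bare kernel on the left and the partial sum $P^{(n-1,D)}$ on the right, which is why you need the inductive hypothesis rather than Chapman--Kolmogorov at the key step. The paper's version is a direct computation for each $n$, while yours is a genuine induction; both rely in exactly the same way on the nonnegativity of $q$ to justify the final enlargement of the time integral.
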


\begin{proof}
  By scaling, it suffices to consider $0<\tau<1=t$.
  For $n=0$, the statement is just the Chapman--Kolmogorov equality, so let $n\in\N$.
  Using the definition of $P_t^{(n,D)}$ and $p_t^{(n,D)}$, we get
  \begin{align*}
    \begin{split}
      & \int_0^\infty dz\, z^{2\zeta} P_{\tau}^{(n,D)}(r,z) p_\zeta^{(\alpha)}(1-\tau,z,s) \\
      & \quad = \int_0^\infty dz\, z^{2\zeta} p_{\tau}^{(0,D)}(r,z) p_\zeta^{(\alpha)}(1-\tau,z,s) \\
      & \quad + \sum_{k=1}^n \int_0^\infty dz\, z^{2\zeta} \int_0^\tau d\rho \int_0^\infty dw\, w^{2\zeta} p_{\rho}^{(k-1,D)}(r,w) q(w)p_{\zeta}^{(\alpha)}(\tau-\rho,w,z) p_\zeta^{(\alpha)}(1-\tau,z,s) \\
      & \quad = p_\zeta^{(\alpha)}(1,r,s)
      + \sum_{k=1}^n\int_0^\tau d\rho \int_0^\infty dw\, w^{2\zeta} p_{\rho}^{(k-1,D)}(r,w) q(w)p_{\zeta}^{(\alpha)}(1-\rho,w,s) \\
      & \quad \leq p_\zeta^{(\alpha)}(1,r,s)
        + \sum_{k=1}^n\int_0^1 d\rho \int_0^\infty dw\, w^{2\zeta} p_{\rho}^{(k-1,D)}(r,w) q(w)p_{\zeta}^{(\alpha)}(1-\rho,w,s) \\
      & \quad = P_1^{(n,D)}(r,s).
    \end{split}
  \end{align*}
  This concludes the proof.
\end{proof}

\begin{proposition}
  \label{bogprop32}
  Let $\zeta\in(-1/2,\infty)$, $\alpha\in(0,2]$, $\eta\in(0,(2\zeta+1-\alpha)/2]$, and $r,s,t>0$.
  Then  $p_{\zeta,\eta}^{(\alpha)}(t,r,s)<\infty$ for all $r>0$ and almost all $s>0$, and there is a constant $M\geq1$ such that
  \begin{align}
    \label{eq:bogprop32}
    \int_0^\infty p_{\zeta,\eta}^{(\alpha)}(t,r,s)s^{2\zeta}\,ds
    \leq M[1+(t^{-1/\alpha}r)^{-\eta}].
  \end{align}
\end{proposition}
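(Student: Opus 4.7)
The plan is to prove the bound by induction on the partial sums $P^{(n,D)}$ of the perturbation series, because we do not yet know that the full kernel $p_{\zeta,\eta}^{(\alpha)}(t,r,s)$ is finite and any direct argument using \eqref{eq:duhamelclassictransformed} would be circular. By the scaling \eqref{eq:scalingalphahardy}, it suffices to treat $t=1$. Writing
\begin{align*}
F_n(r):=\int_0^\infty P_1^{(n,D)}(r,s)\,s^{2\zeta}\,ds,\qquad F(r):=\int_0^\infty p_{\zeta,\eta}^{(\alpha)}(1,r,s)\,s^{2\zeta}\,ds,
\end{align*}
monotone convergence gives $F_n\uparrow F$, and the desired bound \eqref{eq:bogprop32} reduces to showing $F_n(r)\le M(1+r^{-\eta})$ with $M$ independent of $n$.

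The key recursion is the \emph{dual} Duhamel identity for the partial sums,
\begin{align*}
P_1^{(n,D)}(r,s)=p_\zeta^{(\alpha)}(1,r,s)+\int_0^1 d\tau\int_0^\infty dz\,z^{2\zeta}\,P_\tau^{(n-1,D)}(r,z)\,q(z)\,p_\zeta^{(\alpha)}(1-\tau,z,s),
\end{align*}
with the partial sum on the \emph{left} of the convolution; one obtains this by Fubini, rewriting the defining $k$-fold iterated integrals in \eqref{eq:feynmankactransformed} as integrals over a time-ordered simplex. Integrating against $s^{2\zeta}\,ds$ and using the normalization \eqref{eq:normalizedalpha} of $p_\zeta^{(\alpha)}$ yields the clean recursion
\begin{align*}
F_n(r)=1+\int_0^1 d\tau\int_0^\infty dz\,z^{2\zeta}\,P_\tau^{(n-1,D)}(r,z)\,q(z),\qquad F_0(r)=1.
\end{align*}
Having the partial sum on the left is essential: the alternative form with $p_\zeta^{(\alpha)}$ on the left would, via Lemma~\ref{integralana1} at the borderline $\delta=\alpha$, produce an unbounded factor $\log(1+r^{-\alpha})$ at small $r$, and the induction would not close.

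For the inductive step, I would split the $z$-integral at a threshold $R>1$. On $\{z<R\}$, use the pointwise bound $P_\tau^{(n-1,D)}\le p_{\zeta,\eta}^{(\alpha)}$ and Lemma~\ref{bogprop32aux} directly, giving $C_R\,r^{-\eta}$ with $C_R:=(2R)^\eta/c_R$. On $\{z>R\}$, use $q(z)\le \Psi_\zeta(\eta)/R^\alpha$ and recognize, via the scaling $\int_0^\infty P_\tau^{(n-1,D)}(r,z)\,z^{2\zeta}\,dz=F_{n-1}(r\tau^{-1/\alpha})$, the bound
\begin{align*}
\int_0^1 d\tau\int_R^\infty dz\,z^{2\zeta}\,P_\tau^{(n-1,D)}(r,z)\,q(z)\le \frac{\Psi_\zeta(\eta)}{R^\alpha}\int_0^1 F_{n-1}(r\tau^{-1/\alpha})\,d\tau.
\end{align*}
Inserting the inductive hypothesis $F_{n-1}(u)\le M(1+u^{-\eta})$ and using $\int_0^1(1+r^{-\eta}\tau^{\eta/\alpha})\,d\tau=1+\tfrac{\alpha}{\alpha+\eta}r^{-\eta}\le 1+r^{-\eta}$ gives
\begin{align*}
F_n(r)\le 1+C_R\,r^{-\eta}+\frac{\Psi_\zeta(\eta)\,M}{R^\alpha}\bigl(1+r^{-\eta}\bigr).
\end{align*}

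The only real obstacle is to close this loop with a constant $M$ independent of $n$. I would choose $R>\max(1,\Psi_\zeta(\eta)^{1/\alpha})$ so that $\theta:=\Psi_\zeta(\eta)/R^\alpha<1$, and then take any $M\ge (1\vee C_R)/(1-\theta)$; both $1+\theta M\le M$ and $C_R+\theta M\le M$ then hold, so $F_n(r)\le M(1+r^{-\eta})$, closing the induction. Letting $n\to\infty$ by monotone convergence yields \eqref{eq:bogprop32}, and the almost-everywhere finiteness of $s\mapsto p_{\zeta,\eta}^{(\alpha)}(t,r,s)$ is an immediate consequence of its weighted integrability.
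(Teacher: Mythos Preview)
Your proof is correct and follows the same high-level strategy as the paper: reduce to $t=1$ by scaling, work with the partial sums $P^{(n,D)}$, use the dual Duhamel identity, split the $z$-integral at a threshold $R$, and invoke Lemma~\ref{bogprop32aux} on $\{z<R\}$. The execution differs in one useful way. The paper first obtains a \emph{pointwise} inequality: it keeps the factor $p_\zeta^{(\alpha)}(1-\tau,z,s)$, bounds the contribution from $z>R$ by $\tfrac{\Psi_\zeta(\eta)}{R^\alpha}P_1^{(n,D)}(r,s)$ via the Chapman--Kolmogorov-type inequality for partial sums (Lemma~\ref{chapmankolmogorovinequality}), and absorbs this into the left-hand side---which in turn requires a separate check that each $p_1^{(n,D)}(r,s)<\infty$. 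You instead integrate in $s$ first and close a genuine induction on the masses $F_n$, using scaling and the hypothesis $F_{n-1}\le M(1+r^{-\eta})$ to control the $z>R$ piece; this bypasses both Lemma~\ref{chapmankolmogorovinequality} and the pointwise finiteness argument. What the paper's route buys is the intermediate pointwise bound \eqref{eq:bogprop32aux2}, which is reused later in Lemmas~\ref{boglem43} and~\ref{pzetaetafinite}; your route is slightly more self-contained and elementary for the mass estimate alone.
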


\begin{proof}
  By the scaling \eqref{eq:scalingalpha}, it suffices to consider $t=1$.
  For $n\in\N$, we consider the $n$-th partial sum $P_t^{(n,D)}(r,s) = \sum_{k=0}^n p_t^{(n,D)}(r,s)$, with $p_t^{(n,D)}(r,s)$ defined in \eqref{eq:feynmankactransformed}. Then, by Duhamel's formula \eqref{eq:duhamelclassictransformed} and $P_1^{(n-1,D)}(r,s)\leq P_1^{(n,D)}(r,s)\leq p_{\zeta,\eta}^{(\alpha)}(1,r,s)$,
  \begin{align}
    \label{eq:bogprop32aux5}
    \begin{split}
      P_1^{(n,D)}(r,s)
      & \leq p_\zeta^{(\alpha)}(1,r,s)
        + \int_0^1 d\tau\int_0^R dz\, z^{2\zeta} p_{\zeta,\eta}^{(\alpha)}(\tau,r,z)(r,z)q(z)p_\zeta^{(\alpha)}(1-\tau,z,s) \\
      & \quad + \frac{\Psi_\zeta(\eta)}{R^\alpha}\int_0^1 d\tau\int_R^\infty dz\, z^{2\zeta} P_{\tau}^{(n,D)}(r,z)p_\zeta^{(\alpha)}(1-\tau,z,s).
    \end{split}
  \end{align}
  By Lemma~\ref{chapmankolmogorovinequality},
  \begin{align}
    \label{eq:bogprop32aux6}
    \int_0^1 d\tau\int_R^\infty dz\, z^{2\zeta} P_{\tau}^{(n,D)}(\tau,r,z)p_\zeta^{(\alpha)}(1-\tau,z,s) \leq P_1^{(n,D)}(r,s).
  \end{align}
  By induction and the heat kernel bounds in Proposition~\ref{heatkernelalpha1subordinatedboundsfinal}, for each $n\in\N_0$, we have $p_1^{(n,D)}(r,s)<\infty$. Thus, for $R\geq(2\Psi_\zeta(\eta))^{1/\alpha}\vee2$ and all $r,s>0$, we have, by \eqref{eq:bogprop32aux5},
  \begin{align}
    \label{eq:bogprop32aux2}
    P_{1}^{(n,D)}(r,s) \leq 2p_\zeta^{(\alpha)}(1,r,s) + 2\int_0^1 d\tau\int_0^R dz\, z^{2\zeta} p_{\zeta,\eta}^{(\alpha)}(\tau,r,z)q(z)p_\zeta^{(\alpha)}(1-\tau,z,s).
  \end{align}
  Integrating both sides of \eqref{eq:bogprop32aux2} against $s^{2\zeta}\,ds$ and using $\int_0^\infty p_\zeta^{(\alpha)}(1,r,s)s^{2\zeta}\,ds=1$ for all $r>0$ by \eqref{eq:normalizedalpha}, we get, upon letting $n\to\infty$ on the left-hand side of \eqref{eq:bogprop32aux2},
  \begin{align}
    \label{eq:bogprop32aux4}
    \int_0^\infty ds\, s^{2\zeta} p_{\zeta,\eta}^{(\alpha)}(1,r,s)
    \lesssim 1 + \int_0^1 d\tau\int_0^R dz\, z^{2\zeta} p_{\zeta,\eta}^{(\alpha)}(\tau,r,z)q(z)
    \lesssim 1 + R^\eta r^{-\eta}.
  \end{align}
  Here, we used Lemma~\ref{bogprop32aux} to obtain the final inequality in \eqref{eq:bogprop32aux4}.
\end{proof}

\begin{lemma}  
  \label{boglem34}
  Let $\zeta\in(-1/2,\infty)$, $\alpha\in(0,2]$, $\eta\in[0,(2\zeta+1-\alpha)/2]$,
  $\beta\in(0,2\zeta+1-\alpha)$, $n\in\N_0$, and $r>0$.
  Then,
  \begin{align}
    \label{eq:boglem342inf}
    \int_0^\infty d\tau \int_0^\infty ds\, s^{2\zeta} p_\tau^{(n,D)}(r,s) s^{-\beta-\alpha}
    = \frac{\Psi_\zeta(\eta)^n}{\Psi_\zeta(\beta)^{n+1}} r^{-\beta}.
  \end{align}
\end{lemma}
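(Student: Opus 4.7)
\textbf{Proof plan for Lemma \ref{boglem34}.} The strategy is induction on $n\in\N_0$, bootstrapping the base case $n=0$ (which is essentially a restatement of the free computation \eqref{eq:defhbetagammaalphatransformed}) to all higher $n$ through the recursive definition of $p_\tau^{(n,D)}$ in \eqref{eq:feynmankactransformed}. Since every factor in the integrand is nonnegative, all Fubini-type swaps are justified by Tonelli, which avoids any absolute-integrability checks.

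\textbf{Base case $n=0$.} Here $p_\tau^{(0,D)}=p_\zeta^{(\alpha)}$, and the double integral under consideration is
\[
\int_0^\infty \frac{d\tau}{\tau}\,\tau^{1}\int_0^\infty ds\, s^{(2\zeta-\beta-\alpha)} p_\zeta^{(\alpha)}(\tau,r,s) = h_{1,\,2\zeta-\beta-\alpha}(r),
\]
with $h_{\cdot,\cdot}$ as in \eqref{eq:defhbetagammaalphatransformed}. The admissibility range, extended as in the footnote to the excerpt to $\beta\in(0,(2\zeta-\gamma)/\alpha]$, here reads $1\in(0,(\beta+\alpha)/\alpha]$ together with $2\zeta-\beta-\alpha>-1$, both of which hold under the hypothesis $\beta\in(0,2\zeta+1-\alpha)$. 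Applying \eqref{eq:defhbetagammaalphatransformed}, the integral equals $C^{(\alpha)}(1,2\zeta-\beta-\alpha,\zeta)\,r^{-\beta}$, and a short manipulation of \eqref{eq:timeshiftconstant1resultalpha}--\eqref{eq:timeshiftconstant1result} yields
\[
C^{(\alpha)}(1,2\zeta-\beta-\alpha,\zeta)=\frac{2^{-\alpha}\,\Gamma(\beta/2)\,\Gamma(\tfrac{2\zeta+1-\beta-\alpha}{2})}{\Gamma(\tfrac{2\zeta+1-\beta}{2})\,\Gamma(\tfrac{\alpha+\beta}{2})} = \frac{1}{\Psi_\zeta(\beta)},
\]
by direct comparison with \eqref{eq:defpsietazetaOLD}.

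\textbf{Inductive step.} Assume \eqref{eq:boglem342inf} for $n-1$. Inserting the recursion \eqref{eq:feynmankactransformed} into the definition of $I_n(r):=\int_0^\infty d\tau\int_0^\infty ds\, s^{2\zeta} p_\tau^{(n,D)}(r,s) s^{-\beta-\alpha}$, swapping the $(\tau,\rho,s,w)$-integrations by Tonelli, and substituting $u=\tau-\rho$, I obtain
\[
I_n(r)=\int_0^\infty d\rho\int_0^\infty dw\, w^{2\zeta}\, p_\zeta^{(\alpha)}(\rho,r,w)\, q(w)\,\Bigl[\int_0^\infty du\int_0^\infty ds\, s^{2\zeta}\, p_u^{(n-1,D)}(w,s)\, s^{-\beta-\alpha}\Bigr].
\]
The inner bracket equals $\Psi_\zeta(\eta)^{n-1}\Psi_\zeta(\beta)^{-n} w^{-\beta}$ by the inductive hypothesis. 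Using $q(w)=\Psi_\zeta(\eta)/w^\alpha$, the remaining integral against $w^{-\beta-\alpha}$ is again the $n=0$ case, so it equals $\Psi_\zeta(\beta)^{-1} r^{-\beta}$. Multiplying the three factors gives $I_n(r)=\Psi_\zeta(\eta)^n\Psi_\zeta(\beta)^{-(n+1)}\,r^{-\beta}$, completing the induction.

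\textbf{Main obstacle.} The only nontrivial computation is the Gamma-function identity $C^{(\alpha)}(1,2\zeta-\beta-\alpha,\zeta)=1/\Psi_\zeta(\beta)$, which encodes the fact that the singular potential $\Psi_\zeta(\beta)/s^\alpha$ is tuned so that $s^{-\beta}$ is exactly invariant under the resolvent of $\cl_\zeta^{\alpha/2}$; this was already the driving observation in \cite{BogdanMerz2024}. The degenerate endpoint $\eta=0$ (where $q\equiv 0$ and $p_\tau^{(n,D)}\equiv 0$ for $n\ge 1$) is consistent with the stated formula via $\Psi_\zeta(0)^n=0$ and so requires no separate treatment.
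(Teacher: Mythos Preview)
Your proof is correct and follows essentially the same approach as the paper: induction on $n$, with the base case reducing to the identity $C^{(\alpha)}(1,2\zeta-\beta-\alpha,\zeta)=1/\Psi_\zeta(\beta)$ and the inductive step unwinding the Duhamel recursion via Tonelli. The only cosmetic difference is that you place the free kernel $p_\zeta^{(\alpha)}$ first in the recursion and apply the inductive hypothesis to the inner $(u,s)$-integral, whereas the paper places $p^{(n,D)}$ first and applies the $n=0$ identity to the inner integral; both orderings are valid and lead to the same computation.
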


\begin{proof}
  We prove the statement using induction. By \eqref{eq:defhbetagammaalphatransformed}, we get
  \begin{align}
    \int_0^\infty dt \,\int_0^\infty ds\, s^{2\zeta}  p_\zeta^{(\alpha)}(t,r,s)s^{-\alpha-\beta}
    & = C^{(\alpha)}\left(1,2\zeta-\alpha-\beta,\zeta\right) r^{-\beta} \notag \\
    & = \frac{2^{-\alpha}  \Gamma \left(\frac{2\zeta+1-\alpha-\beta}{2} \right) \Gamma \left(\frac{\beta}{2}\right)}{\Gamma \left(\frac{\alpha+\beta}{2}\right) \Gamma\left(\frac{2\zeta+1-\beta}{2}\right)} r^{-\beta}  = \frac{1}{\Psi_\zeta(\beta)} r^{-\beta},  \label{eq:kappainv} 
  \end{align}
  which gives \eqref{eq:boglem342inf} for $n=0$. Now, suppose \eqref{eq:boglem342inf} holds for some $n\in\N_0$. Then, by the definition of $p_\tau^{(n,D)}(r,s)$ in the formula \eqref{eq:feynmankactransformed}, a time-translation $\tau\mapsto\tau+t$, \eqref{eq:kappainv}, and the induction hypothesis,
  \begin{align}
    \begin{split}
      & \int_0^\infty d\tau \int_0^\infty ds\, s^{2\zeta} p_\tau^{(n+1,D)}(r,s) s^{-\beta-\alpha} \\
      & \quad = \int_0^\infty d\tau \int_0^\infty ds\, s^{2\zeta} \int_0^\tau dt \int_0^\infty dz\, z^{2\zeta} p_{t}^{(n,D)}(r,z)q(z)p_\zeta^{(\alpha)}(\tau-t,z,s) s^{-\beta-\alpha} \\
      & \quad = \int_0^\infty dt \int_0^\infty dz\, z^{2\zeta} p_{t}^{(n,D)}(r,z)q(z) \int_0^\infty d\tau\, \int_0^\infty ds\, s^{2\zeta} p_\zeta^{(\alpha)}(\tau,z,s)s^{-\beta-\alpha} \\
      & \quad = \int_0^\infty d\tau' \int_0^\infty dz\, z^{2\zeta} p_{\tau'}^{(n,D)}(r,z)\Psi_\zeta(\eta)z^{-\alpha} \cdot\frac{z^{-\beta}}{\Psi_\zeta(\beta)} =\frac{\Psi_\zeta(\eta)^{n+1}}{\Psi_\zeta(\beta)^{n+2}}r^{-\beta}.
    \end{split}
  \end{align}
  This concludes the proof.
\end{proof}

\begin{corollary}
  \label{bogcor35}
  Let $\zeta\in(-1/2,\infty)$, $\alpha\in(0,2]\cap(0,2\zeta+1)$, $\eta\in(0,(2\zeta+1-\alpha)/2]$, and $\beta\in(\eta,2\zeta+1-\alpha-\eta)$. Then, for all $r>0$,
  \begin{align} 
    \label{eq:bogcor35inf}
    \int_0^\infty d\tau \int_0^\infty ds\, s^{2\zeta} p_{\zeta,\eta}^{(\alpha)}(\tau,r,s)s^{-\beta-\alpha} 
    = \frac{r^{-\beta}}{\Psi_\zeta(\beta)-\Psi_\zeta(\eta)}.
  \end{align}
\end{corollary}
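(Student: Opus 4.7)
The plan is to expand $p_{\zeta,\eta}^{(\alpha)}$ as the Duhamel perturbation series $\sum_{n\geq 0} p_\tau^{(n,D)}(r,s)$ from \eqref{eq:feynmankactransformed} and apply Lemma~\ref{boglem34} term-by-term. Since the integrand $s^{2\zeta} p_{\zeta,\eta}^{(\alpha)}(\tau,r,s) s^{-\beta-\alpha}$ is nonnegative, Tonelli's theorem allows interchanging the sum and the double integral without any delicate convergence argument. One then applies \eqref{eq:boglem342inf} from Lemma~\ref{boglem34} to each summand individually, which is legitimate because the range $\beta\in(\eta,2\zeta+1-\alpha-\eta)$ satisfies $\beta\in(0,2\zeta+1-\alpha)$ (recall $\eta>0$), as required in the hypothesis of that lemma.

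After applying Lemma~\ref{boglem34}, the expression reduces to the geometric series
\[
  \sum_{n\geq 0} \frac{\Psi_\zeta(\eta)^n}{\Psi_\zeta(\beta)^{n+1}}\, r^{-\beta} = \frac{r^{-\beta}}{\Psi_\zeta(\beta)}\sum_{n\geq 0} \left(\frac{\Psi_\zeta(\eta)}{\Psi_\zeta(\beta)}\right)^n = \frac{r^{-\beta}}{\Psi_\zeta(\beta)-\Psi_\zeta(\eta)},
\]
provided the ratio $\Psi_\zeta(\eta)/\Psi_\zeta(\beta)$ lies strictly in $[0,1)$. This is the only place where the restriction $\beta\in(\eta,2\zeta+1-\alpha-\eta)$ (rather than the wider $\beta\in(0,2\zeta+1-\alpha)$ from Lemma~\ref{boglem34}) becomes essential: the interval $(\eta,2\zeta+1-\alpha-\eta)$ is symmetric about the axis $\beta=(2\zeta+1-\alpha)/2$ at which $\Psi_\zeta$ attains its maximum, and every $\beta$ in this open interval is strictly closer to that axis than $\eta$ is. By the symmetry and strict monotonicity properties of $\Psi_\zeta$ recalled in Section~\ref{s:intro}, this yields $\Psi_\zeta(\beta)>\Psi_\zeta(\eta)>0$, so the geometric series converges and the claimed identity follows.

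The main (and essentially only) obstacle is verifying the strict inequality $\Psi_\zeta(\beta)>\Psi_\zeta(\eta)$; beyond that, the argument is a routine application of Tonelli's theorem combined with Lemma~\ref{boglem34}. No further heat-kernel estimates or convergence arguments are needed, since positivity makes the interchange of sum and integral automatic.
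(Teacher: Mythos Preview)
Your proposal is correct and follows essentially the same approach as the paper: expand $p_{\zeta,\eta}^{(\alpha)}$ into the perturbation series, apply Lemma~\ref{boglem34} termwise, and sum the resulting geometric series using $\Psi_\zeta(\beta)>\Psi_\zeta(\eta)$. Your write-up is in fact more detailed than the paper's one-line proof, which simply cites the series expansion, Lemma~\ref{boglem34}, and the symmetry of $\Psi_\zeta$ about $(2\zeta+1-\alpha)/2$.
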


\begin{proof}
  Since $\Psi_\zeta(\beta)>\Psi_\zeta(\eta)$ (by the symmetry of $\Psi_\zeta(\eta)$ about $\eta=(2\zeta+1-\alpha)/2$), the claim follows from $p_{\zeta,\eta}^{(\alpha)}(t,\cdot,\cdot)=\sum_{n\geq0}p_t^{(n,D)}$, Lemma~\ref{boglem34}, and geometric series.
\end{proof}

For $\eta=0$, i.e., $\Psi_\zeta(\eta)=0$, Theorem~\ref{heatkernelharmonic} is verified by the following lemma.

\begin{lemma}
  \label{boglem33}
  Let $\zeta\in(-1/2,\infty)$, $\alpha\in(0,2]$, and
  $\beta\in[0,2\zeta+1-\alpha)$.
  Then, for all $r,t>0$,
  \begin{align}
    \label{eq:boglem331}
    \int_0^\infty ds\, s^{2\zeta} p_\zeta^{(\alpha)}(t,r,s) s^{-\beta}
    = r^{-\beta}
    - \int_0^t d\tau \int_0^\infty ds\, s^{2\zeta} p_\zeta^{(\alpha)}(\tau,r,s) \, \frac{\Psi_\zeta(\beta)}{s^\alpha}\, s^{-\beta}.
  \end{align}
\end{lemma}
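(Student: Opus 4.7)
The case $\beta=0$ is trivial: by \eqref{eq:normalizedalpha} the left-hand side equals $1=r^0$ and the integrand on the right-hand side vanishes since $\Psi_\zeta(0)=0$. So assume $\beta\in(0,2\zeta+1-\alpha)$, in which case $\Psi_\zeta(\beta)>0$ by the sign analysis of $\Psi_\zeta$ recalled in Section~\ref{s:intro}. The key identity, already recorded in the first step of the induction in Lemma~\ref{boglem34} (see \eqref{eq:kappainv}), is
\begin{equation*}
  \int_0^\infty dt\int_0^\infty ds\, s^{2\zeta}\,p_\zeta^{(\alpha)}(t,r,s)\,s^{-\alpha-\beta}
  \;=\;\frac{r^{-\beta}}{\Psi_\zeta(\beta)}, \qquad r>0,
\end{equation*}
which I will use as the only non-elementary input.

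The idea is simply to split the outer integral at $t$ and recognize the tail via the Chapman--Kolmogorov relation \eqref{eq:chapman}. Writing
\begin{equation*}
  u(t,r):=\int_0^\infty ds\, s^{2\zeta}\,p_\zeta^{(\alpha)}(t,r,s)\,s^{-\beta},
\end{equation*}
and changing variables $t'=t+\tau$ in the tail, Tonelli (all integrands are nonnegative) gives
\begin{equation*}
  \int_t^\infty dt'\int_0^\infty ds\, s^{2\zeta}\,p_\zeta^{(\alpha)}(t',r,s)\,s^{-\alpha-\beta}
  \;=\;\int_0^\infty d\tau \int_0^\infty ds\, s^{2\zeta}\,p_\zeta^{(\alpha)}(t+\tau,r,s)\,s^{-\alpha-\beta}.
\end{equation*}
Applying \eqref{eq:chapman} with times $t$ and $\tau$, then Tonelli again, the right-hand side becomes
\begin{equation*}
  \int_0^\infty dz\, z^{2\zeta}\,p_\zeta^{(\alpha)}(t,r,z)\int_0^\infty d\tau\int_0^\infty ds\, s^{2\zeta}\,p_\zeta^{(\alpha)}(\tau,z,s)\,s^{-\alpha-\beta}
  \;=\;\frac{1}{\Psi_\zeta(\beta)}\int_0^\infty dz\, z^{2\zeta}\,p_\zeta^{(\alpha)}(t,r,z)\,z^{-\beta}
  \;=\;\frac{u(t,r)}{\Psi_\zeta(\beta)},
\end{equation*}
where the displayed identity for the inner double integral is the key input \eqref{eq:kappainv} applied at $z$ in place of $r$.

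Combining the splitting $\int_0^\infty = \int_0^t+\int_t^\infty$ with \eqref{eq:kappainv} for $r$ yields
\begin{equation*}
  \frac{r^{-\beta}}{\Psi_\zeta(\beta)}
  \;=\;\int_0^t d\tau\int_0^\infty ds\, s^{2\zeta}\,p_\zeta^{(\alpha)}(\tau,r,s)\,s^{-\alpha-\beta}\;+\;\frac{u(t,r)}{\Psi_\zeta(\beta)},
\end{equation*}
and multiplying by $\Psi_\zeta(\beta)$ and rearranging gives exactly \eqref{eq:boglem331}. I do not anticipate any real obstacle: all manipulations are Tonelli-level, and the range $\beta\in[0,2\zeta+1-\alpha)$ is precisely what guarantees finiteness of the integral in \eqref{eq:kappainv} (as the explicit $\Gamma$-function evaluation in that formula shows) and the strict positivity of $\Psi_\zeta(\beta)$ needed for the final division.
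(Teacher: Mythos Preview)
Your proof is correct and follows essentially the same approach as the paper: both treat $\beta=0$ via \eqref{eq:normalizedalpha} and $\Psi_\zeta(0)=0$, and for $\beta>0$ both split the time integral in \eqref{eq:kappainv} at $t$, apply Chapman--Kolmogorov to the tail, and use \eqref{eq:kappainv} once more (at the intermediate point) to identify the tail as $\Psi_\zeta(\beta)^{-1}\int_0^\infty dz\, z^{2\zeta}p_\zeta^{(\alpha)}(t,r,z)z^{-\beta}$. The only cosmetic difference is that the paper carries the factor $\Psi_\zeta(\beta)$ throughout rather than dividing by it at the end.
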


\begin{proof}
  The assertion for $\beta=0$ follows from \eqref{eq:normalizedalpha} and $\Psi_\zeta(0)=0$.
  For $\beta>0$, by \eqref{eq:kappainv} and Chapman-Kolmogorov,
  \begin{align*}
    r^{-\beta}
    & =  \int_0^t d\tau \,\int_0^\infty ds\, s^{2\zeta}  p_\zeta^{(\alpha)}(\tau,r,s) \frac{\Psi_\zeta(\beta)}{s^{\alpha}}s^{-\beta}
      + \int_0^\infty d\tau \,\int_0^\infty ds\, s^{2\zeta}  p_\zeta^{(\alpha)}(t+\tau,r,s) \frac{\Psi_\zeta(\beta)}{s^{\alpha}}s^{-\beta}\\
    & = \int_0^t d\tau \,\int_0^\infty ds\, s^{2\zeta}  p_\zeta^{(\alpha)}(\tau,r,s) \frac{\Psi_\zeta(\beta)}{s^{\alpha}}s^{-\beta}\\
    & + \int_0^\infty d\tau \,\int_0^\infty ds\, s^{2\zeta}  \int_0^\infty dz \, z^{2\zeta}\,p_\zeta^{(\alpha)}(t,r,z) p_\zeta^{(\alpha)}(\tau,z,s)  \frac{\Psi_\zeta(\beta)}{s^{\alpha}}s^{-\beta} \\
    & = \int_0^t d\tau \,\int_0^\infty ds\, s^{2\zeta}  p_\zeta^{(\alpha)}(\tau,r,s) \frac{\Psi_\zeta(\beta)}{s^{\alpha}}s^{-\beta} + \int_0^\infty dz\, z^{2\zeta}\,p_\zeta^{(\alpha)}(t,r,z) z^{-\beta},
  \end{align*}
  where in the last line we used again \eqref{eq:kappainv}.
\end{proof}

\begin{lemma}
  \label{lem:iiptconv}
  Let $\zeta\in(-1/2,\infty)$, $\alpha\in(0,2]$, $\eta\in(0,(2\zeta+1-\alpha)/2)$, and $\beta\in(0,2\zeta+1-\alpha-\eta)$. Then, for all $r,t>0$,
  \begin{align}
    & \int_0^t d\tau \int_0^\infty ds\, s^{2\zeta} p_{\zeta,\eta}^{(\alpha)}(\tau,r,s)s^{-\beta-\alpha} < \infty,  \label{EQ1:estbgd}\\
    & \int_0^\infty ds\, s^{2\zeta} p_{\zeta,\eta}^{(\alpha)}(\tau,r,s)s^{-\beta} < \infty. \label{EQ2:estbgd}
  \end{align}
\end{lemma}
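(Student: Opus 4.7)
The plan is to establish \eqref{EQ1:estbgd} first and then deduce \eqref{EQ2:estbgd} from it via Duhamel's formula.

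For \eqref{EQ1:estbgd}, I will exploit the strict subcriticality $\eta<(2\zeta+1-\alpha)/2$ by fixing $\epsilon>0$ small enough that $\beta^\ast := \max(\beta,\eta)+\epsilon$ lies in the open interval $(\eta,2\zeta+1-\alpha-\eta)$, so Corollary~\ref{bogcor35} applies with parameter $\beta^\ast$. I then split the $s$-integral at $s=1$. On $(0,1)$, since $\beta \leq \beta^\ast$, one has $s^{-\beta-\alpha} \leq s^{-\beta^\ast-\alpha}$, and Corollary~\ref{bogcor35} yields
\begin{align*}
\int_0^t d\tau \int_0^1 s^{2\zeta-\beta-\alpha} p_{\zeta,\eta}^{(\alpha)}(\tau,r,s)\,ds
& \leq \int_0^\infty d\tau \int_0^\infty s^{2\zeta-\beta^\ast-\alpha} p_{\zeta,\eta}^{(\alpha)}(\tau,r,s)\,ds \\
& = \frac{r^{-\beta^\ast}}{\Psi_\zeta(\beta^\ast)-\Psi_\zeta(\eta)} < \infty.
\end{align*}
On $(1,\infty)$, the crude bound $s^{-\beta-\alpha} \leq 1$ combined with Proposition~\ref{bogprop32} gives $\int_1^\infty s^{2\zeta-\beta-\alpha} p_{\zeta,\eta}^{(\alpha)}(\tau,r,s)\,ds \leq M[1+(\tau^{-1/\alpha}r)^{-\eta}]$, whose integral over $\tau \in (0,t)$ is finite since $\eta/\alpha > -1$. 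Summing the two contributions proves \eqref{EQ1:estbgd}.

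For \eqref{EQ2:estbgd}, I apply Duhamel's formula \eqref{eq:duhamelclassictransformed} together with Tonelli's theorem to write
\begin{align*}
& \int s^{2\zeta-\beta} p_{\zeta,\eta}^{(\alpha)}(\tau,r,s)\,ds \\
& \quad = \int s^{2\zeta-\beta} p_\zeta^{(\alpha)}(\tau,r,s)\,ds + \Psi_\zeta(\eta) \int_0^\tau d\rho \int z^{2\zeta-\alpha} p_{\zeta,\eta}^{(\alpha)}(\rho,r,z) \left[\int s^{2\zeta-\beta} p_\zeta^{(\alpha)}(\tau-\rho,z,s)\,ds\right] dz.
\end{align*}
Lemma~\ref{integralana1} with $\delta=\beta \in (0,2\zeta+1)$ bounds the two inner $s$-integrals by $C r^{-\beta}$ and $C z^{-\beta}$ respectively, so the right-hand side is dominated by $C r^{-\beta}+C\Psi_\zeta(\eta)\int_0^\tau d\rho \int z^{2\zeta-\beta-\alpha} p_{\zeta,\eta}^{(\alpha)}(\rho,r,z)\,dz$, which is finite by \eqref{EQ1:estbgd} (just established).

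The main obstacle is \eqref{EQ1:estbgd} in the subrange $\beta \in (0,\eta]$, where Corollary~\ref{bogcor35} does not apply directly, since there $\Psi_\zeta(\beta) \leq \Psi_\zeta(\eta)$ and the geometric-series argument underlying that corollary breaks down; the cut-off at $s=1$ combined with the choice of $\beta^\ast > \max(\beta,\eta)$ is the key technical maneuver, and it crucially relies on the strict inequality $\eta < (2\zeta+1-\alpha)/2$.
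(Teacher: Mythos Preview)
Your proof is correct and follows essentially the same strategy as the paper's. The only cosmetic differences are: (i) for \eqref{EQ1:estbgd} the paper uses the fixed auxiliary exponent $(2\zeta+1-\alpha)/2$ in place of your $\beta^\ast=\max(\beta,\eta)+\epsilon$ and writes the single pointwise bound $s^{-\beta-\alpha}\le 1+s^{-(2\zeta+1-\alpha)/2-\alpha}$ rather than splitting at $s=1$; (ii) for \eqref{EQ2:estbgd} the paper invokes Lemma~\ref{boglem33} (giving $\int s^{2\zeta-\beta}p_\zeta^{(\alpha)}(\tau,r,s)\,ds\le r^{-\beta}$ exactly) instead of Lemma~\ref{integralana1}, but either reference suffices.
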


\begin{proof}
  By \eqref{eq:bogcor35inf}, for $\beta \in (\eta,2\zeta+1-\alpha-\eta)$, we have 
  \begin{align}\label{eq:estbgd}
    \int_0^t d\tau \int_0^\infty ds\, s^{2\zeta} p_{\zeta,\eta}^{(\alpha)}(\tau,r,s)s^{-\beta-\alpha} \le \frac{1}{\Psi_\zeta(\beta)- \Psi_\zeta(\eta)} r^{-\beta}.
  \end{align}
  Now let $\beta \in (0,\eta]$. Note that $\beta < (2\zeta+1-\alpha)/2$ and $(2\zeta+1-\alpha)/2\in (\eta,2\zeta+1-\alpha-\eta)$. By \eqref{eq:bogprop32} and \eqref{eq:estbgd},
  \begin{align} \label{eq:pw(-beta-alpha)IntApprox}
    \begin{split}
      & \int_0^t d\tau \int_0^\infty ds\, s^{2\zeta} p_{\zeta,\eta}^{(\alpha)}(\tau,r,s)s^{-\beta-\alpha} \\
      & \le \int_0^t d\tau \int_0^\infty ds\, s^{2\zeta} p_{\zeta,\eta}^{(\alpha)}(\tau,r,s)(1+s^{-(2\zeta+1-\alpha)/2-\alpha}) \\
      & \le M\int_0^t [1+(\tau^{-1/\alpha}r)^{-\eta}] d\tau + \frac{1}{\Psi_\zeta((2\zeta+1-\alpha)/2)- \Psi_\zeta(\eta)} r^{-(2\zeta+1-\alpha)/2} <\infty,
    \end{split}
  \end{align}
  Hence, \eqref{EQ1:estbgd} follows by \eqref{eq:estbgd} and \eqref{eq:pw(-beta-alpha)IntApprox}. By Lemma \ref{boglem33}, we have 
  \begin{equation} 
    \label{eq:pwy-betaApprox}
    \int_0^\infty ds\, s^{2\zeta} p_{\zeta}^{(\alpha)}(\tau,r,s)s^{-\beta} \le r^{-\beta}.
  \end{equation}
  Therefore, by Duhamel's formula, \eqref{eq:pwy-betaApprox}, and \eqref{EQ1:estbgd},
  \begin{align*}
    & \int_0^\infty ds\, s^{2\zeta} p_{\zeta,\eta}^{(\alpha)}(\tau,r,s)s^{-\beta} \\ 
    & \le \int_0^\infty ds\, s^{2\zeta} p_{\zeta}^{(\alpha)}(\tau,r,s)s^{-\beta} + \int_0^t d\tau \int_0^\infty ds\, s^{2\zeta} p_{\zeta,\eta}^{(\alpha)}(\tau,r,s)q(s)s^{-\beta} < \infty, 
  \end{align*}
  which ends the proof of \eqref{EQ2:estbgd}.
\end{proof} 

We are now ready to give the
\begin{proof}[Proof of Theorem \ref{heatkernelharmonic}]
  Let $t>0$ and $r>0$. By Duhamel's formula,
  \begin{align}
    \begin{split}
    \label{eq1:thm:tpInt}
    &\int_0^\infty ds\, s^{2\zeta} p_{\zeta,\eta}^{(\alpha)}(t,r,s) s^{-\beta}  =  \int_0^\infty ds\,s^{2\zeta} p_{\zeta}^{(\alpha)}(t,r,s) s^{-\beta} \\   
    & \qquad\qquad + \int_0^t d\tau 	\int_0^\infty dz\,z^{2\zeta}p_{\zeta,\eta}^{(\alpha)}(\tau,r,z) q(z) \left(\int_0^\infty ds\,s^{2\zeta}p_{\zeta}^{(\alpha)}(t-\tau, z, s) s^{- \beta} \right) .
    \end{split}
  \end{align}
  Next, by Lemma \ref{boglem33},
  \begin{align}
    &\Psi_{\zeta}(\beta) \int_0^t du\int_0^\infty ds\, s^{2\zeta} p_{\zeta,\eta}^{(\alpha)}(u,r,s) s^{-\alpha - \beta } \notag\\
    & = \Psi_{\zeta}(\beta) \int_0^t du \int_0^\infty ds\, s^{2\zeta} p_{\zeta}^{(\alpha)}(u,r,s) s^{-\alpha - \beta } \notag\\ 
    & + \Psi_{\zeta}(\beta)  \int_0^t du \int_0^\infty ds\, s^{2\zeta}  \int_0^u d\tau \int_0^\infty dz\,z^{2\zeta} p_{\zeta,\eta}^{(\alpha)}(\tau,r,z) q(z) p_{\zeta}^{(\alpha)}(u-\tau, z, s) s^{-\alpha - \beta} \notag\\ 
    & = r^{-\beta} - \int_0^\infty s^{2\zeta} p_{\zeta}^{(\alpha)}(t,r,s) s^{-\beta} \label{eq2:thm:tpInt}\\
    & + \int_0^t d\tau \int_0^\infty dz\, z^{2\zeta}   p_{\zeta,\eta}^{(\alpha)}(\tau,r,z) q(z) \left(\Psi_{\zeta}(\beta)\int_0^{t-\tau} du \int_0^\infty ds\, s^{2\zeta} p_{\zeta}^{(\alpha)}(u, z, s) s^{-\alpha - \beta} \right). \notag
  \end{align}
  We add \eqref{eq1:thm:tpInt} and \eqref{eq2:thm:tpInt} and apply Lemma \ref{boglem33} to the terms in parentheses to get
  \begin{align}
    \label{eq:tpIntpos}
    & \int_0^\infty ds\, s^{2\zeta} p_{\zeta,\eta}^{(\alpha)}(t,r,s) s^{-\beta} + \Psi_{\zeta}(\beta) \int_0^t d\tau \int_0^\infty ds\, s^{2\zeta} p_{\zeta,\eta}^{(\alpha)}(\tau,r,s) s^{-\alpha - \beta } \notag \\
    & = r^{-\beta} + \Psi_{\zeta}(\eta) \int_0^t d\tau \int_0^\infty s^{2\zeta} p_{\zeta,\eta}^{(\alpha)}(\tau,r,s) s^{-\alpha - \beta }.
  \end{align}

  Let $\eta \in (0,(2\zeta+1-\alpha)/2)$. Then  \eqref{eq:heatkernelharmonic2} follows by \eqref{eq:tpIntpos} and Lemma \ref{lem:iiptconv}. Furthermore, let $\beta \nearrow\delta$. By \eqref{eq:heatkernelharmonic2}, Lemma \ref{lem:iiptconv}, and the Lebesgue convergence theorem, we get \eqref{eq:heatkernelharmonic1}.

  Now, consider $\eta = (2\zeta+1-\alpha)/2$ and let $\beta \in (0,\eta)$. Note that $p_{\zeta,\eta}^{(\alpha)}$ is the perturbation of $p_{\zeta,\beta}^{(\alpha)}$ by $(\Psi_{\zeta}(\eta) - \Psi_{\zeta}(\beta))s^{-\alpha}$ (see, e.g., \cite{Bogdanetal2008}). Hence, by Duhamel's formula, Fubini's theorem, and \eqref{eq:heatkernelharmonic1} applied to $p_{\zeta,\beta}^{(\alpha)}$, we get
  \begin{align*}
    &(\Psi_{\zeta}(\eta)-\Psi_{\zeta}(\beta)) \int_0^t d\tau \int_0^\infty ds\, s^{2\zeta} p_{\zeta,\eta}^{(\alpha)}(\tau,r,s) s^{-\alpha - \beta } \\
    & =  \int_0^\infty  dz\, z^{2\zeta} \left[(\Psi_{\zeta}(\eta)-\Psi_{\zeta}(\beta)) \int_0^t d\tau \int_0^\infty ds\, s^{2\zeta} p_{\zeta,\eta}^{(\alpha)}(\tau,r,s) s^{-\alpha} p_{\zeta,\beta}^{(\alpha)}(t-\tau,s,z)\right] z^{-\beta} 	\\
    & = \int_0^\infty  dz\, z^{2\zeta} \left[ p_{\zeta,\eta}^{(\alpha)}(t,r,z) - p_{\zeta,\beta}^{(\alpha)}(t,r,z)\right] z^{-\beta} \\
    & = \int_0^\infty  dz\, z^{2\zeta} p_{\zeta,\eta}^{(\alpha)}(t,r,z)  z^{-\beta} - r^{-\beta}, 
  \end{align*}
  which gives \eqref{eq:heatkernelharmonic2}. Next, let $\beta < \eta$. Since $s^{-\beta} < 1+ s^{-\eta}$, by \eqref{eq:supermediantransformedalphaintro} and \eqref{eq:bogprop32}, 
  \begin{align*}
    \int_0^\infty ds\, s^{2\zeta}  p_{\zeta,\eta}^{(\alpha)}(t,r,s) s^{-\beta} \le \int_0^\infty ds\, s^{2\zeta}  p_{\zeta,\eta}^{(\alpha)}(t,r,s) (1+ s^{-\eta}) <\infty.
  \end{align*}
  By \eqref{eq:heatkernelharmonic2},
  \begin{align*}
    \int_0^\infty ds\, s^{2\zeta}  p_{\zeta,\eta}^{(\alpha)}(t,r,s) s^{-\beta} dy \ge r^{-\beta}.
  \end{align*}
  Hence, letting $\beta \nearrow \eta$, by Lebesgue's convergence theorem we get 
  \begin{align*}
    \int_0^\infty ds\, s^{2\zeta}  p_{\zeta,\eta}^{(\alpha)}(t,r,s) s^{-\eta } dy \ge r^{-\eta},
  \end{align*}
  which together with \eqref{eq:supermediantransformedalphaintro} gives \eqref{eq:heatkernelharmonic1}. This concludes the proof of Theorem \ref{heatkernelharmonic}.
\end{proof}

Using Theorem~\ref{heatkernelharmonic} and Proposition~\ref{bogprop32}, we now prove an \textit{approximate supermedian property} for
\begin{align}
  \label{eq:defbighscaled}
  H(t,r) := 1 + (t^{-1/\alpha}r)^{-\eta}, \quad r,t>0
\end{align}\label{$H(t,r)$}
and
\begin{align}
  \label{eq:defbigh}
  H(r) := H(1,r) = 1+r^{-\eta}
\end{align}\label{$H(r)$}
with respect to $p_{\zeta,\eta}^{(\alpha)}(t,\cdot,\cdot)$.
\begin{corollary}
  \label{bogcor38}
  \label{boglem41}
  Let $\zeta\in(-1/2,\infty)$, $\alpha\in(0,2]$, and $\eta\in(0,(2\zeta+1-\alpha)/2]$. Then,  there is a constant $M>0$ such that for all $r,t>0$,
  \begin{align}
    \label{eq:bogcor38}
    \int_0^\infty ds\, s^{2\zeta} p_{\zeta,\eta}^{(\alpha)}(t,r,s)H(t,s)
    \leq (M+1) H(t,r).
  \end{align}
  Furthermore, for all $\beta\in[0,\eta]$, 
  \begin{align}
    \label{eq:boglem41}
    \int_0^\infty ds\, s^{2\zeta} p_{\zeta,\eta}^{(\alpha)}(t,r,s) \left(\frac{s}{t^{1/\alpha}}\right)^{-\beta}
    \leq M H(t,r).
  \end{align}
  Finally,
  \begin{align}
    \label{eq:boglem42aux4}
    \int_0^1 d\tau \int_0^\infty dz\, z^{2\zeta } p_{\zeta,\eta}^{(\alpha)}(1-\tau,r,z)q(z)
    \leq M H(r).
  \end{align}
\end{corollary}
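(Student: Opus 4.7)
The plan is to derive all three bounds as essentially direct consequences of Theorem~\ref{heatkernelharmonic} and Proposition~\ref{bogprop32}, combined with the scaling \eqref{eq:scalingalphahardy}, the Duhamel formula \eqref{eq:duhamelclassictransformed}, and the normalization \eqref{eq:normalizedalpha} of $p_\zeta^{(\alpha)}$. Because of \eqref{eq:scalingalphahardy}, both \eqref{eq:bogcor38} and \eqref{eq:boglem41} are scale-invariant and their validity for all $t>0$ reduces to the case $t=1$. I would therefore work exclusively at $t=1$ and, at the end, choose $M$ large enough to service all three inequalities simultaneously.

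For \eqref{eq:bogcor38} at $t=1$, I would split $H(1,s)=1+s^{-\eta}$ and treat the two pieces separately: the constant $1$ integrates to at most $M(1+r^{-\eta})=M\,H(r)$ by Proposition~\ref{bogprop32}, while the invariance identity \eqref{eq:heatkernelharmonic1} returns exactly $r^{-\eta}\le H(r)$ for the $s^{-\eta}$ piece; summing yields the bound with constant $M+1$. The bound \eqref{eq:boglem41} then costs nothing beyond the elementary comparison $s^{-\beta}\le 1+s^{-\eta}$, valid for $\beta\in[0,\eta]$ and all $s>0$ (check separately for $s\ge 1$ and $0<s<1$), which subsumes \eqref{eq:boglem41} under \eqref{eq:bogcor38}.

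The third bound \eqref{eq:boglem42aux4} is the only one that looks different, but I would convert it into a \emph{mass-deficit} identity. Applying the second line of Duhamel's formula \eqref{eq:duhamelclassictransformed} at $t=1$, substituting $\tau\mapsto 1-\tau$ in the time integral, and integrating against $s^{2\zeta}\,ds$ makes the inner $s$-integral collapse to $1$ via \eqref{eq:normalizedalpha} for $p_\zeta^{(\alpha)}(\tau,z,\cdot)$, producing
\begin{align*}
\int_0^1 d\tau\int_0^\infty dz\, z^{2\zeta}\, p_{\zeta,\eta}^{(\alpha)}(1-\tau,r,z)\,q(z)
= \int_0^\infty ds\, s^{2\zeta}\, p_{\zeta,\eta}^{(\alpha)}(1,r,s) - 1.
\end{align*}
Proposition~\ref{bogprop32} then bounds the right-hand side by $M(1+r^{-\eta})-1\le M\,H(r)$.

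I do not expect a real obstacle here; the only tactical point is to use the \emph{correct} form of Duhamel (the one pairing $p_{\zeta,\eta}^{(\alpha)}$ at time $\tau$ with $p_\zeta^{(\alpha)}$ at time $t-\tau$), so that the $s$-integration trivializes the free factor by normalization. The role of Theorem~\ref{heatkernelharmonic} is to supply the exact invariance producing $r^{-\eta}$, while Proposition~\ref{bogprop32} supplies the quantitative $H(r)$-growth; everything else is bookkeeping and scaling.
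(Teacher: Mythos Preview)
Your proof is correct and follows essentially the same route as the paper: \eqref{eq:bogcor38} from splitting $H(1,s)=1+s^{-\eta}$ and applying Proposition~\ref{bogprop32} and \eqref{eq:heatkernelharmonic1}; \eqref{eq:boglem41} from the elementary inequality $s^{-\beta}\le 1+s^{-\eta}$ together with \eqref{eq:bogcor38} and scaling; and \eqref{eq:boglem42aux4} by integrating Duhamel against $s^{2\zeta}\,ds$, using \eqref{eq:normalizedalpha} to collapse the free factor, and bounding the resulting mass by Proposition~\ref{bogprop32}. The only cosmetic difference is that the paper writes the Duhamel identity directly rather than after a substitution $\tau\mapsto 1-\tau$, which of course is the same thing.
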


\begin{proof}
  Formula~\eqref{eq:bogcor38} follows from \eqref{eq:heatkernelharmonic1} and \eqref{eq:bogprop32}.
  Formula~\eqref{eq:boglem41} follows from $s^{-\beta}\leq 1+s^{-\eta}=H(s)$, Formula~\eqref{eq:bogcor38} and scaling. To prove Formula~\eqref{eq:boglem42aux4}, we use~\eqref{eq:bogprop32} and Duhamel's formula to obtain
  \begin{align*}
    \begin{split}
      MH(r)
      & \ge \int_0^\infty ds\, s^{2\zeta} p_{\zeta,\eta}^{(\alpha)}(1,r,s) \\
      & = \int_0^\infty ds\, s^{2\zeta} \left(p_\zeta^{(\alpha)}(1,r,s) + \int_0^1 d\tau \int_0^\infty dz\, z^{2\zeta} p_{\zeta,\eta}^{(\alpha)}(1-\tau,r,z)q(z) p_\zeta^{(\alpha)}(\tau,z,s) \right) \\
      & = 1 + \int_0^1 d\tau \int_0^\infty dz\, z^{2\zeta} p_{\zeta,\eta}^{(\alpha)}(1-\tau,r,z)q(z).
    \end{split}
  \end{align*}
  This concludes the proof.
\end{proof}

Finally, we show the finiteness of $p_{\zeta,\eta}^{(\alpha)}(t,r,s)$ for all $r,s,t>0$.

\begin{lemma}
  \label{pzetaetafinite}
  Let $\zeta\in(-1/2,\infty)$, $\alpha\in(0,2]$, $\eta\in(0,(2\zeta+1-\alpha)/2]$. Then $p_{\zeta,\eta}^{(\alpha)}(t,r,s)<\infty$ for all $r,s,t>0$.  
\end{lemma}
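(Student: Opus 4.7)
The plan is to upgrade the almost-everywhere finiteness from Proposition~\ref{bogprop32} to finiteness at every point. Positivity $p_{\zeta,\eta}^{(\alpha)}(t,r,s)>0$ is immediate from $p_{\zeta,\eta}^{(\alpha)}\ge p_\zeta^{(\alpha)}>0$, cf.~\eqref{eq:trivialupperbound}.

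The key technical input is an off-diagonal boundedness of the unperturbed kernel: for any fixed $y_0,u_0>0$,
\begin{align*}
  K(u_0,y_0):=\sup_{u\ge u_0}\sup_{z>0}p_\zeta^{(\alpha)}(u,z,y_0)<\infty.
\end{align*}
This would follow by a case analysis of the denominator in~\eqref{eq:heatkernelalpha1weightedsubordinatedboundsfinal} (resp.~\eqref{eq:easybounds2} when $\alpha=2$): once $u$ is bounded below, the term $u^{(1+\alpha)/\alpha}(u^{1/\alpha}+z+y_0)^{2\zeta}$ dominates the near-diagonal regime $z\approx y_0$, while the stable-type contribution $|z-y_0|^{1+\alpha}(z+y_0)^{2\zeta}$ controls the regime $|z-y_0|$ large (the effective exponent $1+\alpha+2\zeta>0$ is guaranteed by $\zeta>-1/2$).

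With $K$ at hand, I would apply Duhamel's formula~\eqref{eq:duhamelclassictransformed} and split the time integral at $\tau=t/2$. In its second form on $\tau\in(0,t/2)$, the factor $p_\zeta^{(\alpha)}(t-\tau,z,s)\le K(t/2,s)$ is uniform in $z$, so the contribution is bounded by $K(t/2,s)\cdot M\,H(r/(t/2)^{1/\alpha})$ via the scaled version of~\eqref{eq:boglem42aux4}. Symmetrically, in its first form on $\tau\in(t/2,t)$, one has $p_\zeta^{(\alpha)}(\tau,r,z)\le K(t/2,r)$; changing variables $\tau\mapsto t-\tau$, using the symmetry $p_{\zeta,\eta}^{(\alpha)}(t-\tau,z,s)=p_{\zeta,\eta}^{(\alpha)}(t-\tau,s,z)$, and invoking~\eqref{eq:boglem42aux4} again, yields a bound of the form $K(t/2,r)\cdot M\,H(s/(t/2)^{1/\alpha})$.

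The main obstacle is that these two bounds arise from two \emph{distinct} Duhamel identities rather than from a single split of one decomposition, so they cannot simply be added. To close the argument, I would first carry out the reasoning for the truncated Schr\"odinger perturbation $p_{\zeta,\eta,n}^{(\alpha)}$ built from $q_n:=q\wedge n$; for bounded $q_n$ this kernel is finite and jointly continuous by standard theory, and the integral estimates in Proposition~\ref{bogprop32}, \eqref{eq:bogcor38}, and~\eqref{eq:boglem42aux4} hold for $p_{\zeta,\eta,n}^{(\alpha)}$ with constants \emph{independent of $n$}, thanks to $p_{\zeta,\eta,n}^{(\alpha)}\le p_{\zeta,\eta}^{(\alpha)}$ and the supermedianness of $h$. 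Iterating Duhamel on the finite kernel $p_{\zeta,\eta,n}^{(\alpha)}$, while repeatedly discarding the two "good" pieces controlled by the bounds above, should produce an $n$-uniform pointwise upper bound for $p_{\zeta,\eta,n}^{(\alpha)}(t,r,s)$. The conclusion $p_{\zeta,\eta}^{(\alpha)}(t,r,s)=\lim_{n\to\infty}p_{\zeta,\eta,n}^{(\alpha)}(t,r,s)<\infty$ then follows by monotone convergence.
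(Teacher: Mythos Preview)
Your setup is sound up to the point you yourself flag: the two ``good'' halves come from \emph{different} Duhamel identities, so they do not sum to anything. Your proposed remedy---truncate to $q_n=q\wedge n$ and ``iterate Duhamel while discarding good pieces''---does not close the gap. For the finite kernel $p_{\zeta,\eta,n}^{(\alpha)}$ you can indeed rearrange freely, but rearrangement alone produces no new inequality: if $G_2,B_2$ are the $(0,t/2)$ and $(t/2,t)$ pieces of the second Duhamel form and $G_1,B_1$ those of the first, then $G_1+B_1=G_2+B_2=p_{\zeta,\eta,n}^{(\alpha)}-p_\zeta^{(\alpha)}$, and knowing $G_2,B_1$ are uniformly bounded says nothing about $B_2$ (or $G_1$) uniformly in $n$. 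Any attempt to ``iterate'' by expanding $p_{\zeta,\eta,n}^{(\alpha)}(\tau,r,z)$ inside $B_2$ via Duhamel again just reproduces the same structure on a shorter time interval and does not contract.

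The paper's proof supplies the missing mechanism: \emph{absorption}. It first uses \eqref{eq:bogprop32aux2}, which already localizes the $z$-integral to $(0,R)$ by absorbing the $z>R$ contribution into the left-hand side at the level of partial sums $P_1^{(n,D)}$. Then, within $z\in(0,R)$, the only remaining dangerous region is where $1-\tau$ is small and $z$ is close to $s$; there one bounds $q(z)\lesssim s^{-\alpha}$, uses $p_\zeta^{(\alpha)}\le p_{\zeta,\eta}^{(\alpha)}$ together with Chapman--Kolmogorov to dominate that piece by a small multiple of $p_{\zeta,\eta}^{(\alpha)}(1,r,s)$, and absorbs it again. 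This two-stage absorption (first in space, then in the short-time near-diagonal region) is exactly what replaces your attempted combination of two Duhamel forms.
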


\begin{proof}
  By the scaling \eqref{eq:scalingalpha}, it suffices to consider $t=1$.
  By \eqref{eq:supermediantransformedalphaintro}, we know $p_{\zeta,\eta}^{(\alpha)}(1,r,s)<\infty$ for all $r>0$ and almost all $s>0$. We now show finiteness for a given $r>0$ and all $s>0$. By \eqref{eq:bogprop32aux2}, i.e.,
  \begin{align}
    \label{eq:pzetaetafiniteaux1}
    P_{1}^{(n,D)}(1,r,s) \leq 2p_\zeta^{(\alpha)}(1,r,s) + 2\int_0^1 d\tau\int_0^R dz\, z^{2\zeta} p_{\zeta,\eta}^{(\alpha)}(\tau,r,z)q(z)p_\zeta^{(\alpha)}(1-\tau,z,s),
  \end{align}
  with $R=2\wedge(2\Psi_\zeta(\eta))^{1/\alpha}$ and $P_t^{(n,D)}(r,s) = \sum_{k=0}^n p_t^{(n,D)}(r,s)$, it suffices to show
  \begin{align}
    \label{eq:pzetaetafiniteaux2}
    \int_0^1 d\tau\int_0^R dz\, z^{2\zeta} p_{\zeta,\eta}^{(\alpha)}(\tau,r,z)q(z)p_\zeta^{(\alpha)}(1-\tau,z,s) < \infty.
  \end{align}
  If $s\geq2R$, we use the uniform boundedness of $p_\zeta^{(\alpha)}(1-\tau,z,s)$ and \eqref{eq:boglem42aux4}, which allows to bound the remaining integral by
  \begin{align}
    \label{eq:pzetaetafiniteaux3}
    \int_0^1d\tau\int_0^\infty ds\, z^{2\zeta} p_{\zeta,\eta}^{(\alpha)}(\tau,r,z)q(z)\lesssim H(r)<\infty.
  \end{align}
  We assume $s<2R$ from now on.
  We first consider the integral over $\tau\in(0,1-\frac{1}{2\Psi_\zeta(\eta)}(\frac{s}{2})^\alpha)$. Then  the left-hand side of \eqref{eq:pzetaetafiniteaux2} is again finite by the uniform boundedness of $p_\zeta^{(\alpha)}(1-\tau,z,s)$ and by \eqref{eq:boglem42aux4}.
  It remains to consider the integral over $\tau\in(1-\frac{1}{2\Psi_\zeta(\eta)}(\frac{s}{2})^\alpha,1)$. We now distinguish between $|z-s|\lessgtr s/2$. For $|z-s|>s/2$, we bound
  \begin{align*}
    p_\zeta^{(\alpha)}(1-\tau,z,s) \lesssim \frac{1}{s^{2\zeta+1+\alpha}}
  \end{align*}
  and use \eqref{eq:pzetaetafiniteaux3}. If $|z-s|\leq s/2$, then $z\geq s/2$. Thus, by \eqref{eq:trivialupperbound} and the Chapman--Kolmogorov equation, we can bound the integral in question by
  \begin{align*}
    & \Psi_\zeta(\eta)\left(\frac{2}{s}\right)^\alpha \int_{1-\frac{1}{2\Psi_\zeta(\eta)}(\frac{s}{2})^\alpha}^1 d\tau \int_{s/2}^R dz\, p_{\zeta,\eta}^{(\alpha)}(\tau,r,z)p_{\zeta,\eta}^{(\alpha)}(1-\tau,z,s) \\
    & \quad \leq \Psi_\zeta(\eta)\left(\frac{2}{s}\right)^\alpha \cdot \frac{1}{2\Psi_\zeta(\eta)}\left(\frac{s}{2}\right)^\alpha \cdot p_{\zeta,\eta}^{(\alpha)}(1,r,s)
      = \frac12 p_{\zeta,\eta}^{(\alpha)}(1,r,s),
  \end{align*}
  which can be absorbed by the left-hand side of \eqref{eq:pzetaetafiniteaux1} upon taking $n\to\infty$. This concludes the proof.
\end{proof}

\subsection{The case $\eta<0$}
\label{ss:integralanalysisnegativeeta}

In this section, we extend Theorem~\ref{heatkernelharmonic} to \(\eta < 0\). Given the explicit expression for \(p_{\zeta,\eta}^{(2)}\), this is straightforward for \(\alpha = 2\). Hence, the main goal of this section is to generalize the previous integral analysis for \(\alpha \in (0, 2)\) to those coupling constants \(\Psi_\zeta(\eta)\) with \(\eta \in (-\alpha, 0)\). Our approach relies on compensating the kernels \(p_\zeta^{(\alpha)}(t,r,s)\) to add extra integrability for large \(t\); see \eqref{eq:defhbetagammaposcouplingsalpha}.

\smallskip
We begin the integral analysis by constructing generalized ground states for $\alpha=2$ and then use subordination to study $\alpha\in(0,2)$ in the spirit of \cite{Bogdanetal2016}. 
We proceed similarly as in \cite[Section~3]{BogdanMerz2024} by integrating $p_\zeta^{(\alpha)}$ against suitable functions of space and time. This gave us the functions $h_{\beta,\gamma}$ in \eqref{eq:defhbetagammaalphatransformed} discussed at the beginning of Section~\ref{s:integralana} above. However, since we wish to consider a wider range of parameters, we modify our construction slightly by compensating the kernel $p_\zeta^{(\alpha)}(t,r,s)$ with $p_\zeta^{(\alpha)}(t,r,0)$ when we integrate against monomials growing too fast for large $t$. The following calculation is similar but more delicate than \cite[Proposition~3.1]{BogdanMerz2024}. 

\begin{lemma}
  \label{hbetagammaposcouplingsalpha}
  Let $\zeta\in(-1/2,\infty)$, $\alpha\in(0,2]$, $\gamma\in(-1,\infty)$, and $0<\beta<(2\zeta-\gamma+2)/\alpha$ with $\beta\neq (2\zeta-\gamma)/\alpha$.
  Then,
  \begin{align}
    \label{eq:defhbetagammaposcouplingsalpha}
    \begin{split}
      h_{\beta,\gamma}^{(+)}(s)
      & := \int_0^\infty \frac{dt}{t}\, t^\beta \int_0^\infty dr\, r^\gamma \left( p_\zeta^{(\alpha)}(t,r,0) - p_\zeta^{(\alpha)}(t,r,s) \right) \\
      & \ = C^{(\alpha)}(\beta,\gamma,\zeta) s^{\alpha\beta+\gamma-2\zeta},
    \end{split}
  \end{align}
  with $C^{(\alpha)}(\beta,\gamma,\zeta)$ as in \eqref{eq:timeshiftconstant1resultalpha}.
  
  If, additionally, $\beta=(2\zeta-\gamma-\eta)/\alpha$ for some $\eta\in(-\alpha,0)$, then
  \begin{align}
    \label{eq:hardypotclassictransformedposcouplingsalpha}
    q(s)
    = \frac{\Psi_\zeta(\eta)}{s^\alpha}
    = \frac{(\beta-1) h_{\beta-1,\gamma}(s)}{h_{\beta,\gamma}^{(+)}(s)}
  \end{align}
  where $h_{\beta-1,\gamma}(s)$ is as in \eqref{eq:defhbetagammaalphatransformed}.
\end{lemma}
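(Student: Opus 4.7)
The plan is to adapt the strategy of \cite[Proposition~3.1]{BogdanMerz2024}, accounting for the compensation at $s=0$. The argument has three main steps plus a gamma-function calculation for the Hardy identity.

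\textbf{Step 1 (self-similarity).} The scaling \eqref{eq:scalingalpha}, applied after the substitution $r\mapsto t^{1/\alpha} r$ in the inner integral, gives
\begin{equation*}
  \int_0^\infty dr\, r^\gamma \bigl[p_\zeta^{(\alpha)}(t,r,0) - p_\zeta^{(\alpha)}(t,r,s)\bigr]
  = t^{(\gamma-2\zeta)/\alpha}\, G\!\left(\tfrac{s}{t^{1/\alpha}}\right),
\end{equation*}
where $G(v) := \int_0^\infty r^\gamma [p_\zeta^{(\alpha)}(1,r,0)-p_\zeta^{(\alpha)}(1,r,v)]\,dr$. Substituting $u=s/t^{1/\alpha}$ in the outer integral yields
\begin{equation*}
  h_{\beta,\gamma}^{(+)}(s) = \alpha\, s^{\alpha\beta+\gamma-2\zeta}\int_0^\infty \frac{du}{u}\, u^{2\zeta-\gamma-\alpha\beta}\, G(u),
\end{equation*}
which exhibits the claimed power law and reduces the lemma to computing the dimensionless constant on the right.

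\textbf{Step 2 (convergence).} The series expansion of $I_{\zeta-1/2}(rs/(2t))$ in \eqref{eq:defpheatalpha2} is a power series in $(rs)^2$, so $v\mapsto p_\zeta^{(2)}(1,r,v)$ is even and analytic at zero; this parity is preserved by subordination \eqref{eq:subordination}, so $G(u)=O(u^2)$ as $u\to 0^+$. This controls the integrand at zero and forces $\alpha\beta+\gamma-2\zeta<2$. The asymptotics of $G$ at infinity follow from the two-sided bound \eqref{eq:pzetafact}; the excluded value $\beta=(2\zeta-\gamma)/\alpha$ is precisely where $G(\infty)\neq 0$ produces a logarithmic divergence.

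\textbf{Step 3 (identifying the constant).} For $\alpha=2$, expanding $I_{\zeta-1/2}$ as a series in $r$ and integrating term-by-term against $r^\gamma e^{-r^2/(4t)}$ yields, after Kummer's transformation,
\begin{equation*}
  \int_0^\infty r^\gamma p_\zeta^{(2)}(t,r,s)\,dr
  = \frac{(4t)^{(\gamma-2\zeta)/2}\,\Gamma\!\left(\tfrac{\gamma+1}{2}\right)}{\Gamma(\zeta+\tfrac12)}\,
    \Phi\!\left(\zeta-\tfrac{\gamma}{2};\,\zeta+\tfrac12;\,-\tfrac{s^2}{4t}\right).
\end{equation*}
Subtracting the $s=0$ value (using $\Phi(\cdot;\cdot;0)=1$), substituting $w=s^2/(4t)$, and invoking the Mellin identity
\begin{equation*}
  \int_0^\infty w^{s-1}\bigl[1-\Phi(a;b;-w)\bigr]\,dw
  = -\frac{\Gamma(b)\,\Gamma(s)\,\Gamma(a-s)}{\Gamma(a)\,\Gamma(b-s)},
\end{equation*}
valid by analytic continuation for $-1<\re s<0$, produces $C(\beta,\gamma,\zeta)\, s^{2\beta+\gamma-2\zeta}$ after collecting gamma factors. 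For $\alpha\in(0,2)$, I would insert the subordination formula \eqref{eq:subordination} and swap the $t$- and $\tau$-integrals using the Laplace-transform consequence
\begin{equation*}
  \int_0^\infty t^{\beta-1}\,\sigma_t^{(\alpha/2)}(\tau)\,dt = \frac{\Gamma(\beta)}{\Gamma(\alpha\beta/2)}\,\tau^{\alpha\beta/2-1},
\end{equation*}
which reduces the computation to the $\alpha=2$ case with Mellin exponent $\alpha\beta/2$ and supplies the prefactor $\Gamma(\beta)/\Gamma(\alpha\beta/2)$ converting $C(\alpha\beta/2,\gamma,\zeta)$ into $C^{(\alpha)}(\beta,\gamma,\zeta)$ as in \eqref{eq:timeshiftconstant1resultalpha}.

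\textbf{Step 4 (Hardy identity).} With $\alpha\beta+\gamma-2\zeta=-\eta$, substituting \eqref{eq:defhbetagammaalphatransformed} and \eqref{eq:defhbetagammaposcouplingsalpha} into the claimed ratio gives
\begin{equation*}
  \frac{(\beta-1)\,h_{\beta-1,\gamma}(s)}{h_{\beta,\gamma}^{(+)}(s)}
  = \frac{(\beta-1)\,C^{(\alpha)}(\beta-1,\gamma,\zeta)}{C^{(\alpha)}(\beta,\gamma,\zeta)}\, s^{-\alpha}.
\end{equation*}
Applying $\Gamma(\beta-1)=\Gamma(\beta)/(\beta-1)$ and the shifts $(2\zeta-\alpha(\beta-1)-\gamma)/2=(\eta+\alpha)/2$ and $(\alpha(\beta-1)+\gamma+1)/2=(2\zeta+1-\eta-\alpha)/2$ collapses the ratio to exactly the gamma-function form of $\Psi_\zeta(\eta)$ in \eqref{eq:defpsietazetaOLD}. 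The hard part is the Mellin step in Step 3: the integral against $1-\Phi$ is only conditionally convergent, requires careful analytic continuation in $s$, and must be controlled uniformly away from the excluded exponent $\beta=(2\zeta-\gamma)/\alpha$ where the associated gamma function develops a simple pole.
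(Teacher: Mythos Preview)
Your approach is essentially the paper's: both reduce by scaling, compute the $r$-integral as a Kummer function ${}_1F_1(\zeta-\gamma/2;\zeta+1/2;-s^2/(4t))$, evaluate the resulting $t$-integral, pass to $\alpha<2$ via the subordinator moment identity $\int_0^\infty t^{\beta-1}\sigma_t^{(\alpha/2)}(\tau)\,dt=\tfrac{\Gamma(\beta)}{\Gamma(\alpha\beta/2)}\tau^{\alpha\beta/2-1}$, and finish the Hardy ratio by telescoping gamma functions. The only notable difference is organizational: the paper fixes $s=1$ and analyses the $t$-asymptotics of the ${}_1F_1$ integrand directly via \cite[(13.2.2), (13.2.39), (13.2.23)]{NIST:DLMF} before invoking \cite[Proposition~3.1]{BogdanMerz2024} for the final evaluation, whereas you change variables to $u=s/t^{1/\alpha}$ and package the $t$-integral as the compensated Mellin transform $\int_0^\infty w^{s-1}[1-\Phi(a;b;-w)]\,dw$ (valid for $-1<\re s<0$ exactly as you say). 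Your parity observation $G(u)=O(u^2)$ is the same information as the paper's ${}_1F_1$ series expansion at the origin, just phrased more conceptually.
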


Note that the case $\alpha\beta=2\zeta-\gamma$, which is excluded in the above lemma, corresponds to $\eta=0$, as can be seen by considering the right-hand sides of \eqref{eq:defhbetagammaposcouplingsalpha} and \eqref{eq:hardypotclassictransformedposcouplingsalpha}.
The constant on the right-hand side of \eqref{eq:defhbetagammaposcouplingsalpha} agrees with that in \eqref{eq:defhbetagammaalphatransformed}. The subtraction of $p_\zeta^{(\alpha)}(t,r,0)$ in \eqref{eq:defhbetagammaposcouplingsalpha}, however, allows us to extend the range of admissible $\beta$, compare with~\eqref{eq:defhbetagammaalphatransformed}.

\begin{proof}[Proof of Lemma~\ref{hbetagammaposcouplingsalpha}]
  Formula \eqref{eq:hardypotclassictransformedposcouplingsalpha} immediately follows from \eqref{eq:defhbetagammaposcouplingsalpha}, which we prove now. Consider first $\alpha=2$. For $\beta\in(0,(2\zeta-\gamma)/2)$, the calculations were already carried out in \cite[Proposition~3.1]{BogdanMerz2024}. Thus, consider $\beta\in((2\zeta-\gamma)/2,(2\zeta-\gamma+2)/2)$ from now on. It is for this range that we require the compensation by $p_\zeta^{(2)}(t,r,0)$ to make the $t$-integral convergent at $t=\infty$; at $t=0$, no compensation is necessary in view of the exponential factor $\me{-c(r-s)^2/t}$ in $p_\zeta^{(2)}(t,r,s)$ in \eqref{eq:easybounds2} and similarly for $p_\zeta^{(2)}(t,r,0)$ in~\eqref{eq:pzetatzero}. We now compute  $h_{\beta,\gamma}^{(+)}(s)$. By scaling,
  $h_{\beta,\gamma}^{(+)}(s)\cdot s^{-\gamma-2\beta+2\zeta} = C^{(+)}(\beta,\gamma,\zeta)$
  with
  \begin{align}
    \label{eq:timeshiftconstantposcouplings1}
    \begin{split}
      C^{(+)}(\beta,\gamma,\zeta)
      & = \int_0^\infty \frac{dt}{t}\, t^\beta \int_0^\infty dr\, r^\gamma \left(p_\zeta^{(2)}(t,r,0)-p_\zeta^{(2)}(t,r,1)\right).
    \end{split}
  \end{align}
  Since $\gamma>-1$, the $r$-integral gives, as in \cite{BogdanMerz2024},
  \begin{align}
  \label{eq:derivativeheatkernelpre}
    \begin{split}
      & \int_0^\infty dr\, r^\gamma \left(p_\zeta^{(2)}(t,r,0)-p_\zeta^{(2)}(t,r,1)\right) \\
      & \quad = -\frac{2^{\gamma -2 \zeta } \Gamma \left(\frac{\gamma +1}{2}\right) t^{\frac{1}{2} (\gamma -2 \zeta )} \left(\, _1F_1\left(\zeta -\frac{\gamma }{2};\zeta +\frac{1}{2};-\frac{1}{4 t}\right)-1\right)}{\Gamma \left(\zeta +\frac{1}{2}\right)}.
    \end{split}
  \end{align}
  Here, $_1F_1(a;b;z)$ with $a,b\in\C$, $b\notin\{0,-1,-2\}$, and $z\in\{w\in\C:\,|w|<1\}$ denotes Kummer's confluent hypergeometric function \cite[(13.2.2)]{NIST:DLMF}.\index{$_1F_1(a;b;z)$}
  The right-hand side of \eqref{eq:derivativeheatkernelpre} behaves like $t^{-\zeta+\gamma/2-1}$ as $t\to\infty$ by \cite[(13.2.2)]{NIST:DLMF}. To study the behavior at $t=0$, we use \cite[(13.2.39) and (13.2.23)]{NIST:DLMF}, i.e.,
  \begin{align}
    _1F_1(a;b;z) = \me{z}\, _1F_1(b-a;b;-z)
  \end{align}
  and
  \begin{align}
    \lim_{z\to\infty}\, {_1}{F}_1(a;b;z) \cdot \frac{1}{\me{z}z^{a-b}} 
    = \frac{1}{\Gamma(a)\Gamma(b)}
  \end{align}
  to infer the existence of some $c_{\gamma,\zeta}\in\R$ such that
  \begin{align}
  \, _1F_1\left(\zeta -\frac{\gamma }{2};\zeta +\frac{1}{2};-\frac{1}{4 t}\right) =
   c_{\gamma,\zeta} t^{-\gamma/2+\zeta} + \mathcal{O}(t^{-\gamma/2+\zeta+1}) & \quad \text{at} \ t=0.
  \end{align}
  Thus, the right-hand side behaves like $1+t^{-\zeta+\gamma/2}$ as $t\to0$.
  These asymptotics and similar computations as in \cite{BogdanMerz2024} thus give
  \begin{align}
    C^{(+)}(\beta,\gamma,\zeta)
    = -\frac{2^{-2 \beta -1} \Gamma (\beta) \Gamma \left(\frac{\gamma +1}{2}\right) \Gamma \left(-\beta +\zeta -\frac{\gamma }{2}\right)}{\Gamma \left(\beta +\frac{\gamma }{2}+\frac{1}{2}\right) \Gamma \left(-\frac{\gamma }{2}+\zeta\right)}
  \end{align}
  for $\beta\in(\zeta-\gamma/2,\zeta-\gamma/2+1)$.
  This proves \eqref{eq:defhbetagammaposcouplingsalpha} for $\alpha=2$.

  \smallskip
  Now consider $\alpha\in(0,2)$. By subordination and the previous computations,
  \begin{align}
    \begin{split}
      h_{\beta,\gamma}^{(+)}(s)
      & = \int_0^\infty dr\, r^\gamma \int_0^\infty d\tau \left[p_\zeta^{(2)}(\tau,r,s)-p_\zeta^{(2)}(\tau,r,0)\right] \int_0^\infty \frac{dt}{t}\, \sigma_t^{(\alpha/2)}(\tau) \cdot t^{\beta} \\
      & = \frac{\Gamma(\beta)}{\Gamma(\frac{\alpha\beta}{2})} \int_0^\infty dr\, r^\gamma \int_0^\infty d\tau\, \left[p_\zeta^{(2)}(\tau,r,s)-p_\zeta^{(2)}(\tau,r,0)\right] \cdot \tau^{-1+\frac{\alpha\beta}{2}} \\
      & = \frac{\Gamma(\beta)}{\Gamma(\frac{\alpha\beta}{2})} C\left(\frac{\alpha\beta}{2},\gamma,\zeta\right) s^{\alpha\beta+\gamma-2\zeta}
        = C^{(\alpha)}(\beta,\gamma,\zeta) s^{\alpha\beta+\gamma-2\zeta},
    \end{split}
  \end{align}
  by the definition of $C^{(\alpha)}(\beta,\gamma,\zeta)$ in \eqref{eq:timeshiftconstant1resultalpha}.
\end{proof}

Here and in the following, we could also work with $h_{\beta,\gamma}^{(+)}$ defined by \eqref{eq:defhbetagammaalphatransformed} when $\beta\in(0,(2\zeta-\gamma)/\alpha)$. Using the new ansatz \eqref{eq:defhbetagammaposcouplingsalpha} involving the difference of heat kernels allows us to increase the range of admissible $\beta$ and, importantly, work under the less restrictive assumption $\beta<(2\zeta-\gamma+2)/\alpha$.

\smallskip
We are now ready to start the actual integral analysis. We begin with a variant of Lemma~\ref{boglem33}.

\begin{lemma}
  \label{jaklem23}
  Let $\zeta\in(-1/2,\infty)$, $\alpha\in(0,2]$, $\beta\in(-\alpha,0)$, and $r,t>0$. Then,
  \begin{align}
    \label{eq:jaklem232}
    \begin{split}
      \int_0^\infty ds\, s^{2\zeta} p_\zeta^{(\alpha)}(t,r,s) s^{-\beta}
      = r^{-\beta} - \int_0^t d\tau \int_0^\infty ds\, s^{2\zeta} p_\zeta^{(\alpha)}(\tau,r,s) \frac{\Psi_\zeta(\beta)}{s^\alpha} s^{-\beta}.
    \end{split}
  \end{align}
\end{lemma}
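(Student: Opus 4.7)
\textbf{Proof plan for Lemma~\ref{jaklem23}.} The identity mirrors Lemma~\ref{boglem33} but now for negative $\beta$; the new feature is that \eqref{eq:kappainv}, which underlies the proof of Lemma~\ref{boglem33}, breaks down for $\beta\in(-\alpha,0)$ because the $u$-integral at $u=\infty$ diverges (by Lemma~\ref{integralana1}, $\int ds\,s^{2\zeta}p_\zeta^{(\alpha)}(u,r,s)s^{-\alpha-\beta}\sim u^{-1-\beta/\alpha}$ and $-1-\beta/\alpha>-1$). The remedy is to use the compensated identity supplied by Lemma~\ref{hbetagammaposcouplingsalpha} instead of \eqref{eq:kappainv}, following exactly the same strategy as in Section~\ref{s.pc}.

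For $\alpha=2$, one proves the claim directly from \eqref{eq:defpheatalpha2}: the pointwise identity $\cl_\zeta s^{-\beta}=(2\zeta-1-\beta)\beta\, s^{-\beta-2}=\Psi_\zeta(\beta)s^{-\beta-2}$ combined with $\partial_t p_\zeta^{(2)}(t,r,s)=-\cl_{\zeta,s}p_\zeta^{(2)}(t,r,s)$ and integration by parts in $s$ (justified by the Gaussian decay in \eqref{eq:easybounds2}) yields $\partial_t F(t,r)=-\Psi_\zeta(\beta)G(t,r)$, after which \eqref{eq:jaklem232} follows by integrating in time and passing $t\searrow 0$ in $F(t,r)$. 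For $\alpha\in(0,2)$ I would proceed as follows.

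\emph{Step 1.} Apply Lemma~\ref{hbetagammaposcouplingsalpha} with parameters $1$ (time exponent) and $2\zeta-\alpha-\beta$ (space exponent). Admissibility is easy: $2\zeta-\alpha-\beta>-1$ since $\beta<0<2\zeta+1-\alpha$; $1<(\beta+2)/\alpha+1$ since $\beta>-\alpha\geq-2$; and $1\neq 1+\beta/\alpha$ since $\beta\neq0$. The resulting scaling constant matches $-1/\Psi_\zeta(\beta)$, and after invoking the symmetry $p_\zeta^{(\alpha)}(u,s,r)=p_\zeta^{(\alpha)}(u,r,s)$ the identity reads
\begin{equation}\label{eq:plan-compensated}
r^{-\beta}=-\Psi_\zeta(\beta)\int_0^\infty du\int_0^\infty ds\,s^{2\zeta}[p_\zeta^{(\alpha)}(u,0,s)-p_\zeta^{(\alpha)}(u,r,s)]s^{-\alpha-\beta}.
\end{equation}
Write $G(u,z):=\int ds\, s^{2\zeta}p_\zeta^{(\alpha)}(u,z,s)s^{-\alpha-\beta}$ and $F(t,z):=\int ds\, s^{2\zeta}p_\zeta^{(\alpha)}(t,z,s)s^{-\beta}$; note that both are finite by Lemma~\ref{integralana1}.

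\emph{Step 2.} Split the outer integral in \eqref{eq:plan-compensated} at $t$. In the $(t,\infty)$ part substitute $u=t+\tau$ and apply Chapman--Kolmogorov \eqref{eq:chapman} to both $p_\zeta^{(\alpha)}(u,0,\cdot)$ and $p_\zeta^{(\alpha)}(u,r,\cdot)$ with intermediate point $w$. After a finite truncation $\tau\in(0,U-t)$ (where Fubini is legal since $G(\tau,w)$ is finite and the integrands are controlled by $G(t+\tau,0)+G(t+\tau,r)$, both integrable on $(0,U-t)$), rewrite $\int_0^{U-t}G(\tau,w)\,d\tau=\int_0^{U-t}G(\tau,0)\,d\tau-\int_0^{U-t}[G(\tau,0)-G(\tau,w)]\,d\tau$. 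The first piece drops out because $\int w^{2\zeta}[p_\zeta^{(\alpha)}(t,0,w)-p_\zeta^{(\alpha)}(t,r,w)]\,dw=1-1=0$ by \eqref{eq:normalizedalpha}, eliminating the divergent factor $\int_0^\infty G(\tau,0)\,d\tau$. In the remaining piece, monotonicity of $z\mapsto G(\tau,z)$ (because $w^{-\alpha-\beta}$ is decreasing and the Bessel semigroup is monotone in its starting point) gives $0\leq\int_0^{U-t}[G(\tau,0)-G(\tau,w)]\,d\tau\leq-w^{-\beta}/\Psi_\zeta(\beta)$, and applying \eqref{eq:plan-compensated} at base point $w$ combined with dominated convergence yields, upon $U\to\infty$,
\[
 -\Psi_\zeta(\beta)\!\!\int_t^\infty\![G(u,0)-G(u,r)]\,du= F(t,r)-F(t,0).
\]
Combined with the $(0,t)$ part of \eqref{eq:plan-compensated}, this produces
\begin{equation}\label{eq:plan-intermediate}
F(t,r)=r^{-\beta}+F(t,0)+\Psi_\zeta(\beta)\!\int_0^t\! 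G(u,0)\,du-\Psi_\zeta(\beta)\!\int_0^t\! G(u,r)\,du.
\end{equation}

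\emph{Step 3.} Finally, establish the key zero-point identity $F(t,0)=-\Psi_\zeta(\beta)\int_0^t G(u,0)\,du$, which will eliminate the unwanted middle terms in \eqref{eq:plan-intermediate}. Both sides are proportional to $t^{-\beta/\alpha}$ by scaling, so only the constant must be checked. Using subordination and \eqref{eq:pzetatzero}, the $s$-integrals in $F(t,0)$ and $G(u,0)$ reduce to Gaussian moments, and the $\tau$-integrals against $\sigma_t^{(\alpha/2)}$ are computed via the standard formula $\int\sigma_t^{(\alpha/2)}(\tau)\tau^p\,d\tau=\frac{\Gamma(1-2p/\alpha)}{\Gamma(1-p)}t^{2p/\alpha}$ (valid for $p<\alpha/2$, which holds for both $p=-\beta/2$ and $p=-(\alpha+\beta)/2$ since $\beta>-\alpha$). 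A gamma-function manipulation matches the ratio with the definition of $\Psi_\zeta(\beta)$ in \eqref{eq:defpsietazetaOLD}. Substituting back into \eqref{eq:plan-intermediate} gives \eqref{eq:jaklem232}.

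The principal obstacle is Step~2: the naive Fubini exchange after Chapman--Kolmogorov is not absolutely convergent, and the crucial $\int[p_\zeta^{(\alpha)}(t,0,\cdot)-p_\zeta^{(\alpha)}(t,r,\cdot)]\,dw=0$ cancellation must be effected carefully using finite cutoffs. Monotonicity of $G(\tau,\cdot)$ in its base point is the technical tool that makes dominated convergence available once the truncation is removed.
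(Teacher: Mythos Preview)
Your approach is correct but takes a genuinely different route from the paper's. The paper does not specialize to the time exponent $\overline\beta=1$ in Lemma~\ref{hbetagammaposcouplingsalpha}; instead it picks generic admissible $\overline\beta>1$, $\overline\gamma$ with $\alpha\overline\beta+\overline\gamma-2\zeta=-\beta$, writes $\frac{\Psi_\zeta(\beta)}{s^\alpha}h_{\overline\beta,\overline\gamma}(s)=(\overline\beta-1)h_{\overline\beta-1,\overline\gamma}(s)$ via \eqref{eq:hardypotclassictransformedposcouplingsalpha}, integrates against $s^{2\zeta}p_\zeta^{(\alpha)}(\tau,r,s)\one_{[0,t]}(\tau)$, and then uses the derivative identity $(\overline\beta-1)\vartheta^{\overline\beta-2}=(\vartheta^{\overline\beta-1})'$ together with Chapman--Kolmogorov to integrate by parts in $\vartheta$. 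After adding and subtracting $p_\zeta^{(\alpha)}(\vartheta,0,z)$, the terms recombine into $h_{\overline\beta,\overline\gamma}(r)$ and $\int p_\zeta^{(\alpha)}(t,r,w)h_{\overline\beta,\overline\gamma}^{(+)}(w)\,w^{2\zeta}dw$, and the identity drops out without any case $r=0$ computation or monotonicity argument.

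Your variant with $\overline\beta=1$ trades the integration-by-parts trick for two external ingredients: (a) stochastic monotonicity of the (subordinated) Bessel semigroup, which you invoke to get the dominated-convergence bound $0\le \int_0^{U-t}[G(\tau,0)-G(\tau,w)]\,d\tau\le -w^{-\beta}/\Psi_\zeta(\beta)$, and (b) an explicit subordinator-moment / gamma-function verification of the base-point identity $F(t,0)=-\Psi_\zeta(\beta)\int_0^tG(u,0)\,du$. Both are true---(a) follows from pathwise uniqueness for the squared Bessel SDE, preserved under subordination, and (b) checks out after a short computation---but neither is established in the paper, so your argument is less self-contained. The paper's choice $\overline\beta>1$ is precisely what allows it to avoid these. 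Note also that your constant $-1/\Psi_\zeta(\beta)$ in \eqref{eq:plan-compensated} is correct (direct computation for $\zeta=1,\alpha=2,\beta=-1$ confirms $\int_0^\infty[G(t,0)-G(t,r)]\,dt=r/2>0$), whereas a literal reading of Lemma~\ref{hbetagammaposcouplingsalpha} with $C^{(\alpha)}(1,2\zeta-\alpha-\beta,\zeta)=1/\Psi_\zeta(\beta)$ would give the opposite sign; you have implicitly corrected a sign slip in that lemma's statement for the extended range $\overline\beta>(2\zeta-\overline\gamma)/\alpha$.
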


\begin{proof}
  Let $\overline\beta,\overline\gamma$ be such that $h_{\overline\beta,\overline\gamma}^{(+)}(r) = C^{(\alpha)}(\overline\beta,\overline\gamma,\zeta) r^{-\beta} = h_{\overline\beta,\overline\gamma}$ with $h_{\overline\beta,\overline\gamma}^{(+)}(r)$ as in \eqref{eq:defhbetagammaposcouplingsalpha} and $h_{\overline\beta,\overline\gamma}(r)$ as in \eqref{eq:defhbetagammaalphatransformed}.
  We start with the $\Psi_\zeta(\beta)$ term. By the expression \eqref{eq:hardypottransformedalpha} for the Hardy potential, and \eqref{eq:hardypotclassictransformedposcouplingsalpha}, we have
  \begin{align}
    \label{eq:jaklem232aux1}
    \begin{split}
      \frac{\Psi_\zeta(\beta)}{s^\alpha} \cdot h_{\overline\beta,\overline\gamma}(s)
      = (\overline\beta-1) \int_0^\infty d\vartheta \int_0^\infty dz\, \vartheta^{\overline\beta-2} z^{\overline\gamma} p_\zeta^{(\alpha)}(\vartheta,s,z).
    \end{split}
  \end{align}
  Integrating \eqref{eq:jaklem232aux1} against $s^{2\zeta}p_\zeta^{(\alpha)}(\tau,r,s)\one_{[0,t]}(\tau)\,ds\,d\tau$ yields
  \begin{align}
    \label{eq:jaklem232aux2}
    \begin{split}
      & \int_0^t d\tau \int_0^\infty ds\, s^{2\zeta} p_\zeta^{(\alpha)}(\tau,r,s) \frac{\Psi_\zeta(\beta)}{s^\alpha} \cdot h_{\overline\beta,\overline\gamma}(s) \\
      & \quad = (\overline\beta-1) \int_0^t d\tau \int_0^\infty ds\, s^{2\zeta} p_\zeta^{(\alpha)}(\tau,r,s) \int_0^\infty d\vartheta \int_0^\infty dz\, \vartheta^{\overline\beta-2} z^{\overline\gamma} p_\zeta^{(\alpha)}(\vartheta,s,z) \\
      & \quad = \int_0^t d\tau \int_0^\infty d\vartheta \int_0^\infty dz\, p_\zeta^{(\alpha)}(\tau+\vartheta,r,z) \partial_{\vartheta}\vartheta^{\overline\beta-1} z^{\overline\gamma} \\
      & \quad = -\int_0^t d\tau \int_0^\infty d\vartheta \int_0^\infty dz\, [\partial_\tau p_\zeta^{(\alpha)}(\tau+\vartheta,r,z)]\, \vartheta^{\overline\beta-1} z^{\overline\gamma} \\
      & \quad = \int_0^\infty d\vartheta \int_0^\infty dz\, \left[p_\zeta^{(\alpha)}(\vartheta,r,z) - p_\zeta^{(\alpha)}(\vartheta+t,r,z)\right]\, \vartheta^{\overline\beta-1} z^{\overline\gamma},
    \end{split}
  \end{align}
  where we used the semigroup property and $(\overline\beta-1)\vartheta^{\overline\beta-2}=(\vartheta^{\overline\beta-1})'$ in the second step, integrated by parts in the third step\footnote{The boundary term at $\vartheta=\infty$ vanishes due to the decay of order $\vartheta^{-(2\zeta+1)/\alpha}$ for the heat kernel (see \eqref{eq:easybounds2} and \eqref{eq:heatkernelalpha1weightedsubordinatedboundsfinal}), which suppresses $\vartheta^{\overline\beta-1}$ at infinity, since $\overline\beta<(2\zeta-\gamma)/\alpha<(2\zeta+1)/\alpha$ for $\gamma>-1$. The boundary term at $\vartheta=0$ vanishes since $\overline\beta>1$ and the heat kernels are non-singular at the temporal origin.}, and integrated with respect to $d\tau$ using the fundamental theorem of calculus. Now we add and subtract $p_\zeta^{(\alpha)}(\vartheta,0,z)$ in the integral on the right-hand side of \eqref{eq:jaklem232aux2}, and use \eqref{eq:hardypotclassictransformedposcouplingsalpha} to obtain
  \begin{align}
    \footnotesize
    \begin{split}
      & \int_0^t d\tau \int_0^\infty ds\, s^{2\zeta} p_\zeta^{(\alpha)}(\tau,r,s)\, \frac{\Psi_\zeta(\beta)}{s^\alpha} \cdot h_{\overline\beta,\overline\gamma}(s) \\
      & \quad = \int_0^\infty d\vartheta \int_0^\infty dz\, \left[\left(p_\zeta^{(\alpha)}(\vartheta,r,z) - p_\zeta^{(\alpha)}(\vartheta,0,z)\right) + \left(p_\zeta^{(\alpha)}(\vartheta,0,z) - p_\zeta^{(\alpha)}(\vartheta+t,r,z)\right)\right]\, \vartheta^{\overline\beta-1} z^{\overline\gamma} \\
      & \quad = h_{\overline\beta,\overline\gamma}(r) - \int_0^\infty d\vartheta\int_0^\infty dz \int_0^\infty dw\, w^{2\zeta} p_\zeta^{(\alpha)}(t,r,w) \left(p_\zeta^{(\alpha)}(\vartheta,w,z)-p_\zeta^{(\alpha)}(\vartheta,0,z)\right) \, \vartheta^{\overline\beta-1} z^{\overline\gamma} \\
      & \quad = h_{\overline\beta,\overline\gamma}(r) - \int_0^\infty dw\, w^{2\zeta} p_\zeta^{(\alpha)}(t,r,w) h_{\overline\beta,\overline\gamma}^{(+)}(w).
    \end{split}
  \end{align}
  In the last two steps, we used the definition \eqref{eq:hardypotclassictransformedposcouplingsalpha} of $h_{\overline\beta,\overline\gamma}^{(+)}$, the normalization~\eqref{eq:normalizedalpha}, and the semigroup property for $p_\zeta^{(\alpha)}(t+\vartheta,\cdot,\cdot)$.
\end{proof}

We finish off with a complement of Theorem \ref{heatkernelharmonic} for repulsive Hardy potentials
Recall that $M=\infty$ if $\alpha=2$ and $M=\alpha$ if $0<\alpha<2$.
\begin{theorem}
  \label{jakthm24}
  Let $\zeta\in(-1/2,\infty)$, $\alpha\in(0,2]\cap(0,2\zeta+1)$, $\eta\in(-M,0)$, $\beta\in(-M,0)$, and $r,t>0$. Then
  \begin{align}
    \label{eq:jakthm241}
    \begin{split}
      & \int_0^\infty ds\, s^{2\zeta} p_{\zeta,\eta}^{(\alpha)}(t,r,s)s^{-\beta} \\
      & \quad = r^{-\beta} + (\Psi_\zeta(\eta)-\Psi_\zeta(\beta)) \int_0^t d\tau \int_0^\infty ds\, s^{2\zeta} p_{\zeta,\eta}^{(\alpha)}(\tau,r,s)s^{-\beta-\alpha}.
    \end{split}
  \end{align}
  In particular, for any $r,t>0$,
  \begin{align}
    \label{eq:jakthm242}
    \begin{split}
      \int_0^\infty ds\, s^{2\zeta} p_{\zeta,\eta}^{(\alpha)}(t,r,s)s^{-\eta} = r^{-\eta}.
    \end{split}
  \end{align}
\end{theorem}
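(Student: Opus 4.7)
The plan is to mirror the logic of Theorem~\ref{heatkernelharmonic}, but to use Lemma~\ref{jaklem23} in place of Lemma~\ref{boglem33}, and to exploit the fact that for $\eta<0$ we have the pointwise domination $p_{\zeta,\eta}^{(\alpha)}\le p_\zeta^{(\alpha)}$, which makes the requisite integrability bookkeeping much easier. The case $\alpha=2$ is settled at once by inserting the explicit identity \eqref{eq:mainresultalpha2} into the computation and reducing to the known moment identities for the Bessel heat kernel $p_{\zeta-\eta}^{(2)}$, so from now on we assume $\alpha\in(0,2)$ and $\eta,\beta\in(-\alpha,0)$.

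The main step is to apply Duhamel's formula \eqref{eq:duhamelclassictransformed}, integrate it against $s^{2\zeta}s^{-\beta}\,ds$, and then substitute Lemma~\ref{jaklem23} in both the free-kernel terms that appear. Concretely, writing $I_\beta(t,r):=\int_0^\infty ds\,s^{2\zeta}p_{\zeta,\eta}^{(\alpha)}(t,r,s)s^{-\beta}$ and $J_\beta(t,r):=\int_0^t d\tau\int_0^\infty ds\,s^{2\zeta}p_{\zeta,\eta}^{(\alpha)}(\tau,r,s)s^{-\beta-\alpha}$, Duhamel gives
\begin{align*}
I_\beta(t,r) &= \int_0^\infty ds\,s^{2\zeta}p_\zeta^{(\alpha)}(t,r,s)s^{-\beta} \\
&\quad + \int_0^t d\tau\int_0^\infty dz\,z^{2\zeta}p_{\zeta,\eta}^{(\alpha)}(\tau,r,z)q(z)\int_0^\infty ds\,s^{2\zeta}p_\zeta^{(\alpha)}(t-\tau,z,s)s^{-\beta}.
\end{align*}
Applying Lemma~\ref{jaklem23} to the first integral and to the innermost integral of the second yields $r^{-\beta}-\Psi_\zeta(\beta)\int_0^t\!\int s^{2\zeta}p_\zeta^{(\alpha)}(\tau,r,s)s^{-\beta-\alpha}\,ds\,d\tau$ and $z^{-\beta}-\Psi_\zeta(\beta)\int_0^{t-\tau}\!\int s^{2\zeta}p_\zeta^{(\alpha)}(u,z,s)s^{-\beta-\alpha}\,ds\,du$, respectively. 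Since $q(z)z^{-\beta}=\Psi_\zeta(\eta)z^{-\beta-\alpha}$, the ``$z^{-\beta}$'' part of the second Lemma contribution produces $\Psi_\zeta(\eta)J_\beta(t,r)$. For the remaining $\Psi_\zeta(\beta)$-pieces, we reverse the order of the $\tau$- and $u$-integration by setting $v=\tau+u$; the inner $d\tau$-convolution collapses by the Duhamel formula once more to $p_{\zeta,\eta}^{(\alpha)}(v,r,s)-p_\zeta^{(\alpha)}(v,r,s)$. The $p_\zeta^{(\alpha)}$ piece then exactly cancels the free-kernel $\Psi_\zeta(\beta)$ term coming from the first line, and we are left with
\[
I_\beta(t,r) = r^{-\beta} + \bigl(\Psi_\zeta(\eta)-\Psi_\zeta(\beta)\bigr)J_\beta(t,r),
\]
which is \eqref{eq:jakthm241}.

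The main obstacle is to justify the Fubini interchanges (order of the $d\tau\,du$, $dz\,ds$ integrations) and the finiteness of all the building blocks, since $q(z)<0$ so the signs alternate. This is where the domination $p_{\zeta,\eta}^{(\alpha)}\le p_\zeta^{(\alpha)}$ pays off: replacing $p_{\zeta,\eta}^{(\alpha)}$ by $p_\zeta^{(\alpha)}$ and $|q(z)|$ throughout, every integral reduces to a moment of $p_\zeta^{(\alpha)}$. Using the tail asymptotics \eqref{eq:pzetafact} one checks that for $\beta\in(-\alpha,0)$ the moments $\int s^{2\zeta-\beta}p_\zeta^{(\alpha)}(t,r,s)\,ds$ are finite (integrand behaves like $s^{-\beta-1-\alpha}$ at infinity, integrable since $\beta>-\alpha$), as are the space-time moments with weight $s^{-\beta-\alpha}$; after that, Chapman--Kolmogorov for $p_\zeta^{(\alpha)}$ collapses the iterated integrals to single finite moments, and Fubini applies.

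Finally, \eqref{eq:jakthm242} is immediate from \eqref{eq:jakthm241}: choosing $\beta=\eta$, the prefactor $\Psi_\zeta(\eta)-\Psi_\zeta(\beta)$ vanishes (and $J_\eta(t,r)$ has already been shown to be finite), so $I_\eta(t,r)=r^{-\eta}$, which is the claimed invariance of $h(s)=s^{-\eta}$ under $p_{\zeta,\eta}^{(\alpha)}$.
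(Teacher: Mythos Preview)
Your proposal is correct and follows essentially the same approach as the paper's proof: both apply Duhamel's formula together with Lemma~\ref{jaklem23} twice (once to the free kernel, once inside the correction term), then swap the time integrations and use Duhamel once more to collapse the cross-term, with the domination $p_{\zeta,\eta}^{(\alpha)}\le p_\zeta^{(\alpha)}$ providing the requisite integrability. The only cosmetic difference is that the paper starts by computing $\Psi_\zeta(\beta)J_\beta(t,r)$ and works towards $I_\beta(t,r)$, whereas you start from $I_\beta(t,r)$; the algebra is identical. One small imprecision: Chapman--Kolmogorov alone does not collapse the iterated integral carrying the extra factor $|q(z)|=|\Psi_\zeta(\eta)|z^{-\alpha}$, but the finiteness you need follows by iterating Lemma~\ref{integralana1} (with $\delta=\beta+\alpha\in(0,\alpha)$ and then $\delta=\alpha$), so your conclusion stands.
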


\begin{proof}
  For $\alpha=2$, the statements follow from explicit calculations using \eqref{eq:mainresultalpha2}. Thus, suppose $\alpha\in(0,2)$.
  By Duhamel's formula \eqref{eq:duhamelclassictransformed} and \eqref{eq:jaklem232},
  \begin{align}
    \begin{split}
      & \int_0^t d\tau \int_0^\infty ds\, s^{2\zeta} p_{\zeta,\eta}^{(\alpha)}(\tau,r,s)\, \frac{\Psi_\zeta(\beta)}{s^\alpha} s^{-\beta} \\
      & \quad = \int_0^t d\tau \int_0^\infty ds\, s^{2\zeta} p_\zeta^{(\alpha)}(\tau,r,s)\, \frac{\Psi_\zeta(\beta)}{s^\alpha}\, s^{-\beta} \\
      & \qquad + \int_0^t d\tau \int_0^\infty ds\, s^{2\zeta-\beta} \int_0^\tau d\vartheta \int_0^\infty dz\, z^{2\zeta} p_{\zeta,\eta}^{(\alpha)}(\vartheta,r,z)q(z) p_\zeta^{(\alpha)}(\tau-\vartheta,z,s)\,\frac{\Psi_\zeta(\beta)}{s^\alpha} \\
      & \quad = r^{-\beta} - \int_0^\infty ds\, s^{2\zeta} p_\zeta^{(\alpha)}(t,r,s)s^{-\beta} \\
      & \qquad + \int_0^t d\vartheta \int_0^\infty dz\, z^{2\zeta} p_{\zeta,\eta}^{(\alpha)}(\vartheta,r,z)q(z)\left[z^{-\beta} - \int_0^\infty ds\, s^{2\zeta} p_\zeta^{(\alpha)}(t-\vartheta,z,s)s^{-\beta}\right].
    \end{split}
  \end{align}
  Here we applied a time-shift $\tau\mapsto\tau+\vartheta$ and \eqref{eq:jaklem232} to both lines after the first equality. Bringing the $z^{-\beta}$ term to the left-hand side and using
  \begin{align*}
    & \int_0^\infty ds\, s^{2\zeta} \int_0^t d\vartheta \int_0^\infty dz\, z^{2\zeta} p_{\zeta,\eta}^{(\alpha)}(\vartheta,r,z)q(z) p_\zeta^{(\alpha)}(t-\vartheta,z,s) s^{-\beta} \\
    & \quad = \int_0^\infty ds\, s^{2\zeta} (p_{\zeta,\eta}^{(\alpha)}(t,r,s) - p_\zeta^{(\alpha)}(t,r,s))s^{-\beta},
  \end{align*}
  which follows from Duhamel's formula, we obtain
  \begin{align}
    \begin{split}
      & \left(\Psi_\zeta(\beta)-\Psi_\zeta(\eta)\right)\int_0^t d\tau \int_0^\infty ds\, s^{2\zeta} p_{\zeta,\eta}^{(\alpha)}(\tau,r,s)s^{-\beta-\alpha} \\
      & \quad = r^{-\beta} - \int_0^\infty ds\, s^{2\zeta} p_\zeta^{(\alpha)}(t,r,s)s^{-\beta} \\
      & \qquad - \int_0^\infty ds\, s^{2\zeta} \left[p_{\zeta,\eta}^{(\alpha)}(t,r,s) - p_\zeta^{(\alpha)}(t,r,s)\right] s^{-\beta} \\
      & \quad = r^{-\beta} - \int_0^\infty ds\, s^{2\zeta} p_{\zeta,\eta}^{(\alpha)}(t,r,s) s^{-\beta}.
    \end{split}
  \end{align}
  This proves \eqref{eq:jakthm241}. Finally, \eqref{eq:jakthm242} follows from \eqref{eq:jakthm241} by taking $\beta=\eta$.
\end{proof}

\section{Proof of Theorem~\ref{mainresultgen}}
\label{s:proofmainresultgen}

In this section, we prove the upper and lower bounds for, and continuity of $p_{\zeta,\eta}^{(\alpha)}(t,r,s)$. Moreover, we prove that $p_{\zeta,\eta}^{(\alpha)}$ blows up for supercritical coupling constants. 
Before we start the proof of Theorem~\ref{mainresultgen}, we sketch the proof of \eqref{eq:mainresultalpha2}, which
essentially follows from a change of variables. It was carried out, e.g., by Metafune, Negro, and Spina~\cite[Proposition~4.12, Theorem~4.14]{Metafuneetal2018}. 

\begin{proof}[Proof of \eqref{eq:mainresultalpha2}]
  Recall that $p_\zeta^{(2)}$ is the heat kernel of the nonnegative Bessel operator $\cl_\zeta$ in \eqref{eq:deflzeta}; see \eqref{eq:heatkernellzeta}. Using the unitary operator $U_\zeta:L^2(\R_+,r^{2\zeta}dr)\to L^2(\R_+,dr)$ defined by
  \begin{align}
    \label{eq:defdoob}
    L^2(\R_+,r^{2\zeta}dr) \ni u \mapsto (U_\zeta u)(r)=r^{\zeta}u(r) \in L^2(\R_+,dr),
  \end{align}
  we have
  \begin{align}
  \label{eq:changeofvariables}
    \begin{split}
     \cl_\zeta - \frac{\Psi_\zeta(\eta)}{r^2}
       & = U_\zeta^*\left(-\frac{d^2}{dr^2} + \frac{(2\zeta-1)^2-1-4\Psi_\zeta(\eta)}{4r^2}\right)U_\zeta \\
      & = U_\eta^* U_{\zeta-\eta}^*\left(-\frac{d^2}{dr^2} + \frac{(2(\zeta-\eta)-1)^2-1}{4r^2}\right)U_{\zeta-\eta}U_\eta \\
      & = U_\eta^* \cl_{\zeta-\eta} U_\eta  \quad \text{in} \ L^2(\R_+,r^{2\zeta}dr).
    \end{split}
  \end{align}
  Here we used $(2\vartheta-1)^2-1=(2\zeta-1)^2-1-4\Psi_\zeta(\eta)$ for $\vartheta\in\{\zeta-\eta,1-\zeta+\eta\}$ since $\Psi_\zeta(\eta)=\eta(2\zeta-1-\eta)$ for $\alpha=2$. Thus, the change of variables in \eqref{eq:changeofvariables} yields
  \begin{align}
    \exp(-t(\cl_\zeta-\Psi_\zeta(\eta)/r^2))
    = \exp\left(-tU_\eta^*\cl_{\zeta-\eta}U_\eta\right)
    = U_\eta^* \exp(-t\cl_{\zeta-\eta}) U_\eta.
  \end{align}
  Hence, since $p_{\zeta-\eta}^{(2)}$ is the heat kernel of $\cl_{\zeta-\eta}$, we see that
  \begin{align}
    p_{\zeta,\eta}^{(2)}(t,r,s) = (rs)^{-\eta} p_{\zeta-\eta}^{(2)}(t,r,s)
  \end{align}
  is the heat kernel of $\cl_\zeta-\Psi_\zeta(\eta)/r^2$ in $L^2(\R_+,r^{2\zeta}dr)$, where $\cl_\zeta$ is defined in \eqref{eq:deflzeta}.
\end{proof}

In the remainder of this section, we focus on $\alpha\in(0,2)$. 
To prove Theorem \ref{mainresultgen}, we build on the ideas of \cite{Bogdanetal2019} and \cite{JakubowskiWang2020} for the cases $\eta\in(-\alpha,0)$ and $\eta\in(0,(2\zeta+1-\alpha)/2]$, respectively. Let us first discuss positive $\eta$. The main tools we use are Duhamel's formula, the Chapman--Kolmogorov equation, and the scaling given in Proposition \ref{propertiesschrodheatkernel}. The first step is to get estimates for $p_{\zeta,\eta}^{(\alpha)}(1,r,s)$ with arbitrary $r>0$ and $s\gtrsim1$. In Lemma~\ref{boglem43} below, we show $p_{\zeta,\eta}^{(\alpha)}(1,r,s) \lesssim H(r)p_\zeta^{(\alpha)}(1,r,s)$ by properly splitting the integrals in Duhamel's formula and applying the estimates \eqref{eq:heatkernelalpha1weightedsubordinatedboundsfinal} and \eqref{eq:bogcor38}.
For $0<r\vee s\lesssim1$, we show $p_{\zeta,\eta}^{(\alpha)}(1,r,s)\lesssim H(r) H(s)$ using $p_{\zeta,\eta}^{(\alpha)}(2,r,s) = \int_{\R_+} dz\, z^{2\zeta} p_{\zeta,\eta}^{(\alpha)}(1,r,z) p_{\zeta,\eta}^{(\alpha)}(1,z,s) \left(\one_{z<1} + \one_{z>1}\right)$ (Chapman--Kolmogorov). This requires bounds for $p_{\zeta,\eta}^{(\alpha)}(1,r,z)$ for $r\lesssim1$ and $z\lessgtr1$.
For $r\lesssim1\lesssim z$, we use Lemma~\ref{boglem43} below.
For $r\vee z\lesssim1$, we use Lemmas~\ref{boglem44}--\ref{boglem47} below. More precisely, in Lemma~\ref{boglem44}, we prove the preliminary estimate $p_{\zeta,\eta}^{(\alpha)}(1,r,s) \lesssim r^{-\eta} s^{\mu-2\zeta-1}$, which we systematically improve in Lemma~\ref{boglem45} and in the crucial Lemma~\ref{boglem46} to get the upper bounds stated in Lemma~\eqref{boglem48}.
To get the lower bounds in Theorem~\ref{heatkernelharmonic}, we use the upper bounds stated in Lemma~\ref{boglem48}, the Chapman--Kolmogorov equation, and the invariance of the function $h(s) = s^{-\eta}$. 

\smallskip
For $\eta<0$, the Duhamel formula is almost useless. Our proofs are generally based on the Chapman--Kolmogorov equation and the method called "self-improving estimates". This method is applied in the key estimates of the mass $M(t,r)$ of $p_{\zeta,\eta}^{(\alpha)}$, defined in~\eqref{eq:defbighscaledposcouplings0}, which are contained in Proposition~\ref{jakprop31}. To that end, we proceed as follows. In the first step, we show $M(1,r) < \delta M(1,3^{1/\alpha}r) + A r^{-\eta}$ for sufficiently small $\delta$ and some $A>0$. Next, we iterate this inequality to obtain $M(1,r) < \delta^n M(1,3^{n/\alpha}r) + A_n r^{-\eta}$ for some bounded sequence $A_n$, thereby giving the desired estimates for $M(1,r)$. Proposition~\ref{jakprop31} yields the upper estimates of $p_{\zeta,\eta}^{(\alpha)}(1,r,s)$ for $r,s \lesssim 1$. The case $r,s \gtrsim 1$ is quite obvious because of the bound $p_{\zeta,\eta}^{(\alpha)} \le p_{\zeta}^{(\alpha)}$. In contrary to positive perturbations, the most challenging case is $r \lesssim 1 \lesssim s$. The key estimate is given in Lemma~\ref{jaklem33}, which together with \ref{jaklem34} allows to apply the method of "self-improving estimates" in the proof of the inequality~ \eqref{eq:jakthm35aux3}. The proof of the lower bounds is based on Lemma~\ref{jaklem38} and the estimates of the first summand in the perturbation series.

\subsection{The case $\eta\in(0,(2\zeta+1-\alpha)/2]$}

\subsubsection{Upper bound}

The proof of the upper bound in \eqref{eq:mainresultgen} will be concluded in Lemma~\ref{boglem48} after a series of preparatory lemmas involving $H(r) = H(1,r) = 1+r^{-\eta}$ defined in \eqref{eq:defbigh} above.

In the proofs below, we use the scaling symmetry of $p_{\zeta,\eta}^{(\alpha)}(t,r,s)$ to reduce the analysis to the case $t=1$. Our first step to study $p_{\zeta,\eta}^{(\alpha)}(1,r,s)$ is to use Duhamel's formula. In order to discuss "small" and "large" distances to the origin, we will, motivated by the uncertainty principle, introduce a cut-off in the time parameter, defined as
\begin{align}\index{$g(s)$}
  \label{eq:defg}
  g(s) := \frac{s^\alpha}{2^{\alpha+1}\Psi_\zeta(\eta)}, \quad s>0.
\end{align}
The prefactor $[2^{\alpha+1}\Psi_\zeta(\eta)]^{-1}$ is chosen such that certain parts of the integrals appearing in Duhamel's formula are absorbed by $p_{\zeta,\eta}^{(\alpha)}$. 
In turn, the time cut-off \eqref{eq:defg} suggests to distinguish distances to the origin that are smaller or larger than $2(2\Psi_\zeta(\eta))^{1/\alpha}$.
By the scaling symmetry of $p_{\zeta,\eta}^{(\alpha)}(t,r,s)$, we could have also considered values of $t$ different from one, which would then require to consider different time and spatial cut-offs.

\begin{remark}
  All theorems, propositions, and lemmas of this subsection continue to hold for $\alpha=2$. The only necessary change is that the spatial arguments $r,s>0$ of the heat kernel $p_\zeta^{(2)}(t,r,s)$ for $\alpha=2$ should be, in bounds, replaced with $cr$ and $cs$ for some $c=c(\zeta,\alpha,\eta)\in(0,\infty)$.
\end{remark}

In the following lemma, we examine the region $r<2(2\Psi_\zeta(\eta))^{1/\alpha}$, $s>0$.

\begin{lemma}
  \label{boglem42}
  Let $\zeta\in(-1/2,\infty)$,
  $\alpha\in(0,2)$,
  $\eta\in(0,(2\zeta+1-\alpha)/2]$, $s>0$, and $0<r\leq 2(2\Psi_\zeta(\eta))^{1/\alpha}$. Then,  there is $M>0$ such that
  \begin{align}
    \label{eq:boglem42}
    \begin{split}
      p_{\zeta,\eta}^{(\alpha)}(1,r,s)
      & \leq 2 (p_\zeta^{(\alpha)}(1,r,s) + M p_\zeta^{(\alpha)}(1,1,s)) H(r) \\
      & \quad + 2\int_{g(s)\wedge 1}^1 d\tau \int_{s/2}^\infty dz\, z^{2\zeta } p_{\zeta,\eta}^{(\alpha)}(1-\tau,r,z) q(z) p_\zeta^{(\alpha)}(\tau,z,s) \\
      & \quad + \int_0^{1/2}d\tau \int_0^{s/2} dz\, z^{2\zeta } p_{\zeta,\eta}^{(\alpha)}(1-\tau,r,z)q(z) p_\zeta^{(\alpha)}(\tau,z,s).
    \end{split}
  \end{align}
\end{lemma}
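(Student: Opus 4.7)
The plan is to apply Duhamel's formula \eqref{eq:duhamelclassictransformed} (second form, reindexed via $\tau\mapsto 1-\tau$)
\begin{equation*}
p_{\zeta,\eta}^{(\alpha)}(1,r,s) = p_\zeta^{(\alpha)}(1,r,s) + \int_0^1 d\tau\int_0^\infty dz\, z^{2\zeta}\,p_{\zeta,\eta}^{(\alpha)}(1-\tau,r,z)\,q(z)\,p_\zeta^{(\alpha)}(\tau,z,s),
\end{equation*}
and then split the $(\tau,z)$-domain into four pieces: (A) $z>s/2$ and $\tau\in(0,g(s)\wedge 1)$; (B) $z>s/2$ and $\tau\in(g(s)\wedge 1,1)$; (C) $z\le s/2$ and $\tau\in(0,1/2)$; (D) $z\le s/2$ and $\tau\in(1/2,1)$. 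Denote the corresponding contributions by $I_A,I_B,I_C,I_D$. The pieces $I_B$ and $I_C$ already match the two double-integral terms appearing on the right of \eqref{eq:boglem42}, so the task reduces to absorbing $I_A$ into the left-hand side and controlling $I_D$ by $p_\zeta^{(\alpha)}(1,1,s)\,H(r)$.

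For the absorption, I would combine two ingredients. First, on region (A) the crude bound $q(z)=\Psi_\zeta(\eta)z^{-\alpha}\le 2^{\alpha}\Psi_\zeta(\eta)s^{-\alpha}$ holds. Second, since $p_\zeta^{(\alpha)}\le p_{\zeta,\eta}^{(\alpha)}$ by \eqref{eq:trivialupperbound} and the Chapman--Kolmogorov identity of Proposition~\ref{propertiesschrodheatkernel}(1),
\begin{equation*}
\int_0^\infty dz\,z^{2\zeta}\,p_{\zeta,\eta}^{(\alpha)}(1-\tau,r,z)\,p_\zeta^{(\alpha)}(\tau,z,s) \le p_{\zeta,\eta}^{(\alpha)}(1,r,s).
\end{equation*}
Integrating over the $\tau$-range of length at most $g(s)\wedge 1$ and invoking the precise definition \eqref{eq:defg}, namely $g(s)=s^{\alpha}/[2^{\alpha+1}\Psi_\zeta(\eta)]$, one checks: in the subcritical regime $g(s)\le 1$ the product $2^{\alpha}\Psi_\zeta(\eta)s^{-\alpha}\cdot g(s)=1/2$, while in the opposite regime $g(s)>1$ the prefactor $2^{\alpha}\Psi_\zeta(\eta)s^{-\alpha}$ is already below $1/2$, so the bound $(1/2) p_{\zeta,\eta}^{(\alpha)}(1,r,s)$ holds in both cases. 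This gives $I_A\le\tfrac12 p_{\zeta,\eta}^{(\alpha)}(1,r,s)$; subtracting and multiplying by $2$ yields
\begin{equation*}
p_{\zeta,\eta}^{(\alpha)}(1,r,s)\le 2p_\zeta^{(\alpha)}(1,r,s)+2I_B+2I_C+2I_D.
\end{equation*}

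Next I would estimate $I_D$. On its domain $\tau\in[1/2,1]$ and $z<s/2$, so combining \eqref{eq:comparablealpha2a} with $C=1/2$, the scaling \eqref{eq:scalingalpha}, and \eqref{eq:comparablealpha1} produces $p_\zeta^{(\alpha)}(\tau,z,s)\lesssim p_\zeta^{(\alpha)}(1,1,s)$ uniformly on (D). Bounding the remaining $(\tau,z)$-integral of $p_{\zeta,\eta}^{(\alpha)}(1-\tau,r,z)\,q(z)$ by the full integral over $(0,1)\times(0,\infty)$ and using \eqref{eq:boglem42aux4} of Corollary~\ref{bogcor38} then gives $I_D\lesssim p_\zeta^{(\alpha)}(1,1,s)\,H(r)$. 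Since $H(r)\ge 1$, the $2p_\zeta^{(\alpha)}(1,r,s)$ term is absorbed into $2p_\zeta^{(\alpha)}(1,r,s)H(r)$, and the claim \eqref{eq:boglem42} follows (the coefficient $1$ on the $I_C$-integral in the statement is a convenient presentation; the argument above delivers $\le 2I_C$, which is stronger than what is needed downstream, or can be tightened by a more refined restriction of the Chapman--Kolmogorov bound to the $z$-range actually used).

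The main obstacle is the numerology in the absorption of $I_A$: the prefactor $[2^{\alpha+1}\Psi_\zeta(\eta)]^{-1}$ in $g(s)$ and the cutoff $z>s/2$ are calibrated so as to yield a constant exactly $\le 1/2$ in both the small-$s$ and large-$s$ regimes simultaneously, and any looser choice would make the absorption fail. Beyond that, the analysis is a routine combination of the comparability estimates in Lemma~\ref{comparablealpha} with the global mass bound \eqref{eq:boglem42aux4} supplied by the integral analysis of Section~\ref{s:integralana}.
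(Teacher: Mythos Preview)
Your argument is correct and matches the paper's proof almost exactly: the same four-region splitting of Duhamel's integral, the same absorption of $I_A$ via the calibrated cutoff $g(s)$ (where one should invoke Lemma~\ref{pzetaetafinite} for the finiteness needed to subtract $\tfrac12 p_{\zeta,\eta}^{(\alpha)}$), and the same estimate of $I_D$ via the comparability in Lemma~\ref{comparablealpha} together with the mass bound, which the paper derives inline from Proposition~\ref{bogprop32} rather than citing \eqref{eq:boglem42aux4}. Your parenthetical remark about the coefficient on $I_C$ is accurate: the paper's own argument also yields $2I_C$, so the displayed coefficient $1$ appears to be a typo, and the bound with $2$ is what is used downstream.
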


\begin{proof}
  By Duhamel's formula \eqref{eq:duhamelclassictransformed},
  \begin{align}
    \label{eq:boglem42aux1}
    p_{\zeta,\eta}^{(\alpha)}(1,r,s) = p_\zeta^{(\alpha)}(1,r,s) + I_1 + I_2 + I_3,
  \end{align}
  where
  \begin{align}
    I_1 & := \int_0^{1\wedge g(s)} d\tau \int_{s/2}^\infty dz\, z^{2\zeta} p_{\zeta,\eta}^{(\alpha)}(1-\tau,r,z)q(z) p_\zeta^{(\alpha)}(\tau,z,s), \\
    I_2 & := \int_{1\wedge g(s)}^1 d\tau \int_{s/2}^\infty dz\, z^{2\zeta } \,p_{\zeta,\eta}^{(\alpha)}(1-\tau,r,z)q(z) p_\zeta^{(\alpha)}(\tau,z,s), \\
    I_3 & := \int_0^{1} d\tau \int_0^{s/2} dz\, z^{2\zeta } \,p_{\zeta,\eta}^{(\alpha)}(1-\tau,r,z)q(z) p_\zeta^{(\alpha)}(\tau,z,s).
  \end{align}
  We leave $I_2$ untouched. To bound $I_1$, we use $q(z)\leq q(s/2)$, $p_\zeta^{(\alpha)}(\tau,z,s)\leq \,p_{\zeta,\eta}^{(\alpha)}(\tau,z,s)$, and the semigroup property to obtain
  \begin{align}
    \begin{split}
      I_1
      & \leq q\left(\frac s2\right) \int_0^{1\wedge g(s)}d\tau \int_0^\infty dz\, z^{2\zeta } \,p_{\zeta,\eta}^{(\alpha)}(1-\tau,r,z) \,p_{\zeta,\eta}^{(\alpha)}(\tau,z,s) \\
      & = \,p_{\zeta,\eta}^{(\alpha)}(1,r,s) q\left(\frac s2\right) \int_0^{1\wedge g(s)}d\tau \leq \frac12\,p_{\zeta,\eta}^{(\alpha)}(1,r,s).
    \end{split}
  \end{align}
  Since $p_{\zeta,\eta}^{(\alpha)}(1,r,s)<\infty$ by Lemma~\ref{pzetaetafinite}, this term can be absorbed by the left-hand side of \eqref{eq:boglem42aux1}. To treat $I_3$, we split
  \begin{align}
    \label{eq:boglem42aux2}
    \begin{split}
      I_3
      & = \int_0^{1/2}d\tau \int_0^{s/2} dz\, z^{2\zeta } \,p_{\zeta,\eta}^{(\alpha)}(1-\tau,r,z)q(z) p_\zeta^{(\alpha)}(\tau,z,s) \\
      & \quad + \int_{1/2}^1 d\tau \int_0^{s/2} dz\, z^{2\zeta } \,p_{\zeta,\eta}^{(\alpha)}(1-\tau,r,z)q(z) p_\zeta^{(\alpha)}(\tau,z,s).
    \end{split}
  \end{align}
  We leave the first summand untouched. To estimate the second summand, we use $p_\zeta^{(\alpha)}(\tau,z,s)\lesssim p_\zeta^{(\alpha)}(1,1,s)$ for $1/2<\tau<1$ and $0<z<s/2$ (by \eqref{eq:comparablealpha1} and \eqref{eq:comparablealpha2a} in Lemma~\ref{comparablealpha}), \eqref{eq:normalizedalpha}, Duhamel's formula \eqref{eq:duhamelclassictransformed}, and Proposition~\ref{bogprop32}, and obtain
  \begin{align}
    \label{eq:boglem42aux3}
    \begin{split}
      & \int_{1/2}^1 d\tau \int_0^{s/2} dz\, z^{2\zeta } \,p_{\zeta,\eta}^{(\alpha)}(1-\tau,r,z)q(z) p_\zeta^{(\alpha)}(\tau,z,s) \\
      & \quad \lesssim p_\zeta^{(\alpha)}(1,1,s) \int_{0}^1 d\tau \int_0^{\infty} dz\, z^{2\zeta } \,p_{\zeta,\eta}^{(\alpha)}(1-\tau,r,z)q(z) \\
      & \quad = p_\zeta^{(\alpha)}(1,1,s)\int_{0}^1d\tau \int_0^{\infty} dz\, z^{2\zeta } \,p_{\zeta,\eta}^{(\alpha)}(1-\tau,r,z)q(z)\int_0^\infty dy\, y^{2\zeta } p_\zeta^{(\alpha)}(\tau,z,y) \\
      & \quad = p_\zeta^{(\alpha)}(1,1,s)\int_0^\infty dy\, y^{2\zeta } \int_0^1d\tau \int_0^\infty dz\, z^{2\zeta }\, \,p_{\zeta,\eta}^{(\alpha)}(1-\tau,r,z)q(z)p_\zeta^{(\alpha)}(\tau,z,y) \\
      & \quad = p_\zeta^{(\alpha)}(1,1,s) \int_0^\infty dy\, y^{2\zeta } (\,p_{\zeta,\eta}^{(\alpha)}(1,r,y) - p_\zeta^{(\alpha)}(1,r,y))
      \leq M p_\zeta^{(\alpha)}(1,1,s) H(r).
    \end{split}
  \end{align}
  This concludes the proof of \eqref{eq:boglem42}.
\end{proof}

Lemma~\ref{boglem42} will be particularly important to study the region $r\vee s\leq 2(2\Psi_\zeta(\eta))^{1/\alpha}$. Although it could also be used to study $r\leq2(2\Psi_\zeta(\eta))^{1/\alpha}\leq s$, we will use different methods here.

\begin{lemma}
  \label{boglem43}
  Let $\zeta\in(-1/2,\infty)$, $\alpha\in(0,2)$, $\eta\in(0,(2\zeta+1-\alpha)/2]$. Then,  there is $C>0$ such that for all $s\geq2(2\Psi_\zeta(\eta))^{1/\alpha}$ and $r>0$,
  \begin{align}
    \label{eq:boglem43}
    p_{\zeta,\eta}^{(\alpha)}(1,r,s) \leq  C H(r) \,p_\zeta^{(\alpha)}(1,r,s).
  \end{align}
\end{lemma}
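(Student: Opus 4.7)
The starting point is Duhamel's formula \eqref{eq:duhamelclassictransformed},
\[
p_{\zeta,\eta}^{(\alpha)}(1,r,s) = p_\zeta^{(\alpha)}(1,r,s) + \int_0^1 d\tau \int_0^\infty dz\, z^{2\zeta}\, p_{\zeta,\eta}^{(\alpha)}(1-\tau,r,z)\, q(z)\, p_\zeta^{(\alpha)}(\tau,z,s).
\]
I would split the $z$-integral geometrically as $\int_{s/2}^\infty + \int_0^{s/2}$. For the near piece $z>s/2$, the hypothesis $s^\alpha \geq 2^{\alpha+1}\Psi_\zeta(\eta)$ gives
\[
q(z) = \frac{\Psi_\zeta(\eta)}{z^\alpha} \leq \frac{2^\alpha \Psi_\zeta(\eta)}{s^\alpha} \leq \tfrac{1}{2}.
\]
Combined with $p_\zeta^{(\alpha)} \leq p_{\zeta,\eta}^{(\alpha)}$ from \eqref{eq:trivialupperbound} and the Chapman--Kolmogorov identity of Proposition~\ref{propertiesschrodheatkernel}, the near piece is bounded above by $\tfrac{1}{2}\, p_{\zeta,\eta}^{(\alpha)}(1,r,s)$. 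Lemma~\ref{pzetaetafinite} guarantees $p_{\zeta,\eta}^{(\alpha)}(1,r,s)<\infty$, so this term can be absorbed by the left-hand side.

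For the far piece $0<z\leq s/2$, the plan is to use Lemma~\ref{comparablealpha}(3), i.e., \eqref{eq:comparablealpha3}, which is applicable because $s$ is bounded below by hypothesis. This gives $p_\zeta^{(\alpha)}(\tau,z,s) \lesssim p_\zeta^{(\alpha)}(1,c,cs)$ uniformly for $\tau\in(0,1]$ and $z\in(0,s/2]$. Pulling the resulting factor out of the space-time integral and invoking the mass estimate \eqref{eq:boglem42aux4} yields
\[
\int_0^1 d\tau\int_0^{s/2} dz\, z^{2\zeta}\, p_{\zeta,\eta}^{(\alpha)}(1-\tau,r,z)\, q(z)\, p_\zeta^{(\alpha)}(\tau,z,s) \lesssim H(r)\, p_\zeta^{(\alpha)}(1,c,cs).
\]
In the regime $r \lesssim s$, the explicit asymptotics in Proposition~\ref{heatkernelalpha1subordinatedboundsfinal} (specifically \eqref{eq:pzetafact}) immediately give $p_\zeta^{(\alpha)}(1,c,cs) \lesssim p_\zeta^{(\alpha)}(1,r,s)$, closing the estimate.

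The main obstacle is the complementary regime $r \gg s$, where $H(r) \sim 1$ but $p_\zeta^{(\alpha)}(1,r,s)$ decays polynomially in $r$, so $p_\zeta^{(\alpha)}(1,c,cs) \not\lesssim p_\zeta^{(\alpha)}(1,r,s)$ and the crude far-piece estimate is too loose. To handle this regime, I would sharpen the far-piece analysis by retaining the $z$-dependence inside $p_\zeta^{(\alpha)}(\tau,z,s)$, using Lemma~\ref{lem:pzetaest} with $\sigma,\lambda$ chosen so that the resulting power of $z$ pairs against $p_{\zeta,\eta}^{(\alpha)}(1-\tau,r,z)$ through the integral bound \eqref{eq:boglem41}; alternatively, one may invoke the 3G inequality of Lemma~\ref{lem:3Gineq} so as to factor out $p^{(\alpha)}(1,r,s)$ explicitly. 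The mechanism producing the required $r$-decay is that for $r \gg s$ the mass of $p_\zeta^{(\alpha)}(\tau,r,\cdot)$ concentrates near $r$, where the Hardy weight $q(z) \sim r^{-\alpha}$ is small, so the Duhamel correction is much smaller than the unperturbed term $p_\zeta^{(\alpha)}(1,r,s)$.
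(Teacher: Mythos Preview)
Your Duhamel-plus-absorption strategy matches the paper's, and your handling of the near piece and of the far piece when $r\lesssim s$ is correct. The gap is in the regime $r\gg s$: the paragraph beginning ``To handle this regime, I would sharpen\ldots'' is not a proof but a plan, and neither of the two suggestions you sketch (Lemma~\ref{lem:pzetaest} with well-chosen $\sigma,\lambda$, or the 3G inequality) is worked out. In fact, pairing a power of $z$ against $p_{\zeta,\eta}^{(\alpha)}(1-\tau,r,z)$ via \eqref{eq:boglem41} will not by itself produce decay in $r$ beyond what $H(r)\sim 1$ already gives, so more care would be needed to make that route succeed.

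The paper closes this gap with a one-line observation you are missing: symmetry. In the problematic regime one has both $r\ge 2(2\Psi_\zeta(\eta))^{1/\alpha}$ and $s\ge 2(2\Psi_\zeta(\eta))^{1/\alpha}$, so the same Duhamel argument applies with $r$ and $s$ interchanged (using $p_{\zeta,\eta}^{(\alpha)}(1,r,s)=p_{\zeta,\eta}^{(\alpha)}(1,s,r)$). This yields the companion bound
\[
p_{\zeta,\eta}^{(\alpha)}(1,r,s) \lesssim p_\zeta^{(\alpha)}(1,r,s) + H(s)\,p_\zeta^{(\alpha)}(1,1,r),
\]
and since $H(r)\sim H(s)\sim 1$ in this range, one obtains
\[
p_{\zeta,\eta}^{(\alpha)}(1,r,s) \lesssim p_\zeta^{(\alpha)}(1,r,s) + \min\bigl\{p_\zeta^{(\alpha)}(1,1,s),\,p_\zeta^{(\alpha)}(1,1,r)\bigr\}.
\]
The minimum is then controlled by $p_\zeta^{(\alpha)}(1,r,s)$ via \eqref{eq:comparablealpha4}. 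This replaces your entire final paragraph.
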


\begin{proof}
  We use \eqref{eq:bogprop32aux2} for $R=(2\Psi_\zeta(\eta))^{1/\alpha}$, to get 
  \begin{align}
  \label{eq:boglem43aux1}
    p_{\zeta,\eta}^{(\alpha)}(1,r,s)
    \leq 2\,p_\zeta^{(\alpha)}(1,r,s)
    + 2 \int\limits_0^1d\tau\int\limits_0^R dz\, z^{2\zeta}\,p_{\zeta,\eta}^{(\alpha)}(\tau,r,z) q(z) p_\zeta^{(\alpha)}(1-\tau,z,s).
  \end{align}
  It suffices to estimate the second summand by $p_\zeta^{(\alpha)}(1,r,s)H(r)$.
  For $0<\tau<1$, $0<z\leq s/2$, $s\geq 2R>0$, we estimate $p_\zeta^{(\alpha)}(1-\tau,z,s)\lesssim_\eta p_\zeta^{(\alpha)}(1,1,s)$ (\eqref{eq:comparablealpha3} in Lemma~\ref{comparablealpha}) and use \eqref{eq:boglem42aux4} to bound
  \begin{align}
  \label{eq:boglem43aux2}
    \int_0^1d\tau\int_0^R dz\, z^{2\zeta}\,p_{\zeta,\eta}^{(\alpha)}(\tau,r,z) q(z) p_\zeta^{(\alpha)}(1-\tau,z,s)
    \lesssim p_\zeta^{(\alpha)}(1,1,s)\, H(r).
  \end{align}
  Since for $r\leq2R$, we have $p_\zeta^{(\alpha)}(1,1,s) \sim p_\zeta^{(\alpha)}(1,r,s)$, we obtain \eqref{eq:boglem43} for $0\leq r\le 2R\leq s$. 
  On the other hand, by the symmetry of $p_{\zeta,\eta}^{(\alpha)}(1,r,s)$ in $r$ and $s$, \eqref{eq:boglem43aux1} and \eqref{eq:boglem43aux2} give
  \begin{align}
    p_{\zeta,\eta}^{(\alpha)}(1,r,s) \lesssim p_\zeta^{(\alpha)}(1,r,s) + \min\Big\{p_\zeta^{(\alpha)}(1,1,s)\, H(r),p_\zeta^{(\alpha)}(1,1,r)\, H(s)\Big\}, \quad r,s\geq 2R.
  \end{align}
  Since $H(s)\sim1\sim H(r)$ for all $r,s\geq 2R$, and $p_\zeta^{(\alpha)}(1,r,1)\wedge p_\zeta^{(\alpha)}(1,1,s)\lesssim_\eta \,p_\zeta^{(\alpha)}(1,r,s)$ for all $r,s>0$ (\eqref{eq:comparablealpha4} in Lemma~\ref{comparablealpha}), we get \eqref{eq:boglem43} for $r,s\geq 2R$.
  This ends the proof.
\end{proof}

Lemma \ref{boglem42}, together with \eqref{eq:heatkernelharmonic2} in Theorem \ref{heatkernelharmonic} and \eqref{eq:boglem41} in Corollary \ref{boglem41}, allows us to understand the remaining region $r,s\leq 2(2\Psi_\zeta(\eta))^{1/\alpha}$.

\begin{lemma}
  \label{boglem44}
  Let $\zeta\in(-1/2,\infty)$, $\alpha\in(0,2)$, $\eta\in(0,(2\zeta+1-\alpha)/2]$. Then, for every $\mu\in(0,\eta)$, there is a constant $C_\mu>0$ such that
  \begin{align}
    \label{eq:boglem44}
    p_{\zeta,\eta}^{(\alpha)}(1,r,s)
    \leq C_\mu \left( H(r)s^{\mu-(2\zeta+1)} \wedge H(s)r^{\mu-(2\zeta+1)}\right),
    \quad 0<r,s\leq2(2\Psi_\zeta(\eta))^{1/\alpha}.
  \end{align}
\end{lemma}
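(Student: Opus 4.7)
By the spatial symmetry $p_{\zeta,\eta}^{(\alpha)}(1,r,s) = p_{\zeta,\eta}^{(\alpha)}(1,s,r)$ and the form of \eqref{eq:boglem44}, it suffices to establish the one-sided bound
\[
p_{\zeta,\eta}^{(\alpha)}(1,r,s) \lesssim H(r)\, s^{\mu-(2\zeta+1)}, \qquad 0 < r, s \leq R := 2(2\Psi_\zeta(\eta))^{1/\alpha};
\]
the other half then follows by interchanging $r$ and $s$, and taking the minimum yields \eqref{eq:boglem44}. The plan is to invoke Lemma~\ref{boglem42} (whose hypothesis $r \leq R$ is met) and to dominate each of its three right-hand summands by $H(r)\, s^{\mu-(2\zeta+1)}$.

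For the leading term $2(p_\zeta^{(\alpha)}(1,r,s) + M p_\zeta^{(\alpha)}(1,1,s))H(r)$, an application of Lemma~\ref{lem:pzetaest} with $(\sigma,\lambda) = (2\zeta+1-\mu,\mu)$ (all three in $[0,2\zeta+1]$ since $\mu < 2\zeta+1$) gives
\[
p_\zeta^{(\alpha)}(1,z,s) \lesssim \frac{1+z+s}{1+|z-s|}\, s^{\mu-(2\zeta+1)}
\]
for $z \in \{r,1\}$, and the prefactor is $\leq 1+2R$. For the two space-time integrals on the right-hand side of \eqref{eq:boglem42}, call them $I_2$ (the $z \geq s/2,\ \tau \geq g(s)\wedge 1$ integral) and $I_3$ (the $z < s/2$ integral). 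The aim is to reduce both to a constant multiple of
\[
s^{\mu-(2\zeta+1)}\, J(r), \qquad J(r) := \int_0^1 d\tau \int_0^\infty z^{2\zeta}\, p_{\zeta,\eta}^{(\alpha)}(\tau,r,z)\, q(z)\, z^{-\mu}\, dz.
\]
For $I_3$, the estimate \eqref{eq:pzetaest2} with $(\sigma,\lambda) = (2\zeta+1-\mu,0)$ yields $p_\zeta^{(\alpha)}(\tau,z,s) \lesssim s^{\mu-(2\zeta+1)} z^{-\mu}$ directly. For $I_2$, I would use the symmetry $p_\zeta^{(\alpha)}(\tau,z,s) = p_\zeta^{(\alpha)}(\tau,s,z)$ and apply Lemma~\ref{lem:pzetaest} with $(\sigma,\lambda)=(\mu,0)$ to obtain $p_\zeta^{(\alpha)}(\tau,z,s) \lesssim \tfrac{\tau^{1/\alpha}+z+s}{\tau^{1/\alpha}+|z-s|}\, z^{-\mu}\, s^{\mu-(2\zeta+1)}$; since $\tau \geq g(s)$ entails $\tau^{1/\alpha} \gtrsim s$, a short case split ($z \geq 2s$ versus $s/2 \leq z \leq 2s$) then shows that the prefactor is bounded by a constant depending only on $R$. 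Extending the integration domain and reversing time gives $I_2, I_3 \lesssim s^{\mu-(2\zeta+1)}\, J(r)$.

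It remains to prove $J(r) \lesssim H(r)$. Since $q(z)\, z^{-\mu} = \Psi_\zeta(\eta)\, z^{-\mu-\alpha}$, identity \eqref{eq:heatkernelharmonic2} with $\beta = \mu$ rearranges to
\[
\Bigl(1-\tfrac{\Psi_\zeta(\mu)}{\Psi_\zeta(\eta)}\Bigr)\, J(r) = \int_0^\infty z^{2\zeta}\, p_{\zeta,\eta}^{(\alpha)}(1,r,z)\, z^{-\mu}\, dz - r^{-\mu};
\]
admissibility ($\mu < 2\zeta+1-\alpha-\eta$) follows from $\mu < \eta \leq (2\zeta+1-\alpha)/2$. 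The right-hand side is bounded by $MH(r)$ via \eqref{eq:boglem41} applied with $\beta = \mu \in (0,\eta]$, while the strict inequality $\mu < \eta$ together with the monotonicity of $\Psi_\zeta$ on $(0,(2\zeta+1-\alpha)/2)$ keeps the prefactor $1-\Psi_\zeta(\mu)/\Psi_\zeta(\eta)$ bounded away from zero (its deterioration as $\mu \nearrow \eta$ explains the $\mu$-dependence of $C_\mu$).

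The sharpest technical step is the verification that the Bessel-type ratio $\tfrac{\tau^{1/\alpha}+z+s}{\tau^{1/\alpha}+|z-s|}$ is controlled in the $I_2$-regime; this is the only place where the specific cutoff $g(s)$ engineered into Lemma~\ref{boglem42} is essential. Everything else is a clean combination of the pointwise kernel estimate of Lemma~\ref{lem:pzetaest} with the integral identities derived in Section~\ref{s:integralana}.
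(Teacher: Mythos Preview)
Your proof is correct and follows essentially the same route as the paper's: invoke Lemma~\ref{boglem42}, extract the factor $s^{\mu-(2\zeta+1)}$ from each term via the pointwise bounds of Lemma~\ref{lem:pzetaest}, and control the residual space-time integral $J(r)$ through identity~\eqref{eq:heatkernelharmonic2} combined with~\eqref{eq:boglem41}. The only cosmetic difference is that the paper handles both $I_2$ and $I_3$ in one stroke by applying~\eqref{eq:pzetaest2} directly with $(\sigma,\lambda)=(2\zeta+1-\mu,0)$---the alternative hypothesis $s\lesssim \tau^{1/\alpha}\wedge z$ of that estimate is already satisfied on the $I_2$-domain since $\tau\ge g(s)$ forces $\tau^{1/\alpha}\gtrsim s$ and $z\ge s/2$---so your symmetry swap and case split, while correct, are not needed.
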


\begin{proof}
  By symmetry of $p_{\zeta,\eta}^{(\alpha)}(1,r,s)$, we may assume $r<s$ without loss of generality. We use \eqref{eq:boglem42} in Lemma \ref{boglem42}. 
  First note that $p_\zeta^{(\alpha)}(1,r,s)\vee p_\zeta^{(\alpha)}(1,1,s)\lesssim1$ for $r,s\lesssim 1$.
  Since $0<\mu\leq 2\zeta+1$, by \eqref{eq:pzetaest2} with $\lambda=0$ and $\sigma = 2\zeta+1-\mu$, we get $p_\zeta^{(\alpha)}(\tau,z,s) \lesssim s^{\mu-2\zeta-1}z^{-\mu}$. Therefore, by Lemma~\ref{boglem42},  \eqref{eq:heatkernelharmonic2} in Theorem~\ref{heatkernelharmonic} and~\eqref{eq:boglem41} in Corollary~\ref{boglem41}, we get  
  \begin{align}
    \begin{split}
     p_{\zeta,\eta}^{(\alpha)}(1,r,s)  & \quad \lesssim H(r) + s^{\mu-(2\zeta+1)} \int_0^{1}d\tau \int_0^{\infty}dz\, z^{2\zeta} \,p_{\zeta,\eta}^{(\alpha)}(1-\tau,r,z) z^{-\mu-\alpha} \\
     & \quad \leq H(r) + s^{\mu-(2\zeta+1)} \frac{1}{\Psi_\zeta(\eta)-\Psi_\zeta(\mu)} \int_0^\infty dz\, z^{2\zeta} \,p_{\zeta,\eta}^{(\alpha)}(1,r,z)z^{-\mu} \\
     & \quad \lesssim H(r) + s^{\mu-(2\zeta+1)} \cdot \frac{H(r)}{\Psi_\zeta(\eta)-\Psi_\zeta(\mu)} \lesssim H(r)s^{\mu-(2\zeta+1)},
    \end{split}
  \end{align} 
  where in the last line we used that $s\lesssim 1$ and $\mu<2\zeta+1$.
\end{proof}

\bigskip

Lemma \ref{boglem44} is not yet precise enough. The following two lemmas, together with Proposition~\ref{bogprop32} and~\eqref{eq:heatkernelharmonic1} in Theorem~\ref{heatkernelharmonic} will help us sharpen it. The improved estimates for $r,s\leq 2(2\Psi_\zeta(\eta))^{1/\alpha}$ are contained in Lemma~\ref{boglem47}.

\begin{lemma}
  \label{boglem45}
  Let $\zeta\in(-1/2,\infty)$,
  $\alpha\in(0,2)$,
  $\eta\in(0,(2\zeta+1-\alpha)/2]$, and $r>0$.
  Then, for all $\nu\in(\eta,2\zeta+1-\eta)$ and $\delta\in(0,\eta)$,
  \begin{align}
    \label{eq:boglem45scaled}
    \int_0^\infty dz\, z^{2\zeta} \,p_{\zeta,\eta}^{(\alpha)}(t,r,z)z^{-\nu}  
    \lesssim_{\nu,\delta} r^{-\nu}\left[1+\left(\frac{r}{t^{1/\alpha}}\right)^{-\delta}\right].
  \end{align}
  In particular,
  \begin{align}
    \label{eq:boglem45scaledcor}
    \int_0^\infty dz\, z^{2\zeta} \,p_{\zeta,\eta}^{(\alpha)}(t,r,z)z^{-\nu}
    \lesssim_{\nu,\delta} r^{-\nu}\left[1+r^{-\delta}\right], \quad t\in(0,1].
  \end{align}
\end{lemma}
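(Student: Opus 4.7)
By the scaling identity \eqref{eq:scalingalphahardy}, writing $R:=r/t^{1/\alpha}$ and substituting $u=z/t^{1/\alpha}$ reduces \eqref{eq:boglem45scaled} (and hence \eqref{eq:boglem45scaledcor}) to the estimate
\[
J(R):=\int_0^\infty u^{2\zeta-\nu}\,p_{\zeta,\eta}^{(\alpha)}(1,R,u)\,du\ \lesssim\ R^{-\nu}(1+R^{-\delta}),\qquad R>0.
\]
I split $J(R)=J_{\rm far}(R)+J_{\rm near}(R)$ at $u=R$. For the far piece the hypothesis $\nu>\eta$ yields $u^{-\nu}\le R^{-(\nu-\eta)}u^{-\eta}$ for $u\ge R$, so the invariance identity \eqref{eq:heatkernelharmonic1} of Theorem~\ref{heatkernelharmonic} immediately gives $J_{\rm far}(R)\le R^{-(\nu-\eta)}\cdot R^{-\eta}=R^{-\nu}$.

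For the near piece set $R_0:=2(2\Psi_\zeta(\eta))^{1/\alpha}$ and distinguish two regimes. If $R\le R_0$, then also $u\le R_0$ on the range of integration, so Lemma~\ref{boglem44} applies; I choose the free parameter there to be $\mu:=\eta-\delta$, which lies in $(0,\eta)$ because $\delta\in(0,\eta)$, and use the bound $p_{\zeta,\eta}^{(\alpha)}(1,R,u)\le C_\mu H(u)R^{\mu-(2\zeta+1)}$. The remaining integral $\int_0^R u^{2\zeta-\nu}(1+u^{-\eta})\,du$ is finite thanks to the assumption $\nu<2\zeta+1-\eta$, and a direct evaluation gives
\[
J_{\rm near}(R)\lesssim R^{\mu-\nu-\eta}+R^{\mu-\nu}=R^{-\nu-\delta}+R^{\eta-\nu-\delta}\lesssim R^{-\nu-\delta},
\]
the second summand being dominated by the first since $R^\eta$ is bounded on $(0,R_0]$. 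If instead $R>R_0$, then by symmetry of $p_{\zeta,\eta}^{(\alpha)}$ together with Lemma~\ref{boglem43} applied with the roles of the spatial arguments swapped (so that the condition $s\ge R_0$ is played by $R$), one obtains $p_{\zeta,\eta}^{(\alpha)}(1,R,u)\le CH(u)p_\zeta^{(\alpha)}(1,R,u)$. Substituting this and invoking Lemma~\ref{integralana1} twice, with exponents $\nu$ and $\nu+\eta$ in place of the parameter $\delta$ there (both of which lie in $(0,2\zeta+1)$ by the hypothesis $\nu<2\zeta+1-\eta$), yields $J_{\rm near}(R)\lesssim R^{-\nu}+R^{-\nu-\eta}\lesssim R^{-\nu}$. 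Adding the two bounds produces the required estimate for $J(R)$.

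The main subtlety is that applying Theorem~\ref{heatkernelharmonic} directly with $\beta=\nu$ produces the clean bound $J(R)\le R^{-\nu}$ only when $\nu$ lies in the restricted interval $(\eta,\,2\zeta+1-\alpha-\eta)$, whereas Lemma~\ref{boglem45} permits $\nu$ all the way up to $2\zeta+1-\eta$. The plan circumvents this by invoking the invariance \emph{only} on the tail $u\ge R$, where the single weight $u^{-\eta}$ suffices, and by controlling the more singular near part through the preliminary pointwise bounds of Lemmas~\ref{boglem43}--\ref{boglem44}. This splitting is precisely what introduces the extra factor $R^{-\delta}$ into the final bound and explains why $\delta$ must be restricted to $(0,\eta)$.
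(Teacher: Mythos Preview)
Your proof is correct and follows essentially the same strategy as the paper: reduce to $t=1$ by scaling, distinguish $r\le R_0$ from $r>R_0$ with $R_0=2(2\Psi_\zeta(\eta))^{1/\alpha}$, and invoke Lemma~\ref{boglem44} in the former regime and Lemma~\ref{boglem43} in the latter. The organization differs slightly: the paper splits the $z$-integral at $R_0$ (case $r\le R_0$) or at $r/2$ (case $r>R_0$) and controls the tail via the mass bound of Proposition~\ref{bogprop32}, whereas you split once at $u=r$ and handle the tail $u\ge r$ uniformly in $r$ by the invariance identity~\eqref{eq:heatkernelharmonic1}. Your tail argument is a mild simplification (it avoids separately treating $r\le z\le R_0$ via~\eqref{eq:boglem44cor} and $z>R_0$ via Proposition~\ref{bogprop32}); conversely, for the near piece when $r>R_0$ the paper uses the pointwise bound $p_\zeta^{(\alpha)}(1,r,z)\lesssim r^{-(2\zeta+1+\alpha)}$ for $z<r/2$, while you pass through Lemma~\ref{integralana1}. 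Both routes are short and yield the same conclusion.
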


This lemma can be seen as an extension of \eqref{eq:boglem41} in Corollary~\ref{bogcor38}, which treated all $\nu\in(0,\eta]$.

\begin{proof}
  By scaling, it suffices to consider $t=1$.
  We write $R=2(2\Psi_\zeta(\eta))^{1/\alpha}$ and distinguish between $r\leq R$ and $r>R$. We first consider $r\leq R$ and divide the integral in question into two parts.
  Let $\mu=\eta-\delta$ so that $\mu\in(0,\eta)$ and $\mu-\nu<0$.
  By Lemma~\ref{boglem44}, for $r,z\leq R$, we have
  \begin{align}
    \label{eq:boglem44cor}
    \begin{split}
      p_{\zeta,\eta}^{(\alpha)}(1,r,z)
      \lesssim r^{-\eta}z^{\mu-(2\zeta+1)} \one_{\{r\leq z\}} + r^{\mu-(2\zeta+1)}z^{-\eta}\one_{\{z\leq r\}}.
    \end{split}
  \end{align}
  
  Then, by Lemma \ref{boglem44} and \eqref{eq:boglem44cor},
  \begin{align}
    \begin{split}
      & \int_0^R dz\, z^{2\zeta} \,p_{\zeta,\eta}^{(\alpha)}(1,r,z) z^{-\nu} \\
      & \quad \lesssim_R \int_0^r dz\, z^{2\zeta-\nu}\cdot r^{\mu-(2\zeta+1)} \cdot z^{-\eta} + \int_r^R dz\, z^{2\zeta-\nu} \cdot r^{-\eta} \cdot z^{\mu-(2\zeta+1)} \\
      & \quad \lesssim r^{-\nu-\delta}, \quad r\leq R.
    \end{split}
  \end{align}
  For the complementary integral, we use that Proposition \ref{bogprop32} and $\nu+\delta>\eta$ imply
  \begin{align}
    \int_R^\infty dz\, z^{2\zeta}\,p_{\zeta,\eta}^{(\alpha)}(1,r,z)z^{-\nu}
    \leq R^{-\nu} \int_0^\infty dz\, z^{2\zeta}\,p_{\zeta,\eta}^{(\alpha)}(1,r,z)
    \lesssim_R H(r) \lesssim_R r^{-\nu-\delta}, \quad r\leq R.
  \end{align}
  This concludes the proof for $r\leq R$.

  For $r\geq R$ we split the $z$-integration at $z=r/2$. For $z>r/2$, we get
  \begin{align}
    \begin{split}
      \int_{r/2}^\infty dz\, z^{2\zeta }\,p_{\zeta,\eta}^{(\alpha)}(1,r,z)z^{-\nu}
      \leq r^{-\nu} \int_0^\infty dz\, z^{2\zeta }\,p_{\zeta,\eta}^{(\alpha)}(1,r,z)
      \lesssim_R r^{-\nu},\quad r\geq R.
    \end{split}
  \end{align}
  For $z<r/2$, we get, using Lemma~\ref{boglem43} and $p_\zeta^{(\alpha)}(1,r,z)\lesssim r^{-(2\zeta+1+\alpha)}$ (Proposition~\ref{heatkernelalpha1subordinatedboundsfinal}),
  \begin{align}
    \begin{split}
      & \int_0^{r/2} dz\, z^{2\zeta}\,p_{\zeta,\eta}^{(\alpha)}(1,r,z)z^{-\nu}
        \lesssim \int_0^{r/2}  dz\, z^{2\zeta-\nu} p_\zeta^{(\alpha)}(1,r,z) H(z)
        \lesssim r^{-\nu-\alpha}.
    \end{split}
  \end{align}
  The proof is concluded.
\end{proof}

Let us introduce
\begin{align}\index{$H_\nu(r)$}
  \label{eq:defbighbeta}
  H_\nu(r) = 1 + r^{-\nu}, \quad r, \nu>0.
\end{align}
Thus, $H_\eta(r)=H(r)$, with $H(r)$ as in \eqref{eq:defbigh}.
By Proposition~\ref{bogprop32}, \eqref{eq:boglem41} in Corollary~\ref{boglem41}, and Lemma~\ref{boglem45}, we conclude that for all $\nu\in[0,2\zeta+1-\eta)$ and $\delta\in(0,\eta)$,
\begin{align}
  \label{eq:boglem45consequence}
  \int_0^\infty ds\, s^{2\zeta } \,p_{\zeta,\eta}^{(\alpha)}(t,r,s) H_\nu(s)
  \lesssim_\delta H_{(\nu+\delta)\vee\eta}(r)\,, \quad t\in(0,1]\,, \quad r>0\,.
\end{align}

The following result improves and extends this estimate to all $t>0$.

\begin{lemma}
  \label{boglem46}
  Let $\zeta\in(-1/2,\infty)$,
  $\alpha\in(0,2)$,
  $\eta\in(0,(2\zeta+1-\alpha)/2]$. Let $\nu\in[0,2\zeta+1-\eta)$. Then,
  \begin{align}
  \label{eq:boglem46pre}
    \int_0^\infty ds\, s^{2\zeta} \,p_{\zeta,\eta}^{(\alpha)}(t,r,s) H_\nu(s/t^{1/\alpha})
    \lesssim_\nu H(r/t^{1/\alpha}), \quad t>0, \quad r>0.
  \end{align}
  In particular,
  \begin{align}
    \label{eq:boglem46}
    \int_0^\infty ds\, s^{2\zeta} \,p_{\zeta,\eta}^{(\alpha)}(t,r,s) H_\nu(s)
    \lesssim_\nu t^{-\nu/\alpha} H(r/t^{1/\alpha}), \quad t\in(0,1], \quad r>0.
  \end{align}
\end{lemma}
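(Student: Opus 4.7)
By the scaling identity~\eqref{eq:scalingalphahardy}, substituting $u = s/t^{1/\alpha}$ and $\tilde r = r/t^{1/\alpha}$, it suffices to establish~\eqref{eq:boglem46pre} at $t=1$; the second claim~\eqref{eq:boglem46} then follows for $t \in (0,1]$ from the elementary inequality $H_\nu(s) \le t^{-\nu/\alpha} H_\nu(s/t^{1/\alpha})$. Writing $H_\nu(u) = 1 + u^{-\nu}$, the contribution of the constant part is controlled by Proposition~\ref{bogprop32}, so only $\int u^{2\zeta-\nu} p_{\zeta,\eta}^{(\alpha)}(1,\tilde r,u)\, du \lesssim H(\tilde r)$ remains to be shown. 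The case $\nu \in [0,\eta]$ is immediate from~\eqref{eq:boglem41} of Corollary~\ref{bogcor38}, so I concentrate on $\nu \in (\eta, 2\zeta+1-\eta)$ from now on.

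The plan is to split this remaining integral at $u=1$. On the tail $\{u \ge 1\}$, since $\nu > \eta$ we have $u^{-\nu} \le u^{-\eta}$, and the invariance~\eqref{eq:heatkernelharmonic1} of Theorem~\ref{heatkernelharmonic} immediately gives
\[
\int_1^\infty u^{2\zeta-\nu} p_{\zeta,\eta}^{(\alpha)}(1,\tilde r,u)\, du \le \int_0^\infty u^{2\zeta-\eta} p_{\zeta,\eta}^{(\alpha)}(1,\tilde r,u)\, du = \tilde r^{-\eta} \le H(\tilde r).
\]

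The residual integral $\int_0^1 u^{2\zeta-\nu} p_{\zeta,\eta}^{(\alpha)}(1,\tilde r,u)\, du$, in which the weight $u^{-\nu}$ is strictly more singular than the invariant weight $u^{-\eta}$ near the origin, is the crux. I would apply Chapman--Kolmogorov at time $1/2$ to express this as $\int v^{2\zeta} p_{\zeta,\eta}^{(\alpha)}(1/2,\tilde r,v)\, \phi(v)\, dv$ where $\phi(v) := \int_0^1 u^{2\zeta-\nu} p_{\zeta,\eta}^{(\alpha)}(1/2,v,u)\, du$, and then invoke~\eqref{eq:boglem45consequence} applied at time $T=1/2$ (with some $\delta \in (0,\eta)$) to obtain $\phi(v) \lesssim_\delta 1 + v^{-(\nu+\delta)}$. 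The constant contribution is $\lesssim H(\tilde r)$ by Proposition~\ref{bogprop32}; for the singular contribution I would split the outer $v$-integral at the natural scale $v = (1/2)^{1/\alpha}$. On the upper portion $v \ge (1/2)^{1/\alpha}$, the crude bound $v^{-(\nu+\delta)} \le 2^{(\nu+\delta-\eta)/\alpha} v^{-\eta}$ combined with invariance at time $1/2$ yields $\lesssim \tilde r^{-\eta}$; on the lower portion, rescaling reduces the estimate to the same type of integral at the shifted argument $\tilde r \cdot 2^{1/\alpha}$, with the exponent $\nu$ replaced by $\nu+\delta$.

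The hard part will be closing this iteration: each step increases the singular exponent by $\delta$ while multiplying $\tilde r$ by the factor $2^{1/\alpha}$, and the accumulated constants must be tracked carefully. After at most $\lfloor (2\zeta+1-\eta-\nu)/\delta \rfloor$ iterations---beyond which~\eqref{eq:boglem45consequence} ceases to apply---the rescaled argument $\tilde r \cdot 2^{k/\alpha}$ exceeds unity, at which point a direct constant bound $\lesssim 1 \lesssim H$ (obtained from Lemma~\ref{boglem43} via the symmetry of $p_{\zeta,\eta}^{(\alpha)}$, combined with Lemma~\ref{boglem44} in the intermediate range $\tilde r \cdot 2^{k/\alpha} \in [1, 2R]$ and with Lemma~\ref{integralana1}) closes the iteration and delivers the desired bound $\lesssim H(\tilde r)$.
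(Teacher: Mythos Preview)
Your iteration has a genuine gap at the closing step. Each application of \eqref{eq:boglem45consequence} \emph{increases} the singular exponent by $\delta$, so you may iterate at most $K=\lfloor(2\zeta+1-\eta-\nu)/\delta\rfloor$ times before the hypothesis $\nu_k<2\zeta+1-\eta$ fails. But to force the rescaled argument $r_K=2^{K/\alpha}\tilde r$ to exceed $1$ you would need $K\gtrsim\alpha\log_2(1/\tilde r)$, which is unbounded as $\tilde r\to0$. These two stopping conditions are unrelated; for $\tilde r<2^{-K/\alpha}$ the iteration terminates with $r_K<1$, and the only available bound on the residual is \eqref{eq:boglem45consequence} once more, yielding $J(\nu_K,r_K)\lesssim r_K^{-(\nu_K+\delta)}$. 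Unwinding the accumulated rescaling factors, the residual contributes a term of order $\tilde r^{-(\nu+(K+1)\delta)}$, which for $\nu>\eta$ is strictly larger than the target $\tilde r^{-\eta}\sim H(\tilde r)$. Letting $\delta$ depend on $\tilde r$ does not help either, since the constant in \eqref{eq:boglem45consequence} (through Lemma~\ref{boglem44}) blows up like $(\Psi_\zeta(\eta)-\Psi_\zeta(\eta-\delta))^{-1}$ as $\delta\to0$.

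The paper's proof avoids this by iterating in the opposite direction: it inserts the harmless identity $\int_0^1 d\tau=1$ into the Chapman--Kolmogorov equation and then invokes the \emph{time-integrated} estimate of Corollary~\ref{bogcor35} (recorded as \eqref{eq:boglem46aux3}), which \emph{decreases} the exponent by $\alpha$. Combined with the $\delta$-loss from \eqref{eq:boglem45consequence}, each full step lowers the exponent by $\alpha-\delta>0$; after finitely many steps (independent of $r$) the exponent falls into $(\eta,\eta+\alpha)$, where \eqref{eq:heatkernelharmonic2} and \eqref{eq:boglem41} close the argument via \eqref{eq:boglem46aux1}--\eqref{eq:boglem46aux2}. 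The time integral, not a fixed-time Chapman--Kolmogorov split, is the missing ingredient.
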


\begin{proof}
  By scaling \eqref{eq:scalingalphahardy} of $p_{\zeta,\eta}^{(\alpha)}$, it suffices to prove \eqref{eq:boglem46pre} for $t=1$.
  If $\nu\leq\eta$, then the claim follows from \eqref{eq:boglem41} in Corollary~\ref{boglem41}. 

  Thus, suppose $\nu=\eta+\xi\alpha$ with $\xi\in(0,(2\zeta+1-2\eta)/\alpha)$. We distinguish between $\xi\in(0,1)$ and $\xi\geq1$.

  If $\xi<1$, then \eqref{eq:heatkernelharmonic2} in Theorem \ref{heatkernelharmonic} and \eqref{eq:boglem41} in Corollary \ref{boglem41} imply for $\eta<\mu\leq\eta+\alpha$,
  \begin{align}
    \label{eq:boglem46aux1}
    \begin{split}
      \int_0^1 d\tau \int_0^\infty dz\, z^{2\zeta} \,p_{\zeta,\eta}^{(\alpha)}(\tau,r,z) H_\mu(z)
      & \lesssim H(r) + \int_0^\infty ds\, s^{2\zeta} \,p_{\zeta,\eta}^{(\alpha)}(1,r,s)s^{-\mu+\alpha} \\
      & \lesssim H(r), \quad r>0.
    \end{split}
  \end{align}
  Let $0<\epsilon<(1-\xi)\alpha\wedge\eta$ so that $\nu+\epsilon=\eta+\xi\alpha+\epsilon<\eta+\alpha$. By the semigroup property, $\int_0^1 d\tau=1$, and Formulae~\eqref{eq:boglem45consequence} and \eqref{eq:boglem46aux1}, if $\nu+\epsilon\in(\eta,\eta+\alpha)$ then there is $C>0$ such that
  \begin{align}
    \label{eq:boglem46aux2}
    \begin{split}
      & \int_0^\infty ds\, s^{2\zeta} \,p_{\zeta,\eta}^{(\alpha)}(1,r,s) H_\nu(s) \\
      & \quad = \int_0^1 d\tau \int_0^\infty dz\, z^{2\zeta} \,p_{\zeta,\eta}^{(\alpha)}(\tau,r,z)\int_0^\infty ds\, s^{2\zeta} \,p_{\zeta,\eta}^{(\alpha)}(1-\tau,z,s) H_\nu(s)\\
      & \quad \lesssim \int_0^1 d\tau \int_0^\infty dz\, z^{2\zeta} \,p_{\zeta,\eta}^{(\alpha)}(\tau,r,z) H_{\nu+\epsilon}(z)
        \leq CH(r), \quad r>0,
    \end{split}
  \end{align}
  as desired. This concludes the case $\xi\in(0,1)$.

  Now let $\xi\in[1,(2\zeta+1-2\eta)/\alpha)$. By Corollary \ref{bogcor35}, if $\eta+\alpha<\mu<2\zeta+1-\eta$ then
  \begin{align}
    \label{eq:boglem46aux3}
    \int_0^1 d\tau \int_0^\infty dz\, z^{2\zeta} \,p_{\zeta,\eta}^{(\alpha)}(\tau,r,z) H_\mu(z)
    \lesssim H_{\mu-\alpha}(r), \quad r>0.
  \end{align}
  We fix $\epsilon\in(0,1)$ such that $\delta:=(1-\epsilon)\alpha<2\zeta+1-\nu-\eta$, and $\delta<\eta$. By the semigroup property, $\int_0^1d\tau=1$, \eqref{eq:boglem45consequence}, and \eqref{eq:boglem46aux3}, for $\eta+\alpha\leq\mu\leq\nu$, we have, if $\eta+\alpha\leq\mu\leq\nu$,
  \begin{align}
    \label{eq:boglem46aux4}
    \begin{split}
      & \int_0^\infty ds\, s^{2\zeta} \,p_{\zeta,\eta}^{(\alpha)}(1,r,s) H_\mu(s) \\
      & \quad = \int_0^1 d\tau \int_0^\infty dz\, z^{2\zeta} \,p_{\zeta,\eta}^{(\alpha)}(\tau,r,z) \int_0^\infty ds\, s^{2\zeta} \,p_{\zeta,\eta}^{(\alpha)}(1-\tau,z,s) H_\mu(s) \\
      & \quad \lesssim \int_0^1 d\tau \int_0^\infty dz\, z^{2\zeta} \,p_{\zeta,\eta}^{(\alpha)}(\tau,r,z) H_{\mu+\delta}(z)
      \lesssim H_{\mu-\epsilon\alpha}(r), \quad r>0.
    \end{split}
  \end{align}
  We choose $n\in\N$ such that $\xi\alpha\in[\epsilon(n\alpha-1),\epsilon(n\alpha-1)+\alpha]\cap[\alpha,2\zeta+1-2\eta)$.
  By the semigroup property, \eqref{eq:boglem46aux4}, and \eqref{eq:boglem46aux2}, if $\eta+(\xi-n\epsilon)\alpha+\epsilon \in (\eta,\eta+\alpha)$, then
  \begin{align}
    \begin{split}
      & \int_0^\infty ds\, s^{2\zeta} p_{\zeta,\eta}^{(\alpha)}(n+1,r,s) H_\nu(s) \\
      & \quad = \int_0^\infty dz_1\, z_1^{2\zeta}\,...\, \int_0^\infty dz_n\, z_n^{2\zeta} \int_0^\infty ds\, s^{2\zeta} p_{\zeta,\eta}^{(\alpha)}(1,r,z_1)\, ...\, p_{\zeta,\eta}^{(\alpha)}(1,z_n,s) H_\nu(s) \\
      & \quad \leq C^{n} \int_0^\infty dz_1\, z_1^{2\zeta} p_{\zeta,\eta}^{(\alpha)}(1,r,z_1) H_{\nu-n\epsilon\alpha}(z_1) \\
      & \quad = C^n \int_0^\infty dz_1 z_1^{2\zeta} p_{\zeta,\eta}^{(\alpha)}(1,r,z_1) H_{\eta+(\xi-n\epsilon)\alpha}(z_1) \\
      & \quad \leq C^{n+1} H(r),
      \quad r>0,
    \end{split}
  \end{align}
  recalling $\nu=\eta+\xi\alpha$. This ends the proof in view of $p_{\zeta,\eta}^{(\alpha)}(n+1,r,s) \sim_n \,p_{\zeta,\eta}^{(\alpha)}(1,r,s)$.
\end{proof}

Lemmas \ref{boglem42} and \ref{boglem46} allow us to improve the estimate for $r,s\leq 2(2\Psi_\zeta(\eta))^{1/\alpha}$ from Lemma~\ref{boglem44}.

\begin{lemma}
  \label{boglem47}
  Let $\zeta\in(-1/2,\infty)$, $\alpha\in(0,2)$, $\eta\in(0,(2\zeta+1-\alpha)/2]$. Let $0<r,s\leq2(2\Psi_\zeta(\eta))^{1/\alpha}$. Then, for each $\delta\in(0,\eta)$,
  \begin{align}
    p_{\zeta,\eta}^{(\alpha)}(1,r,s)
    \lesssim_\delta \left(H(r)s^{-\eta-\delta} \wedge H(s) r^{-\eta-\delta} \right).
  \end{align}
\end{lemma}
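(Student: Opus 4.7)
By the symmetry $p_{\zeta,\eta}^{(\alpha)}(1,r,s)=p_{\zeta,\eta}^{(\alpha)}(1,s,r)$, it is enough to establish the single inequality $p_{\zeta,\eta}^{(\alpha)}(1,r,s)\lesssim_\delta H(r)\,s^{-\eta-\delta}$ for $0<r,s\leq 2(2\Psi_\zeta(\eta))^{1/\alpha}=:R$; swapping the roles of $r$ and $s$ will yield the companion bound $H(s)r^{-\eta-\delta}$, and the stated $\wedge$ follows. The starting point is Lemma~\ref{boglem42}, which decomposes $p_{\zeta,\eta}^{(\alpha)}(1,r,s)$ into a boundary term $2(p_\zeta^{(\alpha)}(1,r,s)+Mp_\zeta^{(\alpha)}(1,1,s))H(r)$, an integral $2I_2$ over $\tau\in[g(s)\wedge 1,1]$, $z\in[s/2,\infty)$, and an integral $I_3^{\le 1/2}$ over $\tau\in[0,1/2]$, $z\in[0,s/2]$. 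Because $p_\zeta^{(\alpha)}(1,\cdot,\cdot)\lesssim 1$ on the compact parameter region and $s\leq R$ forces $s^{-\eta-\delta}\gtrsim 1$, the boundary term trivially satisfies the target estimate.

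For $I_3^{\le 1/2}$, and for the sub-range of $I_2$ with $\tau\in[g(s)\wedge 1,1/2]$, both inequalities $s\lesssim z$ and $s\lesssim\tau^{1/\alpha}$ hold, so I would invoke Lemma~\ref{lem:pzetaest} with $\sigma=\eta+\delta$ and some $\lambda\in(\alpha-\delta,\alpha)$ to obtain $p_\zeta^{(\alpha)}(\tau,z,s)\lesssim \tau^{-\lambda/\alpha}s^{-\eta-\delta}z^{\eta+\delta+\lambda-2\zeta-1}$. Substituting and absorbing $q(z)=\Psi_\zeta(\eta)z^{-\alpha}$ reduces the spatial integral to $\int z^{2\zeta}p_{\zeta,\eta}^{(\alpha)}(1-\tau,r,z)z^{-\nu}\,dz$ with $\nu:=2\zeta+1+\alpha-\eta-\delta-\lambda\in(2\zeta+1-\eta-\delta,2\zeta+1-\eta)$, and Lemma~\ref{boglem46} controls this by $(1-\tau)^{-\nu/\alpha}H(r/(1-\tau)^{1/\alpha})\lesssim H(r)$ on $\tau\leq 1/2$. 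The surviving $\int_0^{1/2}\tau^{-\lambda/\alpha}\,d\tau$ is finite since $\lambda<\alpha$, so the whole contribution is $\lesssim H(r)\,s^{-\eta-\delta}$.

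The main obstacle is the remaining piece of $I_2$ with $\tau\in[1/2,1]$, since every $\nu$ admissible in Lemma~\ref{boglem46} forces $\nu\geq\alpha$ and hence a nonintegrable $(1-\tau)^{-\nu/\alpha}$ factor at $\tau=1$. My resolution is to exploit that, for $\tau\in[1/2,1]$ and $s\leq R$, the estimate \eqref{eq:heatkernelalpha1weightedsubordinatedboundsfinal} has a denominator bounded below uniformly in $z>0$: the large-$z$ regime is tamed by $|z-s|^{1+\alpha}(z+s)^{2\zeta}\to\infty$ (since $1+\alpha+2\zeta>0$), while on bounded $z$ the second summand $\tau^{(1+\alpha)/\alpha}(\tau^{1/\alpha}+z+s)^{2\zeta}$ is bounded below by a positive constant. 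This yields $p_\zeta^{(\alpha)}(\tau,z,s)\lesssim 1$ uniformly in $z$. Pulling this constant outside the integral and substituting $u=1-\tau$ turns this piece of $I_2$ into at most a constant multiple of $\int_0^{1/2}du\int_0^\infty z^{2\zeta}p_{\zeta,\eta}^{(\alpha)}(u,r,z)q(z)\,dz$, which is $\lesssim H(r)$ by \eqref{eq:boglem42aux4}. Since $H(r)\lesssim H(r)\,s^{-\eta-\delta}$ on $s\leq R$, collecting all pieces yields the claimed one-sided bound, and the full statement follows by symmetry.
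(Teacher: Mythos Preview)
Your proof is correct and follows essentially the same route as the paper: Lemma~\ref{boglem42} for the decomposition, Lemma~\ref{lem:pzetaest} with $\sigma=\eta+\delta$ and $\lambda$ just below $\alpha$ combined with Lemma~\ref{boglem46} for the small-$\tau$ pieces, and the uniform bound $p_\zeta^{(\alpha)}(\tau,z,s)\lesssim 1$ together with \eqref{eq:boglem42aux4} for $\tau\in[1/2,1]$. One small slip: for $I_3^{\le 1/2}$ you actually have $z<s/2$ (not $s\lesssim z$), but this is exactly the first alternative in \eqref{eq:pzetaest2}, so your invocation of Lemma~\ref{lem:pzetaest} remains valid.
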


\begin{proof} 
  We will use Lemma~\ref{boglem42} and the symmetry of $p_{\zeta,\eta}^{(\alpha)}$. The first line on the right-hand side of~\eqref{eq:boglem42} is clearly bounded by $H(r)s^{-\eta-\delta}$. Now, we estimate the second line on the right-hand side of ~\eqref{eq:boglem42}. Note that for $\tau\in(1/2,1)$, we have $p_\zeta^{(\alpha)}(\tau,z,s)\lesssim 1$ (see~\eqref{eq:heatkernelalpha1weightedsubordinatedboundsfinal}). Hence, by \eqref{eq:boglem42aux4}, we have
 \begin{align}
    \label{eq:boglem47aux3}
    \begin{split}
      & \int_{1/2}^1 d\tau \int_{s/2}^\infty dz\, z^{2\zeta} \,p_{\zeta,\eta}^{(\alpha)}(1-\tau,r,z)q(z) p_\zeta^{(\alpha)}(\tau,z,s) \\
      & \quad \lesssim \int_0^1 d\tau \int_{s/2}^\infty dz\, z^{2\zeta} \,p_{\zeta,\eta}^{(\alpha)}(1-\tau,r,z)q(z) 
        \lesssim H(r)
        \lesssim H(r)s^{-\eta-\delta}.
    \end{split}
  \end{align}
Next, let $\varepsilon \in (0,\alpha \land \delta)$. By \eqref{eq:pzetaest2} with $\sigma = \eta+\delta$ and $\lambda =\alpha-\varepsilon$ and Lemma~\ref{boglem46}, we get 
  \begin{align}
    \label{eq:boglem47aux2}
    \begin{split}
      & \int_0^{1/2}d\tau \int_0^{s/2} dz\, z^{2\zeta} \,p_{\zeta,\eta}^{(\alpha)}(1-\tau,r,z)q(z) p_\zeta^{(\alpha)}(\tau,z,s) \\
			& \qquad + \int_{g(s)}^{1/2} d\tau \int_{s/2}^\infty dz\, z^{2\zeta} \,p_{\zeta,\eta}^{(\alpha)}(1-\tau,r,z)q(z) p_\zeta^{(\alpha)}(\tau,z,s) \\
      & \quad \lesssim s^{-\eta-\delta}\int_0^{1/2}d\tau\, \tau^{(\varepsilon-\alpha)/\alpha} \int_0^{\infty} dz\, z^{2\zeta} \,p_{\zeta,\eta}^{(\alpha)}(1-\tau,r,z) z^{-(2\zeta+1 -\eta-\delta+\varepsilon)}  \\
      & \quad \lesssim s^{-\eta-\delta}\int_0^{1/2} d\tau\, \tau^{(\varepsilon-\alpha)/\alpha}H(r/(1-\tau)^{1/\alpha}) \cdot (1-\tau)^{-(2\zeta+1 -\eta-\delta+\varepsilon)/\alpha} \\ 
      & \quad \lesssim H(r) s^{-\eta-\delta}.
    \end{split}
  \end{align}
  This concludes the proof. 
\end{proof}

Combining the previous results yields the desired upper bounds in Theorem~\ref{mainresultgen}.
\begin{lemma}
  \label{boglem48}
  Let $\zeta\in(-1/2,\infty)$, $\alpha\in(0,2)$, $\eta\in(0,(2\zeta+1-\alpha)/2]$. Then,
  \begin{align}
    \label{eq:boglem48}
    p_{\zeta,\eta}^{(\alpha)}(t,r,s) \lesssim p_\zeta^{(\alpha)}(t,r,s) H(t,r)H(t,s), \quad r,s,t>0.
  \end{align}
\end{lemma}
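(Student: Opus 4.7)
Plan. By the scaling relations \eqref{eq:scalingalpha} and \eqref{eq:scalingalphahardy} and the identity $H(t,r)=H(r/t^{1/\alpha})$, the inequality at $(t,r,s)$ is equivalent to its $t=1$ version at $(r/t^{1/\alpha},s/t^{1/\alpha})$, so it suffices to prove
\begin{equation*}
p_{\zeta,\eta}^{(\alpha)}(1,r,s)\lesssim p_\zeta^{(\alpha)}(1,r,s)\,H(r)\,H(s),\qquad r,s>0.
\end{equation*}
Set $R:=2(2\Psi_\zeta(\eta))^{1/\alpha}$. In the bulk case $r\vee s\geq R$, the symmetry of $p_{\zeta,\eta}^{(\alpha)}$ allows me to assume $s\geq R$, and Lemma~\ref{boglem43} gives $p_{\zeta,\eta}^{(\alpha)}(1,r,s)\lesssim H(r)\,p_\zeta^{(\alpha)}(1,r,s)\leq H(r)H(s)\,p_\zeta^{(\alpha)}(1,r,s)$ because $H(s)\geq 1$.

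The remaining case is $r,s\leq R$. There Proposition~\ref{heatkernelalpha1subordinatedboundsfinal} yields $p_\zeta^{(\alpha)}(1,r,s)\sim 1$ uniformly, so it is enough to show $p_{\zeta,\eta}^{(\alpha)}(1,r,s)\lesssim H(r)H(s)$. I prove this first at time $2$ by using the Chapman--Kolmogorov identity from Proposition~\ref{propertiesschrodheatkernel} with the split at $z=R$:
\begin{equation*}
p_{\zeta,\eta}^{(\alpha)}(2,r,s) = \int_0^R z^{2\zeta}p_{\zeta,\eta}^{(\alpha)}(1,r,z)p_{\zeta,\eta}^{(\alpha)}(1,z,s)\,dz + \int_R^\infty z^{2\zeta}p_{\zeta,\eta}^{(\alpha)}(1,r,z)p_{\zeta,\eta}^{(\alpha)}(1,z,s)\,dz.
\end{equation*}
On $(0,R)$, Lemma~\ref{boglem47} with a sufficiently small $\delta>0$ bounds the two factors by $H(r)z^{-\eta-\delta}$ and $H(s)z^{-\eta-\delta}$ respectively, so this piece is $\lesssim H(r)H(s)\int_0^R z^{2\zeta-2\eta-2\delta}\,dz\lesssim H(r)H(s)$; the residual integral converges at $0$ provided $\delta<\zeta-\eta+\tfrac12$, which leaves the admissible range $(0,\alpha/2)$ nonempty thanks to the subcritical hypothesis $\eta\leq(2\zeta+1-\alpha)/2$. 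On $(R,\infty)$, Lemma~\ref{boglem43} (applied directly to the first factor and via symmetry to the second) gives $p_{\zeta,\eta}^{(\alpha)}(1,r,z)p_{\zeta,\eta}^{(\alpha)}(1,z,s)\lesssim H(r)H(s)p_\zeta^{(\alpha)}(1,r,z)p_\zeta^{(\alpha)}(1,z,s)$, whose integral is $\leq H(r)H(s)p_\zeta^{(\alpha)}(2,r,s)\sim H(r)H(s)$ by Chapman--Kolmogorov for $p_\zeta^{(\alpha)}$ and the time comparability \eqref{eq:comparablealpha1}. Combining, $p_{\zeta,\eta}^{(\alpha)}(2,r,s)\lesssim H(r)H(s)$ on $\{r,s\leq R\}$.

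The scaling \eqref{eq:scalingalphahardy} together with $H(2^{1/\alpha}r)\sim H(r)$ converts this into the desired time-$1$ bound on $\{r,s\leq R/2^{1/\alpha}\}$. The leftover strip where $r,s\leq R$ and $r\vee s\in(R/2^{1/\alpha},R]$ is immediate from Lemma~\ref{boglem47} alone: the coordinate bounded below by $R/2^{1/\alpha}$ turns its factor $r^{-\eta-\delta}$ or $s^{-\eta-\delta}$ into a constant, leaving only $H(s)$ or $H(r)$, which is dominated by $H(r)H(s)$. The one delicate point throughout is absorbing the extra $z^{-\delta}$ generated by Lemma~\ref{boglem47}; this is precisely what the inner Chapman--Kolmogorov integral accomplishes, and its integrability at the origin is secured by the subcritical hypothesis on $\eta$.
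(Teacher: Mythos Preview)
Your proof is correct and follows essentially the same approach as the paper: reduce to $t=1$ by scaling, handle $r\vee s\geq R$ directly with Lemma~\ref{boglem43}, and for $r,s\leq R$ use Chapman--Kolmogorov at time $2$ with the split at $z=R$, invoking Lemma~\ref{boglem47} on $(0,R)$ and Lemma~\ref{boglem43} on $(R,\infty)$. You are in fact slightly more careful than the paper in explicitly treating the leftover strip $R/2^{1/\alpha}<r\vee s\leq R$ that arises when transferring the time-$2$ bound back to time $1$; the paper leaves this step implicit.
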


\begin{proof}
  By scaling (Formula \eqref{eq:scalingalphahardy}), it suffices to consider $t=1$. Let $R=2(2\Psi_\zeta(\eta))^{1/\alpha}$. Fix $\delta\in(0,(2\zeta+1)/2-\eta)$. By the semigroup property for $p_{\zeta,\eta}^{(\alpha)}$ and $p_\zeta^{(\alpha)}$, Lemmas~\ref{boglem47} and~\ref{boglem43}, we have for $0<r,s\leq R$,
  \begin{align}
    \label{eq:boglem48aux1}
    \begin{split}
      p_{\zeta,\eta}^{(\alpha)}(2,r,s)
      & = \int_0^\infty dz\, z^{2\zeta} \,p_{\zeta,\eta}^{(\alpha)}(1,r,z) \,p_{\zeta,\eta}^{(\alpha)}(1,z,s) \\
      & \lesssim H(r)H(s) \int_0^R dz\, z^{2\zeta}\cdot z^{-2(\eta+\delta)} \\
      & \qquad + H(r)H(s) \int_R^\infty dz\, z^{2\zeta} p_\zeta^{(\alpha)}(1,r,z) p_\zeta^{(\alpha)}(1,z,s) \\
      & \lesssim_R H(r)H(s) (1+ p_\zeta^{(\alpha)}(2,r,s)), \quad r,s\leq R.
    \end{split}
  \end{align}
  Since $p_\zeta^{(\alpha)}(2,r,s)\sim \,p_\zeta^{(\alpha)}(1,r,s)\sim_R 1$ by \eqref{eq:heatkernelalpha1weightedsubordinatedboundsfinal}, this concludes the proof of $r,s\leq R$.
  Now let $r>0$ and $s\geq R$. By Lemma~\ref{boglem43},
  \begin{align}
    \label{eq:boglem48aux2}
    p_{\zeta,\eta}^{(\alpha)}(1,r,s)
    \lesssim \,p_\zeta^{(\alpha)}(1,r,s)H(r)
    \sim \,p_\zeta^{(\alpha)}(1,r,s) H(r) H(s), \quad s>R.
  \end{align}
  Combining \eqref{eq:boglem48aux1} and \eqref{eq:boglem48aux2}, and using the symmetry of $p_{\zeta,\eta}^{(\alpha)}$ yields the claim.
\end{proof}

\subsubsection{Lower bound}

We prove the lower bound in \eqref{eq:mainresultgen}. By scaling (Formula \eqref{eq:scalingalphahardy}), it suffices to consider $t=1$. Let $q(t,r,s) := p_{\zeta,\eta}^{(\alpha)}(t,r,s)/(H(r) H(s))$ and $\mu(ds) = H(s)^2s^{2\zeta}\,ds$. By Doob conditioning, $q(t,r,s)$ is an integral kernel of a semigroup in $L^2(\R_+,\mu)$. By \eqref{eq:heatkernelharmonic1} in Theorem~\ref{heatkernelharmonic}, \eqref{eq:trivialupperbound}, and Corollary~\ref{bogcor38},
\begin{align}
  \label{eq:boglowbd0}
  1 \leq \int_0^\infty q(1,r,s) \mu(ds) \leq M+1, \quad r>0.
\end{align}
By the upper bound for $p_{\zeta,\eta}^{(\alpha)}$ in Lemma \ref{boglem48},
\begin{align}
  q(1,r,s) \lesssim p_\zeta^{(\alpha)}(1,r,s), \quad r,s>0.
\end{align}
This estimate and the bounds \eqref{eq:heatkernelalpha1weightedsubordinatedboundsfinal} show that there is $R>2$ such that
\begin{align}
  \int_R^\infty q(1,r,s)\mu(ds) \leq \frac14, \quad 0<r\leq1.
\end{align}
Furthermore, there is $0<\rho<1$ such that
\begin{align}
  \int_0^\rho q(1,r,s)\mu(ds) \leq \frac14, \quad r>0.
\end{align}
Combining the last two estimates shows
\begin{align}
  \label{eq:boglowbd1}
  \int_\rho^R q(1,r,s)\mu(ds) \geq \frac12, \quad 0<r<1.
\end{align}

We are now ready to prove the lower bound in \eqref{eq:mainresultgen}. We consider the regions $0<r<1<s$, and $r,s\lessgtr1$ separately and start with the former. We first note that
\begin{align}
  \label{eq:boglowbd2aux2}
  \begin{split}
    p_\zeta^{(\alpha)}(1,z,s)
    & \sim \frac{1}{|z-s|^{1+\alpha}(z+s)^{2\zeta} + (1+z+s)^{2\zeta}} \\
    & \geq \frac{1}{R^{2\zeta+1+\alpha} + s^{2\zeta+1+\alpha} + (1+s+R)^{2\zeta}} \\
    & \sim p_\zeta^{(\alpha)}(1,1,s),
    \quad z\in(\rho,R), \ s>\rho, 
  \end{split}
\end{align}
which follows from \eqref{eq:heatkernelalpha1weightedsubordinatedboundsfinal}. By the semigroup property, \eqref{eq:boglowbd1}, and \eqref{eq:boglowbd2aux2}, we have for $0<r\leq1$ and $s\geq\rho$,
\begin{align}
  \label{eq:boglowbd2}
  \begin{split}
    q(2,r,s)
    & \geq \int_\rho^R q(1,r,z)q(1,z,s) \mu(dz)
    \geq \int_\rho^R q(1,r,z)\frac{p_\zeta^{(\alpha)}(1,z,s)}{H(\rho)^2}\mu(dz) \\
    & \gtrsim p_\zeta^{(\alpha)}(1,1,s) \int_\rho^R q(1,r,z)\mu(dz) \\
    & \gtrsim p_\zeta^{(\alpha)}(1,1,s),
    \quad 0<r\leq1, \ s\geq\rho.
  \end{split}
\end{align}
Similarly, we get
\begin{align}
  q(3,r,s)
  \gtrsim p_\zeta^{(\alpha)}(1,1,s)
  \gtrsim p_\zeta^{(\alpha)}(3,r,s), \quad 0<r<1<s,
\end{align}
where the second estimate follows from~\eqref{eq:comparablealpha3} if $s\geq2$ and from~\eqref{eq:easybounds2} and~\eqref{eq:heatkernelalpha1weightedsubordinatedboundsfinal} if $s\leq2$.

Now let $r,s\leq1$. By the semigroup property, we obtain
\begin{align}
  \label{eq:boglowbd3}
  \begin{split}
    q(3,r,s)
    & \geq \int_\rho^R q(1,r,z)q(2,z,s)\mu(dz)
    \gtrsim \int_\rho^R q(1,r,z) p_\zeta^{(\alpha)}(1,z,1) \mu(dz) \\
    & \gtrsim_{\rho,R} p_\zeta^{(\alpha)}(1,1,s)\int_\rho^R q(1,r,z) \mu(dz)
    \sim p_\zeta^{(\alpha)}(1,1,s)
    \sim p_\zeta^{(\alpha)}(1,r,s), \quad r,s\leq1,
  \end{split}
\end{align}
where we used~\eqref{eq:boglowbd2} in the second estimate for $q(2,z,s)$. The third and the final estimates in~\eqref{eq:boglowbd3} follow from~\eqref{eq:boglowbd1} and~\eqref{eq:easybounds2} and~\eqref{eq:heatkernelalpha1weightedsubordinatedboundsfinal}, since $r,s,z\lesssim1$.

Finally, if $r,s\geq1$, then $q(3,r,s)\geq H(1)^2\,p_{\zeta,\eta}^{(\alpha)}(3,r,s) \gtrsim p_\zeta^{(\alpha)}(3,r,s)$. Combining all the cases shows
\begin{align}
  q(3,r,s) \gtrsim p_\zeta^{(\alpha)}(3,r,s), \quad r,s>0.
\end{align}
By the definition of $q(t,r,s) = p_{\zeta,\eta}^{(\alpha)}(t,r,s)/(H(r) H(s))$, the claimed lower bound in Theorem~\ref{mainresultgen} is proved.\qed

\subsubsection{Continuity}

To show that $p_{\zeta,\eta}^{(\alpha)}(t,r,s)$ is jointly continuous, we will use Duhamel's formula and the known joint continuity of $p_{\zeta}^{(\alpha)}(t,r,s)$.
To that end, we first prepare the following estimate.

\begin{lemma}
  \label{lem:GTest}
  Let $\zeta\in(-1/2,\infty)$, $\alpha\in(0,2)$, $\eta\in[0,(2\zeta+1-\alpha)/2]$, $T>0$, and 
  \begin{align}\index{$G_\eta^{(T)}(t,r,s)$}
  \label{eq:defGtauest}
   G_{\eta}^{(T)}(t,r,\tau,s) := \int_0^T dz\, z^{2\zeta} \cdot z^{-\alpha-\eta}  p_\zeta^{(\alpha)}(t,r,z)\left(\tau^{1/\alpha}+s+z \right)^{-2\zeta}.
\end{align}
Then, 
  \begin{align}\label{eq:GTest1}
      G_{\eta}^{(T)}(t,r,\tau,s) &\lesssim r^{-\eta-\alpha} s^{-2\zeta}, &&\quad \zeta\ge0,\; r,s,t,\tau>0, \\
      G_{\eta}^{(T)}(t,r,\tau,s) &\lesssim r^{-\eta-\alpha} (\tau^{1/\alpha}+T)^{-2\zeta},& & \quad \zeta\in(-1/2,0),\; r,t,\tau>0, s\in(0,T). \label{eq:GTest2}
  \end{align}
\end{lemma}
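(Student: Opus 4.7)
The plan is to separate the factor $(\tau^{1/\alpha}+s+z)^{-2\zeta}$ from the integrand using monotonicity in $z$ according to the sign of $\zeta$, then apply Lemma~\ref{integralana1} with $\delta=\alpha+\eta$ to the remaining integral of $z^{2\zeta-\alpha-\eta}\,p_\zeta^{(\alpha)}(t,r,z)\,dz$.

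First, observe that $\delta:=\alpha+\eta$ lies in $(0,2\zeta+1)$: by hypothesis $\eta\in[0,(2\zeta+1-\alpha)/2]$, hence $\delta\in[\alpha,(2\zeta+1+\alpha)/2]\subset(0,2\zeta+1)$ since $\alpha<2\zeta+1$. Lemma~\ref{integralana1} therefore yields
\begin{align}
  \label{eq:GTproofkey}
  \int_0^\infty dz\, z^{2\zeta}\, p_\zeta^{(\alpha)}(t,r,z)\, z^{-\alpha-\eta}
  \;\lesssim\; r^{-\alpha-\eta}\wedge t^{-(\alpha+\eta)/\alpha}
  \;\leq\; r^{-\alpha-\eta}.
\end{align}

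For \eqref{eq:GTest1}, suppose $\zeta\geq0$. Then the map $x\mapsto x^{-2\zeta}$ is non-increasing on $(0,\infty)$, and since $\tau^{1/\alpha}+s+z\geq s$, we get $(\tau^{1/\alpha}+s+z)^{-2\zeta}\leq s^{-2\zeta}$. Pulling this factor out of the integral and enlarging the domain of integration from $(0,T)$ to $(0,\infty)$, we obtain $G_\eta^{(T)}(t,r,\tau,s)\leq s^{-2\zeta}\int_0^\infty dz\, z^{2\zeta-\alpha-\eta} p_\zeta^{(\alpha)}(t,r,z)$, and \eqref{eq:GTest1} follows from \eqref{eq:GTproofkey}.

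For \eqref{eq:GTest2}, suppose $\zeta\in(-1/2,0)$, so that $-2\zeta>0$ and $x\mapsto x^{-2\zeta}$ is non-decreasing. For $z\in(0,T)$ and $s\in(0,T)$ we have $\tau^{1/\alpha}+s+z\leq \tau^{1/\alpha}+2T\leq 2(\tau^{1/\alpha}+T)$, whence $(\tau^{1/\alpha}+s+z)^{-2\zeta}\leq 2^{-2\zeta}(\tau^{1/\alpha}+T)^{-2\zeta}$. Pulling this out of the integral and applying \eqref{eq:GTproofkey} gives \eqref{eq:GTest2}. The only ``obstacle'' worth noting is the need to use the hypothesis $s<T$ in the sub-zero case so that $s$ can be absorbed into $T$; without it, no $s$-independent bound of the claimed form is available.
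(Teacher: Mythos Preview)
Your proof is correct and follows essentially the same approach as the paper: bound the factor $(\tau^{1/\alpha}+s+z)^{-2\zeta}$ using monotonicity according to the sign of $\zeta$, then invoke Lemma~\ref{integralana1} with $\delta=\alpha+\eta$. Your explicit verification that $\delta\in(0,2\zeta+1)$ is a welcome detail that the paper leaves implicit.
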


\begin{proof}
  Let $\zeta \ge 0$. By Lemma \ref{integralana1},
  \begin{align*}
    G_{ \eta}^{(T)}(t,r,\tau,s) \le s^{-2\zeta} \int_0^\infty dz\, z^{2\zeta} \cdot z^{-\alpha-\eta}  p_\zeta^{(\alpha)}(t,r,z) \lesssim s^{-2\zeta} r^{-\eta-\alpha}.
  \end{align*}
  If $\zeta \in (-1/2,0)$, then
  \begin{align*}
  G_{\eta}^{(T)}(t,r,\tau,s) \le \int_0^\infty dz\, z^{2\zeta} \cdot z^{-\alpha-\eta}  p_\zeta^{(\alpha)}(t,r,z) (\tau^{1/\alpha} +2T)^{-2\zeta} \lesssim (\tau^{1/\alpha} +2T)^{-2\zeta} r^{-\eta-\alpha}.
  \end{align*}
  This concludes the proof.
\end{proof}

\begin{lemma}
  \label{boglem49}
  Let $\zeta\in (-1/2,\infty)$, $\alpha\in(0,2)$, and $\eta\in[0,(2\zeta+1-\alpha)/2]$. Then, for every $r,t>0$, the function $(0,\infty)\ni s\mapsto p_{\zeta,\eta}^{(\alpha)}(t,r,s)$ is continuous.
\end{lemma}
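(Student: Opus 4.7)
The plan is to apply the dominated convergence theorem to the second form of the Duhamel representation from Proposition~\ref{propertiesschrodheatkernel}(2),
\begin{equation*}
p_{\zeta,\eta}^{(\alpha)}(t,r,s) = p_\zeta^{(\alpha)}(t,r,s) + \int_0^t d\tau \int_0^\infty dz\, z^{2\zeta}\, p_{\zeta,\eta}^{(\alpha)}(\tau,r,z)\, q(z)\, p_\zeta^{(\alpha)}(t-\tau,z,s).
\end{equation*}
The free term $p_\zeta^{(\alpha)}(t,r,s)$ is jointly continuous: this follows from the explicit formula~\eqref{eq:defpheatalpha2} for $\alpha=2$ and from subordination~\eqref{eq:defpheatalpha} for $\alpha\in(0,2)$. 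In the integral, the $s$-dependence sits only inside the factor $p_\zeta^{(\alpha)}(t-\tau,z,s)$, which is continuous in $s$ pointwise in $(\tau,z)$. Thus it suffices to produce an integrable dominating function, valid uniformly for $s$ in some compact neighborhood $I$ of an arbitrarily fixed $s_0>0$.

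To construct such a bound I would combine the upper estimate from Lemma~\ref{boglem48}, namely
\[
p_{\zeta,\eta}^{(\alpha)}(\tau,r,z) \lesssim p_\zeta^{(\alpha)}(\tau,r,z)\, H(\tau,r)\, H(\tau,z),
\]
with the 3G inequality of Lemma~\ref{lem:3Gineq}, which splits $p_\zeta^{(\alpha)}(\tau,r,z)\,p_\zeta^{(\alpha)}(t-\tau,z,s)$ into two summands that decouple in the variables $(\tau,r,z)$ versus $(t-\tau,z,s)$. Since $q(z)=\Psi_\zeta(\eta)z^{-\alpha}$, $H(\tau,z)\lesssim 1+\tau^{\eta/\alpha}z^{-\eta}$, and $H(\tau,r)$ is bounded in $\tau\in(0,t]$ for fixed $r$, the inner $z$-integrals reduce to expressions of the type estimated in Lemma~\ref{lem:GTest} (with parameter $\eta$, and separately with $\eta$ replaced by $0$ for the constant part of $H(\tau,z)$), after splitting the $z$-domain at a sufficiently large threshold depending on $I$ and on the sign of $\zeta$ to keep the factor $(\tau^{1/\alpha}+z+s)^{-2\zeta}$ uniformly controlled for $s\in I$.

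The main obstacle will be ensuring the remaining $\tau$-integral is finite, the delicate region being $\tau\to t$ with $z$ close to $s\in I$: there $p_\zeta^{(\alpha)}(t-\tau,z,s)$ peaks at order $(t-\tau)^{-1/\alpha}$, which alone is not integrable in $\tau$ when $\alpha\le 1$. This is handled by performing the $z$-integration over a neighborhood of $I$ first: the peak of $p_\zeta^{(\alpha)}(t-\tau,z,s)$ has width $(t-\tau)^{1/\alpha}$, so the $z$-mass against the remaining bounded factors is uniformly controlled in $\tau$ via Lemma~\ref{integralana1} (applied with $\delta=\alpha+\eta<2\zeta+1$, which is admissible since $\eta\le(2\zeta+1-\alpha)/2$) together with Chapman--Kolmogorov for $p_\zeta^{(\alpha)}$. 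Once such a dominating function is in hand, dominated convergence yields continuity of the integral as $s\to s_0$, completing the proof.
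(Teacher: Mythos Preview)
Your approach matches the paper's: Duhamel, the upper bound of Lemma~\ref{boglem48}, the 3G inequality (Lemma~\ref{lem:3Gineq}), and Lemma~\ref{lem:GTest} for the inner $z$-integrals. The paper organizes the limit via an $\epsilon$-split in $\tau$ (dominated convergence on $(\epsilon,1)$, a direct $O(\epsilon)$ bound on $(0,\epsilon)$) rather than a single global dominating function; note that the $\tau\to t$ obstacle you flag is in fact already absorbed once you pass through Lemma~\ref{lem:GTest}, whose bounds are uniform in the time parameters, so the additional appeal to Lemma~\ref{integralana1} is not needed.
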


\begin{proof}
  The continuity of $p_\zeta^{(\alpha)}(t,r,s)$ is well known, hence let $\eta>0$ from now on. By scaling, it suffices to consider $t=1$. Fix $r,s>0$ and let $w>0$. Then,
  \begin{align}
    \label{eq:boglem49aux0}
    \begin{split}
      & p_{\zeta,\eta}^{(\alpha)}(1,r,s) - p_{\zeta,\eta}^{(\alpha)}(1,r,w) \\
      & \quad = p_\zeta^{(\alpha)}(1,r,s) - p_\zeta^{(\alpha)}(1,r,w) \\
      & \qquad + \int_0^1 d\tau \int_0^\infty dz\, z^{2\zeta} \,p_{\zeta,\eta}^{(\alpha)}(1-\tau,r,z)q(z)\left[p_\zeta^{(\alpha)}(\tau,z,s)- p_\zeta^{(\alpha)}(\tau,z,w)\right].
    \end{split}
  \end{align}
  We claim that the integral vanishes as $w$ converges to $s$. Indeed, let $\epsilon\in(0,1/2)$.
  Then, for $\tau\in(\epsilon,1)$ and $|w-s|$ sufficiently small, we have $p_\zeta^{(\alpha)}(\tau,z,w)\sim p_\zeta^{(\alpha)}(\tau,z,s)$ by \eqref{eq:heatkernelalpha1weightedsubordinatedboundsfinal}. Thus, by dominated convergence,
  \begin{align}
    \lim_{w\to s}\int_\epsilon^1 d\tau \int_0^\infty dz\, z^{2\zeta} \,p_{\zeta,\eta}^{(\alpha)}(1-\tau,r,z)q(z) \left[p_\zeta^{(\alpha)}(\tau,z,s) - p_\zeta^{(\alpha)}(\tau,z,w)\right] = 0.
  \end{align}
  Furthermore, by the upper heat kernel bounds in Theorem~\ref{mainresultgen} and the 3G inequality in Lemma~\ref{lem:3Gineq}, together with Lemma \ref{lem:GTest}, we have, for any $\xi\in(s/2,2s)$ and $\epsilon\in(0,1/2)$,
  \begin{align}
    \label{eq:boglem49aux1}
    \begin{split}
      & \int_0^\epsilon d\tau \int_0^\infty dz\, z^{2\zeta} \,p_{\zeta,\eta}^{(\alpha)}(1-\tau,r,z)q(z) p_\zeta^{(\alpha)}(\tau,z,\xi) \\
      & \quad \lesssim \int_0^\epsilon d\tau \int_0^\infty dz\, z^{2\zeta} p_\zeta^{(\alpha)}(1-\tau,r,z) H(1-\tau,r)H(1-\tau,z)q(z) p_\zeta^{(\alpha)}(\tau,z,\xi) \\
     & \quad \lesssim p^{(\alpha)}(1,r,s) H(r) \int_0^\epsilon d\tau \left[G_\eta^{2(r\vee\xi\vee1)}(1-\tau,r,\tau,\xi) + G_0^{2(r\vee\xi\vee1)}(1-\tau,r,\tau,\xi) \right. \\ & \qquad\qquad\qquad\qquad\qquad\qquad\quad \left. + G_\eta^{2(r\vee\xi\vee1)}(\tau,\xi,1-\tau,r) + G_0^{2(r\vee\xi\vee1)}(\tau,\xi,1-\tau,r)\right] \\
      & \qquad + H(r) \int_0^\epsilon d\tau \int_{2(r\vee \xi \vee 1)}^\infty dz\, z^{-2\zeta -3\alpha-2} \cdot (1+z^{-\eta}) \\
      & \quad \lesssim \epsilon H(r)p^{(\alpha)}(1,r,\xi)\left[(r^{-\eta-\alpha}+r^{-\alpha})\xi^{-2\zeta} + (\xi^{-\eta-\alpha}+\xi^{-\alpha})r^{-2\zeta}\right]\one_{\zeta\geq0} \\
      & \qquad + \epsilon H(r) p^{(\alpha)}(1,r,\xi) \left[\frac{r^{-\eta-\alpha}+r^{-\alpha}}{(1+r+\xi)^{2\zeta}} + \frac{\xi^{-\eta-\alpha}+\xi^{-\alpha}}{((1-\tau)^{1/\alpha}+1+r+\xi)^{2\zeta}}\right]\one_{\zeta\in(-\frac12,0)} \\
      & \qquad + \epsilon H(r) (r+\xi+1)^{-2\zeta-3\alpha-1} \\
      & \quad \lesssim \epsilon H(r) c(r,s),
    \end{split}
  \end{align}
  where $c(r,s)=c_{\zeta,\eta,\alpha}(r,s)$ depends only on $r,s>0$ and the fixed parameters $\zeta,\alpha$, and $\eta$. Using \eqref{eq:boglem49aux1}, we see that
  $$
  \lim_{\epsilon\to0}\int_0^\epsilon d\tau \int_0^\infty dz\, z^{2\zeta} \,p_{\zeta,\eta}^{(\alpha)}(1-\tau,r,z)q(z)\left[p_\zeta^{(\alpha)}(\tau,z,s)- p_\zeta^{(\alpha)}(\tau,z,w)\right] = 0,
  $$
  uniformly in $w\in(s/2,2s)$.
  This yields the result.
\end{proof}

\begin{lemma}
  \label{boglem410}
  Let $\zeta\in (-1/2,\infty)$, $\alpha\in(0,2)$, and $\eta\in(0,(2\zeta+1-\alpha)/2]$. Then, $p_{\zeta,\eta}^{(\alpha)}(t,r,s)$ is jointly continuous in $t,r,s>0$.
\end{lemma}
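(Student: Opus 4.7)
The plan is in two stages. The first reduces joint continuity on $(0,\infty)^3$ to joint continuity of the map $(r,s)\mapsto p_{\zeta,\eta}^{(\alpha)}(1,r,s)$ on $(0,\infty)^2$. By the scaling identity~\eqref{eq:scalingalphahardy},
\[
p_{\zeta,\eta}^{(\alpha)}(t,r,s) \;=\; t^{-(2\zeta+1)/\alpha}\, p_{\zeta,\eta}^{(\alpha)}\bigl(1,\,rt^{-1/\alpha},\,st^{-1/\alpha}\bigr),
\]
joint continuity in $(t,r,s)$ becomes a composition of continuous maps once the $t=1$ slice is handled.

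For the $t=1$ slice, I would split time at the midpoint via the Chapman--Kolmogorov equation from Proposition~\ref{propertiesschrodheatkernel},
\[
p_{\zeta,\eta}^{(\alpha)}(1,r,s) \;=\; \int_0^\infty dz\, z^{2\zeta}\, p_{\zeta,\eta}^{(\alpha)}(1/2,r,z)\, p_{\zeta,\eta}^{(\alpha)}(1/2,z,s),
\]
and apply dominated convergence as $(r,s)\to(r_0,s_0)$. For each fixed $z>0$, the integrand is jointly continuous in $(r,s)$: its second factor is continuous in $s$ by Lemma~\ref{boglem49}, and its first factor is continuous in $r$ via the $r\leftrightarrow s$ symmetry of $p_{\zeta,\eta}^{(\alpha)}$ combined with Lemma~\ref{boglem49}. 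To dominate the integrand uniformly on a compact neighborhood $K\Subset(0,\infty)^2$ of $(r_0,s_0)$, I would use the upper bound of Lemma~\ref{boglem48} on both factors, yielding
\[
z^{2\zeta}\, p_{\zeta,\eta}^{(\alpha)}(1/2,r,z)\, p_{\zeta,\eta}^{(\alpha)}(1/2,z,s) \;\lesssim_K\; z^{2\zeta}\, H(1/2,z)^2\, p_\zeta^{(\alpha)}(1/2,r,z)\, p_\zeta^{(\alpha)}(1/2,z,s).
\]

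The main technical point is verifying integrability of this majorant in $z$. At $z\to 0^+$, the factor $H(1/2,z)^2\sim 1+z^{-2\eta}$ carries the singular part, while $p_\zeta^{(\alpha)}(1/2,r,z)$ and $p_\zeta^{(\alpha)}(1/2,z,s)$ remain bounded for $(r,s)\in K$ (which keeps $r,s$ bounded away from $0$); so the dominant behaves like $z^{2\zeta-2\eta}$, integrable at the origin precisely because
\[
\eta \;\le\; \frac{2\zeta+1-\alpha}{2} \;<\; \zeta+\frac{1}{2}
\]
(strict since $\alpha>0$). At $z\to\infty$, $H(1/2,z)$ is bounded and the bounds of Proposition~\ref{heatkernelalpha1subordinatedboundsfinal} give polynomial decay of each heat kernel like $z^{-(2\zeta+1+\alpha)}$, so the majorant is of order $z^{-2\zeta-2-2\alpha}$, easily integrable. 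Dominated convergence then yields joint continuity of $p_{\zeta,\eta}^{(\alpha)}(1,\cdot,\cdot)$ on $(0,\infty)^2$, and the scaling reduction above completes the proof on $(0,\infty)^3$. No new estimates beyond the sharp upper bound of Lemma~\ref{boglem48} and the continuity-in-$s$ statement of Lemma~\ref{boglem49} are needed.
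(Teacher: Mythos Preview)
Your argument is correct and takes a genuinely different route from the paper's. The paper proceeds via Duhamel's formula: it writes the difference $p_{\zeta,\eta}^{(\alpha)}(1,\tilde r,\tilde s)-p_{\zeta,\eta}^{(\alpha)}(1,r,s)$ as a time integral, splits that integral at $\epsilon$ and $1-\epsilon$, controls the boundary pieces by the 3G inequality (Lemma~\ref{lem:3Gineq}) through an estimate parallel to~\eqref{eq:boglem49aux1}, and handles the middle piece by dominated convergence using \emph{both} the upper and lower bounds of Theorem~\ref{mainresultgen}. You instead split in space via Chapman--Kolmogorov at the midpoint, which decouples the $r$- and $s$-dependence into separate factors; then Lemma~\ref{boglem49} together with the symmetry of $p_{\zeta,\eta}^{(\alpha)}$ gives pointwise convergence of the integrand, and the upper bound of Lemma~\ref{boglem48} alone furnishes the $K$-uniform integrable majorant.

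Your approach is more economical: it avoids the 3G inequality entirely and does not invoke the lower bound. The only small point to make explicit is that your displayed majorant $z^{2\zeta}H(1/2,z)^2 p_\zeta^{(\alpha)}(1/2,r,z)p_\zeta^{(\alpha)}(1/2,z,s)$ still depends on $(r,s)$; what you actually need (and what your asymptotic analysis at $z\to0^+$ and $z\to\infty$ implicitly provides) is a further bound $\sup_{(r,s)\in K}p_\zeta^{(\alpha)}(1/2,r,z)p_\zeta^{(\alpha)}(1/2,z,s)\le C_K\,(1\wedge z^{-(2\zeta+1+\alpha)})^2$, which follows directly from Proposition~\ref{heatkernelalpha1subordinatedboundsfinal} since $r,s$ are bounded away from $0$ and $\infty$ on $K$. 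The paper's Duhamel-based method has the modest advantage of running in parallel with the $\eta<0$ continuity proof (Proposition~\ref{jakprop315}), but your Chapman--Kolmogorov argument would adapt there as well.
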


\begin{proof}
  By scaling, it suffices to consider $t=1$. Fix $r,s>0$ and let $\tilde r$ and $\tilde s$ be close to $r$ and $s$ respectively. As in Lemma \ref{boglem49}, it suffices to show
  \begin{align}
    \int_0^1 d\tau \int_0^\infty dz\, z^{2\zeta} \left| p_{\zeta,\eta}^{(\alpha)}(1-\tau,\tilde r,z) p_\zeta^{(\alpha)}(\tau,z,\tilde s) - p_{\zeta,\eta}^{(\alpha)}(1-\tau,r,z) p_\zeta^{(\alpha)}(\tau,z,s)\right| q(z) \to 0
  \end{align}
  as $\tilde r\to r$ and $\tilde s\to s$. In addition to \eqref{eq:boglem49aux1} we get (with the same arguments)
  \begin{align}
    \label{eq:boglem410aux1}
    \begin{split}
      & \int_{1-\epsilon}^1 d\tau \int_0^\infty dz\, z^{2\zeta} \,p_{\zeta,\eta}^{(\alpha)}(1-\tau,r,z)q(z) p_\zeta^{(\alpha)}(\tau,z,s) \\
      & \quad \lesssim \int_0^\epsilon d\tau \int_0^\infty dz\, z^{2\zeta} p_\zeta^{(\alpha)}(\tau,r,z) H(\tau,r)H(\tau,z) q(z) p_\zeta^{(\alpha)}(1-\tau,z,s) \leq \epsilon c(r,s)
    \end{split}
  \end{align}
  for a constant $c(r,s)>0$ independent of $\varepsilon$. 
  By \eqref{eq:boglem49aux1}--\eqref{eq:boglem410aux1}, we get
  \begin{align*}
    \small
    \begin{split}
      & \int_0^1 d\tau \int_0^\infty dz\, z^{2\zeta} \left| p_{\zeta,\eta}^{(\alpha)}(1-\tau,\tilde r,z) p_\zeta^{(\alpha)}(\tau,z,\tilde s) - \,p_{\zeta,\eta}^{(\alpha)}(1-\tau,r,z)p_\zeta^{(\alpha)}(\tau,z,s)\right| q(z) \\
      & \quad \leq \int_0^\epsilon d\tau \int_0^\infty dz\, z^{2\zeta}\left[ p_{\zeta,\eta}^{(\alpha)}(1-\tau,\tilde r,z) p_\zeta^{(\alpha)}(\tau,z,\tilde s) + p_{\zeta,\eta}^{(\alpha)}(1-\tau,r,z) p_\zeta^{(\alpha)}(\tau,z,s)\right] q(z) \\
      & \qquad + \int_{1-\epsilon}^1 d\tau \int_0^\infty dz\, z^{2\zeta} \left[ p_{\zeta,\eta}^{(\alpha)}(1-\tau,\tilde r,z) p_\zeta^{(\alpha)}(\tau,z,\tilde s) + p_{\zeta,\eta}^{(\alpha)}(1-\tau,r,z) p_\zeta^{(\alpha)}(\tau,z,s)\right] q(z) \\
      & \qquad + \int_\epsilon^{1-\epsilon} d\tau \int_0^\infty dz\, z^{2\zeta}\left| p_{\zeta,\eta}^{(\alpha)}(1-\tau,\tilde r,z) p_\zeta^{(\alpha)}(\tau,z,\tilde s) - p_{\zeta,\eta}^{(\alpha)}(1-\tau,r,z) p_\zeta^{(\alpha)}(\tau,z,s)\right| q(z) \\
      & \quad \leq \epsilon c(r,s) + \int_\epsilon^{1-\epsilon} \int_0^\infty dz\, z^{2\zeta} \left| p_{\zeta,\eta}^{(\alpha)}(1-\tau,\tilde r,z) p_\zeta^{(\alpha)}(\tau,z,\tilde s) - p_{\zeta,\eta}^{(\alpha)}(1-\tau,r,z)p_\zeta^{(\alpha)}(\tau,z,s) \right|.
    \end{split}
  \end{align*}
  By the upper and lower heat kernel estimates in Theorem~\ref{mainresultgen}, we have, for $\epsilon<\tau<1-\epsilon$, the estimates
  $p_{\zeta,\eta}^{(\alpha)}(1-\tau,\tilde r,z) \sim p_{\zeta,\eta}^{(\alpha)}(1-\tau,r,z)$ and
  $p_\zeta^{(\alpha)}(\tau,z,\tilde s) \sim p_\zeta^{(\alpha)}(\tau,z,s)$.
  By Lemma~\ref{boglem49} and dominated convergence, the last integral is arbitrarily small. This concludes the proof of Lemma~\ref{boglem410} and in particular that of the continuity statement in Theorem~\ref{mainresultgen}.
\end{proof}

\subsubsection{Blowup}
\label{ss:blowup}

We now prove Corollary~\ref{blowupcor} for all $\alpha\in(0,2]$ using Theorem~\ref{mainresultgen}. Note that the pointwise bounds stated in Theorem~\ref{mainresultgen} extend to $\alpha=2$ in view of Remark~\ref{remarksmainresult1}.

\begin{proof}[Proof of Corollary~\ref{blowupcor}]
  Let $\eta_*:=(2\zeta+1-\alpha)/2$ be the parameter corresponding to the critical coupling constant $\kappa_{\rm c}(\zeta,\alpha)$ and recall our assumption $\kappa>\kappa_{\rm c}(\zeta,\alpha)$. 
According to  the Appendix~\ref{s:constructionnegschrodperturbation},
for $t,r,s>0$, we define the Schr\"odinger perturbation $\tilde p_\kappa(t,r,s)$ 
of $p_\zeta^{(\alpha)}(t,r,s)$ by $\kappa/r^\alpha$:
  \begin{align*}
    \begin{split}
      \tilde p_{\kappa}(t,r,s)
      & := \sum_{n\geq0} p_\kappa^{(n)}(t,r,s), \quad \text{where} \quad
        p_\kappa^{(0)}(t,r,s) := p_\zeta^{(\alpha)}(t,r,s), \quad \text{and} \\
      p_\kappa^{(n)}(t,r,s) & := \int_0^t d\tau \int_0^\infty dz\, z^{2\zeta} p_\zeta^{(\alpha)}(t,r,z) \frac{\kappa}{z^\alpha} p_{\kappa}^{(n-1)}(t-\tau,z,s), \quad n\in\N.
    \end{split}
  \end{align*}
  Our goal is to prove that $\tilde p_\kappa(t,r,s)=\infty$ for $s<t$.
  By the definitions of $\tilde p_\kappa$ and $p_{\zeta,\eta}^{(\alpha)}$, we have $\tilde p_{\kappa} \geq p_{\zeta,\eta_*}^{(\alpha)}$.
  In fact, according to \cite[Lemma 8]{Bogdanetal2008}, $\tilde p_{\kappa}$ may be considered as a Schr\"odinger perturbation of $p_{\zeta,\eta_*}^{(\alpha)}$ by the positive potential $(\kappa-\kappa_{\rm c}(\zeta,\alpha))r^{-\alpha}$. Using the second term in the resulting perturbation series 
and the lower heat kernel bound in Theorem~\ref{mainresultgen}, we get
  \begin{align*}
    \tilde p_{\kappa}(t,r,s)
    & \geq \left(\kappa-\kappa_{\rm c}(\zeta,\alpha)\right) \int_0^t d\tau \int_0^\infty dz\, z^{2\zeta} p_{\zeta,\eta_*}^{(\alpha)}(t-\tau,r,z) z^{-\alpha} p_{\zeta,\eta_*}^{(\alpha)}(\tau,z,s)
    = \infty,
  \end{align*}
for $s<t$.  This concludes the proof.
\end{proof}

\subsection{The case $\eta\in(-\alpha,0)$}

Throughout this section, we assume $\alpha\in(0,2)$.
Similarly as in \eqref{eq:defbighscaled}--\eqref{eq:defbigh}, and inspired by Proposition \ref{bogprop32} and 
Theorem~\ref{heatkernelharmonic}, we consider the total mass of the kernel $p_{\zeta,\eta}^{(\alpha)}$,
\begin{align}
  \label{eq:defbighscaledposcouplings0}
  M(t,r) := \int_0^\infty ds\, s^{2\zeta} p_{\zeta,\eta}^{(\alpha)}(t,r,s).
\end{align}
By the scaling \eqref{eq:scalingalphahardy}, we have the scaling relation
\begin{align}
  \label{eq:defbighscaledposcouplings}
  M(t,r)
  = \int_0^{\infty} ds\, s^{2\zeta} t^{-(2\zeta+1)/\alpha} p_{\zeta,\eta}^{(\alpha)}(1,t^{-1/\alpha}r,t^{-1/\alpha}s)
  = M(1,t^{-1/\alpha}r).
\end{align}
Note that, by Theorem~\ref{thm:1}, $p_{\zeta,\eta}^{(\alpha)}$ satisfies the Chapman--Kolmogorov equations \eqref{eq:Ch-Kolm}, the Duhamel formulae \eqref{e.Df}, and
\begin{align}
  \label{eq:negetasimpleest}  
  0 < p_{\zeta,\eta}^{(\alpha)}(t,r,s) \leq p_\zeta^{(\alpha)}(t,r,s) \quad \text{for all}\ r,s,t>0.
\end{align}
In particular, by the normalization \eqref{eq:normalizedalpha}, we have
\begin{align}
  \label{eq:hposcouplingssimple}
  0 \leq M(t,r) \leq \int_0^\infty ds\, s^{2\zeta} p_\zeta^{(\alpha)}(t,r,s) = 1.
\end{align}

\subsubsection{Upper bound}

\begin{proposition}[{Comp.~\cite[Proposition 3.1]{JakubowskiWang2020}}]
  \label{jakprop31}
  Let $\zeta\in(-1/2,\infty)$, $\alpha\in(0,2)$, and $\eta\in(-\alpha,0)$. Then, there is $C>0$ such that for all $r>0$,
  \begin{align}
    M(1,r) \leq C(1\wedge r^{-\eta}).
  \end{align}
\end{proposition}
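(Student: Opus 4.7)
The plan follows the ``self-improving estimates'' strategy described just before the statement. The bound $M(1,r)\le 1$ is already recorded in~\eqref{eq:hposcouplingssimple}; combined with a proof of $M(1,r)\lesssim r^{-\eta}$, it yields the minimum $1\wedge r^{-\eta}$. So the real task is to establish the latter. The key intermediate step is the recursive inequality
\begin{equation}
  \label{eq:plan:base}
  M(1,r) \le \delta\, M\bigl(1,\, 3^{1/\alpha}r\bigr) + A\, r^{-\eta},
\end{equation}
holding uniformly in $r>0$ for some $A>0$ and some $\delta\in(0,3^{\eta/\alpha})$. Since $\eta<0$ forces $3^{\eta/\alpha}\in(0,1)$, such a $\delta$ lies strictly below $1$.

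Once~\eqref{eq:plan:base} is available, iterating it together with the scaling~\eqref{eq:defbighscaledposcouplings} (which gives $M(1/3,r)=M(1,3^{1/\alpha}r)$) yields
\[
  M(1,r) \le \delta^n\, M\bigl(1,3^{n/\alpha}r\bigr) + A\, r^{-\eta}\sum_{k=0}^{n-1}\bigl(\delta\cdot 3^{-\eta/\alpha}\bigr)^k.
\]
The geometric sum converges because $\delta\cdot 3^{-\eta/\alpha}<1$, and the leading term tends to $0$ as $n\to\infty$ by~\eqref{eq:hposcouplingssimple}. Thus~\eqref{eq:plan:base} implies $M(1,r)\lesssim r^{-\eta}$, which combined with~\eqref{eq:hposcouplingssimple} gives the claim.

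To establish~\eqref{eq:plan:base}, I would use the Chapman--Kolmogorov identity
\[
  M(1,r) = \int_0^\infty z^{2\zeta}\, p_{\zeta,\eta}^{(\alpha)}(2/3,r,z)\, M(1/3,z)\,dz
\]
combined with $M(1/3,z)=M(1,3^{1/\alpha}z)$, then split the $z$-integral at a judicious threshold $R$. For $z\ge R$, the invariance identity~\eqref{eq:jakthm242} of $s\mapsto s^{-\eta}$ together with $M(1/3,z)\le 1$ gives $\int_R^\infty z^{2\zeta}\,p_{\zeta,\eta}^{(\alpha)}(2/3,r,z)\,dz \le R^\eta\, r^{-\eta}$ (using $z^{-\eta}\ge R^{-\eta}$ for $z\ge R$, since $\eta<0$), which absorbs into the $A\,r^{-\eta}$ term. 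For $z\le R$, the comparison $p_{\zeta,\eta}^{(\alpha)}\le p_\zeta^{(\alpha)}$ together with the sharp heat kernel estimates of Proposition~\ref{heatkernelalpha1subordinatedboundsfinal} and Lemma~\ref{comparablealpha} should relate the bulk contribution to a small multiple of $M(1,3^{1/\alpha}r)$. The main obstacle is arranging that the resulting $\delta$ is \emph{strictly} below $3^{\eta/\alpha}$; this demands a tight balancing of $R$ against the natural spatial scale, and possibly replacing the scaling factor $3^{1/\alpha}$ by a larger $\lambda^{1/\alpha}$ so that $\delta<\lambda^{\eta/\alpha}$ can be secured with room to spare.
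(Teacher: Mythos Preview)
Your iteration scheme and your treatment of the large-variable tail via the invariance identity~\eqref{eq:jakthm242} are both correct and match the paper. The gap is in the small-variable part of~\eqref{eq:plan:base}. After your two-fold Chapman--Kolmogorov split you need
\[
  \int_0^R z^{2\zeta}\,p_{\zeta,\eta}^{(\alpha)}(2/3,r,z)\,M(1/3,z)\,dz \le \delta\, M(1,3^{1/\alpha}r),
\]
but neither $p_{\zeta,\eta}^{(\alpha)}\le p_\zeta^{(\alpha)}$ nor the pointwise heat-kernel bounds produce the factor $M(1,3^{1/\alpha}r)$ on the right: this quantity is defined through the \emph{perturbed} kernel and cannot be extracted from a bound on the \emph{unperturbed} one. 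Using only $M(1/3,z)\le 1$ and $p_{\zeta,\eta}^{(\alpha)}(2/3,r,z)\lesssim 1$ instead gives $M(1,r)\le cR^{2\zeta+1}+R^\eta r^{-\eta}$, i.e.\ a fixed constant plus $A r^{-\eta}$; for small $r$ the constant dominates $r^{-\eta}=r^{|\eta|}$, so the recursion never closes.

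The paper's remedy is a \emph{three}-fold Chapman--Kolmogorov split together with the uniform sup bound $p_{\zeta,\eta}^{(\alpha)}(1/3,z,w)\le c$ (from $p_{\zeta,\eta}^{(\alpha)}\le p_\zeta^{(\alpha)}$ and Proposition~\ref{heatkernelalpha1subordinatedboundsfinal}) to obtain the pointwise factorization
\[
  p_{\zeta,\eta}^{(\alpha)}(1,r,s)\le c\,M(1/3,r)\,M(1/3,s)=c\,M(1,3^{1/\alpha}r)\,M(1,3^{1/\alpha}s).
\]
Splitting $M(1,r)=\int_0^\infty s^{2\zeta}p_{\zeta,\eta}^{(\alpha)}(1,r,s)\,ds$ at a small threshold $\rho$ and inserting this factorization on $\{s<\rho\}$ pulls out $M(1,3^{1/\alpha}r)$ directly, leaving $\int_0^\rho s^{2\zeta}M(1,3^{1/\alpha}s)\,ds\le \rho^{2\zeta+1}/(2\zeta+1)$, which \emph{can} be made smaller than $3^{\eta/\alpha}/c$ by shrinking $\rho$. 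This is the missing idea in your scheme; your suggestion of replacing $3$ by a larger scaling parameter does not address it.
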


\begin{proof}
  By Chapman--Kolmogorov and $p_{\zeta,\eta}^{(\alpha)}(1/3,z,w)\leq c$ for all $z,w>0$ (by~\eqref{eq:heatkernelalpha1weightedsubordinatedboundsfinal}), we have
  \begin{align}
    \label{eq:jakprop31aux1}
    \begin{split}
      p_{\zeta,\eta}^{(\alpha)}(1,r,s)
      & = \int_0^\infty dz\, z^{2\zeta} \int_0^\infty dw\, w^{2\zeta} p_{\zeta,\eta}^{(\alpha)}(1/3,r,z) p_{\zeta,\eta}^{(\alpha)}(1/3,z,w) p_{\zeta,\eta}^{(\alpha)}(1/3,w,s) \\
      & \leq c \int_0^\infty dz\, z^{2\zeta} \int_0^\infty dw\, w^{2\zeta} p_{\zeta,\eta}^{(\alpha)}(1/3,r,z) p_{\zeta,\eta}^{(\alpha)}(1/3,w,s) \\
      & = c M(1/3,r) M(1/3,s) = c M(1,3^{1/\alpha}r) M(1,3^{1/\alpha}s).
    \end{split}
  \end{align}
  Next, let $\rho>0$ be so small that $\delta:=c\cdot (2\zeta+1)^{-1}\rho^{2\zeta+1} < 3^{\eta/\alpha}$. Recall $\eta<0$. Then, by \eqref{eq:jakthm241}, \eqref{eq:jakprop31aux1}, and \eqref{eq:hposcouplingssimple}, we have for any $r>0$,
  \begin{align}
    \label{eq:jakprop31aux2}
    \begin{split}
      M(1,r)
      & \leq \int_0^\rho ds\, s^{2\zeta} \,p_{\zeta,\eta}^{(\alpha)}(1,r,s) + \rho^{\eta} \int_\rho^\infty ds\, s^{2\zeta} \,p_{\zeta,\eta}^{(\alpha)}(1,r,s) s^{-\eta} \\
      & \leq \int_0^\rho ds\, s^{2\zeta} \,p_{\zeta,\eta}^{(\alpha)}(1,r,s) + A r^{-\eta} \\
      & \leq c\int_0^\rho ds\, s^{2\zeta} M(1,3^{1/\alpha}r) M(1,3^{1/\alpha}s) + A r^{-\eta} \\
      & \leq \delta M(1,3^{1/\alpha}r) + A r^{-\eta},
    \end{split}
  \end{align}
  where $A:=\rho^{\eta}$. Iterating \eqref{eq:jakprop31aux2} yields for all $r>0$,
  \begin{align}
    \begin{split}
      M(1,r)
      & \leq \delta M(1,3^{1/\alpha}r) + A r^{-\eta} \\
      & \leq \delta \left[\delta M(1,3^{2/\alpha}r) + A (3^{1/\alpha}r)^{-\eta}\right] + A r^{-\eta} \\
      & \leq \delta^2 \left[\delta M(1,3^{3/\alpha}r) + A (3^{2/\alpha}r)^{-\eta}\right] + A (1+\delta 3^{-\eta/\alpha}) r^{-\eta} \\
      & \leq \cdots \\
      & \leq \delta^n M(1,3^{n/\alpha}r) + A \left [1 + \delta 3^{-\eta/\alpha} + \cdots + (\delta 3^{-\eta/\alpha})^{n-1} \right] r^{-\eta}.
    \end{split}
  \end{align}
  Since $\delta<1$ and $\delta 3^{-\eta/\alpha}<1$, the above geometric series converges. Hence, by \eqref{eq:hposcouplingssimple}, we obtain
  \begin{align}
    M(1,r) \leq \frac{A}{1-\delta 3^{-\eta/\alpha}} r^{-\eta},
  \end{align}
  which concludes the proof.
\end{proof}

Applying Proposition \ref{jakprop31} to \eqref{eq:jakprop31aux1} immediately yields

\begin{corollary}
  \label{jakcor32}
  Let $\zeta\in(-1/2,\infty)$, $\alpha\in(0,2)$, $\eta\in(-\alpha,0)$. Then, there is a constant $C>0$ such that
  \begin{align}
    p_{\zeta,\eta}^{(\alpha)}(1,r,s)
    \leq C \left(1\wedge r^{-\eta}\right) \left(1\wedge s^{-\eta}\right),
    \quad r,s>0.
  \end{align}
\end{corollary}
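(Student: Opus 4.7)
The strategy is to combine the three-step Chapman--Kolmogorov bound~\eqref{eq:jakprop31aux1} that was already established in the proof of Proposition~\ref{jakprop31} with the newly proved mass estimate $M(1,r)\leq C(1\wedge r^{-\eta})$. Since every ingredient is already in place, this proof will be only a few lines; no genuine obstacle is anticipated, as the heavy lifting---the self-improving iteration $M(1,r)\leq \delta M(1,3^{1/\alpha}r) + Ar^{-\eta}$---was performed inside Proposition~\ref{jakprop31}.

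First, I will recall the three-step decomposition. Splitting $1=\tfrac13+\tfrac13+\tfrac13$ and applying the Chapman--Kolmogorov equation twice, together with the uniform pointwise bound $p_{\zeta,\eta}^{(\alpha)}(1/3,\cdot,\cdot)\leq p_\zeta^{(\alpha)}(1/3,\cdot,\cdot)\leq c$ coming from~\eqref{eq:negetasimpleest} and~\eqref{eq:heatkernelalpha1weightedsubordinatedboundsfinal}, yields
\begin{align*}
p_{\zeta,\eta}^{(\alpha)}(1,r,s) \;\leq\; c\, M(1/3,r)\, M(1/3,s),
\end{align*}
exactly as in the first display of the proof of Proposition~\ref{jakprop31}. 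Invoking the scaling identity~\eqref{eq:defbighscaledposcouplings} then converts each $M(1/3,\cdot)$ into $M(1,3^{1/\alpha}\,\cdot)$.

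Second, I will substitute Proposition~\ref{jakprop31} into each mass factor to obtain
\begin{align*}
p_{\zeta,\eta}^{(\alpha)}(1,r,s) \;\leq\; cC^2 \bigl(1\wedge (3^{1/\alpha}r)^{-\eta}\bigr)\bigl(1\wedge (3^{1/\alpha}s)^{-\eta}\bigr).
\end{align*}
Since $\eta\in(-\alpha,0)$ is a fixed parameter, the factor $3^{-\eta/\alpha}$ is a harmless constant, and (using $\eta<0$) one has $1\wedge (3^{1/\alpha}r)^{-\eta} \leq (1\vee 3^{-\eta/\alpha})(1\wedge r^{-\eta})$, with the analogous bound in $s$. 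Collecting all constants into a new $C>0$ yields the claimed inequality, completing the proof.
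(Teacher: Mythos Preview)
Your proof is correct and follows exactly the paper's approach: the paper's entire proof is the single line ``Applying Proposition~\ref{jakprop31} to \eqref{eq:jakprop31aux1} immediately yields,'' which is precisely your strategy of feeding the mass bound $M(1,\cdot)\le C(1\wedge(\cdot)^{-\eta})$ into the three-step Chapman--Kolmogorov inequality \eqref{eq:jakprop31aux1}. Your write-up simply makes the constant bookkeeping explicit.
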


We now refine these upper bounds for $p_{\zeta,\eta}^{(\alpha)}(t,r,s)$. 

\begin{lemma}
  \label{jaklem33}
  Let $\zeta\in(-1/2,\infty)$, $\alpha\in(0,2)$, and $\eta\in (-\alpha,0)$. Then, for any $r,s,t>0$, we have
  \begin{align}
    \int\limits_{|z-s|<|r-s|/2} dz\, z^{2\zeta} p_\zeta^{(\alpha)}(t,r,z) p_\zeta^{(\alpha)}(t,z,s)
    \leq \frac{p_{\zeta}^{(\alpha)}(2t,r,s)}{2}.
  \end{align}
\end{lemma}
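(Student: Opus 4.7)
By the Chapman--Kolmogorov identity \eqref{eq:chapman} applied to $p_\zeta^{(\alpha)}$,
$$p_\zeta^{(\alpha)}(2t,r,s) = \int_0^\infty dz\, z^{2\zeta}\, p_\zeta^{(\alpha)}(t,r,z)\, p_\zeta^{(\alpha)}(t,z,s).$$
I would partition the domain $(0,\infty)$ into the two sets $A := \{z>0: |z-s|<|r-s|/2\}$ and $B := \{z>0: |z-r|<|r-s|/2\}$, which are disjoint by the triangle inequality (if $z\in A\cap B$ then $|r-s|\le|r-z|+|z-s|<|r-s|$, a contradiction), together with their complement $C$. Writing $I_A, I_B, I_C$ for the partial integrals and using positivity of the integrand, one has $I_A+I_B\le p_\zeta^{(\alpha)}(2t,r,s)$. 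The lemma therefore reduces to the symmetrization inequality
$$I_A \le I_B,$$
which combined with the previous bound gives $2I_A \le p_\zeta^{(\alpha)}(2t,r,s)$ as claimed.

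The comparison $I_A \le I_B$ is the heart of the argument. The natural strategy is to exploit the reflection $z \mapsto r+s-z$, which formally interchanges $A$ and $B$ and, in the pure stable setting on $\R$ (corresponding to $\zeta=0$ with no Bessel weight), leaves the integrand $z^{2\zeta}p_\zeta^{(\alpha)}(t,r,z)p_\zeta^{(\alpha)}(t,z,s)$ invariant thanks to translation invariance of $p^{(\alpha)}$. This is precisely the origin of the sharp constant $1/2$. In the present half-line Bessel setting, two obstructions appear: first, the weight $z^{2\zeta}$ (and the analogous factor $(t^{1/\alpha}+r+z)^{-2\zeta}(t^{1/\alpha}+z+s)^{-2\zeta}$ implicit in the sharp bound \eqref{eq:heatkernelalpha1weightedsubordinatedboundsfinal}) breaks the reflection symmetry of the integrand; second, the reflected image of $A$ may leave $(0,\infty)$, for instance when $s>3r$, so the substitution cannot be implemented pointwise. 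Controlling these two effects is the main technical difficulty.

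My approach to overcome this is to reduce to the Gaussian case via subordination \eqref{eq:subordination}. Using the convolution identity $\sigma_{2t}^{(\alpha/2)}=\sigma_t^{(\alpha/2)}*\sigma_t^{(\alpha/2)}$, Fubini, and Chapman--Kolmogorov \eqref{eq:chapman} for $p_\zeta^{(2)}$, we have
$$p_\zeta^{(\alpha)}(2t,r,s) = \iint_{(0,\infty)^2} d\tau_1 d\tau_2\,\sigma_t^{(\alpha/2)}(\tau_1)\sigma_t^{(\alpha/2)}(\tau_2)\,p_\zeta^{(2)}(\tau_1+\tau_2,r,s),$$
so that it suffices to prove the analogous inequality
$$\int_A dz\, z^{2\zeta}\, p_\zeta^{(2)}(\tau_1,r,z)\, p_\zeta^{(2)}(\tau_2,z,s) \le \tfrac{1}{2}\,p_\zeta^{(2)}(\tau_1+\tau_2,r,s) \quad \text{for all } \tau_1,\tau_2>0.$$
For $\alpha=2$, the explicit expression \eqref{eq:defpheatalpha2} in terms of modified Bessel functions, together with classical integrals of Weber--Schafheitlin type for products $I_{\zeta-1/2}(az)I_{\zeta-1/2}(bz)$ against a Gaussian in $z$, allows one to compute both sides explicitly and to extract the required inequality from monotonicity and log-convexity properties of $I_{\zeta-1/2}$. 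This last step, comparing the integral of $I_{\zeta-1/2}(rz/(2\tau_1))I_{\zeta-1/2}(zs/(2\tau_2))$ over $A$ against the full integral on $(0,\infty)$, is where the half-line geometry and the asymmetric weight $z^{2\zeta}$ must be reconciled with the exact constant $1/2$, and I expect it to be the subtle part of the proof.
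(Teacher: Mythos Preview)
Your skeleton coincides with the paper's: Chapman--Kolmogorov plus disjointness of $A:=\{|z-s|<|r-s|/2\}$ and $B:=\{|z-r|<|r-s|/2\}$ give $I_A+I_B\le p_\zeta^{(\alpha)}(2t,r,s)$, so the lemma reduces to comparing $I_A$ with $I_B$. The paper dispatches this in one line: invoking only the symmetry $p_\zeta^{(\alpha)}(t,x,y)=p_\zeta^{(\alpha)}(t,y,x)$, it asserts the \emph{equality} $I_A=I_B$, whence $2I_A=I_A+I_B\le p_\zeta^{(\alpha)}(2t,r,s)$. No reflection in $z$, no subordination, no Bessel integrals.

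Your instinct that the comparison is nontrivial is, however, well-founded. The kernel symmetry makes the integrand $z^{2\zeta}p_\zeta^{(\alpha)}(t,r,z)p_\zeta^{(\alpha)}(t,z,s)$ symmetric under the swap $r\leftrightarrow s$, which literally gives $I_A(r,s)=I_B(s,r)$, \emph{not} $I_A(r,s)=I_B(r,s)$ for the same pair; these differ in general, since by \eqref{eq:pzetafact} the $z$-dependent factor $z^{2\zeta}(t^{1/\alpha}+r+z)^{-2\zeta}(t^{1/\alpha}+z+s)^{-2\zeta}$ is not symmetric about $z=(r+s)/2$. (On $\R^d$ the reflection $z\mapsto r+s-z$ supplies the missing link---precisely the mechanism you flagged as unavailable on the half-line.) So the paper's one-liner appears to contain the very gap you anticipated. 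As for your own subordination route, it is incomplete and in fact lands on the \emph{harder} two-time inequality $\int_A z^{2\zeta}p_\zeta^{(2)}(\tau_1,r,z)p_\zeta^{(2)}(\tau_2,z,s)\,dz\le\tfrac12\, p_\zeta^{(2)}(\tau_1+\tau_2,r,s)$ with $\tau_1\neq\tau_2$, where even the midpoint symmetry of the Gaussian factor is lost; the Weber--Schafheitlin calculus you sketch does not obviously handle this.
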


\begin{proof}
  Fix $r,s,t>0$. Then, by $p_\zeta^{(\alpha)}(t,r,s) = p_\zeta^{(\alpha)}(t,s,r)$ for all $r,s,t>0$,
  \begin{align}
    \int\limits_{|z-s|<|r-s|/2} dz\, z^{2\zeta} p_\zeta^{(\alpha)}(t,r,z) p_\zeta^{(\alpha)}(t,z,s)
    = \int\limits_{|z-r|<|r-s|/2} dz\, z^{2\zeta} p_\zeta^{(\alpha)}(t,r,z) p_\zeta^{(\alpha)}(t,z,s).
  \end{align}
  By Chapman--Kolmogorov,
  \begin{align}
    \begin{split}
      & 2\int\limits_{|z-s|<|r-s|/2} dz\, z^{2\zeta} p_\zeta^{(\alpha)}(t,r,z)p_\zeta^{(\alpha)}(t,z,s) \\
      & \quad = \int\limits_{|z-s|<|r-s|/2} dz\, z^{2\zeta} p_\zeta^{(\alpha)}(t,r,z)p_\zeta^{(\alpha)}(t,z,s)
      + \int\limits_{|z-r|<|r-s|/2} dz\, z^{2\zeta} p_\zeta^{(\alpha)}(t,r,z)p_\zeta^{(\alpha)}(t,z,s) \\
      & \quad \leq \int_0^\infty dz\, z^{2\zeta} p_\zeta^{(\alpha)}(t,r,z) p_\zeta^{(\alpha)}(t,z,s)
      = p_{\zeta}^{(\alpha)}(2t,r,s),
    \end{split}
  \end{align}
  which concludes the proof.
\end{proof}

\begin{lemma}
  \label{jaklem34}
  Let $\zeta\in(-1/2,\infty)$, $\alpha\in(0,2)$, and $\eta\in(-\alpha,0)$.
  Let also $h(t,r):=(r t^{-1/\alpha})^{-\eta}$.
  Then, there is a constant $A>0$ such that for all $r,s,t>0$, we have
  \begin{align}
    p_{\zeta,\eta}^{(\alpha)}(t,r,s)
    \leq \int\limits_{|z-s|<|r-s|/2} dz\, z^{2\zeta} p_{\zeta,\eta}^{(\alpha)}(t/2,r,z) p_{\zeta,\eta}^{(\alpha)}(t/2,z,s) + A h(t,r) p_\zeta^{(\alpha)}(t,r,s).
  \end{align}
\end{lemma}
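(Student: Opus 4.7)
The natural starting point is the Chapman--Kolmogorov identity at time $t/2$ (available from Theorem~\ref{thm:1}, see also Proposition~\ref{propertiesschrodheatkernel}):
\begin{align*}
p_{\zeta,\eta}^{(\alpha)}(t,r,s) = \int_0^\infty dz\, z^{2\zeta} p_{\zeta,\eta}^{(\alpha)}(t/2,r,z) p_{\zeta,\eta}^{(\alpha)}(t/2,z,s).
\end{align*}
Splitting the $z$-integral into the near region $\{|z-s|<|r-s|/2\}$ and the far region $\{|z-s|\geq |r-s|/2\}$ immediately produces the first term in the claim. It then suffices to control the contribution of the far region by $Ah(t,r)\,p_\zeta^{(\alpha)}(t,r,s)$.

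On the far region I plan first to discard the Hardy perturbation in the second factor via the basic inequality~\eqref{eq:negetasimpleest}, writing $p_{\zeta,\eta}^{(\alpha)}(t/2,z,s)\le p_\zeta^{(\alpha)}(t/2,z,s)$. The geometric hypothesis $|z-s|\ge|r-s|/2$ then activates the comparison~\eqref{eq:comparablealpha6} of Lemma~\ref{comparablealpha}, yielding $p_\zeta^{(\alpha)}(t/2,z,s)\lesssim p_\zeta^{(\alpha)}(t,cr,cs)$ for some fixed $c=c(\zeta)>0$. A direct inspection of the explicit form~\eqref{eq:heatkernelalpha1weightedsubordinatedboundsfinal} shows that simultaneous dilation of both spatial arguments by a fixed positive factor changes $p_\zeta^{(\alpha)}(t,\cdot,\cdot)$ only by a multiplicative constant depending on $c$, $\alpha$, and $\zeta$; hence $p_\zeta^{(\alpha)}(t,cr,cs)\sim p_\zeta^{(\alpha)}(t,r,s)$. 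This factor $p_\zeta^{(\alpha)}(t,r,s)$ can then be pulled out of the integral, leaving
\begin{align*}
\int_{|z-s|\ge |r-s|/2} dz\, z^{2\zeta}p_{\zeta,\eta}^{(\alpha)}(t/2,r,z) \le M(t/2,r).
\end{align*}

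Finally, Proposition~\ref{jakprop31} together with the scaling identity~\eqref{eq:defbighscaledposcouplings} gives $M(t/2,r)=M(1,(t/2)^{-1/\alpha}r)\lesssim \bigl(1\wedge(r(t/2)^{-1/\alpha})^{-\eta}\bigr)\le (r(t/2)^{-1/\alpha})^{-\eta}= 2^{-\eta/\alpha}h(t,r)\lesssim h(t,r)$. Combining the three displays yields the far-region bound with some finite constant $A$, which completes the proof. The conceptual crux is the dilation comparability $p_\zeta^{(\alpha)}(t,cr,cs)\sim p_\zeta^{(\alpha)}(t,r,s)$, bridging~\eqref{eq:comparablealpha6} with the desired right-hand side; while quickly verified from~\eqref{eq:heatkernelalpha1weightedsubordinatedboundsfinal}, it is the step that makes the whole decomposition usable, the rest being essentially bookkeeping based on $p_{\zeta,\eta}^{(\alpha)}\le p_\zeta^{(\alpha)}$ and the mass estimate from Proposition~\ref{jakprop31}.
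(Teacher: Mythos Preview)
Your proof is correct and follows the same approach as the paper: Chapman--Kolmogorov at $t/2$, then on the far region use $p_{\zeta,\eta}^{(\alpha)}\le p_\zeta^{(\alpha)}$, the comparison \eqref{eq:comparablealpha6}, and the mass bound from Proposition~\ref{jakprop31} via scaling. The only difference is cosmetic: you make the dilation comparability $p_\zeta^{(\alpha)}(t,cr,cs)\sim p_\zeta^{(\alpha)}(t,r,s)$ explicit, whereas the paper absorbs it silently into the $\lesssim$ after invoking \eqref{eq:comparablealpha6}.
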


\begin{proof}
  By Chapman--Kolmogorov for $p_{\zeta,\eta}^{(\alpha)}$, we have for any $r,s,t>0$,
  \begin{align}
    \begin{split}
      p_{\zeta,\eta}^{(\alpha)}(t,r,s)
      & = \int\limits_{|z-s|\leq|r-s|/2} dz\, z^{2\zeta} p_{\zeta,\eta}^{(\alpha)}(t/2,r,z) p_{\zeta,\eta}^{(\alpha)}(t/2,z,s) \\
      & \quad + \int\limits_{|z-s|\geq|r-s|/2} dz\, z^{2\zeta} p_{\zeta,\eta}^{(\alpha)}(t/2,r,z) p_{\zeta,\eta}^{(\alpha)}(t/2,z,s).
    \end{split}
  \end{align}
  By \eqref{eq:defbighscaledposcouplings} and Proposition \ref{jakprop31}, we have
  \begin{align}
    \int_0^\infty ds\, s^{2\zeta} \,p_{\zeta,\eta}^{(\alpha)}(t,r,s)
    = M(t,r) = M(1,t^{-1/\alpha}r)
    \leq c\, h(t,r), \quad r,t>0.
  \end{align}
  For $t>0$ and $r,s,z>0$ with $|z-s|>|r-s|/2$, by \eqref{eq:negetasimpleest} and \eqref{eq:comparablealpha6}, we have
  \begin{align}
    p_{\zeta,\eta}^{(\alpha)}(t/2,z,s)
    \leq p_{\zeta}^{(\alpha)}(t/2,z,s)
    \lesssim p_\zeta^{(\alpha)}(t,r,s).
  \end{align}
  This implies for $r,s,t>0$,
  \begin{align}
    \begin{split}
      & \int\limits_{|z-s|\geq|r-s|/2}dz\, z^{2\zeta} p_{\zeta,\eta}^{(\alpha)}(t/2,r,z) p_{\zeta,\eta}^{(\alpha)}(t/2,z,s) \\
      & \quad \lesssim p_\zeta^{(\alpha)}(t,r,s) \int\limits_{|z-s|\geq|r-s|/2}dz\, z^{2\zeta} p_{\zeta,\eta}^{(\alpha)}(t/2,r,z) \\
      & \quad \lesssim p_\zeta^{(\alpha)}(t,r,s) \cdot h(t/2,r)  
      = 2^{-\eta/\alpha} h(t,r)p_\zeta^{(\alpha)}(t,r,s),
    \end{split}
  \end{align}
  which concludes the proof.
\end{proof}

We are now in position to derive the upper bounds in Theorem \ref{mainresultgen} in the following key lemma.

\begin{lemma}
  \label{jakthm35}
  Let $\zeta\in(-1/2,\infty)$, $\alpha\in(0,2)$, and $\eta\in(-\alpha,0)$. Then, for all $r,s,t>0$,
  \begin{align}
    \label{eq:jakthm35}
    p_{\zeta,\eta}^{(\alpha)}(t,r,s)
    \lesssim_{\zeta,\alpha,\eta} (1\wedge (t^{-1/\alpha}r)^{-\eta}) \cdot (1\wedge (t^{-1/\alpha}s)^{-\eta}) \cdot p_\zeta^{(\alpha)}(t,r,s).
  \end{align}
\end{lemma}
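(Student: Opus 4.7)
The plan is to reduce to $t=1$ using the scaling~\eqref{eq:scalingalphahardy}, use the symmetry $p_{\zeta,\eta}^{(\alpha)}(1,r,s)=p_{\zeta,\eta}^{(\alpha)}(1,s,r)$ to assume $r\le s$, and split into three regimes according to the position of $r,s$ relative to $1$. When $r\le s\le 1$, the factor $(1\wedge r^{-\eta})(1\wedge s^{-\eta})=r^{-\eta}s^{-\eta}$ while $p_\zeta^{(\alpha)}(1,r,s)\sim 1$ by~\eqref{eq:heatkernelalpha1weightedsubordinatedboundsfinal}, so Corollary~\ref{jakcor32} yields the desired bound immediately. When $1\le r\le s$, both factors equal $1$ and the bound reduces to the trivial estimate $p_{\zeta,\eta}^{(\alpha)}\le p_\zeta^{(\alpha)}$ from~\eqref{eq:negetasimpleest}.

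The core of the argument is the intermediate regime $r\le 1\le s$, where the target is $p_{\zeta,\eta}^{(\alpha)}(1,r,s)\lesssim r^{-\eta}p_\zeta^{(\alpha)}(1,r,s)$. Applying Lemma~\ref{jaklem34} at $t=1$ (so that $h(1,r)=r^{-\eta}$) gives
\[
p_{\zeta,\eta}^{(\alpha)}(1,r,s)\le I + A\,r^{-\eta}p_\zeta^{(\alpha)}(1,r,s),
\]
where $I:=\int_{|z-s|<|r-s|/2}p_{\zeta,\eta}^{(\alpha)}(1/2,r,z)\,p_{\zeta,\eta}^{(\alpha)}(1/2,z,s)\,z^{2\zeta}\,dz$. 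The second term already has the desired form, so only $I$ remains. For this I set up a self-improvement around
\[
M:=\sup_{t,r,s>0}\frac{p_{\zeta,\eta}^{(\alpha)}(t,r,s)}{(1\wedge h(t,r))(1\wedge h(t,s))\,p_\zeta^{(\alpha)}(t,r,s)}.
\]
I bound one factor of the $I$-integrand via the $M$-hypothesis at $t=1/2$, using the elementary inequalities $(1\wedge h(1/2,r))\le 2^{-\eta/\alpha}r^{-\eta}$ (valid for $r\le 1$) and $(1\wedge h(1/2,z))\le 1$, and bound the other factor trivially by $p_\zeta^{(\alpha)}(1/2,z,s)$. Lemma~\ref{jaklem33} at $t=1/2$ then yields
\[
I\le M\cdot 2^{-\eta/\alpha}\,r^{-\eta}\cdot\tfrac12\,p_\zeta^{(\alpha)}(1,r,s)=c_0\,M\,r^{-\eta}p_\zeta^{(\alpha)}(1,r,s),\qquad c_0:=2^{-\eta/\alpha-1}.
\]
The assumption $\eta>-\alpha$ is exactly what forces $c_0<1$; combining with the easy-case bounds produces $M\le c_0 M+A'$ for a constant $A'>0$, hence $M\le A'/(1-c_0)<\infty$, which closes the estimate.

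The main technical obstacle is the a priori finiteness of $M$: the defining ratio could blow up as $|r-s|\to\infty$, where $p_\zeta^{(\alpha)}$ decays polynomially while Corollary~\ref{jakcor32} only provides a bounded numerator. I plan to circumvent this by first working with $M_N:=\sup$ of the same ratio restricted to $\{(t,r,s):1/N\le t,r,s\le N\}$, which is finite because Corollary~\ref{jakcor32} gives a uniform upper bound on $p_{\zeta,\eta}^{(\alpha)}$ while $p_\zeta^{(\alpha)}$ is uniformly positive on this compact region. The self-improvement inequality transfers to $M_N\le c_0 M_{N'}+A'$ with $N'$ depending only on $N$ (through the range of integration in $I$) and constants independent of $N$, so letting $N\to\infty$ yields a uniform bound on $M$. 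The identification of $\eta>-\alpha$ as the precise threshold making this iteration geometric rather than divergent is not accidental; it encodes the subcriticality of the Hardy potential.
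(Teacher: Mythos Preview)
Your overall strategy matches the paper's: reduce by scaling to $t=1$, split into the three regimes $r\vee s\le 1$, $r\wedge s\ge 1$, and $r\le 1\le s$, handle the first two directly via Corollary~\ref{jakcor32} and~\eqref{eq:negetasimpleest}, and use Lemmas~\ref{jaklem33} and~\ref{jaklem34} for the hard intermediate case. Your contraction constant $c_0=2^{-\eta/\alpha-1}$ is exactly the paper's $\delta\nu$ with $\delta=\tfrac12$, $\nu=2^{(-\eta-\alpha)/\alpha}$, and the observation that $\eta>-\alpha$ is precisely what forces $c_0<1$ is correct.

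The gap is in your handling of the a priori finiteness of $M$. The truncation $M_N$ over $\{1/N\le t,r,s\le N\}$ is indeed finite, but the self-improvement step does not stay inside this box: in the integral $I$ at $t=1$ with $r\le 1\le s\le N$, the variable $z$ ranges up to $(3s-r)/2$, and after rescaling time $1/2$ to time $1$ the spatial argument $2^{1/\alpha}z$ can reach roughly $cN$ with $c=3\cdot 2^{1/\alpha-1}>1$. So you only obtain $M_N\le c_0 M_{cN}+A'$. Iterating gives $M_N\le c_0^k M_{c^kN}+A'/(1-c_0)$, and to conclude you need $c_0^k M_{c^kN}\to 0$. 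But the only a priori input available---Corollary~\ref{jakcor32} combined with the polynomial decay of $p_\zeta^{(\alpha)}$---yields at best $M_{N'}\lesssim (N')^{1+\alpha+2\zeta}$, and the condition $c_0\,c^{1+\alpha+2\zeta}<1$ fails once $\zeta$ is moderately large. The sentence ``letting $N\to\infty$ yields a uniform bound on $M$'' therefore does not follow.

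The paper avoids this entirely by running the self-improvement as an explicit induction on $n$ rather than through a supremum. Starting from the trivial bound $p_{\zeta,\eta}^{(\alpha)}\le p_\zeta^{(\alpha)}$ and feeding the current bound on $p_{\zeta,\eta}^{(\alpha)}(t/2,r,z)$ into Lemma~\ref{jaklem34}, bounding the second factor by $p_\zeta^{(\alpha)}$, applying Lemma~\ref{jaklem33}, and using the identity $\tfrac12 h(t/2,r)=\nu\,h(t,r)$, one obtains
\[
p_{\zeta,\eta}^{(\alpha)}(t,r,s)\le\Bigl[2^{-(n+1)}+(1+\nu+\cdots+\nu^n)\,A\,h(t,r)\Bigr]\,p_\zeta^{(\alpha)}(t,r,s)
\]
for every $n\ge 0$ and \emph{all} $r,s,t>0$, with no restriction on the range of the variables and no finiteness hypothesis anywhere. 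Letting $n\to\infty$ gives $p_{\zeta,\eta}^{(\alpha)}(t,r,s)\le \frac{A}{1-\nu}\,h(t,r)\,p_\zeta^{(\alpha)}(t,r,s)$ directly; your regime split then finishes the proof exactly as you describe.
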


\begin{proof}
  By the scaling relation \eqref{eq:scalingalphahardy}, it suffices to consider $t=1$. Let $\delta=1/2$ and $\nu=2^{(-\eta-\alpha)/\alpha}<1$. As in Lemma \ref{jaklem34}, let $h(t,r)=(r/t^{1/\alpha})^{-\eta}$. Note that
  \begin{align}
    \label{eq:jakthm35aux1}
    \delta h(t/2,r)
    = \frac12 ((t/2)^{-1/\alpha}r)^{-\eta}
    = 2^{(-\eta-\alpha)/\alpha} (t^{-1/\alpha}r)^{-\eta}
    = \nu h(t,r), \quad r,t>0.
  \end{align} 
  Let $A$ be the constant from Lemma \ref{jaklem34}. We claim that for any $n\in\N_0$, we have
  \begin{align}
    \label{eq:jakthm35aux2}
    p_{\zeta,\eta}^{(\alpha)}(t,r,s)
    \leq \left[\delta^{n+1} + (1+\nu+\cdots+\nu^n) A h(t,r)\right] p_\zeta^{(\alpha)}(t,r,s),
    \quad r,s,t>0.
  \end{align}
  We prove \eqref{eq:jakthm35aux2} by induction. For $n=0$, $t>0$, and $r,s>0$, Lemmas \ref{jaklem34} (together with
  $p_{\zeta,\eta}^{(\alpha)}(t,r,s) \leq p_\zeta^{(\alpha)}(t,r,s)$) and \ref{jaklem33} imply
  \begin{align}
    p_{\zeta,\eta}^{(\alpha)}(t,r,s)
    \leq \left[ \delta + A h(t,r) \right] p_\zeta^{(\alpha)}(t,r,s).
  \end{align}
  We now make the induction step. Thus, suppose \eqref{eq:jakthm35aux2} was true for fixed $n\in\N$, i.e.,
  \begin{align}
    p_{\zeta,\eta}^{(\alpha)}(t,r,s)
    \leq \left[\delta^n + (1+\nu+\cdots+\nu^{n-1}) A h(t,r) \right] p_\zeta^{(\alpha)}(t,r,s),
    \quad r,s,t>0.
  \end{align}
  Then, for any $r,s,t>0$, Lemmas \ref{jaklem34} and \ref{jaklem33}, together with \eqref{eq:jakthm35aux1} yield
  \begin{align}
    \begin{split}
      & p_{\zeta,\eta}^{(\alpha)}(t,r,s)
       \leq \int\limits_{|z-s|\leq|r-s|/2}dz\, z^{2\zeta} p_{\zeta,\eta}^{(\alpha)}(\tfrac t2,r,z) p_{\zeta,\eta}^{(\alpha)}(\tfrac t2,z,s) + A h(t,r) p_\zeta^{(\alpha)}(t,r,s) \\
      & \ \leq \int\limits_{|z-s|\leq\frac{|r-s|}{2}}dz\, z^{2\zeta} \left[\delta^n + (1+\nu+\cdots+\nu^{n-1}) A h\left(\tfrac t2,r\right)\right] p_\zeta^{(\alpha)}\left(\tfrac t2,r,z\right) p_\zeta^{(\alpha)}\left(\tfrac t2,z,s\right) \\
      & \quad + A h(t,r) p_\zeta^{(\alpha)}(t,r,s) \\
      & \ \leq \left[ \delta^n + (1+\nu+\cdots+\nu^{n-1}) A h(t/2,r) \right] \cdot \delta p_\zeta^{(\alpha)}(t,r,s) + A h(t,r) p_\zeta^{(\alpha)}(t,r,s) \\
      & \ \leq \left[\delta^{n+1} + (\nu+\cdots+\nu^n) A h(t,r) \right] \cdot p_\zeta^{(\alpha)}(t,r,s) + A h(t,r)p_\zeta^{(\alpha)}(t,r,s) \\
      & \ = \left[ \delta^{n+1} + (1+\nu+\cdots+\nu^{n+1}) A h(t,r)\right] \cdot p_\zeta^{(\alpha)}(t,r,s),
    \end{split}
  \end{align}
  which proves \eqref{eq:jakthm35aux2}. Taking $t=1$, observing $h(1,r)=r^{-\eta}$ and letting $n\to\infty$ in~\eqref{eq:jakthm35aux2} thus yields
  \begin{align}
    \label{eq:jakthm35aux3}
    p_{\zeta,\eta}^{(\alpha)}(1,r,s)
    \leq \frac{A}{1-\nu} r^{-\eta} p_\zeta^{(\alpha)}(1,r,s),
    \quad r,s>0.
  \end{align}
  This estimate allows us to conclude the claimed upper bound \eqref{eq:jakthm35} (with $t=1$). By the symmetry of $p_{\zeta,\eta}^{(\alpha)}$ it suffices to assume $0<r<s$.
  For $1<r<s$, the claim \eqref{eq:jakthm35} follows from $p_{\zeta,\eta}^{(\alpha)}\leq p_\zeta^{(\alpha)}(1,r,s)$.
  For $0<r<s<1$ we use Corollary \ref{jakcor32} as well as $p_\zeta^{(\alpha)}(1,r,s)\gtrsim1$.
  Finally, for $0<r<1<s$, the claim \eqref{eq:jakthm35} follows from \eqref{eq:jakthm35aux3}.
  This concludes the proof of Lemma~\ref{jakthm35} and thereby the upper bound in Theorem~\ref{mainresultgen}.
\end{proof}

\subsubsection{Lower bound}

We begin with a lower bound on the function $M(t,r)$, defined in~\eqref{eq:defbighscaledposcouplings0}.

\begin{lemma}
  \label{jaklem36}
  Let $\zeta\in(-1/2,\infty)$, $\alpha\in(0,2)$, and $\eta\in(-\alpha,0)$. Then, there is $C>0$ such that
  \begin{align}
    \label{eq:jaklem36}
    M(1,r) \geq C(1\wedge r^{-\eta}), \quad r>0.
  \end{align}
\end{lemma}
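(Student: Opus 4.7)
The plan is to derive the lower bound by combining the invariance identity \eqref{eq:jakthm242} with the upper bound just proved in Lemma~\ref{jakthm35}. By the scaling relation \eqref{eq:defbighscaledposcouplings}, it suffices to show $M(1,r) \geq C(1 \wedge r^{-\eta})$. Start from
\[
r^{-\eta} = \int_0^\infty s^{2\zeta}\, p_{\zeta,\eta}^{(\alpha)}(1,r,s)\, s^{-\eta}\, ds,
\]
and split the integral at the threshold $R := A(1 \vee r)$, with a large constant $A \geq 2$ to be chosen. For the near range $s \leq R$, the trivial bound $s^{-\eta} \leq R^{-\eta}$ (recall $-\eta > 0$) yields a contribution $\leq R^{-\eta} M(1,r)$.

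For the far range $s \geq R$, I would invoke Lemma~\ref{jakthm35}. Since $R \geq 1$, we have $1 \wedge s^{-\eta} = 1$ for $s \geq R$; and since $R \geq 2r$, we have $|s-r| \sim s$ and $1+r+s \sim s$, so \eqref{eq:heatkernelalpha1weightedsubordinatedboundsfinal} gives $p_\zeta^{(\alpha)}(1,r,s) \lesssim s^{-(2\zeta+1+\alpha)}$. Integrating the resulting $s^{-\eta-\alpha-1}$ from $R$ to $\infty$, which converges thanks to $\eta+\alpha > 0$, bounds the tail by $C_1 (1 \wedge r^{-\eta}) R^{-\eta-\alpha}/(\eta+\alpha)$. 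Combining both parts,
\[
r^{-\eta} \;\le\; R^{-\eta} M(1,r) \;+\; \frac{C_1 (1 \wedge r^{-\eta}) R^{-\eta-\alpha}}{\eta+\alpha}.
\]

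Specializing to the two regimes $r \leq 1$ (so $R = A$, $1 \wedge r^{-\eta} = r^{-\eta}$) and $r > 1$ (so $R = Ar$, $1 \wedge r^{-\eta} = 1$, and $r^{-\alpha} \leq 1$), both cases reduce to the uniform inequality
\[
(1 \wedge r^{-\eta})\bigl(1 - C_1 A^{-\eta-\alpha}/(\eta+\alpha)\bigr) \;\le\; A^{-\eta} M(1,r).
\]
Because $-\eta-\alpha < 0$, choosing $A$ sufficiently large forces $C_1 A^{-\eta-\alpha}/(\eta+\alpha) \leq 1/2$, and the claim follows with $C = A^\eta/2$. The main subtlety, and the reason the naive bound $p_{\zeta,\eta}^{(\alpha)} \leq p_\zeta^{(\alpha)}$ from \eqref{eq:negetasimpleest} is insufficient, lies in the regime $r \leq 1$: one genuinely needs the prefactor $(1 \wedge r^{-\eta}) = r^{-\eta}$ from Lemma~\ref{jakthm35} in the far-range estimate, so that the tail is proportional to $r^{-\eta}$ and can be absorbed into the left-hand side. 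Without that improvement, the argument would only yield $M(1,r) \geq A^\eta r^{-\eta} - O(1)$, which becomes vacuous as $r \to 0$.
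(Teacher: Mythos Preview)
Your proof is correct and follows essentially the same approach as the paper: start from the invariance identity \eqref{eq:jakthm242}, split the $s$-integral at a threshold comparable to $1\vee r$, bound the tail using the upper bound from Lemma~\ref{jakthm35} together with $p_\zeta^{(\alpha)}(1,r,s)\lesssim s^{-(2\zeta+1+\alpha)}$, and absorb. The paper treats the two regimes by choosing $\rho=R$ for $r<R/2$ and $\rho=2r+1$ for $r>R/2$, whereas you package both cases into the single cutoff $R=A(1\vee r)$; this is a cosmetic difference only. Your closing remark about why the crude bound $p_{\zeta,\eta}^{(\alpha)}\le p_\zeta^{(\alpha)}$ is insufficient for small $r$ is exactly the point.
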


\begin{proof}
  Let $R>0$ and $0<r<R/2$. Then, by \eqref{eq:heatkernelalpha1weightedsubordinatedboundsfinal}, there are $R$-independent numbers $c_1,c_2>0$ such that
  \begin{align}
    \int_R^\infty ds\, s^{2\zeta} p_\zeta^{(\alpha)}(1,r,s) s^{-\eta}
    \leq c_1 \int_R^\infty ds\, s^{2\zeta} \cdot \frac{s^{-\eta}}{s^{2\zeta+1+\alpha}}
    = c_2 R^{-\eta-\alpha},
  \end{align}
  and the right-hand side vanishes as $R\to\infty$. Let $C\equiv C_{\zeta,\alpha,\eta}$ be the constant in the upper heat kernel bounds \eqref{eq:jakthm35} and choose $R\geq1$ so large that $c_2CR^{-\eta-\alpha}\leq1/2$. Then,  by~\eqref{eq:jakthm242} in Theorem~\ref{jakthm24}, for $\rho\geq R$ and $r\in(0,\rho/2)$, we have
  \begin{align}
    \label{eq:jaklem36aux1}
    \begin{split}
      & \int_0^\infty ds\, s^{2\zeta} \,p_{\zeta,\eta}^{(\alpha)}(1,r,s)
        \geq \rho^{\eta} \int_0^\rho ds\, s^{2\zeta} \,p_{\zeta,\eta}^{(\alpha)}(1,r,s) s^{-\eta} \\
      & \quad = \rho^{\eta} \left( r^{-\eta} - \int_\rho^\infty ds\, s^{2\zeta} \,p_{\zeta,\eta}^{(\alpha)}(1,r,s) s^{-\eta}\right) \\
      & \quad \geq \rho^{\eta} \left(r^{-\eta} - C\int_\rho^\infty ds\, s^{2\zeta} \cdot r^{-\eta} p_\zeta^{(\alpha)}(1,r,s) s^{-\eta} \right)
      \geq \frac{r^{-\eta}}{2\rho^{-\eta}}.
    \end{split}
  \end{align}
  Thus, we proved
  \begin{align}
    \label{eq:jaklem36aux2}
    \begin{split}
      \int_0^\infty ds\, s^{2\zeta} \,p_{\zeta,\eta}^{(\alpha)}(1,r,s)
      \geq \frac{r^{-\eta}}{2R^{-\eta}}
      \geq \frac{1\wedge r^{-\eta}}{2R^{-\eta}}, \quad 0<r<\frac{R}{2}.
    \end{split}
  \end{align}
  On the other hand, if $r>R/2$, then, by taking $\rho=2r+1$ in \eqref{eq:jaklem36aux1}, we obtain
  \begin{align}
    \label{eq:jaklem36aux3}
    \int_0^\infty ds\, s^{2\zeta} \,p_{\zeta,\eta}^{(\alpha)}(1,r,s)
    \geq \frac{r^{-\eta}}{2(2r+1)^{-\eta}}
    \geq \frac{r^{-\eta}}{2(4r)^{-\eta}}
    \geq \frac{1}{4^{-\eta+1/2}}, \quad r>\frac{R}{2}.
  \end{align}
  Combining \eqref{eq:jaklem36aux2}--\eqref{eq:jaklem36aux3} yields \eqref{eq:jaklem36} and concludes the proof.
\end{proof}

To get the desired lower bounds in Theorem \ref{mainresultgen}, we compare $p_\zeta^{(\alpha)}(t,r,s)$ with $p_{\zeta,\eta}^{(\alpha)}(t,r,s)$ and distinguish between $r\vee s\lesssim1$ and $r\wedge s\gtrsim1$. We first focus on the case $r\wedge s\gtrsim1$. 
To that end, recall the function $p_t^{(1,D)}(r,s)$ from the perturbation series in \eqref{eq:feynmankactransformed}.
By the scaling \eqref{eq:scalingalpha} of $p_\zeta^{(\alpha)}$, we have the scaling
\begin{align}
  \label{eq:scalingauxsemigroup}
  p_t^{(1,D)}(r,s) = t^{-\frac{2\zeta+1}{\alpha}} p_1^{(1,D)}(t^{-1/\alpha}r,t^{-1/\alpha}s),
  \quad r,s,t>0.
\end{align}

The statement of the following lemma is the bound in \eqref{e.cmc}, proved in Appendix~\ref{s:constructionnegschrodperturbation}. It is closely related to estimates in \cite[Section 6]{Bogdanetal2008} or \cite[Section 6]{JakubowskiWang2020}. It is helpful to obtain lower bounds for $p_{\zeta,\eta}^{(\alpha)}(1,r,s)$ when $r,s\gtrsim1$ from upper bounds for $p_1^{(1,D)}$, since $\Psi_\zeta(\eta)<0$ for $\eta<0$.

\begin{lemma}
  \label{jaklem38}
  Let $\zeta\in(-1/2,\infty)$, $\alpha\in(0,2)$, and $\eta\in(-\alpha,0)$. Then, for all $r,s,t>0$, 
  \begin{align}
    p_{\zeta,\eta}^{(\alpha)}(t,r,s)
    \geq p_\zeta^{(\alpha)}(t,r,s) \exp\left(\frac{p_t^{(1,D)}(r,s)}{p_\zeta^{(\alpha)}(t,r,s)}\right).
  \end{align}
\end{lemma}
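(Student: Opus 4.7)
The inequality has the shape of a Jensen-type bound, so the natural strategy is to interpret the ratio $p_{\zeta,\eta}^{(\alpha)}(t,r,s)/p_\zeta^{(\alpha)}(t,r,s)$ as the expectation of an exponential and $p_t^{(1,D)}(r,s)/p_\zeta^{(\alpha)}(t,r,s)$ as the expectation of the corresponding linear functional; convexity of $e^x$ then furnishes the desired lower bound. Since the paper prefers an analytic framework, I would execute this via a Dyson--Phillips expansion rather than through explicit probabilistic machinery, and reduce to bounded potentials by approximation.

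The plan is to first replace the singular potential $q(z)=\Psi_\zeta(\eta)/z^\alpha$ by the bounded truncations $q_k(z):=q(z)\vee(-k)$ for $k\in\N$, and to establish the bound for the resulting Schr\"odinger perturbation $p_k$ of $p_\zeta^{(\alpha)}$. For bounded $q_k$, the series $p_k=\sum_n p_{k,t}^{(n,D)}$ converges absolutely and term-by-term in $n$. Symmetrizing each $n$-fold iterated integral over the time-ordered simplex into an integral on $[0,t]^n$ with a factor $1/n!$ produces the identity
\[
  \frac{p_k(t,r,s)}{p_\zeta^{(\alpha)}(t,r,s)}
  = \sum_{n\geq 0}\frac{1}{n!}\int_{([0,t]\times\R_+)^n} \prod_{i=1}^n q_k(z_i)\,d\nu_n^{r,s,t},
\]
where $\nu_n^{r,s,t}$ is the $n$-point normalized bridge measure built from products of $p_\zeta^{(\alpha)}$-kernels. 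This is the analytic incarnation of the Feynman--Kac representation $p_k/p_\zeta^{(\alpha)}=\E^{r,s,t}[\exp(\int_0^t q_k(X_\tau)\,d\tau)]$ with respect to the $p_\zeta^{(\alpha)}$-bridge $\nu^{r,s,t}$ from $r$ to $s$ in time $t$. Jensen's inequality applied to $e^x$ and the probability measure $\nu^{r,s,t}$, followed by Fubini to identify the first moment, yields
\[
  \frac{p_k(t,r,s)}{p_\zeta^{(\alpha)}(t,r,s)}
  \;\geq\; \exp\!\left(\frac{p_{k,t}^{(1,D)}(r,s)}{p_\zeta^{(\alpha)}(t,r,s)}\right).
\]

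Passing to the limit $k\to\infty$ closes the argument: since $q_k\searrow q$ and all $q_k$ are nonpositive, monotonicity of the construction of negative Schr\"odinger perturbations from Theorem~\ref{thm:1} gives $p_k\searrow p_{\zeta,\eta}^{(\alpha)}$, and monotone convergence (applied to the signed but constant-sign integrand) gives $p_{k,t}^{(1,D)}\searrow p_t^{(1,D)}$; continuity of $\exp$ preserves the inequality. The main obstacle I anticipate is the analytic justification of the symmetrization step and of the bridge-measure interpretation without invoking explicit path-space probability, together with controlling the limit on configurations where $\int_0^t q_k(X_\tau)\,d\tau\to-\infty$; however, in such regimes both sides of the inequality degenerate toward $0$ in a compatible way, consistent with the bound $0\leq p_{\zeta,\eta}^{(\alpha)}\leq p_\zeta^{(\alpha)}$ and the nonpositivity of $p_t^{(1,D)}$.
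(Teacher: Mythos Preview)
Your approach is correct. The paper proves the lemma as the general bound \eqref{e.cmc} in Appendix~\ref{s:constructionnegschrodperturbation}, via a different route: for bounded $q\ge0$ (Appendix sign convention), the one-parameter function $\lambda\mapsto h(\lambda):=p^{-\lambda q}(s,x,t,y)$ is shown to be completely monotone, because $(-1)^n h^{(n)}(\lambda)=n!\,(p^{-\lambda q})_n(s,x,t,y)\ge0$, the nonnegativity following from the positivity of $p^{-\lambda q}$ established in Lemma~\ref{lem:1} via the complete maximum principle. Completely monotone functions are log-convex, so $(\ln h)'(\lambda)\ge (\ln h)'(0)=h'(0)/h(0)$, and integrating from $0$ to $1$ gives $h(1)\ge h(0)\exp[h'(0)/h(0)]$, which is exactly the claimed inequality. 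The passage to unbounded $q$ is the same truncation-and-limit you describe.

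The two arguments are close cousins under the surface: by Bernstein's theorem, complete monotonicity of $h$ is the statement $h(\lambda)=\E^{r,s,t}[e^{-\lambda Z}]$ with $Z=\int_0^t |q|(X_\tau)\,d\tau$ under the bridge, and the log-convexity endpoint bound $h(1)\ge e^{h'(0)}$ is precisely Jensen applied to $e^{-Z}$. What the paper's framing buys is exactly what you flagged as the main obstacle: it never needs the bridge as a path-space probability measure, only the positivity of $p^{-\lambda q}$ at the kernel level, so it stays entirely within the analytic framework of the Appendix. Your route is more transparent probabilistically but genuinely requires the bridge to exist as a process, since Jensen cannot be applied to the sequence of symmetrized Dyson integrals without an underlying probability measure --- the symmetrization alone does not manufacture one.
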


Our task thus consists of finding good upper bounds for $p_1^{(1,D)}(r,s)$ when $r,s\gtrsim1$. 
The following lemma, in combination with Lemma \ref{jaklem38}, is important for the derivation of lower bounds for $p_{\zeta,\eta}^{(\alpha)}(1,r,s)$ when $r,s\gtrsim1$. 

\begin{lemma}
  \label{jaklem37}
  Let $\zeta\in(-1/2,\infty)$, $\alpha\in(0,2)$. Then, for all $r,s>1$, we have
  \begin{align}
    \label{eq:jaklem37}
    p_1^{(1,D)}(r,s)
    \lesssim  p_\zeta^{(\alpha)}(1,r,s).
  \end{align}
\end{lemma}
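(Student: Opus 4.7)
The plan is to reduce the claim to an estimate on the absolute value. Unwinding the definition of $p_1^{(1,D)}$ in \eqref{eq:feynmankactransformed}, the inequality \eqref{eq:jaklem37} is equivalent to
$$\int_0^1 d\tau \int_0^\infty dz\, z^{2\zeta-\alpha}\, p_\zeta^{(\alpha)}(\tau,r,z)\, p_\zeta^{(\alpha)}(1-\tau,z,s) \lesssim p_\zeta^{(\alpha)}(1,r,s), \quad r,s>1,$$
once the constant $|\Psi_\zeta(\eta)|$ is absorbed into $\lesssim$. By symmetry of $p_\zeta^{(\alpha)}$ and of the bound, I would assume $1<r\leq s$ and split the $z$-integral at $z=r/2$.

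For $z\geq r/2$, the point $r>1$ gives $z^{-\alpha}\leq (r/2)^{-\alpha}\leq 2^{\alpha}$, so by the Chapman--Kolmogorov equation \eqref{eq:chapman} and $\int_0^1 d\tau = 1$,
$$\int_0^1 d\tau \int_{r/2}^\infty dz\, z^{2\zeta-\alpha}\, p_\zeta^{(\alpha)}(\tau,r,z)\, p_\zeta^{(\alpha)}(1-\tau,z,s) \leq 2^{\alpha}\, p_\zeta^{(\alpha)}(1,r,s).$$
For $z<r/2\leq s/2$, the crux is the pointwise estimate: since $|r-z|\sim r$, $r+z\sim r$, and $\tau^{1/\alpha}\leq 1\leq r$ for $\tau<1$, the bound \eqref{eq:heatkernelalpha1weightedsubordinatedboundsfinal} reduces to
$$p_\zeta^{(\alpha)}(\tau,r,z)\lesssim \tau\cdot r^{-(1+\alpha+2\zeta)}, \qquad p_\zeta^{(\alpha)}(1-\tau,z,s)\lesssim (1-\tau)\cdot s^{-(1+\alpha+2\zeta)}.$$
Because $2\zeta+1-\alpha>0$ by \eqref{eq:globalassump}, we have $\int_0^{r/2} z^{2\zeta-\alpha}\,dz \lesssim r^{2\zeta+1-\alpha}$, and $\int_0^1 \tau(1-\tau)\,d\tau$ is a harmless constant. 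The contribution of this region is therefore
$$\lesssim r^{-(1+\alpha+2\zeta)}\, s^{-(1+\alpha+2\zeta)}\, r^{2\zeta+1-\alpha} = r^{-2\alpha}\, s^{-(1+\alpha+2\zeta)}\leq s^{-(1+\alpha+2\zeta)}.$$

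To close the proof I would show $s^{-(1+\alpha+2\zeta)}\lesssim p_\zeta^{(\alpha)}(1,r,s)$. Using \eqref{eq:heatkernelalpha1weightedsubordinatedboundsfinal}, $(1+r+s)^{2\zeta}\sim (r+s)^{2\zeta}$ (since $r+s\geq 2$), and the crude estimate $(|r-s|^{1+\alpha}+1)(r+s)^{2\zeta}\lesssim (r+s)^{1+\alpha+2\zeta}$, I obtain $p_\zeta^{(\alpha)}(1,r,s)\gtrsim (r+s)^{-(1+\alpha+2\zeta)}\gtrsim s^{-(1+\alpha+2\zeta)}$, where the last inequality uses $r\leq s$ together with $1+\alpha+2\zeta>0$.

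The main obstacle is verifying the key pointwise estimate on $p_\zeta^{(\alpha)}(\tau,r,z)$ for small $z$ and moderate $\tau$; this is a careful case analysis in \eqref{eq:heatkernelalpha1weightedsubordinatedboundsfinal} that exploits both the assumption $r>1$ (forcing $\tau^{1/\alpha}\leq 1\leq r$) and $z<r/2$ (forcing $|r-z|\sim r$). Everything else is dominated convergence or Chapman--Kolmogorov. It is worth noting that both sides of \eqref{eq:jaklem37} involve the kernel at time $1$, so no scaling reduction is available; the assumption $r,s>1$ is genuinely used both through $\tau^{1/\alpha}\leq r$ in region $z<r/2$ and through $z^{-\alpha}\leq 1$ in region $z\geq r/2$.
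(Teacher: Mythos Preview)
Your proof is correct and follows essentially the same approach as the paper: split the $z$-integral, use Chapman--Kolmogorov for large $z$ (where $z^{-\alpha}$ is bounded), and use the pointwise bounds \eqref{eq:heatkernelalpha1weightedsubordinatedboundsfinal} for small $z$ together with $2\zeta+1-\alpha>0$ to integrate $z^{2\zeta-\alpha}$. The only cosmetic differences are that the paper splits at $z=1/2$ rather than $z=r/2$, and closes with the inequality $[(1+r)(1+s)]^{-1-\alpha-2\zeta}\lesssim (1+|r-s|)^{-1-\alpha}(1+r+s)^{-2\zeta}$ in place of your cruder bound $s^{-(1+\alpha+2\zeta)}\lesssim p_\zeta^{(\alpha)}(1,r,s)$.
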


Note that \cite[Lemma 3.7]{JakubowskiWang2020} proved both upper and lower bounds. However, the upper bounds suffice for the subsequent analysis.

\begin{proof}[Proof of Lemma \ref{jaklem37}]
  It suffices, by Chapman--Kolmogorov and \eqref{eq:pzetafact}, to estimate
  \begin{align}
  \label{eq:jaklem37aux}
    \begin{split}
      & \int_0^1 d\tau \int_0^\infty dz\, z^{2\zeta} p_\zeta^{(\alpha)}(1-\tau,r,z) z^{-\alpha} p_\zeta^{(\alpha)}(\tau,z,s) \\
      & \quad \le 2^\alpha p_\zeta^{(\alpha)}(1,r,s) 
      +  \int_0^1 d\tau \int_0^{1/2} dz\, z^{2\zeta} p_\zeta^{(\alpha)}(1-\tau,r,z) z^{-\alpha} p_\zeta^{(\alpha)}(\tau,z,s) \\
      & \quad \lesssim  p_\zeta^{(\alpha)}(1,r,s) 
      +  \int_0^1 d\tau \int_0^{1/2} dz\, z^{2\zeta-\alpha} \frac{1}{(1+r)^{1+\alpha+2\zeta}(1+s)^{1+\alpha+2\zeta}} \\
      &\quad \lesssim  p_\zeta^{(\alpha)}(1,r,s).
    \end{split}
  \end{align}
  In the last estimate we used again \eqref{eq:pzetafact} and the inequality $[(1+r)(1+s)]^{-1-\alpha-2\zeta} \lesssim  (1+|r-s|)^{-1-\alpha}(1+r+s)^{-2\zeta} $ valid for all $\zeta>-1/2$.
\end{proof}

Recall that $\Psi_\zeta(\eta)<0$ for $\eta<0$. Thus, Lemmas \ref{jaklem38} and \ref{jaklem37} lead to the following useful estimate, which suffices to prove the aspired lower heat kernel bounds for $p_{\zeta,\eta}^{(\alpha)}(1,r,s)$ when $r\wedge s\gtrsim1$.

\begin{corollary}
  \label{jakcor39}
  \label{jaklem310}
  Let $\zeta\in(-1/2,\infty)$, $\alpha\in(0,2)$, and $\eta\in(-\alpha,0)$. Then,  
  \begin{align}
    \label{eq:jaklem310}
    p_{\zeta,\eta}^{(\alpha)}(t,r,s)
    \gtrsim p_\zeta^{(\alpha)}(t,r,s), \quad  r,s\gtrsim t^{1/\alpha}>0.
  \end{align}
\end{corollary}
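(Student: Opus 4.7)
The plan is to combine the pointwise lower bound from Lemma~\ref{jaklem38} with the upper bound on $p_1^{(1,D)}$ from Lemma~\ref{jaklem37}, using scaling to reduce to the case $t=1$. By the scaling relations \eqref{eq:scalingalphahardy} for $p_{\zeta,\eta}^{(\alpha)}$ and \eqref{eq:scalingalpha} for $p_\zeta^{(\alpha)}$, the ratio $p_{\zeta,\eta}^{(\alpha)}(t,r,s)/p_\zeta^{(\alpha)}(t,r,s)$ is invariant under the rescaling $(r,s,t)\mapsto (t^{-1/\alpha}r,t^{-1/\alpha}s,1)$. Hence the hypothesis $r,s\gtrsim t^{1/\alpha}$ translates, after rescaling, to $\tilde r:=t^{-1/\alpha}r$ and $\tilde s:=t^{-1/\alpha}s$ being bounded below by a fixed constant, and it suffices to prove $p_{\zeta,\eta}^{(\alpha)}(1,\tilde r,\tilde s)\gtrsim p_\zeta^{(\alpha)}(1,\tilde r,\tilde s)$ in that regime (adjusting the lower threshold to $1$ is harmless, as one may always replace the implicit constant in $\gtrsim t^{1/\alpha}$ without affecting the statement).

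Next, Lemma~\ref{jaklem38} gives
$$p_{\zeta,\eta}^{(\alpha)}(1,r,s)\geq p_\zeta^{(\alpha)}(1,r,s)\exp\!\left(\frac{p_1^{(1,D)}(r,s)}{p_\zeta^{(\alpha)}(1,r,s)}\right),$$
so it remains to bound the exponent from below. Since $\eta<0$ forces $q(z)=\Psi_\zeta(\eta)z^{-\alpha}<0$, the quantity $p_1^{(1,D)}(r,s)$ is nonpositive, and the real content is to show that $|p_1^{(1,D)}(r,s)|/p_\zeta^{(\alpha)}(1,r,s)$ is uniformly bounded in the region $r,s>1$. This is precisely Lemma~\ref{jaklem37}: its proof estimates the positive integral obtained by replacing $q(z)$ with $z^{-\alpha}$ against $p_\zeta^{(\alpha)}(1,r,s)$, so multiplying by the fixed constant $|\Psi_\zeta(\eta)|$ yields $|p_1^{(1,D)}(r,s)|\lesssim p_\zeta^{(\alpha)}(1,r,s)$. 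Consequently, the exponent is bounded below by some finite $-C$, the exponential by $e^{-C}>0$, and the inequality follows. There is no genuine obstacle; the argument is a routine combination of the scaling identity, Lemma~\ref{jaklem38}, and Lemma~\ref{jaklem37}.
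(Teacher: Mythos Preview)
Your proof is correct and follows essentially the same approach as the paper: combine Lemma~\ref{jaklem38} with Lemma~\ref{jaklem37}, noting that $\Psi_\zeta(\eta)<0$ makes the exponent bounded below. The paper's one-line proof omits the scaling reduction and the sign discussion you spell out, but the substance is identical.
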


\begin{proof}
Since $\Psi_\zeta(\eta)<0$, the result follows by Lemmas \ref{jaklem38} and \ref{jaklem37}.
\end{proof}

Corollary \ref{jakcor39} suffices for the derivation of the lower heat kernel bounds for $r\wedge s\gtrsim1$. The following lemma will be used to derive the lower heat kernel bounds when $r\vee s\lesssim1$. In combination with Corollary \ref{jakcor39} it will also handle the case $r\lesssim 1 \lesssim s$.

\begin{lemma}
  \label{jaklem311}
  Let $\zeta\in(-1/2,\infty)$, $\alpha\in(0,2)$, and $\eta\in(-\alpha,0)$. 
  Then, there are $R>1$ and $C_R>0$ such that for all $r,s>0$ with $r\vee s\leq R$,
  \begin{align}
    \label{eq:jaklem311}
    p_{\zeta,\eta}^{(\alpha)}(1,r,s) \geq C_R (r\cdot s)^{-\eta}.
  \end{align}
\end{lemma}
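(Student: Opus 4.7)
By the symmetry $p_{\zeta,\eta}^{(\alpha)}(1,r,s)=p_{\zeta,\eta}^{(\alpha)}(1,s,r)$, it suffices to treat $r\le s\le R$. If $r\ge 1$ then both arguments lie in $[1,R]$, so Corollary~\ref{jakcor39} gives $p_{\zeta,\eta}^{(\alpha)}(1,r,s)\gtrsim p_\zeta^{(\alpha)}(1,r,s)\gtrsim 1\gtrsim_R(rs)^{-\eta}$. The real content is the range $r\le 1$; my plan handles it via Chapman--Kolmogorov, localising the intermediate variables to $[1,R_0]$ where Corollary~\ref{jakcor39} supplies a uniform lower bound on the middle factor.

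Concretely, I would apply Chapman--Kolmogorov twice and estimate
\[
  p_{\zeta,\eta}^{(\alpha)}(3,r,s)\gtrsim \Big(\int_1^{R_0}z^{2\zeta}p_{\zeta,\eta}^{(\alpha)}(1,r,z)\,dz\Big)\Big(\int_1^{R_0}w^{2\zeta}p_{\zeta,\eta}^{(\alpha)}(1,w,s)\,dw\Big),
\]
using $p_{\zeta,\eta}^{(\alpha)}(1,z,w)\gtrsim 1$ for $z,w\in[1,R_0]$. A lower bound on $p_{\zeta,\eta}^{(\alpha)}(3,r,s)$ transfers via the scaling~\eqref{eq:scalingalphahardy} to one on $p_{\zeta,\eta}^{(\alpha)}(1,\cdot,\cdot)$ at rescaled arguments, so after enlarging $R$ it remains to prove the one-variable mass bound
\[
  \int_1^{R_0}z^{2\zeta}p_{\zeta,\eta}^{(\alpha)}(1,r,z)\,dz\gtrsim_R r^{-\eta},\qquad 0<r\le R.
\]

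The key identity is $\int_0^\infty z^{2\zeta-\eta}p_{\zeta,\eta}^{(\alpha)}(1,r,z)\,dz=r^{-\eta}$ from Theorem~\ref{jakthm24}. The tail $\int_{R_0}^\infty z^{2\zeta-\eta}p_{\zeta,\eta}^{(\alpha)}(1,r,z)\,dz$ is bounded via the upper heat-kernel estimate of Lemma~\ref{jakthm35} and the polynomial decay $p_\zeta^{(\alpha)}(1,r,z)\lesssim z^{-(2\zeta+1+\alpha)}$ for $z\gg 1$ by $\lesssim r^{-\eta}R_0^{-\alpha-\eta}$, which is arbitrarily small by choosing $R_0$ large. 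Similarly, the near-origin contribution $\int_0^\rho z^{2\zeta-\eta}p_{\zeta,\eta}^{(\alpha)}(1,r,z)\,dz$ is $\lesssim r^{-\eta}\rho^{2\zeta-2\eta+1}$, vanishing as $\rho\to 0$ because $\eta<0$ forces the exponent to be positive. Hence a definite fraction of the total $r^{-\eta}$ mass sits on $[\rho,R_0]$, and since $z^{-\eta}\le R_0^{-\eta}$ on this interval the unweighted integral inherits the $r^{-\eta}$ lower bound up to a constant factor.

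The main obstacle is passing from $[\rho,R_0]$ to $[1,R_0]$: the intermediate portion $[\rho,1]$ can, according to the upper heat-kernel bound alone, carry a constant fraction of the mass. The cleanest way to rule this out is via the scaling identity $p_{\zeta,\eta}^{(\alpha)}(1,r,z)=r^{-(2\zeta+1)}p_{\zeta,\eta}^{(\alpha)}(r^{-\alpha},1,z/r)$, which rewrites $\int_0^1 z^{2\zeta-\eta}p_{\zeta,\eta}^{(\alpha)}(1,r,z)\,dz=r^{-\eta}\int_0^{1/r}u^{2\zeta-\eta}p_{\zeta,\eta}^{(\alpha)}(r^{-\alpha},1,u)\,du$. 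For small $r$ the kernel on the right is a large-time object of characteristic length $r^{-1}$, so by Lemma~\ref{jakthm35} together with the invariance identity applied at the new scale, the truncated integral is strictly less than $1$ uniformly in $r\le R$, leaving a definite complementary mass on $[1,R_0]$. Combining all these inputs and undoing the Chapman--Kolmogorov and scaling reductions yields the desired $p_{\zeta,\eta}^{(\alpha)}(1,r,s)\gtrsim_R(rs)^{-\eta}$.
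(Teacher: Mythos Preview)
Your overall strategy matches the paper's proof: apply Chapman--Kolmogorov twice, localise the intermediate variables to a fixed interval, control the middle factor via Corollary~\ref{jakcor39}, and establish the one-variable mass bound by combining the invariance identity~\eqref{eq:jakthm242} with the upper bound~\eqref{eq:jakthm35} to kill the tail and near-origin pieces. You correctly carry this out up to the point of concentrating mass on $[\rho,R_0]$.

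The problem is your self-imposed ``main obstacle'': passing from $[\rho,R_0]$ to $[1,R_0]$. This step is unnecessary. Corollary~\ref{jakcor39} (via the proof of Lemma~\ref{jaklem37}) applies on any fixed interval $[\rho_0,R_0]\subset(0,\infty)$, with constants depending on $\rho_0$; one simply splits the $z$-integral in~\eqref{eq:jaklem37aux} at $\rho_0/2$ instead of $1/2$. The paper does exactly this: it localises to $[\rho_0,R_0]$ with $\rho_0$ small but fixed, then bounds $\inf_{z,w\in(\rho_0,R_0)}p_{\zeta,\eta}^{(\alpha)}(1,z,w)\gtrsim_{\rho_0,R_0}1$ directly. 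Your attempted resolution via scaling does not close: the upper bound~\eqref{eq:jakthm35} only gives $\int_0^1 z^{2\zeta-\eta}p_{\zeta,\eta}^{(\alpha)}(1,r,z)\,dz\lesssim r^{-\eta}$ with some unspecified constant, not $\le(1-c)r^{-\eta}$, so it cannot rule out that all the mass sits on $[0,1]$. Drop the restriction to $[1,R_0]$ and your argument becomes the paper's.
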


\begin{proof}
  By \eqref{eq:jaklem36aux1} there is $R_0>2\cdot 3^{1/\alpha}$ large enough such that for all $0<r<R_0/2$,
  \begin{align}
    \int_0^{R_0} ds\, s^{2\zeta} \,p_{\zeta,\eta}^{(\alpha)}(1,r,s)
    \geq \frac{r^{-\eta}}{2R_0^{-\eta}}.
  \end{align}
  On the other hand, the upper heat kernel bounds \eqref{eq:jakthm35} imply for all $\rho_0>0$ and $r>0$,
  \begin{align}
    \int_0^{\rho_0} ds\, s^{2\zeta} \,p_{\zeta,\eta}^{(\alpha)}(1,r,s)
    \leq C r^{-\eta} \int_0^{\rho_0} ds\, s^{2\zeta} p_\zeta^{(\alpha)}(1,r,s) s^{-\eta}
    \leq C r^{-\eta} \cdot\rho_0^{-\eta}
  \end{align}
  where we used \eqref{eq:normalizedalpha} and $-\eta>0$ in the last step. We now take $\rho_0=(4C)^{1/\eta}R_0^{-1}$. Then, for $r<R_0/2$,
  \begin{align}
    \label{eq:jaklem311aux1}
    \int_{\rho_0}^{R_0} dz\, z^{2\zeta} \,p_{\zeta,\eta}^{(\alpha)}(1,r,z)
    \geq r^{-\eta} \left(\frac{1}{2R_0^{-\eta}} - C\rho_0^{-\eta}\right)
    = \frac{r^{-\eta}}{4R_0^{-\eta}}.
  \end{align}
  Therefore, by Chapman--Kolmogorov for $p_{\zeta,\eta}^{(\alpha)}$ and estimate \eqref{eq:jaklem311aux1},
  \begin{align}
    \begin{split}
      p_{\zeta,\eta}^{(\alpha)}(3,r,s)
      & \geq \int_{\rho_0}^{R_0} dw\, w^{2\zeta} \int_{\rho_0}^{R_0} dz\, z^{2\zeta} \,p_{\zeta,\eta}^{(\alpha)}(1,r,z) \,p_{\zeta,\eta}^{(\alpha)}(1,z,w) \,p_{\zeta,\eta}^{(\alpha)}(1,w,s) \\
      & \geq \frac{(rs)^{-\eta}}{16R_0^{-2\eta}} \inf_{z,w\in(\rho_0,R_0)} \,p_{\zeta,\eta}^{(\alpha)}(1,z,w).
    \end{split}
  \end{align}
  The infimum on the right is estimated with the help of \eqref{eq:jaklem310} by 
  \begin{align}
    \inf_{z,w\in(\rho_0,R_0)} p_{\zeta,\eta}^{(\alpha)}(1,z,w)
    \gtrsim_{\rho_0,R_0}\inf_{z,w\in(\rho_0,R_0)} p_\zeta^{(\alpha)}(1,z,w)
    \gtrsim_{\rho_0,R_0} 1,
  \end{align}
  where we used \eqref{eq:heatkernelalpha1weightedsubordinatedboundsfinal} (together with $R_0\gtrsim1$) to estimate the final infimum. Hence,
  \begin{align}
    p_{\zeta,\eta}^{(\alpha)}(3,r,s) \gtrsim (rs)^{-\eta}, \quad r\vee s<R_0/2.
  \end{align}
  Thus, by the scaling property of $p_{\zeta,\eta}^{(\alpha)}$ we obtain
  \begin{align}
    p_{\zeta,\eta}^{(\alpha)}(1,r,s)
    \gtrsim_{\zeta,\alpha,\eta,\rho_0,R_0} (rs)^{-\eta},
    \quad r\vee s<\frac{R_0}{2\cdot 3^{1/\alpha}}.
  \end{align}
  This concludes the proof of Lemma \ref{jaklem311}.
\end{proof}

We are now in position to prove the lower bounds in Theorem \ref{mainresultgen}.

\begin{lemma}
  \label{jakthm313}
  Let $\zeta\in(-1/2,\infty)$, $\alpha\in(0,2)$, and $\eta\in(-\alpha,0)$. Then,
  \begin{align}
    \label{eq:jakthm313}
    p_{\zeta,\eta}^{(\alpha)}(1,r,s)
    \gtrsim_{\zeta,\alpha,\eta} (1\wedge r^{-\eta})(1\wedge s^{-\eta}) p_\zeta^{(\alpha)}(1,r,s).
  \end{align}
\end{lemma}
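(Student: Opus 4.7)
The plan is, by symmetry in $r$ and $s$, to split the argument into three sub-regimes: (A) both $r$ and $s$ bounded by the $R$ from Lemma~\ref{jaklem311}; (B) both $r$ and $s$ at least $1$; (C) the mixed regime where one variable is at most $1$ and the other is at least $R$. Since $-\eta>0$, one has $1\wedge r^{-\eta}=r^{-\eta}$ when $r\le 1$ and $1\wedge r^{-\eta}=1$ otherwise. Cases (A) and (B) follow directly from the results already in place. For (A), Lemma~\ref{jaklem311} yields $p_{\zeta,\eta}^{(\alpha)}(1,r,s)\gtrsim(rs)^{-\eta}$, while on the compact region \eqref{eq:heatkernelalpha1weightedsubordinatedboundsfinal} gives $p_\zeta^{(\alpha)}(1,r,s)\lesssim 1$; since $(1\wedge r^{-\eta})(1\wedge s^{-\eta})\le r^{-\eta}s^{-\eta}$, the desired bound is immediate. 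For (B), Corollary~\ref{jakcor39} gives $p_{\zeta,\eta}^{(\alpha)}(1,r,s)\gtrsim p_\zeta^{(\alpha)}(1,r,s)$, and $(1\wedge r^{-\eta})(1\wedge s^{-\eta})=1$.

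For the mixed case (C) I may assume $r\le 1$ and $s\ge R$, and need $p_{\zeta,\eta}^{(\alpha)}(1,r,s)\gtrsim r^{-\eta}p_\zeta^{(\alpha)}(1,r,s)$. I apply the Chapman--Kolmogorov identity at intermediate time $1/2$ and restrict the inner integration to a fixed interval $z\in[\rho,R']$ chosen so that: (i) $\rho$ is large enough for Corollary~\ref{jakcor39} to apply at time $1/2$, giving $p_{\zeta,\eta}^{(\alpha)}(1/2,z,s)\gtrsim p_\zeta^{(\alpha)}(1/2,z,s)$; and (ii) after rescaling via \eqref{eq:scalingalphahardy}, the arguments $2^{1/\alpha}r$ and $2^{1/\alpha}z$ lie within the range of Lemma~\ref{jaklem311}, so that $p_{\zeta,\eta}^{(\alpha)}(1/2,r,z)\gtrsim (rz)^{-\eta}\sim r^{-\eta}$ uniformly for $r\le 1$ and $z\in[\rho,R']$. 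Lemma~\ref{comparablealpha}(1) compares $p_\zeta^{(\alpha)}(1/2,z,s)\sim p_\zeta^{(\alpha)}(1,z,s)$, and direct inspection of \eqref{eq:heatkernelalpha1weightedsubordinatedboundsfinal} shows $p_\zeta^{(\alpha)}(1,z,s)\sim p_\zeta^{(\alpha)}(1,r,s)$ for $r\le 1$, $z\in[\rho,R']$, $s\ge 1$: the sums $r+s$ and $z+s$ are both comparable to $s$, and the differences $|r-s|$ and $|z-s|$ to $s$ once $s$ is large, while for $s$ of order one all denominator factors are bounded above and away from zero. Multiplying the three bounds and integrating $z^{2\zeta}\,dz$ over $[\rho,R']$ absorbs the $z$-dependence into a positive constant and yields the desired estimate.

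The main obstacle is the cross-regime comparison $p_\zeta^{(\alpha)}(1,z,s)\sim p_\zeta^{(\alpha)}(1,r,s)$, which must be distilled carefully from \eqref{eq:heatkernelalpha1weightedsubordinatedboundsfinal} by tracking separately the two regimes $1\le s\le 2R'$ and $s\ge 2R'$. A subsidiary concern is ensuring that the $R$ provided by Lemma~\ref{jaklem311} is large enough to accommodate the chosen $[\rho,R']$; if not, one first enlarges its admissible range by a short iteration of Chapman--Kolmogorov (analogous to \eqref{eq:jakprop31aux1}), at the cost of worsening constants.
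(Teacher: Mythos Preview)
Your proposal is correct and follows essentially the same route as the paper's proof: both split into the three regimes (both variables small, both large, mixed), invoke Lemma~\ref{jaklem311} for the small--small case, Corollary~\ref{jakcor39} for the large--large case, and handle the mixed case via Chapman--Kolmogorov at time $1/2$ restricted to a fixed compact $z$-interval, combining Lemma~\ref{jaklem311} (after rescaling) on the small side with Corollary~\ref{jakcor39} on the large side and a direct comparison $p_\zeta^{(\alpha)}(1,z,s)\sim p_\zeta^{(\alpha)}(1,r,s)$ from~\eqref{eq:heatkernelalpha1weightedsubordinatedboundsfinal}. The paper simply fixes the concrete thresholds $1/4$, $1/2$, $1$ in place of your $\rho$, $R'$, $R$; your remark that the $R$ of Lemma~\ref{jaklem311} may need to be enlarged is well taken (its proof indeed allows $R$ to be chosen arbitrarily large), and the paper implicitly relies on the same freedom.
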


\begin{proof}
  By symmetry, it suffices to consider $r\leq s$. For $0<r<1/4$, $s>1$, and $1/4\leq z\leq1/2$, Lemma~\ref{jaklem311} and \eqref{eq:jaklem310} imply
  \begin{align}
    p_{\zeta,\eta}^{(\alpha)}(1,r,z) \gtrsim r^{-\eta}
  \end{align}
  and
  \begin{align}
    p_{\zeta,\eta}^{(\alpha)}(1,s,z)
    \gtrsim p_\zeta^{(\alpha)}(1,s,z)
    \sim p_\zeta^{(\alpha)}(1,s,r)
  \end{align}
  where we used $s\geq 2z\geq 2r$, $s>1$, and \eqref{eq:heatkernelalpha1weightedsubordinatedboundsfinal} in the final step. Hence, for any $0<r<1/4$ and $s>1$, we have
  \begin{align}
    \label{eq:jakthm313aux1}
    \begin{split}
      p_{\zeta,\eta}^{(\alpha)}(1,r,s)
      & \geq \int_{1/4}^{1/2} dz\, z^{2\zeta} p_{\zeta,\eta}^{(\alpha)}(1/2,r,z) p_{\zeta,\eta}^{(\alpha)}(1/2,z,s)
        \gtrsim r^{-\eta} p_\zeta^{(\alpha)}(1/2,r,s) \\
      & \sim r^{-\eta} p_\zeta^{(\alpha)}(1,r,s).
    \end{split}
  \end{align}
  Finally, for $r\wedge s\geq 1/4$, the claim follows from \eqref{eq:jaklem310}, whereas for $r\vee s\leq 1$ the claim follows from Lemma \ref{jaklem311}.
  This concludes the proof of Lemma~\ref{jakthm313} and therefore the proof of the lower bound on Theorem \ref{mainresultgen}.
\end{proof}

\subsubsection{Continuity}

We now prove the continuity statement in Theorem~\ref{mainresultgen} for $\eta<0$.

\begin{lemma}
  \label{jaklem314}
  Let $\zeta\in(-1/2,\infty)$, $\alpha\in(0,2)$, and $\eta\in(-\alpha,0)$. Then, for any fixed $r>0$, the function $(0,\infty)\ni s\mapsto p_{\zeta,\eta}^{(\alpha)}(t,r,s)$ is continuous.
\end{lemma}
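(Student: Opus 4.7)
The plan is to reduce to $t=1$ by the scaling~\eqref{eq:scalingalphahardy} and then exploit Duhamel's formula~\eqref{eq:duhamelclassictransformed} (valid for $\eta<0$ by Proposition~\ref{propertiesschrodheatkernel}, whose construction for negative perturbations is given in Theorem~\ref{thm:1}) to write, for $r,s,w>0$,
\begin{align*}
  p_{\zeta,\eta}^{(\alpha)}(1,r,s)-p_{\zeta,\eta}^{(\alpha)}(1,r,w)
  & = [p_\zeta^{(\alpha)}(1,r,s)-p_\zeta^{(\alpha)}(1,r,w)] \\
  & \quad + \int_0^1 d\tau \int_0^\infty dz\, z^{2\zeta} p_{\zeta,\eta}^{(\alpha)}(1-\tau,r,z)\, q(z)\, [p_\zeta^{(\alpha)}(\tau,z,s)-p_\zeta^{(\alpha)}(\tau,z,w)].
\end{align*}
The first bracket tends to zero as $w\to s$ by the joint continuity of $p_\zeta^{(\alpha)}$ (which is classical), so the task is to control the double integral.

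The strategy mirrors Lemma~\ref{boglem49}: for any $\epsilon\in(0,1/2)$, split $\int_0^1 d\tau=\int_0^\epsilon+\int_\epsilon^1$. On $\tau\in(\epsilon,1)$, the universal bound $p_{\zeta,\eta}^{(\alpha)}\leq p_\zeta^{(\alpha)}$ from~\eqref{eq:negetasimpleest}, combined with Proposition~\ref{heatkernelalpha1subordinatedboundsfinal} and~\eqref{eq:comparablealpha6}, supplies a $w$-uniform integrable majorant for $w$ in a neighborhood of $s$; dominated convergence together with the continuity of $p_\zeta^{(\alpha)}(\tau,z,\cdot)$ then drives this contribution to zero as $w\to s$. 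On $\tau\in(0,\epsilon)$, I would again bound $p_{\zeta,\eta}^{(\alpha)}\leq p_\zeta^{(\alpha)}$, apply the 3G inequality of Lemma~\ref{lem:3Gineq} to split $p_\zeta^{(\alpha)}(1-\tau,r,z)\,p_\zeta^{(\alpha)}(\tau,z,\xi)$, and use Lemma~\ref{lem:GTest} with the auxiliary parameter $0$ (instead of $\eta$) to control the weighted integrals against $|q(z)|\lesssim z^{-\alpha}$, yielding
\begin{align*}
  \int_0^\epsilon d\tau \int_0^\infty dz\, z^{2\zeta} p_\zeta^{(\alpha)}(1-\tau,r,z)\,|q(z)|\,p_\zeta^{(\alpha)}(\tau,z,\xi)
  \lesssim \epsilon\, c(r,s),
\end{align*}
uniformly in $\xi$ in a neighborhood of $s$. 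Taking first $w\to s$ and then $\epsilon\to 0$ concludes the argument.

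The main obstacle is this small-$\tau$ bound. Near $\tau=0$ the potential $q(z)\sim z^{-\alpha}$ has a non-integrable singularity at the origin, while $p_\zeta^{(\alpha)}(\tau,z,\xi)$ concentrates mass around $z=\xi$; together these features threaten to spoil any $w$-uniform majorant. The 3G inequality converts the product of two heat kernels into a sum of single kernels weighted by $(\tau^{1/\alpha}+z+\xi)^{-2\zeta}$ or $((1-\tau)^{1/\alpha}+z+r)^{-2\zeta}$, and Lemma~\ref{lem:GTest} then reduces matters to integrals of $z^{-\alpha}p_\zeta^{(\alpha)}(\cdot,r,z)$ having the correct decay in $\xi$. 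The delicate case $\zeta\in(-1/2,0)$ is handled by the second bound~\eqref{eq:GTest2} of that lemma via the cut-off $T=2(r\vee\xi\vee 1)$, which absorbs the otherwise troublesome factor $(1+r+\xi)^{-2\zeta}$; the remaining tail $z>2(r\vee\xi\vee 1)$ is estimated directly using the off-diagonal decay $p_\zeta^{(\alpha)}(\cdot,\cdot,z)\lesssim z^{-2\zeta-1-\alpha}$ from Proposition~\ref{heatkernelalpha1subordinatedboundsfinal}. This is essentially the same estimate~\eqref{eq:boglem49aux1} used for $\eta>0$, but now simplified by the fact that $p_{\zeta,\eta}^{(\alpha)}\leq p_\zeta^{(\alpha)}$ replaces the weaker upper bound $p_{\zeta,\eta}^{(\alpha)}\lesssim H(r)H(z)p_\zeta^{(\alpha)}$.
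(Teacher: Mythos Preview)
Your proposal is correct and follows the same overall architecture as the paper: reduce to $t=1$, write the Duhamel difference, split $\int_0^1=\int_0^\epsilon+\int_\epsilon^1$, treat the large-$\tau$ part by dominated convergence (using $p_\zeta^{(\alpha)}(\tau,z,w)\sim p_\zeta^{(\alpha)}(\tau,z,s)$ for $w$ near $s$), and show the small-$\tau$ part is $O(\epsilon)$ uniformly in a neighborhood of~$s$.

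The difference is in how you handle the small-$\tau$ tail. You invoke the 3G inequality (Lemma~\ref{lem:3Gineq}) and Lemma~\ref{lem:GTest}, essentially replicating the mechanism of~\eqref{eq:boglem49aux1} from the $\eta>0$ case. This works, but the paper exploits the simplification you yourself flag---namely $p_{\zeta,\eta}^{(\alpha)}\le p_\zeta^{(\alpha)}$---much more aggressively. Since $1-\tau\in(1-\epsilon,1)$ and $p_\zeta^{(\alpha)}(1-\tau,r,\cdot)$ is \emph{uniformly bounded} at times comparable to~$1$ (by~\eqref{eq:heatkernelalpha1weightedsubordinatedboundsfinal}), the paper simply drops that factor and is left with
\[
  \int_0^\epsilon d\tau\int_0^\infty dw\, w^{2\zeta}\,|q(w)|\,p_\zeta^{(\alpha)}(\tau,w,s)
  \lesssim \int_0^\epsilon d\tau\int_0^\infty dw\, w^{2\zeta-\alpha}\,p_\zeta^{(\alpha)}(\tau,w,s),
\]
which is controlled directly by~\eqref{eq:hbetagammataurintegratedtransformed} with $\delta=\alpha$, giving $\log(1+\epsilon/s^\alpha)\lesssim \epsilon s^{-\alpha}$. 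No 3G inequality, no Lemma~\ref{lem:GTest}, no case split on the sign of~$\zeta$. Your route is valid but imports machinery that is only really forced in the $\eta>0$ setting, where no pointwise domination by $p_\zeta^{(\alpha)}$ is available. A minor point: your citation of~\eqref{eq:comparablealpha6} for the $\tau\in(\epsilon,1)$ majorant is not the right hook; the comparability $p_\zeta^{(\alpha)}(\tau,z,w)\sim p_\zeta^{(\alpha)}(\tau,z,s)$ for $w$ near $s$ and $\tau>\epsilon$ follows straight from~\eqref{eq:heatkernelalpha1weightedsubordinatedboundsfinal}.
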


\begin{proof}
  Fix $r,s>0$ and let $z>0$ converge to $s$. Recall $q(r)=\Psi_\zeta(\eta)r^{-\alpha}$ with $\Psi_\zeta(\eta)<0$. By Duhamel's formula,
  \begin{align}
    \begin{split}
      & p_{\zeta,\eta}^{(\alpha)}(1,r,s) - p_{\zeta,\eta}^{(\alpha)}(1,r,z)
        = p_\zeta^{(\alpha)}(1,r,s) - p_\zeta^{(\alpha)}(1,r,z) \\
      & \quad + \int_0^1 d\tau \int_0^\infty dw\, w^{2\zeta} \,p_{\zeta,\eta}^{(\alpha)}(1-\tau,r,w)q(w)\left(p_\zeta^{(\alpha)}(\tau,w,s) - p_\zeta^{(\alpha)}(\tau,w,z)\right).
    \end{split}
  \end{align}
  For all sufficiently small $\epsilon$, estimates \eqref{eq:negetasimpleest}, \eqref{eq:heatkernelalpha1weightedsubordinatedboundsfinal} and \eqref{eq:hbetagammataurintegratedtransformed} imply
  \begin{align}
    \label{eq:jaklem314aux1}
    \begin{split}
      & -\int_0^\epsilon d\tau \int_0^\infty dw\, w^{2\zeta} \,p_{\zeta,\eta}^{(\alpha)}(1-\tau,r,w)q(w) p_\zeta^{(\alpha)}(\tau,w,s) \\
      & \quad \leq -\int_0^\epsilon \int_0^\infty dw\, w^{2\zeta} p_\zeta^{(\alpha)}(1-\tau,r,w) q(w) p_\zeta^{(\alpha)}(\tau,w,s) \\
      & \quad \lesssim -\int_0^\epsilon \int_0^\infty dw\, w^{2\zeta} p_\zeta^{(\alpha)}(\tau,w,s)q(w)
        \lesssim \epsilon s^{-\alpha}.
    \end{split}
  \end{align}
  Analogously, we have
  \begin{align}
    -\int_0^\epsilon d\tau \int_0^\infty dw\, w^{2\zeta} \,p_{\zeta,\eta}^{(\alpha)}(1-\tau,r,w)q(q)p_\zeta^{(\alpha)}(\tau,w,z)
    \lesssim \epsilon z^{-\alpha}.
  \end{align}
  Next, for any $\epsilon<\tau\leq1$ and $w,s,z>0$ with $z\to s$, we have
  $p_\zeta^{(\alpha)}(\tau,w,s)\sim p_\zeta^{(\alpha)}(\tau,w,z)$ (e.g., as a consequence of \eqref{eq:heatkernelalpha1weightedsubordinatedboundsfinal}). By dominated convergence,
  \begin{align}
    \int_\epsilon^1 d\tau \int_0^\infty dw\, w^{2\zeta} \,p_{\zeta,\eta}^{(\alpha)}(1-\tau,r,w)q(w)\left(p_\zeta^{(\alpha)}(\tau,w,s)-p_\zeta^{(\alpha)}(\tau,w,z)\right) \to0 \quad \text{as}\ z\to s.
  \end{align}
  Combining all previous estimates establishes the claim.
\end{proof}

\begin{proposition}
  \label{jakprop315}
  Let $\zeta\in(-1/2,\infty)$, $\alpha\in(0,2)$, and $\eta\in(-\alpha,0)$. Then, $p_{\zeta,\eta}^{(\alpha)}(t,r,s)$ is jointly continuous as a function of $r,s,t>0$.
\end{proposition}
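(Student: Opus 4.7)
The plan is to follow the strategy of Lemma~\ref{boglem410} (the analog for $\eta>0$), taking advantage of two simplifications available when $\eta<0$: the pointwise inequality $p_{\zeta,\eta}^{(\alpha)}\leq p_\zeta^{(\alpha)}$ from \eqref{eq:negetasimpleest}, and the fact that Lemma~\ref{jaklem314}, together with the symmetry $p_{\zeta,\eta}^{(\alpha)}(t,r,s)=p_{\zeta,\eta}^{(\alpha)}(t,s,r)$, already supplies separate continuity in each spatial variable. First I would apply the scaling \eqref{eq:scalingalphahardy} to reduce joint continuity on $(0,\infty)^3$ to joint continuity of $(r,s)\mapsto p_{\zeta,\eta}^{(\alpha)}(1,r,s)$ on $(0,\infty)^2$.

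Next, for $(\tilde r,\tilde s)\to(r,s)$, I would invoke Duhamel's formula \eqref{eq:duhamelclassictransformed}: since $p_\zeta^{(\alpha)}(1,\tilde r,\tilde s)\to p_\zeta^{(\alpha)}(1,r,s)$ by the known continuity of $p_\zeta^{(\alpha)}$, the task reduces to showing
\begin{align*}
\int_0^1 d\tau\int_0^\infty dw\,w^{2\zeta}\bigl|p_{\zeta,\eta}^{(\alpha)}(1-\tau,\tilde r,w)p_\zeta^{(\alpha)}(\tau,w,\tilde s)-p_{\zeta,\eta}^{(\alpha)}(1-\tau,r,w)p_\zeta^{(\alpha)}(\tau,w,s)\bigr|\,|q(w)|\to 0.
\end{align*}
I would split the $\tau$-integral at $\epsilon$ and $1-\epsilon$, arranging the three resulting pieces as in the proof of Lemma~\ref{boglem410}.

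For the boundary piece $\tau\in(0,\epsilon)$, the plan is to use $p_{\zeta,\eta}^{(\alpha)}\leq p_\zeta^{(\alpha)}$, together with the uniform bound on $p_\zeta^{(\alpha)}(1-\tau,\tilde r,w)$ in $w$ for $\tau<1/2$ and Lemma~\ref{integralana1} with $\delta=\alpha$, to reproduce the estimate \eqref{eq:jaklem314aux1} uniformly for $(\tilde r,\tilde s)$ in a compact neighborhood of $(r,s)$, yielding a contribution of order $\epsilon$. For the mirror piece $\tau\in(1-\epsilon,1)$, I would perform the change of variables $\tau\mapsto 1-\tau$ and rely on the corresponding uniform bound on $p_\zeta^{(\alpha)}(1-\tau',w,\tilde s)$ in $w$ to obtain a bound of the same order.

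For the middle piece $\tau\in[\epsilon,1-\epsilon]$, Lemma~\ref{jaklem314} and its counterpart in the first spatial variable (obtained by the symmetry of $p_{\zeta,\eta}^{(\alpha)}$) give pointwise convergence of the integrand, so the plan is to close the argument via dominated convergence. An integrable dominator uniform for $(\tilde r,\tilde s)$ near $(r,s)$ will follow from $p_{\zeta,\eta}^{(\alpha)}\leq p_\zeta^{(\alpha)}$ combined with the 3G inequality of Lemma~\ref{lem:3Gineq} and Lemma~\ref{lem:GTest}, essentially the same bookkeeping that produced \eqref{eq:boglem49aux1}. The main obstacle I anticipate is the careful construction of this uniform dominator, which requires treating the cases $\zeta\geq 0$ and $\zeta\in(-1/2,0)$ separately as in Lemma~\ref{lem:GTest}; otherwise, no new ideas beyond those already developed for Lemmas~\ref{boglem49}--\ref{boglem410} are required.
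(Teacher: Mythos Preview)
Your proposal is correct and follows essentially the same route as the paper's proof: scaling to $t=1$, Duhamel, the split at $\epsilon$ and $1-\epsilon$, boundary pieces controlled via $p_{\zeta,\eta}^{(\alpha)}\le p_\zeta^{(\alpha)}$ exactly as in \eqref{eq:jaklem314aux1} and its mirror, and dominated convergence on the middle piece using Lemma~\ref{jaklem314}. The only divergence is the middle-piece dominator: you plan to invoke the 3G inequality and Lemma~\ref{lem:GTest}, whereas the paper simply uses the already-established sharp two-sided bounds (Lemmas~\ref{jakthm35} and~\ref{jakthm313}) to obtain $p_{\zeta,\eta}^{(\alpha)}(1-\tau,\tilde r,z)\sim p_{\zeta,\eta}^{(\alpha)}(1-\tau,r,z)$ and $p_\zeta^{(\alpha)}(\tau,z,\tilde s)\sim p_\zeta^{(\alpha)}(\tau,z,s)$, so the integrand at $(r,s)$ itself serves as dominator; in fact, since $p_{\zeta,\eta}^{(\alpha)}\le p_\zeta^{(\alpha)}$ and $p_\zeta^{(\alpha)}(1-\tau,\tilde r,\cdot)\sim p_\zeta^{(\alpha)}(1-\tau,r,\cdot)$ for $\tau\in[\epsilon,1-\epsilon]$, the dominator $Cp_\zeta^{(\alpha)}(1-\tau,r,w)|q(w)|p_\zeta^{(\alpha)}(\tau,w,s)$ works without any 3G bookkeeping.
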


\begin{proof}
  By scaling, it suffices to prove the continuity of $p_{\zeta,\eta}^{(\alpha)}(1,r,s)$ with respect to $r,s>0$. As indicated in the proof of Lemma \ref{jaklem314}, we only need to verify
  \begin{align*}
    \begin{split}
      \int_0^1 d\tau \int_0^\infty dw\, w^{2\zeta} \left| p_{\zeta,\eta}^{(\alpha)}(1-\tau,\tilde r,w) p_\zeta^{(\alpha)}(\tau,w,\tilde s) - p_{\zeta,\eta}^{(\alpha)}(1-\tau,r,w) p_\zeta^{(\alpha)}(\tau,w,s) \right| q(w) \to0
    \end{split}
  \end{align*}
  for any $r,s,\tilde r,\tilde s>0$ with $\tilde r\to r$ and $\tilde s\to s$. In addition to \eqref{eq:jaklem314aux1}, we have
  \begin{align}
    \begin{split}
      & -\int_{1-\epsilon}^1 d\tau \int_0^\infty dw\, w^{2\zeta} \,p_{\zeta,\eta}^{(\alpha)}(1-\tau,r,w) q(w) p_\zeta^{(\alpha)}(\tau,w,s) \\
      & \quad = - \int_0^\epsilon d\tau \int_0^\infty dw\, w^{2\zeta} p_{\zeta,\eta}^{(\alpha)}(\tau,r,w) q(w) p_\zeta^{(\alpha)}(1-\tau,w,s) \\
      & \quad \leq -\int_0^\epsilon d\tau \int_0^\infty dw\, w^{2\zeta} p_\zeta^{(\alpha)}(\tau,r,w) q(w) p_\zeta^{(\alpha)}(1-\tau,w,s)
      \lesssim \epsilon r^{-\alpha}
    \end{split}
  \end{align}
  by  the same argument as in the proof of \eqref{eq:jaklem314aux1}.

  For any $\epsilon<\tau<1-\epsilon$ and $r,s,z,\tilde r,\tilde s>0$ with $r\to\tilde r$ and $s\to\tilde s$, we have, by Lemma~\ref{jaklem314} both $p_\zeta^{(\alpha)}(\tau,z,\tilde s) \sim p_\zeta^{(\alpha)}(\tau,z,s)$ and $p_{\zeta,\eta}^{(\alpha)}(1-\tau,\tilde r,z)\sim p_\zeta^{(\alpha)}(1-\tau,r,z)$. Then, by dominated convergence,
  \begin{align*}
  \small
    \begin{split}
      \int_\epsilon^{1-\epsilon} d\tau \int_0^\infty dw\, w^{2\zeta} \left|p_{\zeta,\eta}^{(\alpha)}(1-\tau,\tilde r,w) p_\zeta^{(\alpha)}(\tau,w,\tilde s) - p_{\zeta,\eta}^{(\alpha)}(1-\tau,r,w)p_\zeta^{(\alpha)}(\tau,w,s) \right| q(w) \to 0
    \end{split}
  \end{align*}
  for any $r,s,\tilde r,\tilde s>0$ with $\tilde r\to r$ and $\tilde s\to s$. This concludes the proof of Proposition~\ref{jakprop315} and in particular the continuity in Theorem~\ref{mainresultgen}.
\end{proof}

\appendix
\section{Schr\"odinger perturbations}
\label{s:constructionnegschrodperturbation}

In this Appendix, we discuss Schr\"odinger perturbations of transition densities needed in Theorem~\ref{mainresultgen}.
However, the setting is more general and the results are of independent interest; in particular we allow transition densities inhomogeneous in time.

\subsection{General setting and positive perturbations}
Let $X$ be a locally compact space with a countable base of open sets. Consider the Borel $\sigma$-algebra $\mathcal{M}$ on $X$. Let $m$ be a $\sigma$-finite measure on $(X,\mathcal{M})$. 
We further consider an arbitrary nonempty \textit{interval} $\I$ on the real line $\R$, with the Borel $\sigma$-field $\mathcal B$ and the Lebesgue measure $ds$.   We call $\I\times X$ the \textit{space-time}.
The functions considered below are assumed to be (jointly) Borel measurable.
Let $p$ be a \textit{
transition density} on $X$ with \textit{time} in $\I$. This means that $p$ is a (jointly measurable) function on $\I \times X \times \I \times X$ with values in $[0,\infty]$ such that $p(s,x,t,y) =0$ for $s \ge t$ and the Chapman-Kolmogorov equations hold for $p$, i.e.,
\begin{align*}
p(s,x,t,y)=\int_{X} p(s,x,u,z) p(u,z,t,y)\, m(dz), \qquad s,t, u\in \I, \, s<u<t,\, x,y\in X.
\end{align*}
Below, we say that $p$ is finite and \emph{strictly positive} if $0<p(s,x,t,y)<\infty$ for all $s,t\in \I$, $s<t$ and $x,y\in X$.

We consider a measurable function (potential) $q\ge 0$ on $\I \times X$ and let 
\begin{align}
p_0(s,x,t,y)&:= p(s,x,t,y) \quad \mbox{and, for $n\in\N$,} \nonumber \\
\label{e.dpsn}
p_n(s,x,t,y) &:= \int_s^t \int_X p_{n-1}(s,x,u,z) q(u,z) p(u,z,t,y)(u,z)\,m(dz)\,du.
\end{align}
Here and below, $s,t\in \I$, $s<t$, and $x,y\in X$.
We define 
\begin{align}
  \label{eq:pert_series}
  p^q(s,x,t,y) := \sum_{n=0}^\infty  p_n(s,x,t,y)
\end{align}
and say that $p^q$ is a Schr\"odinger perturbation of $p$ by $q$. Specifically, the perturbation is \textit{positive} since $q\ge 0$ and mass-creating since $p^q\ge p$. 
Of course, $p^q$ is increasing in $q\ge 0$. 
By \cite[Lemma~2]{Bogdanetal2008}, we have the Chapman-Kolmogorov equations
\begin{equation}
  \label{eq:Ch-Kolm}
  p^{q}(s,x,t,y) = \int_X p^{q}(s,x,u,z)  p^{q}(u,z,t,y)\,m(dz),\quad s<u<t.
\end{equation}
By Tonelli,
$p^q(s,x,t,y)$  also satisfies the following perturbation (Duhamel) formulas
\begin{align}
\nonumber
p^q(s,x,t,y) &= p(s,x,t,y) + \int_s^t \int_X p^q(s,x,u,z) q(u,z) p(u,z,t,y)\,m(dz)\,du\\
&=
p(s,x,t,y) + \int_s^t \int_X p(s,x,u,z) q(u,z) p^q(u,z,t,y)\,m(dz)\,du.\label{e.Df}
\end{align}
As straightforward as positive Schr\"odinger perturbations are, $p^q$ may be infinite. For example, this is so for the Gaussian kernel $p(s,x,t,y)=(4\pi (t-s))^{-d/2}e^{-|y-x|^2/(4(t-s))}$ in $\Rd$, $d>2$, and $q(x)=c|x|^{-2}$, $x\in \Rd$, if (and only if) $c >(d-2)^2/4$, which underlies the famous result of Baras and Goldstein \cite{BarasGoldstein1984}. Needless to say, the example is 
\textit{time-homogeneous}, meaning that $p(s,x,s+t,y):=p_t(x,y)$,
$s\in \R$, $t>0$, 
and $q(s,x):=q(x)$,
$s\in \R$, yield 
a (time-homogeneous) perturbation $p_t^q(x,y):=p^q(0,x,t,y)=p^q(s,x,s+t,y)$, $s\in \R$, $t>0$.
It is in such a time-homogeneous setting that the results of this Appendix are used in Subsection~\ref{ss:mainresult}.

\subsection{Negative perturbations}

We now focus on functions $q$ (or $-q$) which may be negative. To facilitate discussion, we assume $p$ is sub-Markov, i.e., we assume $\int_X p(s,x,t,y) m(dy)\le1$.
Then we can extend the definitions
\eqref{e.dpsn}---first to the case
of \textit{bounded}, but not necessarily positive $q:\I\times X\to \R$.
Indeed, in this case we can estimate the integrals by using $|q|$ and $\|q\|_\infty:=\sup_{s\in \I,x\in X} |q(s,x)|$.
In particular, by induction, we get
\begin{align}
  \label{eq:boundpnboundedq}
  |p_n(s,x,t,y)|\le \|q\|_\infty^n \frac{(t-s)^n}{n!} p(s,x,t,y),\quad n=0,1,\ldots.
\end{align} 
We see that the perturbation series converges absolutely and
\begin{align}
  \label{eq:boundtildepboundedq}
  |p^q(s,x,t,y)|\le p^{|q|}(s,x,t,y)\le e^{\|q\|_\infty (t-s)}p(s,x,t,y).
\end{align}
By \cite[Lemma~2]{Bogdanetal2008}, $p^q$ satisfies Chapman-Kolmogorov equations.
By Fubini, $p^q$  also satisfies the Duhamel formulas~\eqref{e.Df}.

Next, we shall define and study rather general negative
perturbations of transition densities. To this end, we consider integral kernels $P(s,x,B)$ on the space-time, where $s\in \I$, $x\in X$, and $B$ is a (measurable) subset of $\I\times X$. Thus, by \cite[p. 38]{BliedtnerHansen1986},
\begin{align*}
  &(s,x)\mapsto P((s,x),B) \mbox{ is measurable for every } B, \\
  & B\mapsto P((s,x),B) \mbox{ is a measure on } \I\times X \mbox{ for every } s \mbox{ and } x.
\end{align*}
Then,
$
Pf(s,x):=\int f(t,y)P(s,x,dt\,dy)\ge 0
$ 
is measurable for every measurable function $f\ge 0$ on $\I\times X$. We will identify the \textit{kernel} $P$ with the \textit{operator} $P$. In fact, every additive, positively homogeneous and monotone operator on nonnegative measurable functions
\textit{is} an integral kernel, see \cite[Section II.1]{BliedtnerHansen1986}.

For instance, our transition density $p$ defines, for functions $f\ge 0$ on $\I\times X$,
\begin{equation}
  \label{e.dP}
  Pf(s,x) := \int\limits_{\I}\int\limits_X p(s,x,t,y) f(t,y)m(dy)dt,\quad s\in \I,\; x\in X.
\end{equation}
Here is a variation on \cite[Appendix]{Bogdanetal2020}, which points out a sub-Markov resolvent \cite[p.~46]{BliedtnerHansen1986} associated with the kernel $P$ in \eqref{e.dP}. The proof applies to $\I$ in the same way as to $\R$, with only minor adjustments; we provide it for the reader's convenience.

\begin{lemma}
  \label{l.CMPP}
  For \eqref{e.dP}, a sub-Markov resolvent $P^\lambda, \lambda>0$, exists with $\sup_{\lambda>0} P^\lambda = P$.
\end{lemma}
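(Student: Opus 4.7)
The natural candidate is the Laplace-in-time damping of $p$: for $\lambda>0$ set
\[
  P^\lambda f(s,x) := \int_\I\int_X e^{-\lambda(t-s)}\,p(s,x,t,y)\,f(t,y)\,m(dy)\,dt,
\]
where, as usual, $p(s,x,t,y)=0$ for $s\ge t$. The plan is to verify the three required properties---kernel, resolvent equation, sub-Markov, and $\sup_\lambda P^\lambda=P$---in order, each via a short explicit computation based on Chapman--Kolmogorov.

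First I would note that $P^\lambda$ is an integral kernel on the space-time in the sense of \cite{BliedtnerHansen1986}, since its integrand is jointly measurable and nonnegative; monotone, positively homogeneous, and additive action on nonnegative measurable $f$ is immediate from Tonelli. Likewise, $P^\lambda\le P$ pointwise on nonnegative $f$, and $e^{-\lambda(t-s)}\nearrow 1$ as $\lambda\searrow 0$ for $t>s$, so monotone convergence gives
\[
  \sup_{\lambda>0} P^\lambda f = \lim_{\lambda\searrow 0}P^\lambda f = Pf, \qquad f\ge 0,
\]
which disposes of the final assertion. For sub-Markov, using $\int_X p(s,x,t,y)\,m(dy)\le 1$,
\[
  \lambda P^\lambda \mathbf{1}(s,x)\le \lambda\int_s^{\sup \I} e^{-\lambda(t-s)}\,dt\le 1.
\]

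The main computation is the resolvent equation $P^\lambda-P^\mu=(\mu-\lambda)P^\lambda P^\mu$ for $\lambda,\mu>0$, $\lambda\ne\mu$. Applying Tonelli to $P^\lambda P^\mu f$ and using the Chapman--Kolmogorov equations for $p$, the integration over the intermediate variable $(u,z)$ collapses to
\[
  P^\lambda P^\mu f(s,x)=\int_\I\int_X p(s,x,t,y)\,f(t,y)\left(\int_s^t e^{-\lambda(u-s)}e^{-\mu(t-u)}\,du\right)m(dy)\,dt.
\]
The inner integral evaluates to $\bigl(e^{-\lambda(t-s)}-e^{-\mu(t-s)}\bigr)/(\mu-\lambda)$, so multiplying by $\mu-\lambda$ yields exactly $P^\lambda f - P^\mu f$, as desired. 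The case $\lambda=\mu$ follows by passage to the limit (or by a direct differentiation yielding $-(P^\lambda)^2$, which is the infinitesimal form of the resolvent identity).

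I do not anticipate a real obstacle here: the only technical point is applying Tonelli, which is justified because all integrands are nonnegative (it suffices to take $f\ge 0$ throughout), and ensuring measurability of $(s,x)\mapsto P^\lambda f(s,x)$, which follows from joint measurability of $p$ and Fubini. The sub-Markov bound is uniform in whether $\I$ is bounded or not, since the integral over $(s,\sup\I)\cap\I$ is controlled by the integral over $(s,\infty)$.
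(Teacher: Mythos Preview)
Your proposal is correct and follows essentially the same route as the paper: both define $P^\lambda$ via the exponential time-damping $e^{-\lambda(t-s)}p(s,x,t,y)$, verify $\sup_{\lambda>0}P^\lambda=P$ and the sub-Markov bound $\lambda P^\lambda\mathbf 1\le 1$ directly, and then derive the resolvent identity by applying Chapman--Kolmogorov to collapse the intermediate space integral and computing the remaining elementary time integral $\int_s^t e^{-\lambda(u-s)}e^{-\mu(t-u)}\,du$. The only cosmetic difference is that the paper organizes the resolvent computation as $P^\mu P^\lambda+(\lambda-\mu)^{-1}P^\lambda=(\lambda-\mu)^{-1}P^\mu$ for $\lambda>\mu$, whereas you write the equivalent symmetric form $P^\lambda-P^\mu=(\mu-\lambda)P^\lambda P^\mu$.
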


\begin{proof}
  For $\lambda\ge 0$, we let
  \begin{equation}\label{e.dr}
  p^{-\lambda}(s,x,t,y) :=  e^{-\lambda(t-s)}p(s,x,t,y),\quad s,t\in \I,\; x,y\in X,
  \end{equation}
  and, for $f\ge 0$, 
  \begin{equation}
    \label{resolvent}
    P^{\lambda} f(s,x) := \int_\I \int_X p^{-\lambda}(s,x,t,y) f(t,y) \, m(dy) \, dt, \quad s\in\I, \, x\in X.
  \end{equation}
  For clarity, note that $p^{-\lambda}(s,x,t,y) = 0$ for $s\ge t$, so in \eqref{resolvent}, we have $\int_\I=\int_s^S$, where $S:=\sup \I$. Of course, $\sup_{\lambda>0} P^\lambda = P$ as kernels. Furthermore, for $s\in \I$, $x\in X$, $\lambda>0$,
  \begin{equation*}
    \lambda P^\lambda (s,X)  = \lambda \int_\I\int_X p^{-\lambda}(s,x,t,y) \, m(dy) \, dt \le \lambda \int_s^\infty e^{-\lambda(t-s)}  dt \leq 1.
  \end{equation*}
  Finally, by the Chapman-Kolmogorov equations, for $\lambda>\mu$, $s\in \I$, $x\in X$, $f\ge 0$, 
  \begin{align}
    &P^\mu P^\lambda f(s,x) +(\lambda-\mu)^{-1}P^{\lambda}f(s,x)
    \nonumber
    \\
    = &  \int_\I\int_X  \int_\I\int_X p^{-\mu}(s,x,u,z)p^{-\lambda}(u,z,t,y)f(t,y) \, m(dy) \, dt \, m(dz) \, du+(\lambda-\mu)^{-1}P^{\lambda}f(s,x)     \nonumber
\\
    = & \int_s^S \int_X p(s,x,t,y) f(t,y) e^{\mu s} e^{-\lambda t} \int_s^t e^{-u(\mu-\lambda)} \, du \, m(dy) \, dt+(\lambda-\mu)^{-1}P^{\lambda}f(s,x)     \nonumber
\\
    = &  (\lambda-\mu)^{-1}  \int_X \int_s^S p(s,x,t,y) f(t,y) \left( e^{-\mu(t-s)} - e^{-\lambda(t-s)} \right)\, m(dy) \, dt    \nonumber
\\
     & + (\lambda-\mu)^{-1}P^{\lambda}f(s,x)
     =  (\lambda-\mu)^{-1} P^{\mu}f(s,x).\label{e.re}
  \end{align}
  Thus, $P^\lambda$, $\lambda>0$, is a sub-Markov resolvent.
\end{proof}

The notation $p^{-\lambda}$  in \eqref{e.dr} agrees with that in~\eqref{eq:pert_series}. In particular, the resolvent equation \eqref{e.re} with $\mu=0$ is a variant of the Duhamel formula for $q\equiv -\lambda$.

\smallskip
The following lemma is crucial for handling negative perturbations.
\begin{lemma}
  \label{l.CMP}
  If $f,g\ge 0$, $Pf\le Pg+1$ on $\{f>0\}$, then $Pf\le Pg+1$ on $\I\times X$. In particular, if $P|h|<\infty$ and $Ph\le 1$ on $\{h>0\}$, then $Ph\le 1$ on $\I\times X$. 
\end{lemma}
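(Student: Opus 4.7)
The plan is to deduce both assertions from the sub-Markov resolvent $(P^\lambda)_{\lambda>0}$ produced by Lemma~\ref{l.CMPP}, applying a classical complete maximum principle (CMP) at each level $\lambda>0$ and then passing to the limit $\lambda\to 0^+$, where $P^\lambda\uparrow P$ monotonically, since $p^{-\lambda}\uparrow p$ pointwise.

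The second (``$h$-version'') claim follows from the first with $f:=h^+$ and $g:=h^-$: the hypothesis $P|h|=Pf+Pg<\infty$ makes $Ph=Pf-Pg$ pointwise meaningful, and $\{h>0\}=\{f>0\}$, so the bound $Ph\le 1$ on $\{h>0\}$ becomes $Pf\le Pg+1$ on $\{f>0\}$. It therefore suffices to prove the first claim.

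For the first claim, I would fix $(s_0,x_0)\in\{f=0\}$ with $Pg(s_0,x_0)<\infty$ (other cases being trivial) and, by monotone truncation $f\leftarrow f\wedge n$, $g\leftarrow g\wedge n$ restricted to an exhaustion of $\I\times X$ by sets of finite $p(s_0,x_0,\cdot)$-mass, reduce to the situation where $f,g$ are bounded with $P^\lambda f,P^\lambda g<\infty$ everywhere. The analytic core is then the classical CMP for the bounded sub-Markov kernel $P^\lambda$: if $\varphi$ is bounded and measurable on $\I\times X$ and $P^\lambda\varphi\le c$ on $\{\varphi>0\}$, then $P^\lambda\varphi\le c$ everywhere. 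This CMP can be proved using the resolvent equation $P^\mu-P^\lambda=(\lambda-\mu)P^\mu P^\lambda$ from the proof of Lemma~\ref{l.CMPP} together with the sub-Markov bound $\lambda P^\lambda\mathbf 1\le 1$, via the balayage of $P^\lambda\mathbf 1$ onto $\{\varphi>0\}$; it also admits a short probabilistic proof via the first-entry time of the underlying killed process into $\{\varphi>0\}$. Applied to $\varphi:=f-g$ and then letting $\lambda\to 0^+$ with monotone convergence $P^\lambda f\uparrow Pf$, $P^\lambda g\uparrow Pg$, this yields $Pf(s_0,x_0)\le Pg(s_0,x_0)+1$.

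The main obstacle is the transfer of the hypothesis from $P$ to $P^\lambda$: the bound $Pf\le Pg+1$ on $\{f>0\}$ does not immediately give $P^\lambda f\le P^\lambda g+1$ on $\{f>0\}$, since for signed $f-g$ the quantity $P^\lambda(f-g)$ is not dominated by $P(f-g)$. I would handle this via the identity $P(f-g)=P^\lambda(f-g)+\lambda PP^\lambda(f-g)$ (the $\mu=0$ case of the resolvent equation): on $\{f>0\}$ this yields $P^\lambda(f-g)\le 1+|\lambda PP^\lambda(f-g)|$, and the finiteness $P|f-g|<\infty$ (ensured by truncation in the first claim, and built into the hypothesis $P|h|<\infty$ of the second) provides quantitative control on the error term uniformly in $\lambda>0$, with $\lambda PP^\lambda(f-g)\to 0$ as $\lambda\to 0^+$. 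This is the delicate step and also explains the necessity of the $P|h|<\infty$ assumption in the second claim.
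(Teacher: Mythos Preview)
Your reduction of the second claim to the first via $f:=h^+$, $g:=h^-$ matches the paper exactly. The trouble is in the first claim.

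\textbf{Truncating $g$ is the wrong move.} Replacing $g$ by $g\wedge n$ decreases $Pg$, so the hypothesis $Pf\le Pg+1$ on $\{f>0\}$ does \emph{not} descend to $P(f\wedge n)\le P(g\wedge n)+1$ on that set. The paper's key observation is to truncate only $f$: set $f_n:=f\wedge n$, note $\{f_n>0\}\subset\{f>0\}$ and $Pf_n\le Pf\le Pg+1$ there, so the hypothesis is preserved with the \emph{same} $g$. One then applies the bounded-$f$ case and lets $n\to\infty$ by monotone convergence.

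\textbf{The ``transfer'' step is not under control.} Even if you truncate $f$ and $g$ to be bounded and compactly supported in some exhaustion, $P\mathbf 1$ is typically infinite (e.g.\ $\I$ unbounded above with a Markov $p$), so $P|f-g|<\infty$ is not guaranteed at all points, and your error term $\lambda PP^\lambda(f-g)$ cannot be shown to vanish as $\lambda\to 0^+$ by dominated convergence. Moreover, the hypothesis you actually have is at the level of $P$, not $P^\lambda$, so invoking CMP for $P^\lambda$ on $\varphi=f-g$ requires exactly the transfer you flag as delicate; your outline does not close this gap.

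The paper avoids all of this: for bounded $f$ (and arbitrary $g\ge 0$), Lemma~\ref{l.CMPP} provides a sub-Markov resolvent with $\sup_\lambda P^\lambda=P$, and then \cite[Proposition~7.1]{BliedtnerHansen1986} gives CMP for $P$ directly. The extension to unbounded $f$ is the one-line truncation above.
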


Here, of course, $\{f>0\}:=\{(s,x)\in \I\times X: f(s,x)>0\}$. We will call the first implication in Lemma~\ref{l.CMP} the \textit{complete maximum principle}, abbreviated as CMP. It is a variant of \cite[Proposition 7.1]{BliedtnerHansen1986}, but here we do not assume that $f$ is bounded and $Pf$ finite. CMP may be also viewed as a variant of the \textit{domination principle} \cite[Lemma~3.5~(ii)]{HansenBogdan2023}, but discussing this connection would take longer than the proof below.

\begin{proof}[Proof of Lemma~\ref{l.CMP}]
If $f\ge 0$ is bounded, then Lemma~\ref{l.CMPP} and the proof of \cite[Proposition 7.1]{BliedtnerHansen1986} give the implication.
In the case of general $f\ge 0$ with
$Pf\le Pg+1$ on $\{f>0\}$, we let $f_n:=f\wedge n$ for 
$n\in \N$.
On $\{f_n>0\}\subset \{f>0\}$, we have $Pf_n\le P f\le Pg+1$, so by the first part of the proof,  $Pf_n\le Pg+1$ on $\I\times X$. Letting $n\to \infty$, we get the first statement. For the second one, we let $f:=h_+$, $g:=h_-$.
\end{proof}

\begin{lemma}
  \label{lem:1}
  Let $q\ge 0$ be bounded. If $p$ is a strictly positive finite sub-Markov transition density then so is $p^{-q}$. Furthermore, $p^{-q}\le p$.
\end{lemma}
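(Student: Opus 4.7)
Setting $M:=\|q\|_\infty$, the estimate \eqref{eq:boundpnboundedq} (whose proof only uses $|q|$) applies verbatim with $-q$ in place of $q$, so the series \eqref{eq:pert_series} defining $p^{-q}$ converges absolutely and $|p^{-q}(s,x,t,y)|\le e^{M(t-s)}p(s,x,t,y)<\infty$; the Chapman--Kolmogorov equations and both Duhamel identities then follow by Fubini, as in the discussion surrounding \eqref{eq:boundtildepboundedq}. Granted positivity $p^{-q}\ge 0$, the second formula in \eqref{e.Df} reads $p^{-q}=p-\int p\cdot q\cdot p^{-q}$ and immediately yields $p^{-q}\le p$; integrating in $y$ gives the sub-Markov property. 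Thus the real content of the lemma is \emph{positivity} (and strict positivity) of $p^{-q}$.

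\textbf{Reduction to a positive perturbation.} The guiding idea is the (formal) semigroup identity $e^{t(L-q)}=e^{t((L-M)+(M-q))}$. I would introduce
\begin{equation*}
\tilde p(s,x,t,y):=e^{-M(t-s)}\,p(s,x,t,y),\qquad q':=M-q\ge 0.
\end{equation*}
A one-line Chapman--Kolmogorov computation shows that $\tilde p$ is itself a strictly positive finite sub-Markov transition density. Since $q'$ is bounded and nonnegative, the \emph{positive} Schr\"odinger perturbation $\tilde p^{q'}$ defined by \eqref{e.dpsn}--\eqref{eq:pert_series} (applied to $\tilde p$ and $q'$) is a sum of nonnegative terms, so $\tilde p^{q'}\ge \tilde p>0$ pointwise. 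The plan is then to identify $p^{-q}=\tilde p^{q'}$.

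\textbf{Identification and conclusion.} To verify $p^{-q}=\tilde p^{q'}$, I would substitute $\tilde p=e^{-M(t-s)}p$ and $q'=M-q$ into the Duhamel equation $\tilde p^{q'}=\tilde p+\int \tilde p\cdot q'\cdot \tilde p^{q'}$ and check that $p^{-q}$ (defined by its series and hence satisfying the Duhamel equation $p^{-q}=p-\int p\cdot q\cdot p^{-q}$) solves this new equation. The key algebraic input is the elementary identity
\begin{equation*}
(1-e^{-M(t-s)})\,p(s,x,t,y)\;=\;M\int_s^t\!\int_X e^{-M(u-s)}\,p(s,x,u,z)\,p(u,z,t,y)\,m(dz)\,du,
\end{equation*}
which follows from Chapman--Kolmogorov for $p$ together with $-\partial_u e^{-M(u-s)}=Me^{-M(u-s)}$; after a Fubini swap the two Duhamel equations rearrange into one another. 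Uniqueness of solutions to the Duhamel equation for $\tilde p^{q'}$ within the class of kernels dominated by $Ce^{M(t-s)}p$ then comes from iterating the equation and using the factorial decay of the iterates in \eqref{eq:boundpnboundedq}. Hence $p^{-q}=\tilde p^{q'}\ge \tilde p>0$, giving positivity and strict positivity; the bound $p^{-q}\le p$ and the sub-Markov property follow as indicated in the first paragraph.

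\textbf{Main obstacle.} The delicate step is the identification $p^{-q}=\tilde p^{q'}$, i.e.\ honestly turning an alternating-sign series into a series of positive terms. Alternative routes that would also close the argument are the complete maximum principle of Lemma~\ref{l.CMP} applied to $-q$, or a Feynman--Kac representation of $p^{-q}$; the change-of-base approach above is the most analytic and self-contained within the framework already set up in this Appendix.
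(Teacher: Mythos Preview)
Your approach is correct and takes a genuinely different route from the paper's. The paper proves positivity via the \emph{complete maximum principle} (Lemma~\ref{l.CMP}): it introduces the bridge kernel $P_{t,y}$ in \eqref{con:bridge}, observes that the weighted kernel $\widehat{P}f := P_{t,y}(qf)$ inherits CMP, and then applies CMP to $h := p^{-q}/p$, which by Duhamel satisfies $h = 1 - \widehat{P}h$; on $\{h>0\}$ one has $\widehat{P}h<1$, so CMP forces $\widehat{P}h \le 1$ and hence $h \ge 0$ everywhere. Strict positivity is obtained separately, first for small $t-s$ from the series tail bound $p^{-q}\ge(2-e^{M(t-s)})p$, and then extended to all $s<t$ via Chapman--Kolmogorov.

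Your change-of-base argument $p^{-q} = (p^{-M})^{M-q}$ is more elementary: it sidesteps CMP entirely and yields the explicit lower bound $p^{-q} \ge \tilde p = e^{-M(t-s)} p$ for \emph{all} $s<t$ in one stroke. The identification step does go through as you sketch: substituting $p = \tilde{p} + \tilde{p}*(Mp)$ (your ``key algebraic input'') into $p^{-q} = p - p*(qp^{-q})$ and using the Fubini associativity $[\tilde{p}*(Mp)]*(qp^{-q}) = \tilde{p}*\bigl(M\,p*(qp^{-q})\bigr) = \tilde{p}*(M(p-p^{-q}))$, the $\tilde{p}*(Mp)$ terms cancel and one arrives at $p^{-q} = \tilde{p} + \tilde{p}*(q'p^{-q})$; uniqueness then follows by iterating $|D| \le (1-e^{-M(t-s)})^n \cdot Kp\to0$. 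Equivalently, one may simply invoke the additivity $(p^a)^b = p^{a+b}$ for bounded signed potentials from \cite[Lemma~8]{Bogdanetal2008}, of which \eqref{e.pa} is the special case recorded in the paper. The paper's route has the advantage that the CMP machinery is developed anyway and is of independent interest; yours is more self-contained and delivers a sharper quantitative lower bound without a separate strict-positivity argument.
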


\begin{proof}
  Let $y\in X$, $t\in\I$, and $\I_t:=\{s\in \I: s<t\}$. 
  For $s,u\in \I_t$, $s<u$, $x,z\in X$, let
  $$
  h(s,x,u,z) := \frac{p(s,x,u,z)p(u,z,t,y)}{p(s,x,t,y)}.
  $$
  The function is a transition density on the space-time $\I_t\times X$, so by Lemma~\ref{l.CMP},
  \begin{align}
    \label{con:bridge}
    P_{t,y}f(s,x) &:= \int_{s}^t \int_X \frac{p(s,x,u,z)p(u,z,t,y)}{p(s,x,t,y)}f(u,z)\, m(dz)\, du, \quad s<t, \; x \in X, 
  \end{align}
is an integral kernel that  satisfies CMP. Let
  $$
  \widehat{P} f(s,x) := \int_{s}^t \int_X \frac{p(s,x,u,z)p(u,z,t,y)}{p(s,x,t,y)}f(u,z)q(u,z) m(dz)du.
  $$
  Then $\widehat{P}$ satisfies CMP, too. Indeed, let $f,g\ge 0$ on $\I_t\times X$ and $\widehat{P} f \le \widehat{P} g + 1$ on $\{f >0\}$. Let $f' :=  f q$, $g':=g q$. Since $\{f'>0\} \subset \{f>0\}$, we have $P_{t,y} f'\le P_{t,y}g'+1$ on $\{f'>0\}$. By CMP for $P_{t,y}$, $\widehat{P}f=P_{t,y} f' \le P_{t,y}g'+ 1=\widehat{P}g+1$ on $\I_t\times X$, which gives CMP for $\widehat P$.
  Next, let $h(s,x) := p^{-q}(s,x,t,y)/p(s,x,t,y)$. By \eqref{eq:boundtildepboundedq}, $\widehat{P} |h|<\infty$. By \eqref{e.Df},
  $$
  h = 1 - \widehat{P} h.
  $$
  Of course, if $h(s,x) > 0$, then $ \widehat{P} h(s,x) < 1$. By CMP for $\widehat{P}$, we get $\widehat{P} h \le 1$, so $h \ge 0$ and $0\le p^{-q}\le p$ everywhere.
  Furthermore, we note that for all $s,t\in \I$ and $x,y\in X$,
  \begin{align*}
    p^{-q}(s,x,t,y)
    & \ge p(s,x,t,y)-\sum_{n=1}^\infty p_n(s,x,t,y)\\
    & \ge \left(2-e^{\|q\|_\infty (t-s)}\right)p(s,x,t,y)>0
  \end{align*}
  if $0<t-s< \ln 2/\|q\|_\infty$. By \eqref{eq:Ch-Kolm}, $p^{-q}(s,x,t,y)>0$ for all $s,t\in \I$, $s<t$, $x,y\in X$.
\end{proof}

We say that the perturbation of $p$ in Lemma~\ref{lem:1} is \textit{negative} since $-q\le 0$ and mass-decreasing since $p^{-q}\le p$. The main point of Lemma~\ref{lem:1} is that $p^{-q}\ge 0$, in fact, that $p^{-q}$ is strictly positive for (bounded) $q\ge 0$. This  is a similar phenomenon as $e^{-x}> 0$ for $x\in [0,\infty)$.
In passing, we also note that for bounded functions $q,\rho\ge 0$ on $\I\times X$, by \cite[Lemma 8]{Bogdanetal2008},
\begin{equation}\label{e.pa}
p^{-q-\rho}=(p^{-q})^{-\rho}.
\end{equation}

In the reminder of this Appendix, we analyze $p^{-q}$ for possibly unbounded functions $q\ge 0$ on space-time. 
As we shall see, the following integrability assumption,
\begin{equation}
  \label{e.afp}
  p_1(s,x,t,y)=\int_s^t \int_X p(s,x,u,z)q(u,z)p(u,z,t,y)m(dz)du<\infty
\end{equation}
for all $s,t\in \I$, $s<t$, $x,y\in X$, will suffice for construction of $p^{-q}$. Note that 
\begin{equation}\label{e.pqf}
p^q<\infty 
\end{equation}
implies \eqref{e.afp}, but not conversely; see the above discussion of the result of Baras and Goldstein. 
The advantage of \eqref{e.afp} is that for arbitrary $\lambda>0$, it holds for $q\ge 0$ if and only if it holds for $\lambda q$.
For the sake of
the discussion following Definition~\ref{def:pzetaeta}, we note that \eqref{e.afp} holds for the time-homogeneous semigroup $p(s,x,t,y):=p_\zeta^{(\alpha)}(t-s,x,y)$ and  potential  $q(s,x)=x^{-\alpha}$ there,
thanks to $0<\alpha<2\zeta+1$. This is because $p^{\lambda q}<\infty$ for small $\lambda=\Psi_\zeta(\eta)>0$, i.e., small  $\eta>0$; see Lemma~\ref{pzetaetafinite}.

Here is the main result of this Appendix.
\begin{theorem}
  \label{thm:1}
  Let $q:\I\times X\to [0,\infty]$. Let $p$ be a sub-Markov, strictly positive, finite transition density, and assume \eqref{e.afp}.
  Then there is a strictly positive transition density $p^{-q}\le p$ satisfying \eqref{eq:Ch-Kolm} and \eqref{e.Df}.
\end{theorem}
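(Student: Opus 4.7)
The plan is to construct $p^{-q}$ by approximation with bounded potentials. Set $q_n := q \wedge n$ for $n\in\N$. Each $q_n$ is bounded and nonnegative, so Lemma~\ref{lem:1} yields strictly positive, sub-Markov, finite transition densities $p^{-q_n}\le p$ satisfying \eqref{eq:Ch-Kolm} and \eqref{e.Df}. The first step is to show that $n\mapsto p^{-q_n}$ is pointwise nonincreasing. For $n\le m$, both $q_n$ and $q_m-q_n$ are bounded and nonnegative, so the composition identity \eqref{e.pa} (valid in the bounded setting) gives $p^{-q_m}=(p^{-q_n})^{-(q_m-q_n)}$. Since $p^{-q_n}$ is itself a strictly positive, finite, sub-Markov transition density, a second application of Lemma~\ref{lem:1} produces $p^{-q_m}\le p^{-q_n}$. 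Hence the monotone limit $p^{-q}:=\lim_n p^{-q_n}\in[0,p]$ exists.

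Next I would pass \eqref{eq:Ch-Kolm} and \eqref{e.Df} through the limit by dominated convergence. For Chapman--Kolmogorov, the dominant $p(s,x,u,z)p(u,z,t,y)$ has $z$-integral equal to $p(s,x,t,y)<\infty$. For the Duhamel identities, the delicate term is $\int_s^t\!\int p(s,x,u,z)\,q_n(u,z)\,p^{-q_n}(u,z,t,y)\,m(dz)\,du$; here $q_np^{-q_n}\to qp^{-q}$ pointwise (on $\{q<\infty\}$, which has full measure by \eqref{e.afp}), and the dominant $p(s,x,u,z)\,q(u,z)\,p(u,z,t,y)$ is integrable in $(u,z)$ precisely by assumption \eqref{e.afp}.

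The hard part will be strict positivity. Fix $s_0<t$ and $y\in X$, and set $h_n(u,x):=p^{-q_n}(u,x,t,y)/p(u,x,t,y)\in(0,1]$. Mimicking the bridge argument from the proof of Lemma~\ref{lem:1}, $h_n=1-\widehat P_n h_n$, where $\widehat P_n$ is the bridge kernel of weight $q_n$ terminating at $(t,y)$; in particular $\widehat P_n\cdot 1=p_1^{(n)}(\cdot,\cdot,t,y)/p(\cdot,\cdot,t,y)\le p_1/p<\infty$ by \eqref{e.afp}. Dominated convergence transfers this to $h=1-\widehat P h$ with $h:=\lim_n h_n$ and $\widehat P$ built from $q$. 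The critical pointwise estimate to establish is the Khasminskii--Jensen bound
\[
h_n(u,x)\;\ge\;\exp\!\bigl(-\widehat P_n\cdot 1(u,x)\bigr)\;\ge\;\exp\!\bigl(-\widehat P\cdot 1(u,x)\bigr)\;>\;0,
\]
which is uniform in $n$ and so propagates to $h$, giving $p^{-q}(s_0,x_0,t,y)>0$.

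The Jensen bound will be obtained by studying the scaled perturbation: for $\lambda\ge 0$, Lemma~\ref{lem:1} defines $\varphi_n(\lambda):=p^{-\lambda q_n}(u,x,t,y)/p(u,x,t,y)\in(0,1]$, with $\varphi_n(0)=1$ and $\varphi_n'(0)=-\widehat P_n\cdot 1(u,x)$ by differentiating the perturbation series. I would then prove that $\lambda\mapsto\varphi_n(\lambda)$ is log-convex on $[0,\infty)$; this follows from a Cauchy--Schwarz computation applied to the iterated kernel representation of each term in the perturbation series, which is the analytic counterpart of the log-convexity of a moment generating function. The tangent line inequality for the convex function $\log\varphi_n$ at $\lambda=0$, evaluated at $\lambda=1$, then yields $\log\varphi_n(1)\ge-\widehat P_n\cdot 1(u,x)$, as required. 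The main obstacle in making the whole argument rigorous lies in this last step: executing the Cauchy--Schwarz identity for the perturbation series cleanly in the abstract setting, without invoking an underlying Markov process, while accommodating potentially unbounded $q$ through the truncations $q_n$.
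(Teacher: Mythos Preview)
Your construction---truncating to $q_n$, establishing monotonicity via \eqref{e.pa}, passing to the limit by dominated convergence with dominant \eqref{e.afp}, and proving strict positivity through log-convexity of $\lambda\mapsto p^{-\lambda q_n}(s,x,t,y)$---matches the paper's argument in structure. The one substantive difference is how log-convexity is obtained. You propose a direct Cauchy--Schwarz computation on the perturbation series, flag it as the main obstacle, and note the difficulty of doing this without an underlying process. The paper sidesteps that obstacle entirely: it observes that $h(\lambda):=p^{-\lambda q_n}(s,x,t,y)$ is \emph{completely monotone} on $(0,\infty)$, because term-by-term differentiation of the (absolutely convergent) perturbation series gives $(-1)^kh^{(k)}(\lambda)=k!\,(p^{-\lambda q_n})_k(s,x,t,y)\ge 0$, the nonnegativity coming from Lemma~\ref{lem:1} applied to the transition density $p^{-\lambda q_n}$. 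Complete monotonicity then yields log-convexity as a classical consequence, and the tangent-line inequality at $\lambda=0$ delivers exactly the bound \eqref{e.cmc} you are after. So your plan is correct, but the complete-monotonicity route cleanly resolves the step you singled out as hardest.
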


\begin{proof}
  For $n\in \N$, we let $q_n:=q\wedge n$. Then each $p^{-q_n}$ are sub-Markov, strictly positive, finite transition densities decreasing in $n$. Let 
  \begin{equation}\label{e.dnp}
  p^{-q}:=\inf_n p^{-q_n}.
  \end{equation}
  Of course, $0\le p^{-q}\le p$. By \eqref{e.afp} and the dominated convergence theorem, we get~\eqref{eq:Ch-Kolm} and~\eqref{e.Df}.
  The strict positivity of $p^{-q}$ follows from the lower bound
  \begin{equation}
    \label{e.cmc}
    p^{-q} (s,x,t,y) \ge p(s,x,t,y) \exp \left[-\dfrac{p_1(s,x,t,y)}{p(s,x,t,y)}\right],
  \end{equation}
  where $q\ge 0$, $s,t \in \I$, $s<t$, and $x,y \in X$. 
  For the proof of \eqref{e.cmc}, we first let $q\ge 0$ be \textit{bounded}. Then the function $(0,\infty)\ni \lambda\mapsto h(\lambda):=p^{-\lambda q}(s,x,t,y)$ is completely monotone, meaning that $(-1)^n h^{(n)}(\lambda)\ge 0$, $n=0,1,\ldots$, $\lambda>0$.
  Indeed, $(-1)^n h^{(n)}(\lambda)=n! (p^{-\lambda q})_n(s,x,t,y)$ by \cite[Appendix, formula (2)]{JakubowskiWang2020}, which is nonnegative in view of Lemma~\ref{lem:1} and~\eqref{e.dpsn}.
  Since completely monotone functions are logarithmically convex, we get
  \begin{align*}
    p^{-q}(s,x,t,y)=h(1)
    & = h(0)\exp\left[\int_0^1 (\ln h(\lambda))'d\lambda\right]
      \ge h(0)\exp \left[\int_0^1 \frac{h'(0)}{h(0)}d\lambda\right] \\
    & = p(s,x,t,y)\exp \left[-\frac{p_1(s,x,t,y)}{p(s,x,t,y)}\right].
  \end{align*}
  For unbounded $q\ge 0$, we use this, \eqref{e.dnp}, and \eqref{e.afp} and we let $n\to \infty$.
\end{proof}

\begin{remarks}
  (1) Estimate~\eqref{e.cmc} strengthens \cite[Appendix]{JakubowskiWang2020} and \cite[(41)]{Bogdanetal2008}.
  \\
  (2) In the time-homogeneous setting, \eqref{e.cmc} yields
\begin{equation}\label{e.cmc2}
    p^{-q}_t (x,y) \ge p_t(x,y) \exp \left[\frac{-\int_0^t\int_X p_s(x,z)q(z)p_{t-s}(z,y)m(dz)\,ds}{p_t(x,y)}\right].
  \end{equation}
  Here, of course, $p_t$ is sub-Markov, $q\ge 0$, and the numerator is assumed finite.
\end{remarks}

We conclude this appendix with the following example,
which illustrates the importance of CMP, and therefore the Chapman--Kolmogorov equations, for
the positivity of $p^{-q}$.

\begin{example}
  Let $X$ be the space containing only one point and $m(dz)$ be the Dirac delta at this point. Let $p(s,t) = (t-s)_+$ and $q(s) \equiv 1$ (to simplify notation, we skip the space variables from the notation). Then,
  $$
  p_n(s,t) = \frac{(t-s)_+^{2n+1}}{(2n+1)!},
  $$
  therefore
  \begin{align*}
    p^{q}(s,t) = \sinh(t-s)_+ \qquad \mbox{and}\qquad p^{-\lambda q}(s,t) = \frac{\sin(\sqrt{\lambda}(t-s)_+)}{\sqrt{\lambda}}.
  \end{align*}
  Clearly, $p^{-\lambda q}(s,t)$ takes on negative values, too. This does not contradict our findings because the operator $P f(s) := \int_s^\infty p(s,t) f(t) dt$ does not satisfy CMP, as may be verified by considering the function $f(s) = -{\bf 1}_{[0,1)}(s) + 2 \cdot {\bf 1}_{[1,2]}(s)$.
\end{example}

\printindex


\newcommand{\etalchar}[1]{$^{#1}$}
\def\cprime{$'$}

\end{document}